\documentclass[english,article]{siamart190516} 

\usepackage[utf8]{inputenc}
\usepackage[english]{babel}
\usepackage{color}
\usepackage{amsmath}
\usepackage{amssymb}
\usepackage{amsfonts}
\usepackage{bbm}
\usepackage{pgfplots}
\usetikzlibrary{patterns}
\usepgfplotslibrary{fillbetween}
\usetikzlibrary{intersections}
\usepackage{subfig}
\usepackage{enumerate}
\usepackage{mathtools}
\usepackage[shortlabels]{enumitem}
\usepackage{url}

\pgfplotsset{compat=1.14}

\newsiamthm{conjecture}{Conjecture}

\DeclareMathOperator*{\argmin}{arg\,min}

\DeclareMathOperator{\sign}{sign}
\DeclareMathOperator{\rank}{rank}
\DeclareMathOperator{\nullity}{nullity}

\newcommand{\norm}[2]{\left \Vert{#1} \right \Vert_{#2}}

\newcommand{\F}[0]{\mathbb{F}}
\newcommand{\R}[0]{\mathbb{R}}
\newcommand{\C}[0]{\mathbb{C}}
\newcommand{\Fn}[0]{\mathbb{F}^n}
\newcommand{\Fnn}[0]{\mathbb{F}^{n \times n}}
\newcommand{\Rn}[0]{\mathbb{R}^n}
\newcommand{\Rnn}[0]{\mathbb{R}^{n \times n}}
\newcommand{\Cn}[0]{\mathbb{C}^n}
\newcommand{\Cnn}[0]{\mathbb{C}^{n \times n}}
\newcommand{\Fp}[1]{\mathbb{F}^{#1}}
\newcommand{\Rp}[1]{\mathbb{R}^{#1}}

\newcommand{\Fpq}[2]{\mathbb{F}^{#1 \times #2}}

\newcommand{\G}[1]{\mathcal{G}^{\mathbb{F}}(#1)}
\newcommand{\Ga}[1]{\mathcal{G}_{\text{aff}}^{\mathbb{F}}(#1)}

\newcommand{\Img}[1]{\text{Im}(#1)}
\newcommand{\Ker}[1]{\text{Ker}(#1)}
\newcommand{\Aff}[1]{\text{Aff}\left( #1 \right)}
\newcommand{\Span}[1]{\text{span}(#1)}
\newcommand{\Ind}[2]{\textbf{Ind}(#1,#2)}
\newcommand{\Indsimple}[2]{\textbf{Ind}_{#1}(#2)}
\newcommand{\T}[0]{\mathcal{T}}
\newcommand{\X}[0]{\mathcal{X}}
\newcommand{\Y}[0]{\mathcal{Y}}
\newcommand{\K}[0]{\mathcal{K}}
\newcommand{\omin}[0]{\omega_{\text{min}}}
\newcommand{\xbo}[0]{x_{b,\omega}}
\newcommand{\Uni}[1]{\text{U}(#1)}
\newcommand{\Sym}[1]{\text{Sym}(#1)}
\newcommand{\Pos}[1]{\text{P}^{+}(#1)}
\newcommand{\Gl}[1]{\text{GL}(#1)}
\newcommand{\Gr}[2]{\text{Gr}^{\mathbb{F}} (#1,#2)}
\newcommand{\Gra}[2]{\text{Graff}^{\mathbb{F}} (#1,#2)}
\newcommand{\Gam}[2]{\mathbf{\Gamma}_{#1}(#2)}
\newcommand{\evalmin}[1]{\lambda_{\text{min}}(#1)}
\newcommand{\D}[2]{\mathbf{D}_{#1}(#2)}
\newcommand{\dD}[3]{\partial \mathbf{D}_{#1}(#2,#3)}
\newcommand{\M}[0]{\mathcal{M}}
\newcommand{\Msym}[0]{\mathcal{M}_{\text{sym}}}
\newcommand{\MsymT}[0]{\mathcal{M}_{\text{sym,T}}}
\newcommand{\Minv}[0]{\mathcal{M}_{\text{inv}}}
\newcommand{\Msyminv}[0]{\mathcal{M}_{\text{sym,inv}}}
\newcommand{\Mpos}[0]{\mathcal{M}_{\text{pos}}}
\newcommand{\W}[0]{\widehat{W}}

\renewcommand{\S}[0]{\mathcal{S}}
\renewcommand{\dim}[1]{\text{dim}(#1)}
\renewcommand{\vec}[1]{\textbf{vec}(#1)}

\definecolor{darkgreen}{rgb}{0,0.5,0}

\title{The index of invariance and its implications for a parameterized least squares problem
    \thanks{Both authors contributed equally.
    \funding{Rahul Sarkar was partially supported by a fellowship from Schlumberger, and Léopold Cambier was partially supported by a fellowship from Total S.A., for the duration of this work.}}
}


\author{
L\'{e}opold Cambier\thanks{Institute for Computational and Mathematical Engineering, Stanford University
  (\email{lcambier@stanford.edu}, \url{www.stanford.edu/\~lcambier}).}
    \and
Rahul Sarkar\thanks{Institute for Computational and Mathematical Engineering, Stanford University
  (\email{rsarkar@stanford.edu}, \url{www.stanford.edu/\~rsarkar}).}
}

\begin{document}

\maketitle

\begin{abstract}
We study the least squares problem $x_{b,\omega} := \argmin_{x \in \S} \|(A + \omega I)^{-1/2} (b - Ax)\|_2$, with $A = A^*$, for a subspace $\S$ of $\F^n$ ($\F = \R$ or $\C$), and $\omega > -\evalmin{A}$.
We show that there exists a subspace $\Y$ of $\Fn$, independent of $b$, such that $\{x_{b,\omega} - x_{b,\mu} \mid \omega,\mu > -\evalmin{A}\} \subseteq \mathcal{Y}$, where $\dim{\mathcal{Y}} \leq \dim{\S + A\S} - \dim{\S} = \Indsimple{A}{\S}$, a quantity which we call the index of invariance of $\S$ with respect to $A$.
In particular if $\S$ is a Krylov subspace, this implies the low dimensionality result of Hallman \& Gu (2018) \cite{hallman2018lsmb}.
The least squares problem also has the property that when $A$ is positive, and $\S$ is a Krylov subspace, it reduces to the conjugate gradient problem for $\omega = 0$, and to the minimum residual problem in the limit $\omega \rightarrow \infty$.
We study several properties of $\Indsimple{A}{\S}$ in relation to $A$ and $\S$.
We show that in general, the dimension of the affine subspace $\X_b$ containing the solutions $x_{b,\omega}$ can be smaller than $\Indsimple{A}{\S}$ for all $b$.
However, we also exhibit some sufficient conditions on $A$ and $\S$, under which a related set $\X := \Span{\{x_{b,\omega} - x_{b,\mu} \mid b \in \F^n, \omega,\mu > -\evalmin{A}\}}$ has dimension equal to $\Indsimple{A}{\S}$.
We then study the injectivity of the map $\omega \mapsto x_{b,\omega}$, leading us to a proof of the convexity result from \cite{hallman2018lsmb}. We finish by showing that sets such as $M(\S,\S') := \{A \in \F^{n \times n} \mid \S + A\S = \S'\}$, for nested subspaces $\S \subseteq \S' \subseteq \Fn$, form smooth real manifolds, and explore some topological relationships between them.
\end{abstract}

\begin{keywords}
Parameterized Least Squares, Low Dimensional Subspaces, Block Matrix Decompositions, Real Analytic Functions, CG, MINRES, Matrix Manifolds.
\end{keywords}

\begin{AMS}
15A23, 30C15, 47A15, 47A56, 65F10
\end{AMS}
\section{Introduction}
\label{sec:intro}

It was recently shown in \cite{hallman2018lsmb} that for any $b \in \mathbb{R}^m$, and a full column rank matrix $A \in \mathbb{R}^{m \times n}$ with $m \geq n$, the solution $x_{b,\omega} \in \mathbb{R}^n$ to the \textit{LSMB problem} for $\omega \geq 0$
\begin{equation}
\label{eq:lsmb-obj}
    \argmin \;\; \norm{(A^{\ast}A + \omega I)^{-\frac{1}{2}} A^{\ast} (b-Ax)}{2} \;\;\;\; \text{s.t. } x \in \mathcal{K}_k (A^{\ast} A, A^{\ast} b),
\end{equation}
is a convex combination of the LSQR \cite{paige1982lsqr} and LSMR \cite{fong2011lsmr} solutions (all notations are formally introduced in \Cref{ssec:defn}). Here $\omega \in \mathbb{R}$ is an arbitrarily chosen parameter, and 
\begin{equation}
\label{eq:least-sq-krylov}
    \mathcal{K}_k (A^{\ast} A, A^{\ast} b) := \Span{ \{A^{\ast}b,(A^{\ast} A) A^{\ast}b,(A^{\ast} A)^2 A^{\ast}b,\dots,(A^{\ast} A)^{k-1}A^{\ast}b\}}
\end{equation}
is the Krylov subspace over which we minimize \eqref{eq:lsmb-obj}. It was also noted in \cite{hallman2018lsmb} that when $\omega = 0$, one recovers the LSQR solution, while if $\omega \rightarrow \infty$, $x_{b,\omega}$ converges to the LSMR solution. Thus, the LSMB problem is a generalization of LSQR and LSMR. Furthermore, it was shown that for all $\omega \geq 0$, the iterates $x_{b,\omega}$ are convex combinations of $x_{b,0}$ and $x_{b,\infty}$. Thus by varying $\omega$, one obtains the set $\{x_{b,\omega} \mid \omega \geq 0\}$ contained in the line passing through the LSQR and LSMR solutions, which is a one-dimensional affine subspace of $\mathcal{K}_k (A^{\ast} A, A^{\ast} b)$.

Since $A$ is a full rank matrix, $A^{\ast} A$ is a positive matrix, and conversely any positive matrix $B \in \mathbb{R}^{n \times n}$ can be decomposed as $B = L^{\ast} L$ for some full rank matrix $L \in \mathbb{R}^{n \times n}$, for e.g. using the Cholesky decomposition. 
Thus, from the above, we immediately deduce the result that for any positive matrix $B \in \mathbb{R}^{n \times n}$ and $b' \in \mathbb{R}^n$, the set of solutions $\{x_{b',\omega} \in \mathbb{R}^n \mid \omega \geq 0\}$ to the problem
\begin{equation}
\label{eq:spd-obj-krylov}
    \argmin \;\; \norm{(B + \omega I)^{-\frac{1}{2}} (Bx - b')}{2} \;\;\;\; \text{s.t. } x \in \mathcal{K}_k (B, b'),
\end{equation}
also lie on a line. Moreover, as we will show later in \Cref{ssec:injectivity}, these are convex combinations of CG \cite{hestenes1952methods} and MINRES \cite{paige1975solution} solutions, the convexity being a direct consequence of the convexity result in \cite{hallman2018lsmb} as stated above. In fact the other direction is also true: if we knew that the solutions $x_{b',\omega}$ to \eqref{eq:spd-obj-krylov} lied on a line, then the corresponding result for the LSMB problem \eqref{eq:lsmb-obj} follows by simply substituting $A^{\ast} A$ and $A^{\ast} b$ in places of $B$ and $b'$ respectively in \eqref{eq:spd-obj-krylov}.

Below we briefly summarize our contributions, the main result of the paper, and how the paper is organized. The notations and objects appearing in \Cref{ssec:contrib,ssec:summary} are all formally introduced and defined in \Cref{sec:prelim}; however some of them are restated below for convenience.

\subsection{Contributions}
\label{ssec:contrib}
In this paper, we generalize the previously mentioned one-dimensional affine subspace result by studying the minimization problem
\[ x_{b,\omega} := \argmin_{x \in \T} \|(A + \omega I)^{-\frac{1}{2}} (Ax-b)\|_2, \]
with $\T \in \Gra{p}{n}$ ($\F = \R$ or $\F = \C$) an arbitrary affine subspace, $A \in \Gl{n} \cap \Sym{n}$, $b \in \Fn$, and $-\evalmin{A} < \omega \in \R$, where $\evalmin{A} \in \R$ is the smallest eigenvalue of $A$.
Also for any subspace $\S$ of $\Fn$, we define $\Indsimple{A}{\S} := \dim{\S + A\S} - \dim{\S}$, which we call the index of invariance of $\S$ with respect to $A$. We then prove a number of results:
\begin{enumerate}[(i)]
    \item We study (\Cref{sec:prelim}) the index of invariance in some detail and prove a number of its properties, such as upper-bounds, its relationship to a tridiagonal block decomposition of $A$, how it relates to $\Indsimple{A^{-1}}{\S}$, $\Indsimple{A^s}{\S}$, $\Indsimple{A}{\S^\perp}$ and subadditivity.
    \item We show (\Cref{sec:main-result}) that there exists a subspace $\mathcal{Y}$ such that for all $b \in \Fn$,  $\{x_{b,\omega} - x_{b,\mu} \mid \omega, \mu \geq - \evalmin{A}\} \subseteq \mathcal{Y}$, where $\dim{\mathcal{Y}} \leq \Indsimple{A}{\S}$, immediately generalizing the result from $\cite{hallman2018lsmb}$, since $\Indsimple{A}{\S} = 1$ for Krylov subspaces. We give an expression for $\mathcal{Y}$ as a function of $A$ and $\S$. This theorem is formally stated in \Cref{ssec:summary}.
    \item We then study (\Cref{sec:converse}) the tightness of the previously mentioned bound when one varies $\omega$, keeping $b$ fixed. Let $\X_b$ be the affine hull of $\{x_{b,\omega} \mid \omega > -\evalmin{A}\}$, for a fixed $b \in \Fn$.
    \begin{itemize}[-]
        \item We show that the 0-dimensional case is special, as $\dim{\X_b} = 0$ for all $b \in \Fn$, if and only if $\Indsimple{A}{\S} = 0$.
        \item We show however that there exist $A$ and $\S$ such that, for all $b \in \Fn$,  $\dim{\X_b} = 1$, while $\Indsimple{A}{\S}$ can be arbitrarily large.
        \item We finally show that the set $\{b \in \Fn \mid \dim{\X_b} = 0\}$ is non-trivial and has Lebesgue measure zero.
    \end{itemize}
    We continue by studying instead a related set $\X := \Span{\{x_{b,\omega} - x_{b,\mu} \mid b \in \F^n, \omega,\mu > -\evalmin{A}\}}$,
    where we find some sufficient conditions on $A$ and $\S$ ensuring $\dim{\X} = \Indsimple{A}{\S}$.
    \item We finish (\Cref{sec:related-results}) by studying some applications of our results. 
    We study the injectivity of the map $\omega \mapsto x_{b,\omega}$, which leads us to a proof of the convexity result from \cite{hallman2018lsmb} in the real Krylov case.
    We also study some new matrix manifolds that arise in connection with the index of invariance.
\end{enumerate}

\subsection{The main result}
\label{ssec:summary}

The main result of the paper is the following theorem which is proved in \Cref{sec:main-result}:

\begin{theorem} 
\label{thm:main_result}
Let $\F$ denote the field $\C$ or $\R$. Let $\Gra{p}{n}$ and $\Gr{p}{n}$ denote the set of $p$-dimensional affine subspaces and subspaces of $\Fn$ respectively, and let $\mathbf{\Gamma}_p : \Gra{p}{n} \rightarrow \Gr{p}{n}$ denote the vector bundle projection map. For $1 \leq p \leq n$, let $A \in \Sym{n} \cap \Gl{n}$ be an $n \times n$ invertible Hermitian matrix over $\F$, $\T \in \Gra{p}{n}$, and $b \in \Fn$. Define $\omin := -\evalmin{A}$, $\S := \Gam{p}{\T}$, $q := \dim{\S+A\S} - \dim{\S}$, and for all $\omega \in (\omin,\infty)$
\[ x_{b,\omega} := \argmin_{x \in \T} \| (A + \omega I)^{-\frac{1}{2}} (Ax - b) \|_2. \]
Then there exists a subspace $\Y \in \Gr{q}{n}$, independent of $b$, such that $x_{b,\omega} - x_{b,\mu} \in \Y$ for all $\omega, \mu \in (\omin,\infty)$.
When $q \geq 1$, if $V \in \Fpq{n}{p}$, $V' \in \Fpq{n}{q}$ are chosen such that $\begin{bmatrix}V & V'\end{bmatrix}$ is semi-unitary, $\Img{V} = \S$, and $\Img{\begin{bmatrix}V & V'\end{bmatrix}} = \S + A\S$, then $\Y = \Img{V (H^\ast H)^{-1} B^\ast}$ (not depending on the choice of $V, V'$), where $H^\ast = \begin{bmatrix}T & B^\ast\end{bmatrix}$, with $T = V^\ast A V$, and $B = V'^\ast A V$.
\end{theorem}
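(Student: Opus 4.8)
The plan is to recast the constrained least squares problem as an orthogonal projection in the weighted inner product $\langle u,v\rangle_\omega := v^\ast (A+\omega I)^{-1}u$ — a genuine inner product since $A+\omega I \succ 0$ for $\omega > \omin$ — to isolate the only place where $\omega$-dependence can survive, and finally to translate the resulting subspace into the matrix form in the statement. Concretely, I would fix $x_0 \in \T$, write $\T = x_0 + \S$, and set $r_0 := b - Ax_0$. Since $\|(A+\omega I)^{-1/2}(Ax-b)\|_2^2 = \langle Ax-b,\,Ax-b\rangle_\omega$ and $A$ is invertible, minimizing over $x\in\T$ is the same as computing the (unique) $\langle\cdot,\cdot\rangle_\omega$-orthogonal projection of $b$ onto the affine subspace $A\T = Ax_0 + A\S$ and applying $A^{-1}$; hence $x_{b,\omega} = x_0 + A^{-1}\Pi_\omega r_0$, with $\Pi_\omega$ the $\langle\cdot,\cdot\rangle_\omega$-orthogonal projector onto $A\S$, and therefore
\[ x_{b,\omega} - x_{b,\mu} = A^{-1}(\Pi_\omega - \Pi_\mu)\,r_0 . \]
As $b$ ranges over $\Fn$ so does $r_0$, so it suffices to produce a single fixed subspace $\mathcal{Z} \subseteq A\S$ of dimension $q$ with $\Img{\Pi_\omega - \Pi_\mu} \subseteq \mathcal{Z}$ for all $\omega,\mu$; then $\Y := A^{-1}\mathcal{Z}$ works and the independence of $b$ is automatic.

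The crux — and really the one nontrivial point — is the identity $\Pi_\omega^\ast w = w$ for every $w \in \S\cap A\S$ and every $\omega$. By definition $\Pi_\omega x \in A\S$ while $(A+\omega I)^{-1}(x-\Pi_\omega x) \perp A\S$; the key structural observation is that $w\in\S$ forces $Aw\in A\S$ and $w\in A\S$ forces $\omega w\in A\S$, so that $(A+\omega I)w\in A\S$, i.e. $w = (A+\omega I)^{-1}c$ with $c := (A+\omega I)w \in A\S$. Hence, for every $x$, $\langle x - \Pi_\omega x,\,w\rangle = \langle (A+\omega I)^{-1}(x-\Pi_\omega x),\,c\rangle = 0$, which is exactly $\langle\Pi_\omega x, w\rangle = \langle x, w\rangle$, i.e. $\Pi_\omega^\ast w = w$. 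Consequently $(\Pi_\omega - \Pi_\mu)^\ast$ annihilates $\S\cap A\S$, so $\Img{\Pi_\omega - \Pi_\mu} \subseteq (\S\cap A\S)^\perp$; together with the trivial $\Img{\Pi_\omega - \Pi_\mu} \subseteq A\S$ this gives $\Img{\Pi_\omega - \Pi_\mu} \subseteq \mathcal{Z} := A\S \cap (\S\cap A\S)^\perp$ for all $\omega,\mu$. Since $\dim{\S\cap A\S} = \dim{\S} + \dim{A\S} - \dim{\S+A\S} = 2p-(p+q) = p-q$ and $\S\cap A\S\subseteq A\S$, one gets $\dim{\mathcal{Z}} = q$; hence $\Y := A^{-1}\mathcal{Z} \in \Gr{q}{n}$ depends only on $A$ and $\S$ and satisfies $x_{b,\omega} - x_{b,\mu}\in\Y$ for all $b,\omega,\mu$ (for $q=0$ this reads $\mathcal{Z}=\{0\}$, i.e. $x_{b,\omega}$ constant in $\omega$).

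It remains, for $q\geq 1$, to match $\Y$ with the matrix expression. With $V,V'$ as in the statement, $\S' := \Img{V'}$ is the orthogonal complement of $\S$ inside $\S + A\S$, so $\S + A\S = \S \oplus \S'$ orthogonally; since $A\S \subseteq \S + A\S$, a vector of $A\S$ is orthogonal to $\S'$ iff it lies in $\S$. Setting $C := AV$, a full-column-rank basis matrix of $A\S$, and $P := C(C^\ast C)^{-1}C^\ast$ the orthogonal projector onto $A\S$, a short computation shows $P\S'$ and $\mathcal{Z} = A\S\cap(\S\cap A\S)^\perp$ have the same orthogonal complement $A\S\cap\S$ inside $A\S$, whence $\mathcal{Z} = P\S'$ and $\Y = A^{-1}P\,\Img{V'}$. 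Now $A^{-1}C = V$, $C^\ast C = (AV)^\ast(AV) = V^\ast A^2 V = H^\ast H$ — the last equality because $AV = \begin{bmatrix} V & V'\end{bmatrix}H$ (valid since $A\S\subseteq\S+A\S$) with $\begin{bmatrix} V & V'\end{bmatrix}$ semi-unitary, which also shows $H^\ast H$ is invertible — and $C^\ast V' = V^\ast A V' = ({V'}^\ast A V)^\ast = B^\ast$ (using $A=A^\ast$), so that
\[ \Y = A^{-1}P\,\Img{V'} = \Img{A^{-1}C(C^\ast C)^{-1}C^\ast V'} = \Img{V(H^\ast H)^{-1}B^\ast}, \]
and this description uses only $\S$, $\S+A\S$ and the orthogonal projector onto $A\S$, hence is independent of the choice of $V,V'$. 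The hard part is the identity $\Pi_\omega^\ast w = w$ on $\S\cap A\S$; once that is in hand, the rest is dimension counting and elementary manipulation of subspaces.
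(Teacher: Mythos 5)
Your proposal is correct, and it takes a genuinely different route from the paper. You recast the problem as the $\langle u,v\rangle_\omega = v^\ast A_\omega^{-1}u$-orthogonal projection of $b$ onto $A\T$, so that $x_{b,\omega}-x_{b,\mu}=A^{-1}(\Pi_\omega-\Pi_\mu)r_0$, and your key identity — that the Euclidean adjoint of the weighted projector $\Pi_\omega$ fixes $\S\cap A\S$, because $(A+\omega I)w\in A\S$ for $w\in\S\cap A\S$ — is sound and does all the work: it traps $\Img{\Pi_\omega-\Pi_\mu}$ in $\mathcal{Z}=A\S\cap(\S\cap A\S)^\perp$, whose dimension is $q$ by the standard intersection formula, and your identification $\mathcal{Z}=P\,\Img{V'}$ and the computation $A^{-1}C(C^\ast C)^{-1}C^\ast V'=V(H^\ast H)^{-1}B^\ast$ (using $AV=\begin{bmatrix}V & V'\end{bmatrix}H$ and $C^\ast V'=B^\ast$) correctly recovers the stated formula, with basis-independence manifest since $\Y=A^{-1}\bigl(A\S\cap(\S\cap A\S)^\perp\bigr)$ is intrinsic. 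The paper instead reduces to the subspace case, invokes the tridiagonal block decomposition of \Cref{cor:tridiag-blk-decomp} with a separate reduction of the $n=p+q$ case (\Cref{app:appB}), and uses Schur-complement block-inverse identities to derive an explicit linear system for $d_{b,\omega,\mu}=V^\ast(x_{b,\omega}-x_{b,\mu})$ (\Cref{lemma:HTSH}), from which membership in $\Img{V(H^\ast H)^{-1}B^\ast}$ is read off; basis-independence is then checked by a unitary change of basis. Your argument is shorter, coordinate-free, avoids the case split entirely, and yields a cleaner geometric description of $\Y$; what the paper's heavier machinery buys is the explicit system of \Cref{lemma:HTSH} (the quantities $G_\omega$, $N$, $z_{b,\omega,\mu}$), which is reused later for the tightness and convexity results of \Cref{sec:converse,sec:related-results}, so it is not redundant in context. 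Two small points you gloss over are the existence and uniqueness of $x_{b,\omega}$ (your projection formulation handles this, consistent with \Cref{lemma:x-bw}) and the invertibility of $H^\ast H$, which follows from $C=AV$ having full column rank rather than from the factorization $AV=\begin{bmatrix}V & V'\end{bmatrix}H$ itself; neither affects the validity of the argument.
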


\subsection{Outline of the paper}
\label{subsec:outline}

The rest of the paper is structured as follows. 
In \Cref{sec:prelim} we first introduce some definitions and notations, motivate and formally state the problem. We then prove a number of properties related to the index of invariance.
In \Cref{sec:main-result} we prove the main result of the paper. 
In \Cref{sec:converse} we study the converse of the main result in some detail, i.e. under what conditions is the bound from \Cref{sec:main-result} tight.
\Cref{sec:related-results} explores the question of injectivity and some topological consequences of our results.
We finally finish by stating some open problems in \Cref{sec:future-work}.
\section{Preliminaries}
\label{sec:prelim}

\subsection{Definitions and notation}
\label{ssec:defn}

Relevant definitions and notations to be used throughout the paper are introduced here. Some additional aspects of topology and real analytic functions are required in \Cref{sec:converse,sec:related-results}, which we don't introduce below for brevity; but we use standard terminology. The unfamiliar reader is referred to \cite{rudin1964principles, armstrong2013basic} for a comprehensive treatment of these topics. In \Cref{sec:related-results} we have a few results on smooth manifold embeddings, and we use terminology consistent with \cite{lee2013smooth}. We use $\F$ to represent the field we work over, which can be either $\C$ or $\R$.

\subsubsection{Matrix notations}
\label{sssec:hilbert-spaces}

We define $\Fpq{p}{q}$ to be the set of all $p \times q$ matrices with $\F$-valued entries. Given any $A \in \Fpq{p}{q}$, $\Img{A}:= \{Ax \mid x \in \Fp{q}\}$ will denote its range, while $\Ker{A}:= \{x \in \Fp{q} \mid Ax=0\}$ will denote its kernel or nullspace. The rank of $A$ is defined as the dimension of its range, which we denote $\rank(A)$. The transpose of $A$ is denoted $A^\top$, while the adjoint of $A$ is denoted by $A^\ast$, and it is the complex conjugate transpose (resp. transpose) of $A$ when $\F = \C$ (resp. $\R$). $\overline{A}$ will denote the complex conjugate of $A$ without the transpose, unless specified otherwise. When $\F = \C$, $\Re(A)$ and $\Im(A)$ will denote the real and imaginary parts of $A$ respectively. The $(i,j)$ entry of $A$ will be denoted by $A_{ij}$. $A$ is said to be \textit{semi-unitary} if all its columns are orthonormal. If $p=q$, the adjugate of $A$ denoted $\text{adj}(A)$ is the transpose of the cofactor matrix of $A$. Sometimes we will refer to matrices as \textit{operators}.

If $A \in \Fnn$,  $A$ is said to be \textit{Hermitian} if $A = A^\ast$. When $\F = \C$, $A$ is said to be \textit{positive} if $x^\ast Ax > 0$ (i.e. real and positive) for all non-zero $x \in \Fn$ (this in fact ensures that $A$ is Hermitian), while if $\F = \R$ we additionally require that $\F$ is Hermitian for it to be positive. We say that $A$ is \textit{unitary} if $A^\ast A = I$, where $I \in \Fnn$ denotes the identity matrix, which also implies $AA^\ast = I$. The symbol $I$ will be used to denote square identity matrices of other shapes as well, but the shape will always be clear from context. The set of all Hermitian matrices, positive matrices, unitary matrices, and invertible matrices in $\Fnn$ will be denoted using the symbols $\Sym{n}$, $\Pos{n}$, \Uni{n}, and $\Gl{n}$ respectively, and recall that $\Pos{n}\subseteq \Gl{n}$ and $\Uni{n} \subseteq \Gl{n}$ ($\F$ will be clear from context). We recall that Hermitian matrices have real eigenvalues and positive matrices have positive eigenvalues. For any $A \in \Sym{n}$, $\evalmin{A}$ will denote its smallest eigenvalue. We also recall that if $A \in \Sym{n}$, then it admits a spectral decomposition $A = U \Lambda U^\ast$ for some $U \in \Uni{n}$ and a real diagonal matrix $\Lambda \in \Fnn$, which allows us to define\footnote{The spectral decomposition guarantees that $\Lambda$ has positive diagonal entries, and is unique up to conjugation by permutation matrices --- so $A^q$ defined this way is unique.} whenever $A \in \Pos{n}$, its power $A^q := U \Lambda^q U^\ast \in \Pos{n}$ for any $q \in \R$, where $(\Lambda^q)_{ij} = \Lambda_{ij}^q$ (sign always chosen to be positive) for all $1 \leq i,j \leq n$.

For any $A \in \Fpq{p}{q}$, and for any $1 \leq k \leq l \leq p$ and $1 \leq k' \leq l' \leq q$, we will define the block matrix $A_{k:l,k':l'} \in \Fpq{(l-k+1)}{(l'-k'+1)}$ as
\begin{equation}
    \left( A_{k:l,k':l'} \right)_{ij} = A_{k+i-1, k'+j-1}, \;\; \forall \;\; 1 \leq i \leq (l-k+1),  1 \leq j \leq (l'-k'+1).
\end{equation}
We also define the function $\textbf{vec}: \F^{p \times q} \to \F^{pq}$ by $\vec{A}_{i+(j-1)p} = A_{ij}$, where $1 \leq i \leq p$, and $1 \leq j \leq q$. Informally, $\vec{A}$ stacks the columns of $A$ on top of each other into a vector. For completeness, we define the span of zero vectors to be $\{0\}$.
Similarly, assuming $A \in \F^{p \times m}, B \in \F^{m \times q}$ and $C = AB$, if $m = 0$ we define $C = 0 \in \F^{p \times q}$.

\subsubsection{Subspaces and affine subspaces}
\label{sssec:affine-subspaces}
An affine subspace $\T$ of $\Fn$ is a set such that if $x \in \T$, $\S_x := \{x - y \mid y \in \T\}$ is a subspace of $\Fn$. Clearly $\S_x = \S_{x'}$ for distinct $x, x' \in \T$, so one can unambiguously associate a subspace $\S := \{x - y \mid y \in \T\}$ with $\T$, for any arbitrarily chosen $x \in \T$. We define the dimension of $\T$ as $\dim{\T}:= \dim{\S}$. The notation $\Gra{k}{n}$ (resp. $\Gr{k}{n}$) denotes the set of all $k$-dimensional affine subspaces (resp. subspaces) of $\Fn$. Defining $x_0 := \argmin_{x \in \T} \norm{x}{2}$ (this minimizer exists\footnote{Existence follows by choosing any point $x \in \T$ and defining the compact set $\T_1 := \{y \in \T \mid \norm{y}{2} \leq \norm{x}{2}\}$, which has the property that for any $y \in \T \setminus \T_1$, $\norm{y}{2} > \norm{x}{2}$, and so the minimization can be performed over $\T_1$.} and is unique because in the case $\F=\C$, if $\Cn \ni x = x_1 + i x_2$ with $x_1, x_2 \in \Rn$, the map $(x_1,x_2) \mapsto \norm{x}{2}^2$ is smooth and strictly convex, while if $\F = \R$, the map $\Rn \ni x \mapsto \norm{x}{2}^2$ is also smooth and strictly convex), it follows that every $\T \in \Gra{k}{n}$ can be represented uniquely as
\begin{equation}
\label{eq:graff-to-gr}
\T = x_0 + \S := \{x_0 + y \mid y \in \S\},
\end{equation}
where
$\S \in \Gr{k}{n}$, $x_0 \in \T$, and $\norm{x_0}{2} \leq \norm{x}{2}$ for all $x \in \T$. It will be useful to represent $\S$ appearing in \eqref{eq:graff-to-gr} by the map $\mathbf{\Gamma}_k: \Gra{k}{n} \rightarrow \Gr{k}{n}$; thus we can rewrite \eqref{eq:graff-to-gr} as $\T = x_0 + \Gam{k}{\T}$. An affine subspace $\T$ is a subspace if and only if $x_0 = 0$ in its representation \eqref{eq:graff-to-gr}. Defining $\G{n} := \bigcup_{k=0}^{n} \Gr{k}{n}$ and $\Ga{n} := \bigcup_{k=0}^{n} \Gra{k}{n}$, we have the set inclusions $\G{n} \subseteq \Ga{n}$, and $\Gr{k}{n} \subseteq \Gra{k}{n}$ for all $0 \leq k \leq n$.

If $\S \in \G{n}$, $\T \in \Ga{n}$, and $A \in \Fnn$, we define $A\S := \{Ax \mid x \in \S\}$, and $A\T := \{Ax \mid x \in \T\}$, which is a subspace and an affine subspace of $\Fn$ respectively. The orthogonal complement of $\S$ is defined as $\S^{\perp} := \{x \in \Fn \mid x^\ast y = 0, \; \forall \; y \in \S \}$. For $\S, \S' \in \G{n}$, we define the sum $\S + \S' := \{x + y \mid x \in \S, y \in \S'\}$, and if $\S \cap \S' = \{0\}$, this sum is a direct sum denoted as $\S \oplus \S'$. If $\mathcal{U} \in 2^{\G{n}}$ is infinite (possibly uncountable), we define $\sum \mathcal{U} = \bigl \{ \sum_{i=1}^{m} x_i \mid x_i \in \S \in \mathcal{U}, \; 0 \leq m < \infty \bigr \}$, and it is also a subspace. Intersections of subspaces (resp. affine subspaces), possibly uncountable, is a subspace (resp. affine subspace). If $\mathcal{X} \subseteq \Fn$, the \textit{affine hull} $\Aff{\mathcal{X}}$ of $\mathcal{X}$ is the intersection of all affine subspaces of $\Fn$ containing $\mathcal{X}$ and is an affine subspace. The \textit{linear hull} or span of $\mathcal{X}$, denoted $\Span{\mathcal{X}}$, is the intersection all subspaces of $\Fn$ containing $\mathcal{X}$. When $A \in \Fpq{p}{q}$, the linear hull of the set of its columns equals its range $\Img{A}$. For $A \in \Fnn$, $b \in \Fn$, and $\mathbb{Z} \ni k \geq 1$, we define the Krylov subspace $\K_k(A,b) := \Span{\{b, Ab, \dots, A^{k-1}b\}}$.

\subsubsection{Index of invariance}
\label{sssec:index}

We now introduce the most important quantity relevant for this paper which plays a key role in the proofs.

\begin{definition}
\label{def:index}
Let $A \in \Fnn$, and $\S \in \G{n}$ be a subspace of $\Fn$. We define the \textit{index of invariance} of $\S$ with respect to $A$ to be the codimension of $\S$ in $\S + A\S$. Formally we will represent this quantity as a map $\textbf{Ind} : \G{n} \times \Fnn \rightarrow \mathbb{Z}_{\geq 0}$, and so $\Ind{\S}{A} := \dim{\S + A\S} - \dim{S}$. In most of our applications $A$ will be fixed, in which case we will use the compressed notation $\Indsimple{A}{\S}$ to mean $\Ind{\S}{A}$, and treat it as a map $\textbf{Ind}_{A}: \G{n} \rightarrow \mathbb{Z}_{\geq 0}$. In this setting, we will refer to $\Indsimple{A}{\S}$ as simply the \textit{index} of $\S$.
\end{definition}

If $A \in \Fnn$, a subspace $\S \in \G{n}$ is called an \textit{invariant subspace} of $A$ or simply $A-$\textit{invariant} if $A\S \subseteq \S$. Thus it can be seen from \Cref{def:index} that $\Ind{\S}{A} = 0$ if and only if $\S$ is $A-$invariant. Another interesting example is that of a Krylov subspace $\K_k(A,b)$ that is not $A-$invariant, in which case it can be verified that $\Ind{\K_k(A,b)}{A} = 1$. We study several interesting properties of the index of invariance below in \Cref{ssec:properties-index}.

\subsubsection{Strong orthogonality}
\label{sssec:strong-orthogonality}
Finally, we introduce a notion of orthogonality of vectors that will be used in \Cref{ssec:zero-dim}, that is much stronger than the usual notion of orthogonality.

\begin{definition}
\label{def:strong-orthogonality}
Let $\Fn = \S_1 \oplus \dots \oplus \S_t$ be an orthogonal direct sum decomposition for $t$ orthogonal subspaces $\S_1, \dots, \S_t$. Let $\pi_i : \Fn \rightarrow \S_i$ denote the orthogonal projection map on $\S_i$, for all $1 \leq i \leq t$. Then two vectors $u, v \in \Fn$ are called \textit{strongly orthogonal} with respect to $\S_1, \dots, \S_t$ if and only if $\pi_i(u)$ is orthogonal to $\pi_i(v)$ for each $i$.
\end{definition}

It is important to note that if two vectors $u,v$ are strongly orthogonal with respect to $\S_1, \dots, \S_t$, then they are orthogonal (but the converse is not true). This is because the direct sum $\S_1 \oplus \dots \oplus \S_t = \Fn$, so we have $u = \sum_{i=1}^{t} \pi_i(u)$, and $v = \sum_{i=1}^{t} \pi_i(v)$, from which it follows that $u^\ast v = \sum_{i=1}^{t} (\pi_i(u))^\ast \pi_i(v)$, the other terms vanishing due to orthogonality of the subspaces, and finally by strong orthogonality we get $u^\ast v = 0$.
\subsection{Problem statement}
\label{ssec:problem-statement}

In this subsection, after we formally state the problem in the next paragraph, we will define a few quantities that will be used in its analysis and prove some easy facts.

Let $\F$ be $\C$ or $\R$. Let $A \in \Sym{n} \cap \Gl{n}$ be a Hermitian invertible operator, $\T \in \Gra{p}{n}$ be an affine subspace of dimension $1 \leq p \leq n$, and let $\omin = - \evalmin{A}$. For any $b \in \Fn$ and $\omega \in (\omin, \infty)$, we define $x_{b,\omega}$ to be the solution to the following minimization problem (the fact that this minimizer exists and is unique is proved in \Cref{lemma:x-bw}):
\begin{equation} 
\label{eq:x_b_w} 
x_{b,\omega} := \argmin_{x \in \T} \|(A + \omega I)^{- \frac{1}{2}}(b - Ax) \|_2,
\end{equation}
and in addition, we also define an affine subspace and a subspace
\begin{equation}
\label{eq:X-X_b}
\X_b := \Aff{\{\xbo \mid \omega > \omin\}}, \text{ and } \X := \sum_{b \in \Fn} \Gam{\dim{\X_b}}{\X_b}.
\end{equation}
We seek to resolve the following questions: What is the maximum dimension of $\X_b$ and $\X$? Conversely, does the dimensions of $\X_b$ and $\X$ say anything about the quantity $\Indsimple{A}{\Gam{p}{\T}}$?

\subsubsection{Characterizing the solution}
\label{sssec:solution-formula}

We start by giving an explicit solution for $x_{b,\omega}$. In fact we give the solution for a slightly more general case in the next lemma, and $x_{b,\omega}$ is obtained by setting $s = -1$ in the lemma (i.e. $x_{b,\omega} = x_{b,\omega, -1}$).

\begin{lemma} 
\label{lemma:x-bw}
Let $A \in \Sym{n} \cap \Gl{n}$, $b \in \Fn$, $\omin = - \evalmin{A}$, and $\T \in \Gra{p}{n}$. Let $\T = x_0 + \S$ be any representation of $\T$ for some $x_0 \in \T$, and $\S = \Gam{p}{\T}$. Then for any $\omega \in (\omin, \infty)$, and for any $s \in \mathbb{R}$, the problem
\begin{equation}
\label{eq:general-min-prob}
    \argmin_{x \in \T} \|A_\omega^{s/2}(b - Ax) \|_2
\end{equation}
has a unique solution $x_{b,\omega,s}$ given by
\begin{equation}
\label{eq:x_b_w_explicit}
    x_{b,\omega,s} := x_0 + V \left( V^* A A_\omega^{s} A V\right)^{-1} V^* A A_\omega^{s} (b - Ax_0)
\end{equation}
where $A_\omega := A + \omega I$, and $V \in \Fpq{n}{p}$ is any full rank matrix whose columns span $\S$. \eqref{eq:x_b_w_explicit} is well defined as it is independent of the choices $x_0$ and $V$.
\end{lemma}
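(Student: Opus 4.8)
The plan is to reduce \eqref{eq:general-min-prob} to an unconstrained linear least squares problem in $p$ variables and solve it via the normal equations. First I would record that since $\omega > \omin = -\evalmin{A}$, every eigenvalue of $A_\omega = A + \omega I$ equals an eigenvalue of $A$ plus $\omega$ and is therefore positive, so $A_\omega \in \Pos{n}$; hence the powers $A_\omega^{s}$ and $A_\omega^{s/2}$ are well-defined, Hermitian, and invertible, and $y \mapsto \norm{A_\omega^{s/2} y}{2}$ is a genuine norm on $\Fn$ — the one induced by the inner product $(u,v) \mapsto u^\ast A_\omega^{s} v$. Since the columns of $V$ span $\S$ and $V$ has full column rank, the affine map $z \mapsto x_0 + Vz$ is a bijection from $\Fp{p}$ onto $\T$ (its image is $x_0 + \Img{V} = x_0 + \S = \T$), so \eqref{eq:general-min-prob} is equivalent to minimizing $\norm{c - Mz}{2}^2$ over $z \in \Fp{p}$, where $M := A_\omega^{s/2} A V \in \Fpq{n}{p}$ and $c := A_\omega^{s/2}(b - Ax_0)$.

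Next I would check that $M$ has full column rank: $V$ has rank $p$, and both $A$ and $A_\omega^{s/2}$ are invertible, so $\rank(M) = p$. Consequently $M^\ast M \in \Fpq{p}{p}$ is positive definite, in particular invertible, so the real quadratic function $z \mapsto \norm{c - Mz}{2}^2$ — viewed, when $\F = \C$, as a function of the real and imaginary parts of $z$ — is strictly convex and coercive, hence has the unique minimizer $z^\ast = (M^\ast M)^{-1} M^\ast c$ characterized by the normal equations $M^\ast M z = M^\ast c$. (Uniqueness can alternatively be argued purely algebraically: any minimizer must satisfy $M^\ast(c - Mz) = 0$, and injectivity of $M$ then pins down $z$.) It then remains to simplify, using $A^\ast = A$ together with the fact that $A_\omega^{s/2}$ is Hermitian and $A_\omega^{s/2} A_\omega^{s/2} = A_\omega^{s}$: one gets $M^\ast M = V^\ast A A_\omega^{s} A V$ and $M^\ast c = V^\ast A A_\omega^{s}(b - Ax_0)$, so that $x_0 + Vz^\ast$ is exactly the right-hand side of \eqref{eq:x_b_w_explicit}; as a byproduct this shows $V^\ast A A_\omega^{s} A V$ is invertible, so the claimed formula is meaningful.

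Finally, for the well-definedness assertion I would simply observe that the problem \eqref{eq:general-min-prob} refers only to $A$, $b$, $\omega$, $s$ and the set $\T$, not to the auxiliary choices $x_0 \in \T$ and $V$; since we have shown its solution exists, is unique, and equals the expression in \eqref{eq:x_b_w_explicit}, that expression cannot depend on $x_0$ or $V$ either.

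I do not anticipate a genuine obstacle. The only points needing care are the positive-definiteness of $A_\omega$ (which is what makes the fractional powers, and hence the whole objective, behave well) and, in the complex case, phrasing the strict-convexity/uniqueness step in terms of real coordinates so the standard facts apply — or sidestepping that via the algebraic uniqueness argument noted above.
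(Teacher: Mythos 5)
Your proof is correct and takes essentially the same route as the paper's: reduce \eqref{eq:general-min-prob} to an unconstrained least squares problem in $\Fp{p}$ via the parametrization $z \mapsto x_0 + Vz$, note that $A_\omega^{s/2}AV$ has full column rank because $A_\omega \in \Pos{n}$ and $A \in \Gl{n}$, and solve by the normal equations (the paper packages this last step as a separate lemma). The paper handles independence from $V$ and $x_0$ by direct computation, whereas your abstract argument --- the unique minimizer of a problem that refers only to $A, b, \omega, s, \T$ cannot depend on auxiliary choices --- is a slightly cleaner way to reach the same conclusion.
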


The proof of this lemma is given in \Cref{app:appA}, and we simply note here that $\omega > \omin$ ensures that $A_\omega \in \Pos{n}$. Because of the freedom in the choices of $x_0$ and $V$ in \Cref{lemma:x-bw}, from now on unless otherwise specified, we will always assume that $V$ is semi-unitary, and $x_0$ is chosen so that it satisfies the unique representation of $\T$ in \eqref{eq:graff-to-gr}. It will also suffice to study the special case when $\T$ is a subspace, because of the following easy corollary of \Cref{lemma:x-bw}.

\begin{corollary}
\label{cor:general-min-prob-subspace}
Under the assumptions of \Cref{lemma:x-bw}, defining $b' := b - Ax_0$,
\begin{equation}
\label{eq:reduction-eqv}
    \argmin_{x \in \T} \|A_\omega^{s/2}(b - Ax) \|_2 = x_0 + \argmin_{x \in \S} \|A_\omega^{s/2}(b' - Ax) \|_2.
\end{equation}
\end{corollary}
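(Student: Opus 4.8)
The plan is to reduce the affine minimization over $\T$ to a minimization over the subspace $\S$ by the translation $x = x_0 + y$. Since $\T = x_0 + \S$ with $\S = \Gam{p}{\T}$, the map $\S \to \T$, $y \mapsto x_0 + y$, is a bijection. First I would substitute into the residual: for $x = x_0 + y$ we have $b - Ax = (b - Ax_0) - Ay = b' - Ay$, hence $\|A_\omega^{s/2}(b - Ax)\|_2 = \|A_\omega^{s/2}(b' - Ay)\|_2$ for every $y \in \S$. Thus the two objective functions coincide along this bijection, so the image under $y \mapsto x_0 + y$ of any minimizer of the right-hand objective is a minimizer of the left-hand objective, and conversely.

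Next I would invoke \Cref{lemma:x-bw} twice to guarantee that both sides of \eqref{eq:reduction-eqv} are genuine singletons, so that the asserted equality of points is meaningful: once with the affine subspace $\T$ and data $b$ (producing the unique minimizer $x_{b,\omega,s}$ on the left), and once with $\S$ itself — viewed as an affine subspace whose associated subspace is $\S$ and whose minimal-norm representative is $0$ — together with data $b'$ (producing the unique minimizer on the right). Since a bijection carrying one objective to the other must send the unique minimizer of one to the unique minimizer of the other, we conclude $\argmin_{x\in\T}\|A_\omega^{s/2}(b-Ax)\|_2 = x_0 + \argmin_{x\in\S}\|A_\omega^{s/2}(b'-Ax)\|_2$, which is exactly \eqref{eq:reduction-eqv}.

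There is essentially no obstacle here; the only point needing a line of care is that \eqref{eq:reduction-eqv} is an equality of points, which presupposes existence and uniqueness of both argmins, and this is precisely what \Cref{lemma:x-bw} supplies. As a consistency check one may also substitute $x_0 = 0$, $\T = \S$, and $b'$ in place of $b$ directly into the explicit formula \eqref{eq:x_b_w_explicit}: this gives $V(V^\ast A A_\omega^{s} A V)^{-1} V^\ast A A_\omega^{s} b'$, and adding $x_0$ recovers \eqref{eq:x_b_w_explicit} for the original data, in agreement with \eqref{eq:reduction-eqv}.
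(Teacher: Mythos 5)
Your proof is correct and differs mildly but genuinely from the paper's. The paper's proof is a one-liner: it simply instantiates the explicit formula \eqref{eq:x_b_w_explicit} twice (for $\T = x_0 + \S$ with data $b$, and for $\S$ itself with $x_0 = 0$ and data $b'$) and compares the two expressions, which visibly differ by the additive constant $x_0$. You instead argue variationally: the translation $y \mapsto x_0 + y$ is a bijection $\S \to \T$ under which $b - A(x_0+y) = b' - Ay$, so the two objectives coincide along it, and hence the bijection carries the unique minimizer of one problem to the unique minimizer of the other. Your route only uses \Cref{lemma:x-bw} for existence and uniqueness, not for the closed-form expression of the minimizer, so it is slightly more structural; it would survive even if the explicit formula were unavailable. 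The paper's route is shorter given that \eqref{eq:x_b_w_explicit} has already been proved. Both are sound, and you correctly flag the one subtlety (that the equality of points presupposes both $\argmin$s are singletons) and handle the fact that $\S$ must be viewed as an affine subspace with minimal-norm point $0$.
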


\begin{proof}
This follows from \eqref{eq:x_b_w_explicit}, because when $\T$ is a subspace $x_0 = 0$.
\end{proof}

As a consequence, if $\X_b$ and $\X$ are the affine subspaces defined in \eqref{eq:X-X_b} for $\T$, and $\X'_b$ and $\X'$ are the corresponding affine subspaces for $\S$, then $\X_b = x_0 + \X'_{(b - Ax_0)}$ and $\X = x_0 + \X'$, which also implies $\dim{\X'_{(b-Ax_0)}} = \dim{\X_b}$, and $\dim{\X'} = \dim{\X}$. Thus from now on, unless specified otherwise, we will assume that $\T = \Gam{p}{\T} = \S$, and with this the expressions for $ x_{b,\omega,s}$ and $ x_{b,\omega}$ become
\begin{equation}
\label{eq:x_b_w_explicit_S}
    x_{b,\omega,s} = V \left( V^* A A_\omega^{s} A V\right)^{-1} V^* A A_\omega^{s} b, \text{ and } x_{b,\omega} = V ( V^* A A_\omega^{-1} A V)^{-1} V^* A A_\omega^{-1} b.
\end{equation}

The expression for $x_{b,\omega}$ will be studied in some detail in this paper, and so to make things easier we make the following definition:

\begin{definition}
\label{def:Dww'}
Using the notation and assumptions of \Cref{lemma:x-bw}, we define two maps $\mathbf{D}_{A} : (\omin, \infty) \rightarrow \Fpq{n}{n}$, and $\partial \mathbf{D}_{A} : (\omin, \infty) \times (\omin, \infty) \rightarrow \Fpq{n}{n}$ as
\begin{equation}
\label{eq:Dww'}
    \D{A}{\omega} = V ( V^* A A_\omega^{-1} A V)^{-1} V^* A A_\omega^{-1}, \;\; \dD{A}{\omega}{\mu} = \D{A}{\omega} - \D{A}{\mu}.
\end{equation}
For any $\omega, \mu \in (\omin, \infty)$, $\D{A}{\omega}$ and $\dD{A}{\omega}{\mu}$ represent linear maps $\Fn \rightarrow \S$, and is independent of the choice of $V$ (the proof of independence is essentially contained in the proof of \Cref{lemma:x-bw}).
\end{definition}

\subsubsection{Motivation}
\label{sssec:Motivation}

In order to gain some motivation about why we study the problem, we start with the following observation\footnote{Note that \Cref{lemma:solutions-invariant-S} holds for all $s \in \R$, even though we are only interested in the case $s=-1$.}, that holds under the assumptions mentioned above.

\begin{lemma}
\label{lemma:solutions-invariant-S}
If $\S$ is $A-$invariant, $x_{b,\omega,s}$ given by \eqref{eq:x_b_w_explicit_S} is independent of $\omega$ and $s$.
\end{lemma}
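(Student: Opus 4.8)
The plan is to show that when $\S$ is $A$-invariant, the expression \eqref{eq:x_b_w_explicit_S} collapses to something that does not reference $A_\omega$ at all. The key observation is that $A$-invariance, $A\S \subseteq \S$, combined with $A$ being invertible (so that $\dim{A\S} = \dim{\S}$), forces $A\S = \S$, i.e. $\S$ is also $A^{-1}$-invariant. Concretely, with $V \in \Fpq{n}{p}$ semi-unitary spanning $\S$, invariance means $AV = VT$ for some $T \in \Fpq{p}{p}$ (namely $T = V^\ast A V$), and $T$ is invertible since $A$ is. Then for any polynomial $r$, $r(A)V = V r(T)$; more to the point, since $A_\omega = A + \omega I$ maps $\S$ into itself bijectively, $A_\omega V = V(T + \omega I)$, and $A_\omega^{-1} V = V(T+\omega I)^{-1}$.

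The main step is then a direct substitution. First I would write $V^\ast A A_\omega^s A V$: using $AV = VT$ we get $A A_\omega^s A V = A A_\omega^s V T = A V (T + \omega I)^s T = V T (T+\omega I)^s T$, using that $V$ spans an $A_\omega^s$-invariant subspace (here $A_\omega^s$ is defined via the spectral calculus, but restricted to $\S$ it acts as $(T+\omega I)^s$ because $T+\omega I$ is the compression of the positive operator $A_\omega$ to the invariant subspace $\S$, hence shares the relevant eigenstructure). Therefore $V^\ast A A_\omega^s A V = V^\ast V T (T+\omega I)^s T = T (T+\omega I)^s T$ since $V$ is semi-unitary. Similarly $V^\ast A A_\omega^s b = V^\ast A A_\omega^s b$; but we need to be slightly more careful here since $b$ need not lie in $\S$. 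However $V^\ast A A_\omega^s = (A A_\omega^s V)^\ast = (V T (T+\omega I)^s)^\ast = (T+\omega I)^s T V^\ast$ (using Hermiticity of $A$, $A_\omega^s$, $T$, and $T+\omega I$ — all of these are Hermitian since $A$ is). Plugging in:
\[
x_{b,\omega,s} = V \big(T(T+\omega I)^s T\big)^{-1} (T+\omega I)^s T V^\ast b = V T^{-1} (T+\omega I)^{-s} T^{-1} (T+\omega I)^s T V^\ast b = V T^{-1} V^\ast b,
\]
where the cancellation $(T+\omega I)^{-s}(T+\omega I)^s = I$ and commutativity of $T$-powers with $(T+\omega I)$-powers (both functions of $T$) are used. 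The final expression $V T^{-1} V^\ast b$ is manifestly independent of $\omega$ and $s$.

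I expect the main obstacle to be the bookkeeping around the fractional/negative power $A_\omega^s$ restricted to the invariant subspace: one must justify that $A_\omega^s V = V(T+\omega I)^s$, i.e. that the spectral-calculus power of $A_\omega$, when applied to vectors in $\S$, agrees with the corresponding power of the compression $T + \omega I = V^\ast A_\omega V$. This follows because $\S$ decomposes into eigenspaces of $A$ (as $A$ is Hermitian and $\S$ is $A$-invariant, $A|_\S$ is Hermitian on $\S$ and diagonalizable with an orthonormal eigenbasis lying in $\S$), and on each such eigenvector $v$ with $A v = \lambda v$ we have $A_\omega^s v = (\lambda+\omega)^s v = (T+\omega I)^s (V^\ast v)$ transported back by $V$. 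Once this identity is in hand, the rest is the routine algebraic cancellation above. An alternative, perhaps cleaner route avoiding fractional powers entirely: observe directly that $\D{A}{\omega}$ (and its $s$-analogue) restricted to $\S$ must equal the oblique projector onto $\S$ determined by the normal equations, and $A$-invariance makes all these projectors coincide; but the substitution argument is the most self-contained.
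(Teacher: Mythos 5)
Your proof is correct and follows essentially the same route as the paper: both arguments exploit the spectral decomposition of $A$ restricted to the $A$-invariant subspace $\S$ to obtain the compression identity $A_\omega^s V = V(T+\omega I)^s$, then substitute into \eqref{eq:x_b_w_explicit_S} and cancel. The only cosmetic difference is that the paper immediately chooses $V$ to be a matrix of eigenvectors of $A$, making $T = \Lambda$ diagonal so the commutativity you invoke at the final cancellation is automatic, whereas you keep $V$ general and appeal to the fact that $T$ and $(T+\omega I)^s$ commute as functions of $T$.
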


\begin{proof}
Since $A$ is invertible and $\Indsimple{A}{\S} = 0$, $A\S = \S$; so applying the spectral theorem to the restriction map $A|_\S$ (which is Hermitian because $A$ is), we can conclude that $\S$ is spanned by eigenvectors of $A$. Thus, we can choose $V$ such that $AV = V \Lambda$, where the columns of $V$ are eigenvectors of $A$, and $\Lambda$ is a diagonal matrix with real entries (the eigenvalues). Thus for any $\omega \in \R$, $A_\omega V = V (\Lambda + \omega I)$, while the definition of $A_\omega^s$ shows that $A^s_\omega V = V (\Lambda + \omega I)^s$ for all $\omega \in (\omin,\infty)$ and $s \in \R$, where the bounds on $\omega$ ensure that $A_\omega \in \Pos{n}$ so that $A_\omega^s$ is well-defined\footnote{Indeed if $A = UDU^\ast$ is the spectral decomposition of $A$ with $U \in U(n)$, $A^s_\omega = U (D+\omega I)^s U^\ast$ from definition, so if $u$ is any eigenvector of $A$ (not necessarily a column of $U$) such that $Au = \lambda u$, then $A^s_\omega u = (\lambda + \omega)^s u$.}. Plugging into \eqref{eq:x_b_w_explicit_S} gives $x_{b,\omega,s} = V \left( \Lambda (\Lambda + \omega I)^s \Lambda \right)^{-1} \Lambda (\Lambda + \omega I)^s V^* b = V \Lambda^{-1} V^\ast b$.
\end{proof}

\Cref{lemma:solutions-invariant-S} suggests that when $\Indsimple{A}{\S} = 0$, $\dim{\X_b} = 0$ for all $b \in \Fn$. A natural question that arises then is what happens when $\Indsimple{A}{\S} > 0$. As indicated in \Cref{sec:intro}, a simple consequence of the results in \cite{hallman2018lsmb} is that in the case $\F = \R$, if $B \in \R^{n \times n}$ a positive matrix, $b \in \R^n$, and $\S = \K_k(B,b)$ is a real Krylov subspace, the set of solutions $\{x_{b,\omega} \mid \omega \geq 0 \}$ with $x_{b,\omega}$ defined as the solution to \eqref{eq:spd-obj-krylov} belong to a $1$-dimensional affine subspace. Recall from \Cref{sssec:index} that for a Krylov subspace $\S = \K_k(A,b)$ that is not $A-$invariant, $\Indsimple{A}{\S} = 1$. Based on these two known results, we are faced with the possibility that the conjecture $\dim{\X_b} \leq \Indsimple{A}{\S}$ for all $b \in \Fn$, might be true for $\F = \C$ or $\R$.

\begin{figure}[ht]
    \centering
    \begin{minipage}[b]{6cm}
        \subfloat[][Singular values of $Y$ for $\Indsimple{A}{\S}=2$]{
            \label{fig:svd_dim2}
            \begin{tikzpicture}
                \begin{semilogyaxis}[ylabel={$\sigma_i/\sigma_1$},xlabel={$i$},width=5cm]
                    \addplot[color=blue,mark=square*] table[x=i,y=s] {data/example_2d_svds_A.txt};
                \end{semilogyaxis}
            \end{tikzpicture}
        }
    \end{minipage}
    \begin{minipage}[b]{6cm}
        \subfloat[][Principal components of $x_{b,\omega_j}$ for $\Indsimple{A}{\S}=2$]{
            \label{fig:x_b_w_plot_dim2}
            \begin{tikzpicture}
                \begin{axis}[width=5cm,xlabel={\phantom{LOL}}]
                    \addplot[color=blue] table[x=x,y=y] {data/example_2d_coords_A.txt};
                \end{axis}
            \end{tikzpicture}
        }
    \end{minipage} \\
    \vspace{0.5cm}
    \begin{minipage}[b]{6cm}
        \subfloat[][Singular values of $Y$ for $\Indsimple{A}{\S}=3$]{
            \label{fig:svd_dim3}
            \begin{tikzpicture}
                \begin{semilogyaxis}[ylabel={$\sigma_i/\sigma_1$},xlabel={$i$},width=5cm]
                    \addplot[color=blue,mark=square*] table[x=i,y=s] {data/example_3d_svds_A.txt};
                \end{semilogyaxis}
            \end{tikzpicture}
        }
    \end{minipage}
    \begin{minipage}[b]{6cm}
        \subfloat[][Principal components of $x_{b,\omega_j}$ for $\Indsimple{A}{\S}=3$]{
            \label{fig:x_b_w_plot_dim3}
            \begin{tikzpicture}
                \begin{axis}[width=5cm,xlabel={\phantom{LOL}}]
                    \addplot3[no marks,blue] table[x=x,y=y,z=z] {data/example_3d_coords_A.txt};
                \end{axis}
            \end{tikzpicture}
        }
    \end{minipage}
\caption{Illustration of the low dimensionality of the affine subspace $\X_b = \Aff{\{x_{b,\omega} \mid \omega \geq 0\}}$. Left plots show the singular values of the centered matrix $Y$ computed as $Y_{ij} = X_{ij} - K^{-1} \sum_k X_{ik}$ where $X = [x_{b,\omega_1} \, \dots \, x_{b,\omega_K}]$. A sharp drop in the singular values indicates that the set $\{x_{b,\omega_j}\}_{j=1}^K$ lives in a low dimensional affine subspace. Right plots show the projection of $\{x_{b,\omega_j}\}_{j=1}^K$ over that low dimensional subspace.}
\label{fig:curves}
\end{figure}

We now describe a numerical experiment that also illustrates and confirms our intuition. Working over $\F = \R$, given a positive matrix $A \in \R^{N \times N}$ (built as the finite difference discretization with a 5-points stencil of a Poisson equation on a square domain, with $N = 529$), we create two experiments by building $\S$ as the sum of two (resp. three) real Krylov subspaces, i.e. $\K_{11}(A,b_1) + \K_{6}(A,b_2)$ (resp. $\K_{11}(A,c_1) + \K_{6}(A,c_2) + \K_{4}(A,c_3)$). The vectors $b_1, b_2, c_1, c_2, c_3 \in \Rp{N}$ were chosen as random Gaussian vectors, but such that $\Indsimple{A}{\S} = 2$ (resp. 3), and $\dim{\S} = 17$ (resp. 21). The vector $b \in \Rp{N}$ was also initialized as a random Gaussian vector.

To check the dimension of the solution set $\X_b$, we then build a matrix $X = \begin{bmatrix} x_{b,\omega_1} & \dots & x_{b,\omega_K} \end{bmatrix} \in \R^{N \times K}$, with the columns of $X$ computed using \eqref{eq:x_b_w_explicit_S} and $K=200$, and where $\omega_j = 10^{-3+6(j-1)/(K-1)}$, for all $1 \leq j \leq K$. We then perform a principal component analysis on $X$: we compute and subtract the mean across each $N$ dimensions, building $Y$ such that $Y_{ij} = X_{ij} - K^{-1} \sum_k X_{ik}$, for all $1 \leq i \leq N, \; 1 \leq j \leq K$. \Cref{fig:svd_dim2} (resp. \Cref{fig:svd_dim3}) shows the singular values of $Y$ in the $\Indsimple{A}{\S} = 2$ (resp. $\Indsimple{A}{\S} = 3$) cases. The sharp drop at the third (resp. fourth) singular value indicates that $Y$ is rank two (resp. three), which indicates that $\X_b$ may belong to a low dimensional affine subspace of dimension 2 (resp. 3). \Cref{fig:x_b_w_plot_dim2} (resp. \Cref{fig:x_b_w_plot_dim3}) shows the solution set $\{x_{b,\omega_j}\}_{j=1}^K$ projected over the leading two (resp. three) eigenvectors of $Y$ for the two experiments.

\subsubsection{A property of the minimization problem}
\label{sssec:minprob-prop}

It is worth noting a property of the minimization problem \eqref{eq:general-min-prob} that we now state, reminding the reader that we have already assumed that $\T = \Gam{p}{\T} = \S$. The result uses \Cref{lemma:index-bounds} proved in the next subsection, and the Pythagorean theorem: if $\mathcal{A} \in \G{n}$, $x \in \Fn$, and $y \in \mathcal{A}$ is the orthogonal projection of $x$ in $\mathcal{A}$, then $\norm{x}{2}^2 = \norm{y}{2}^2 + \norm{x - y}{2}^2$, and $z^\ast (x - y) = 0$ for all $z \in \mathcal{A}$; thus $x-y \in \mathcal{A}^\perp$.

Let $\S_1$ be the largest\footnote{Equivalently $\S_1$ is the sum of all $A-$invariant subspaces contained in $\S$.} $A-$invariant subspace such that $\S_1 \subseteq \S$, $\S_2$ be the smallest $A-$invariant subspace\footnote{Equivalently $\S_2$ is the intersection of all $A-$invariant subspaces containing $\S$.} such that $\S \subseteq \S_2$, and define $\S' := \S_1^\perp \cap \S$. Note that $\S_2$ always exists because $\Fn$ is $A-$invariant and contains $\S$, but $\S_1$ could be trivial. We thus have $\S_1 \subseteq \S \subseteq \S_2$, and $\S = \S_1 \oplus \S'$, a direct sum of orthogonal subspaces\footnote{$\S_1 \cap \S' = \{0\}$ because $\S_1 \cap \S_1^\perp = \{0\}$.}. The latter is true because if $x \in \S$, and $x_1$ is the orthogonal projection of $x$ on $\S_1$, then $x = x_1 + (x - x_1)$, with $x_1 \in \S_1$, and $x - x_1 \in \S_1^\perp$ by the Pythagorean theorem, and moreover $x - x_1 \in \S$ as both $x,x_1 \in \S$. Let $b \in \Fn$ be decomposed as $b = b_1 + b_2 + (b - b_1 - b_2)$, where $b_1$ is the orthogonal projection of $b$ on $\S_1$, and $b_2$ is the orthogonal projection of $b - b_1$ on $\S_2$; so again by the Pythagorean theorem $b - b_1 \in \S_1^\perp$, and $b - b_1 - b_2 \in \S_2^\perp$. Now consider the minimization problem \eqref{eq:general-min-prob}: $\argmin_{x \in \S} \|A_\omega^{s/2}(b - Ax) \|_2$. Writing $\S \ni x = y + z$, for $y \in \S_1$ and $z \in \S'$, and remembering that this representation is unique by the property of direct sums, we can equivalently express the minimization problem as $\argmin_{y \in \S_1,\; z \in \S'} \|A_\omega^{s/2}(b - Ay - Az) \|_2$. Next notice that 
\begin{equation}
\label{eq:orthogonal-sum-S1}
\begin{split}
    \|A_\omega^{s/2}(b - Ay - Az) \|_2^2 &= \|A_\omega^{s/2}(b_1 - Ay) + A_\omega^{s/2}(b - b_1 - Az) \|_2^2 \\
    &= \|A_\omega^{s/2}(b_1 - Ay) \|_2^2 + \|A_\omega^{s/2}(b - b_1 - Az) \|_2^2
\end{split}
\end{equation}
using the Pythagorean theorem. This is because $\S_1, \S_1^\perp$ are both $A,A_\omega^{s/2}-$invariant by \Cref{lemma:index-bounds}(iv),(v) (this uses $A_\omega^{s/2} \in \Pos{n}$) --- so as both $b_1, y \in \S_1$, we have $A_\omega^{s/2}(b_1 - Ay) \in \S_1$; similarly both $b-b_1,z \in \S_1^\perp$ implies $A_\omega^{s/2}(b - b_1 - Az) \in \S_1^\perp$, and \eqref{eq:orthogonal-sum-S1} follows. A final simplification happens by noticing that $\S' \subseteq \S_2$, and since $\S_2, \S_2^\perp$ are both $A,A_\omega^{s/2}-$invariant (again by \Cref{lemma:index-bounds}(iv),(v)), we have $A_\omega^{s/2} (b - b_1 - b_2) \in \S_2^\perp$ and $A_\omega^{s/2}(b_2 - Az) \in \S_2$, and so by another application of the Pythagorean theorem
\begin{equation}
\label{eq:orthogonal-sum-S2}
\begin{split}
    \|A_\omega^{s/2}(b - b_1 - Az) \|_2^2 &= \|A_\omega^{s/2}(b - b_1 - b_2) + A_\omega^{s/2}(b_2 - Az) \|_2^2 \\
    &= \|A_\omega^{s/2}(b - b_1 - b_2) \|_2^2 + \|A_\omega^{s/2}(b_2 - Az) \|_2^2.
\end{split}
\end{equation}
Thus, we have decoupled the variables $y$ and $z$, into two separate minimization problems, which can be solved independently, and we have proved
\begin{lemma}
\label{lemma:soln-sum-simpler-probs}
The solution to the minimization problem \eqref{eq:general-min-prob} satisfies the identity
\begin{equation}
\label{eq:soln-sum-simpler-probs}
    x_{b,\omega,s} = \argmin_{y \in \S_1} \|A_\omega^{s/2}(b_1 - Ay) \|_2 + \argmin_{z \in \S'} \|A_\omega^{s/2}(b_2 - Az) \|_2.
\end{equation}
\end{lemma}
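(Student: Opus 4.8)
The plan is to use the orthogonal direct-sum decomposition $\S = \S_1 \oplus \S'$ together with the nesting $\S_1 \subseteq \S \subseteq \S_2$ to split the objective into pieces supported in mutually orthogonal, $A_\omega^{s/2}$-invariant subspaces, and then to invoke the Pythagorean theorem twice. Concretely, I would first write each $x \in \S$ uniquely as $x = y + z$ with $y \in \S_1$ and $z \in \S' = \S_1^\perp \cap \S$, so that \eqref{eq:general-min-prob} becomes a joint minimization over $(y,z) \in \S_1 \times \S'$. Next I would decompose $b = b_1 + b_2 + (b - b_1 - b_2)$ by orthogonal projection: $b_1$ is the projection onto $\S_1$, so $b - b_1 \in \S_1^\perp$; $b_2$ is the projection of $b - b_1$ onto $\S_2$, so $b_2 \in \S_2 \cap \S_1^\perp$ and $b - b_1 - b_2 \in \S_2^\perp$.

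The structural input that makes the argument work is that $\S_1, \S_1^\perp, \S_2, \S_2^\perp$ are invariant not only under $A$ but also under $A_\omega^{s/2}$; this follows from \Cref{lemma:index-bounds}(iv),(v), using that $A_\omega \in \Pos{n}$ (hence $A_\omega^{s/2}$ is well defined) because $\omega > \omin$. Granting this, the residual splits as $b - Ay - Az = (b_1 - Ay) + (b - b_1 - Az)$ with the first summand in $\S_1$ and the second in $\S_1^\perp$ (here $Ay \in \S_1$ by invariance, and $b - b_1, Az \in \S_1^\perp$); applying $A_\omega^{s/2}$ preserves this orthogonal splitting, so the Pythagorean theorem yields \eqref{eq:orthogonal-sum-S1}. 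Inside $\S_1^\perp$ one repeats the same move with the orthogonal pair $\S_2 \cap \S_1^\perp$ and $\S_2^\perp$: write $b - b_1 - Az = (b_2 - Az) + (b - b_1 - b_2)$, noting that $Az \in \S_2$ since $z \in \S' \subseteq \S_2$, which gives \eqref{eq:orthogonal-sum-S2}.

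Combining these identities, the objective equals $\|A_\omega^{s/2}(b_1 - Ay)\|_2^2 + \|A_\omega^{s/2}(b_2 - Az)\|_2^2 + \|A_\omega^{s/2}(b - b_1 - b_2)\|_2^2$, and the last term does not involve $(y,z)$. Hence the joint problem decouples into the independent minimizations over $y \in \S_1$ and over $z \in \S'$; by \Cref{lemma:x-bw} applied to the subspaces $\S_1$ and $\S'$ (taking $x_0 = 0$), each has a unique solution, and their sum is the unique solution of the joint problem, which is exactly \eqref{eq:soln-sum-simpler-probs}.

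The only delicate point is the invariance of $\S_1^\perp$ and $\S_2^\perp$ — and of all four subspaces — under the fractional power $A_\omega^{s/2}$, i.e. that a subspace invariant under a Hermitian operator is invariant under its functional-calculus images; but this is precisely what \Cref{lemma:index-bounds}(iv),(v) supplies, and the remainder is routine bookkeeping with orthogonal projections and two applications of the Pythagorean theorem.
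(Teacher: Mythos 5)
Your proposal is correct and matches the paper's own argument essentially step for step: the same decomposition $x = y + z$ with $y \in \S_1$, $z \in \S' = \S_1^\perp \cap \S$, the same splitting $b = b_1 + b_2 + (b - b_1 - b_2)$, the invariance of $\S_1, \S_1^\perp, \S_2, \S_2^\perp$ under $A$ and $A_\omega^{s/2}$ from \Cref{lemma:index-bounds}(iv),(v), and two applications of the Pythagorean theorem to decouple the objective as in \eqref{eq:orthogonal-sum-S1} and \eqref{eq:orthogonal-sum-S2}. No gaps; the appeal to \Cref{lemma:x-bw} for uniqueness of the two sub-minimizers is a harmless addition consistent with the paper.
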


\Cref{lemma:soln-sum-simpler-probs} allows us to get an upper bound on $\dim{\X_b}$, and already gives the first hints that $\X_b$ is a low dimensional affine subspace. This is stated in the next corollary.

\begin{corollary}
\label{cor:weak-bound}
$\dim{\X_b} \leq \dim{\S'}$, for all $b \in \Fn$.
\end{corollary}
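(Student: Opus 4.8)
The plan is to read off the corollary directly from Lemma~\ref{lemma:soln-sum-simpler-probs}, specialized to $s = -1$ (so that $x_{b,\omega} = x_{b,\omega,-1}$). Write $x_{b,\omega} = y_{b,\omega} + z_{b,\omega}$, where $y_{b,\omega} := \argmin_{y \in \S_1}\|A_\omega^{-1/2}(b_1 - Ay)\|_2 \in \S_1$ and $z_{b,\omega} := \argmin_{z \in \S'}\|A_\omega^{-1/2}(b_2 - Az)\|_2 \in \S'$, using the notation $\S_1, \S', b_1, b_2$ introduced just before the lemma.

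First I would observe that $y_{b,\omega}$ is in fact independent of $\omega$. Since $\S_1$ is $A$-invariant by construction, the first minimization problem is exactly the situation of Lemma~\ref{lemma:solutions-invariant-S}, with $\S$ replaced by the $A$-invariant subspace $\S_1$ and $b$ replaced by $b_1$; that lemma then gives that its solution does not depend on $\omega$ (nor on $s$). Hence $y_{b,\omega} = c_b$ for some fixed vector $c_b \in \S_1$ depending only on $b$. Consequently $\{x_{b,\omega} \mid \omega > \omin\} = \{c_b + z_{b,\omega} \mid \omega > \omin\} \subseteq c_b + \S'$, an affine subspace of $\Fn$ of dimension $\dim{\S'}$ (note $\S' \subseteq \S \subseteq \Fn$ is a genuine subspace). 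Since the affine hull of a set contained in an affine subspace is contained in that same affine subspace, $\X_b = \Aff{\{x_{b,\omega}\mid \omega > \omin\}} \subseteq c_b + \S'$, and therefore $\dim{\X_b} \leq \dim{\S'}$.

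The only point requiring a line of care is checking that Lemma~\ref{lemma:solutions-invariant-S} genuinely applies to the compressed problem on $\S_1$: the restriction $A|_{\S_1}$ is Hermitian and invertible (a compression of a Hermitian invertible operator to an invariant subspace), and the eigenvalues of $A|_{\S_1}$ are a subset of those of $A$, so $\omega > \omin = -\evalmin{A}$ still forces $A_\omega$ to be positive on $\S_1$, which is all the lemma needs. Beyond this bookkeeping, no real obstacle is expected — the corollary is an immediate consequence of the variable decoupling in Lemma~\ref{lemma:soln-sum-simpler-probs} together with the $\omega$-independence of the $\S_1$-component.
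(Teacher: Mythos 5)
Your proof is correct and is essentially the paper's own argument: decouple via \Cref{lemma:soln-sum-simpler-probs}, use \Cref{lemma:solutions-invariant-S} to see that the $\S_1$-component is a fixed vector independent of $\omega$ (and $s$), and conclude $\X_b \subseteq y_b + \S'$. Your extra care about ``compressing'' to $\S_1$ is not even needed, since \Cref{lemma:solutions-invariant-S} applies verbatim with the same full matrix $A$, the $A$-invariant subspace $\S_1$, and right-hand side $b_1$.
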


\begin{proof}
By \Cref{lemma:solutions-invariant-S}, the first term in the right hand side of \eqref{eq:soln-sum-simpler-probs} is independent of $\omega, s$, and so is a fixed point $y_b \in \S_1$ for a given $b \in \Fn$; the second term always is in $\S'$. Thus $x_{b,\omega,s} \in y_b + \S'$ for all $\omega \in (\omin, \infty)$, and $s \in \mathbb{R}$, and so $\X_b \subseteq y_b + \S'$. The conclusion follows as $b$ is arbitrary.
\end{proof}

One should note that $\Indsimple{A}{\S} \leq \dim{\S'}$, because $\S + A\S = \S + A\S_1 + A\S' = \S + \S_1 + A\S' = \S + A\S'$, and so $\Indsimple{A}{\S} = \dim{\S + A\S} - \dim{\S} \leq \dim{A\S'} \leq \dim{\S'}$. It turns out that because of this reason the bound provided by \Cref{cor:weak-bound} is weak, which will be strengthened in \Cref{sec:main-result}.

\begin{remark}
Indeed for a Krylov subspace $\S = \K_p(A,b)$ that is not $A-$invariant, $\S_1 = \{0\}$\footnote{If $\S_1$ was not trivial, it must have an eigenvector $x \neq 0$ satisfying $Ax = \lambda x$, $\lambda \neq 0$, as $A$ is invertible. Expanding $x$ in the Krylov basis as $x = \sum_{i=0}^{p-1} c_i A^i b$, and using that $\{A^i b\}_{i=0}^{p}$ is linearly independent because $\K_p(A,b)$ is not $A-$invariant, gives $c_i = 0$ for all $0 \leq i \leq p-1$.}, and so $\S' = \S$ and the bound gives $\dim{\X_b} \leq p$, while as we have already mentioned, we know from \cite{hallman2018lsmb} that $\dim{\X_b} \leq 1$ when $\F=\R$. On the other hand, for invariant subspaces the bound is tight as $\S_1 = \S$, so $\dim{\X_b} \leq 0$.
\end{remark}

\subsection{Properties of the index of invariance}
\label{ssec:properties-index}

We now prove some facts about the index of invariance, defined previously in \Cref{sssec:index}. This subsection is self-contained, and the assumptions established in \Cref{ssec:problem-statement} will not be assumed here, but we assume $n \geq 1$. We start with two lemmas that characterize the relationship between the index of invariance and bases of the subspaces involved in its definition.

\begin{lemma} 
\label{lemma:basic_properties}
Let $A \in \Fnn$, $\S \in \Gr{p}{n}$ and $q = \Indsimple{A}{\S}$.
Then
\begin{enumerate}[(i)]
\item $\Indsimple{A}{\S} \leq \min \{ \dim{\S}, \; n - \dim{\S} \} \leq \lfloor n/2 \rfloor$.
\item If $p \geq 1$, there exists semi-unitary $V \in \Fpq{n}{p}$ such that $\Img{V} = \S$, and when $q \geq 1$ also, there exists $V' \in \Fpq{n}{q}$ such that $\begin{bmatrix} V & V' \end{bmatrix}$ is semi-unitary, $\Img{\begin{bmatrix} V & V' \end{bmatrix}} = \S + A\S$, and $\Img{V'} = \S^\perp \cap (\S + A\S)$.
\end{enumerate}
\end{lemma}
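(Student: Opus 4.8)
The plan is to prove the two parts separately, with part (i) following from a rank-nullity argument and part (ii) being essentially a Gram–Schmidt construction with bookkeeping.

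For part (i), first I would recall that $\Indsimple{A}{\S} = \dim{\S + A\S} - \dim{\S}$, and use the identity $\dim{\S + A\S} = \dim{\S} + \dim{A\S} - \dim{\S \cap A\S}$. This immediately gives $\Indsimple{A}{\S} = \dim{A\S} - \dim{\S \cap A\S} \le \dim{A\S} \le \dim{\S} = p$, where the last inequality uses that $A\S$ is the image of $\S$ under a linear map. For the other bound, $\S + A\S$ is a subspace of $\Fn$, so $\dim{\S + A\S} \le n$, hence $\Indsimple{A}{\S} = \dim{\S + A\S} - \dim{\S} \le n - p$. Taking the minimum of the two bounds and noting $\min\{p, n-p\} \le \lfloor n/2 \rfloor$ finishes part (i).

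For part (ii), if $p \ge 1$, existence of a semi-unitary $V \in \Fpq{n}{p}$ with $\Img{V} = \S$ is just the statement that $\S$ has an orthonormal basis, which I would obtain by applying Gram–Schmidt to any basis of $\S$ (or invoke that any subspace of $\Fn$ with the standard inner product has an orthonormal basis). Now suppose $q \ge 1$. Set $\mathcal{W} := \S^\perp \cap (\S + A\S)$. Since $\S \subseteq \S + A\S$, decomposing $\S + A\S = \S \oplus (\S^\perp \cap (\S + A\S))$ orthogonally gives $\dim{\mathcal{W}} = \dim{\S + A\S} - \dim{\S} = q$. Pick an orthonormal basis of $\mathcal{W}$ and let $V' \in \Fpq{n}{q}$ be the matrix with those columns; then $\Img{V'} = \mathcal{W} = \S^\perp \cap (\S + A\S)$, the columns of $V'$ are orthonormal and each is orthogonal to every column of $V$ (as they lie in $\S^\perp$), so $\begin{bmatrix} V & V' \end{bmatrix}$ is semi-unitary. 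Finally $\Img{\begin{bmatrix} V & V' \end{bmatrix}} = \Img{V} + \Img{V'} = \S \oplus \mathcal{W} = \S + A\S$, as desired.

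I do not anticipate a serious obstacle here; the only point requiring mild care is confirming $\dim{\S^\perp \cap (\S + A\S)} = q$, which rests on the orthogonal decomposition $\S + A\S = \S \oplus (\S^\perp \cap (\S+A\S))$ — valid because $\S$ is a subspace of the larger space $\S + A\S$ and we are working with the standard (positive-definite Hermitian) inner product on $\Fn$, so every subspace has an orthogonal complement inside any containing subspace. One should also note the degenerate conventions already fixed in the preliminaries (span of the empty set is $\{0\}$, $m=0$ products are zero) so that the statement reads correctly when $q = 0$, in which case the second half of (ii) is vacuous.
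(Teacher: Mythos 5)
Your proposal is correct and follows essentially the same route as the paper: part (i) is the same dimension count ($\dim{A\S}\leq\dim{\S}$ plus $\dim{\S+A\S}\leq n$), and part (ii) is the same orthogonal decomposition $\S+A\S=\S\oplus(\S^\perp\cap(\S+A\S))$ with a Gram--Schmidt/orthonormal-basis construction. The only cosmetic difference is that you build $V'$ directly from an orthonormal basis of $\S^\perp\cap(\S+A\S)$, whereas the paper extends $V$ by $q$ independent vectors of $\S+A\S$, orthonormalizes, and then identifies $\Img{V'}$ by the same dimension argument.
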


\begin{proof}
\begin{enumerate}[(i)]
\item Since $\dim{A\S}~\leq~\dim{\S}$, $\dim{\S + A\S} \leq 2 \dim{\S}$, and so $\Indsimple{A}{\S} = \dim{\S+A\S} - \dim{\S} \leq \dim{\S}$. Furthermore, since $\dim{\S + A\S} \leq n$, $\Indsimple{A}{\S} \leq n - \dim{\S}$. We conclude by noting that $\lfloor n/2 \rfloor \geq \min\{\dim{\S},\; n - \dim{\S}\} \in \mathbb{N}$.

\item Since $\S$ is of dimension $p$, the existence of $V$ follows from using the Gram-Schmidt process on any basis of $\S$. Now assume $q \geq 1$. Since $\dim{\S + A\S} = \dim{\S} + q$, one can find $q$ independent vectors $\{x_i\}_{i=1}^q$ in $\S+A\S$ not in $\S$, and let $X = \begin{bmatrix} x_{1} & \dots & x_{q} \end{bmatrix} \in \Fpq{n}{q}$. Then, applying the Gram Schmidt process to $\begin{bmatrix} V & X \end{bmatrix}$ gives the semi-unitary matrix $\begin{bmatrix}V & V'\end{bmatrix}$. The columns of $V'$ are orthogonal to $\S$ because $\begin{bmatrix}V & V'\end{bmatrix}$ is semi-unitary, so $\Img{V'} \subseteq \S^\perp \cap (\S + A\S)$. Also $\S + A\S = \S \oplus (\S^\perp \cap (\S + A\S))$, thus $\dim{\S^\perp \cap (\S + A\S)} = q = \dim{\Img{V'}}$, so in fact $\Img{V'} = \S^\perp \cap (\S + A\S)$.
\end{enumerate}
\end{proof}

\begin{lemma} 
\label{lemma:rank_B}
Let $A \in \Fnn$ be any operator, and $\S \in \Gr{p}{n}$ for $p \geq 1$. Let $\begin{bmatrix} V & V' \end{bmatrix}$ be semi-unitary such that $\Img{V} = \S$, and $\Img{\begin{bmatrix} V & V' \end{bmatrix}} = \S + A\S$. Then $\Indsimple{A}{\S} = 0$ if and only if there exist $T \in \Fpq{p}{p}$ such that $A V = V T$. Otherwise the following are equivalent:
    \begin{enumerate}[(i)]
        \item $\Indsimple{A}{\S} = q \geq 1$.
        \item There exist $T \in \Fpq{p}{p}$, $B \in \Fpq{q}{p}$ and $\rank(B)=q$, such that $A V = V T + V' B$, and $T,B$ are uniquely determined by $A, V, V'$.
    \end{enumerate}
\end{lemma}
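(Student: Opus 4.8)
\textbf{Overview and the case $\Indsimple{A}{\S}=0$.} The plan is to separate the degenerate case $\Indsimple{A}{\S}=0$, then extract the decomposition $AV=VT+V'B$ directly from the semi-unitary hypothesis, and finally pin down $\rank(B)$ by a dimension count. For the degenerate case: $\Indsimple{A}{\S}=0$ means $\dim{\S+A\S}=\dim{\S}$, and since $\S\subseteq\S+A\S$ this is equivalent to $\S+A\S=\S$, i.e. $A\S\subseteq\S$. As the columns of $AV$ span $A\S$ and the columns of $V$ span $\S$, the inclusion $A\S\subseteq\S$ is equivalent to each column of $AV$ lying in $\Img{V}$, which is exactly the existence of $T\in\Fpq{p}{p}$ with $AV=VT$; left-multiplying by $V^\ast$ and using $V^\ast V=I$ then forces $T=V^\ast AV$.

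\textbf{Existence and uniqueness of the decomposition, and (ii)$\Rightarrow$(i).} Assume now $\Indsimple{A}{\S}\geq 1$, so $V'$ has $q:=\dim{\S+A\S}-p=\Indsimple{A}{\S}\geq 1$ columns. Since $\Img{AV}=A\S\subseteq\S+A\S=\Img{\begin{bmatrix}V & V'\end{bmatrix}}$, every column of $AV$ is a linear combination of the columns of $\begin{bmatrix}V & V'\end{bmatrix}$, so $AV=\begin{bmatrix}V & V'\end{bmatrix}\begin{bmatrix}T\\ B\end{bmatrix}=VT+V'B$ for some $T\in\Fpq{p}{p}$ and $B\in\Fpq{q}{p}$. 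Left-multiplying by $\begin{bmatrix}V & V'\end{bmatrix}^\ast$ and using semi-unitarity gives $T=V^\ast AV$, $B=V'^\ast AV$, and since the columns of $\begin{bmatrix}V & V'\end{bmatrix}$ are linearly independent, these are the unique such matrices. This already yields (ii)$\Rightarrow$(i): if such $T,B$ exist with $B\in\Fpq{q}{p}$, then $V'$ has $q$ columns, hence $\Indsimple{A}{\S}=\dim{\Img{\begin{bmatrix}V & V'\end{bmatrix}}}-p=(p+q)-p=q$.

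\textbf{The rank claim, (i)$\Rightarrow$(ii).} It remains to show the unique $B$ above satisfies $\rank(B)=q$. Let $P:=V'V'^\ast$, the orthogonal projector onto $\Img{V'}$ (using $V'^\ast V'=I$); semi-unitarity of $\begin{bmatrix}V & V'\end{bmatrix}$ gives $V'^\ast V=0$, so $PV=0$ and $PV'=V'$. Applying the linear map $P$ to $\S+A\S=\Img{V}+\Img{AV}$ gives $P(\S+A\S)=P(\Img{V})+P(\Img{AV})=P(\Img{AV})$ since $PV=0$; and from $AVz=V(Tz)+V'(Bz)$ we get $P(AVz)=V'Bz$, so $P(\Img{AV})=\Img{V'B}$. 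On the other hand $\Img{V'}\subseteq\S+A\S$ and $P$ is the identity on $\Img{V'}$, so $P(\S+A\S)=\Img{V'}$. Combining, $\Img{V'B}=\Img{V'}$, and since $V'$ has trivial kernel, $\rank(B)=\rank(V'B)=\dim{\Img{V'}}=q$.

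\textbf{Expected obstacle.} Essentially every step is bookkeeping with orthonormal columns; the only one requiring an idea is the last, namely recognizing that projecting $\S+A\S$ onto the orthogonal complement $\Img{V'}$ of $\S$ inside $\S+A\S$ converts the spanning hypothesis ``$\S+A\S$ is generated by the columns of $V$ and of $AV$'' into the clean identity $\Img{V'B}=\Img{V'}$, from which the full row rank of $B$ is immediate.
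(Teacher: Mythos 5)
Your proof is correct, and the key step — establishing $\rank(B)=q$ — takes a genuinely different route from the paper. The paper argues by contradiction: assuming $\rank(B)=r<q$, it factors $B=UW$ with inner dimension $r$, rewrites $AV=VT+(V'U)W$, and deduces $\S+A\S\subseteq\S+\Img{V'U}$, a subspace of dimension at most $p+r<p+q$, contradicting $\Indsimple{A}{\S}=q$. You instead apply the orthogonal projector $P=V'V'^\ast$ onto $\Img{V'}$ to the identity $\S+A\S=\Img{V}+\Img{AV}$: since $PV=0$ this collapses to $P(\S+A\S)=\Img{V'B}$, while $\Img{V'}\subseteq\S+A\S$ is fixed pointwise by $P$, so $P(\S+A\S)=\Img{V'}$. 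Equating the two gives $\Img{V'B}=\Img{V'}$, and the injectivity of $V'$ yields $\rank(B)=q$. The paper's argument is a dimension count by contradiction; yours is direct and arguably more illuminating — it identifies exactly what full rank of $B$ means geometrically, namely that $V'B$ must generate the whole orthogonal complement of $\S$ inside $\S+A\S$. Both proofs are of comparable length, and the existence, uniqueness, and $q=0$ parts are handled essentially as in the paper.

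One minor observation: your (ii)$\Rightarrow$(i) direction uses only the shape compatibility $B\in\Fpq{q}{p}$ (forcing $V'$ to have $q$ columns, whence $\Indsimple{A}{\S}=q$), and never invokes the rank hypothesis on $B$. This is sound given the lemma's setup, in which $V'$ is already required to satisfy $\Img{[V\ V']}=\S+A\S$ and so has exactly $\Indsimple{A}{\S}$ columns, but it is worth noting that the "converse" is essentially built into the hypotheses; the full-rank condition is a consequence of (i), established in (i)$\Rightarrow$(ii), not a further assumption needed for (ii)$\Rightarrow$(i).
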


\begin{proof} Notice that from \Cref{lemma:basic_properties}(ii), $V$ and $V'$ always exist (the latter only existing when $q \geq 1$). Let $\Indsimple{A}{\S} = q$. The $q=0$ case is clear, so assume $q \geq 1$. We first prove (i)$\rightarrow$(ii). Since $\Img{AV} = A\S \subseteq \S + A\S = \Img{\begin{bmatrix} V & V'\end{bmatrix}}$, we have $AV = VT + V'B$, for some $T \in \mathbb{F}^{p \times p}$ and $B \in \mathbb{F}^{q \times p}$, which are uniquely determined because $\begin{bmatrix} V & V'\end{bmatrix}$ is full rank. From \Cref{lemma:basic_properties}(i) we have $q \leq p$.
Now assume $B$ is not of full rank $q$. Then one can decompose $B$ (such as using the singular value decomposition) as $B = UW$, where $U \in \mathbb{F}^{q \times r},\; W \in \mathbb{F}^{r \times p}$ with $r < q$. Then $AV = VT + (V'U)W$ from which it follows that $\Img{AV} \subseteq \Img{\begin{bmatrix}V & V'U \end{bmatrix}}$,
where $\rank(V'U) \leq r < q$. But $\S=\Img{V}$, and so $\S+A\S \subseteq \Img{\begin{bmatrix}V & V'U \end{bmatrix}} = \S + \Img{V'U}$.
This implies that
$\dim{\S + A\S} \leq \dim{\S} + r$,
which is a contradiction.

Now suppose (ii) holds. Since $\S = \Img{V}$, and $A\S = \Img{AV}$, by assumption it follows that $\S + A\S = \{ Vx + (VT + V'B)y \mid x, y \in \Fp{p} \}
= \{ V(x + Ty) + V' B y \mid x, y \in \Fp{p} \}
= \{ Vw + V'z \mid w \in \Fp{p},\; z \in \Fp{q}\}
= \Img{\begin{bmatrix} V & V' \end{bmatrix}}$ (the second last equality follows because $B$ is full rank). Since $\begin{bmatrix}V & V' \end{bmatrix}$ is semi-unitary, we conclude that $\dim{\S+A\S} = p+q = \dim{\S}+q$.
\end{proof}

\begin{remark}
It should be noted that \Cref{lemma:basic_properties,lemma:rank_B} are also true when the semi-unitarity condition of $V$ and $\begin{bmatrix}V & V' \end{bmatrix}$ is replaced by the condition that $V$ and $\begin{bmatrix}V & V' \end{bmatrix}$ are full rank.
\end{remark}

\Cref{lemma:rank_B} has an important consequence that we state next, which will play a key role later in the proof of the main theorem of this paper.

\begin{corollary}
\label{cor:tridiag-blk-decomp}
Let $A \in \Fnn$, $\S \in \Gr{p}{n}$, and $\Indsimple{A}{\S} = q$. Let $V \in \Fpq{n}{p}$, $V' \in \Fpq{n}{q}$, and $V'' \in \Fpq{n}{(n-p-q)}$ be such that $\begin{bmatrix} V & V' & V'' \end{bmatrix}$ is unitary, $\S = \Img{V}$, and $\S + A\S = \Img{\begin{bmatrix}V & V' \end{bmatrix}}$. Then $A$ has the following block decomposition
\vspace*{-0.4cm}
\begin{equation}
\begin{aligned}
\label{eq:property_VTAV} 
    \begin{tikzpicture}
    \node[] at (1.2, 1) {$p$};
    \node[] at (1.9, 1) {$q$};
    \node[] at (3.7, 0.4) {$p$};
    \node[] at (3.7, 0) {$q$};
    \node[] {
    $\begin{bmatrix} 
        V^* \\ V'^* \\ V''^* 
    \end{bmatrix} 
        A 
    \begin{bmatrix} 
        V & V' & V'' 
    \end{bmatrix} 
    = 
    \left[
    \begin{array}{c|c|c} 
        T & P & Q \\ \hline
        B & C & R \\ \hline
        0 & D & E 
    \end{array}
    \right],
    $
    };
    \end{tikzpicture}
\end{aligned}
\vspace*{-0.3cm}
\end{equation}
where $T \in \Fpq{p}{p}$, $C \in \Fpq{q}{q}$, $E \in \Fpq{(n-p-q)}{(n-p-q)}$, with the shapes of the other blocks being compatible. If $p=0$, or $q=0$, or $n-p-q=0$, then \eqref{eq:property_VTAV} holds with the non-existent blocks and the corresponding non-existent $V,V',V''$ removed. If $q \geq 1$, then $B$ is of full rank $q$. Additionally
\begin{enumerate}[(i)]
    \item If $A \in \Gl{n}$ and $p \geq 1$, then $H :=\begin{bmatrix} T \\ B \end{bmatrix}$ is of full rank $p$.
    
    \item If $A \in \Sym{n}$, one has $P = B^\ast$, $R = D^\ast$, $Q = 0$, and $T,C,E$ Hermitian. 
\end{enumerate}
\end{corollary}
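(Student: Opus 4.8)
The plan is to read off all three assertions directly from \Cref{lemma:rank_B} together with the unitarity of $U := \begin{bmatrix} V & V' & V'' \end{bmatrix}$. I would first dispose of the generic case $p, q, n-p-q \geq 1$, and then remark that the degenerate cases follow by deleting the rows and columns corresponding to the absent matrices among $V, V', V''$, using the empty-matrix conventions from \Cref{sssec:hilbert-spaces}.

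For the block structure: by \Cref{lemma:rank_B}, since $\Indsimple{A}{\S} = q$, there are (unique) $T \in \Fpq{p}{p}$ and $B \in \Fpq{q}{p}$ with $\rank(B) = q$ such that $AV = VT + V'B$. Left-multiplying by $U^*$ and using that the columns of $U$ are orthonormal yields $U^* A V = \begin{bmatrix} T \\ B \\ 0 \end{bmatrix}$, which is precisely the first block column of \eqref{eq:property_VTAV} --- including the zero block in position $(3,1)$ (which also follows from $\Img{V''} = (\S+A\S)^\perp$ being orthogonal to $A\S$) and the full rank of $B$. The two remaining block columns are then simply \emph{defined} by writing $U^* A V' = \begin{bmatrix} P \\ C \\ D \end{bmatrix}$ and $U^* A V'' = \begin{bmatrix} Q \\ R \\ E \end{bmatrix}$ with the stated shapes, so there is nothing further to check for the decomposition itself.

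For (i), I would note that $AV = VT + V'B = \begin{bmatrix} V & V' \end{bmatrix} H$ with $H = \begin{bmatrix} T \\ B \end{bmatrix}$. Since $\begin{bmatrix} V & V' \end{bmatrix}$ is semi-unitary, hence injective, and $A \in \Gl{n}$ preserves ranks, $\rank(H) = \rank\!\left(\begin{bmatrix} V & V' \end{bmatrix} H\right) = \rank(AV) = \rank(V) = p$; equivalently, $Hx = 0$ forces $AVx = 0$, hence $Vx = 0$, hence $x = 0$. For (ii), when $A = A^*$ the matrix $M := U^* A U$ equals the right-hand side of \eqref{eq:property_VTAV}, and $M^* = U^* A^* U = U^* A U = M$ because $U$ is unitary; so $M$ is Hermitian. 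Comparing the blocks of $M$ and $M^*$ gives $T = T^*$, $C = C^*$, $E = E^*$ (the diagonal blocks are Hermitian), $P = B^*$, $R = D^*$, and $Q = 0^* = 0$, which is the claim.

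There is no real obstacle here: the statement is essentially bookkeeping layered on top of \Cref{lemma:rank_B} and the unitarity of $U$. The only point requiring a little care is the handling of the degenerate cases ($p = 0$, $q = 0$, or $n-p-q = 0$), where one must verify that \Cref{lemma:rank_B} still supplies the reduced identities ($AV = VT$ when $q = 0$, and so on) and that the corresponding block rows and columns of \eqref{eq:property_VTAV} are to be omitted; this is routine given the stated conventions on empty matrices and products.
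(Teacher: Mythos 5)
Your proposal is correct and follows essentially the same route as the paper: the block structure and full rank of $B$ come from \Cref{lemma:rank_B} together with $V''^*(AV)=0$ by unitarity, and part (ii) is read off from the Hermitian symmetry of $U^*AU$. Your argument for (i) via $\rank(H)=\rank\bigl(\begin{bmatrix} V & V' \end{bmatrix}H\bigr)=\rank(AV)=p$ is only a cosmetic variant of the paper's observation that the first $p$ columns of the conjugated matrix must be linearly independent, so no substantive difference.
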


\begin{proof}
The decomposition follows from \Cref{lemma:rank_B} (which also gives $\rank(B) = q$), by noting that $V''^*(AV) = 0$, using the unitarity of $\begin{bmatrix} V & V' & V'' \end{bmatrix}$ and $AV = VT + V'B$. For (i), let $\bar A$ be the right-hand-side of \eqref{eq:property_VTAV}; so $A \in \Gl{n}$ implies $\bar A \in \Gl{n}$, which means the first $p$ columns of $\bar A$ are linearly independent. But if $\rank(H) < p$, the first $p$ columns of $\bar A$ are linearly dependent, giving a contradiction. For (ii), note that when $A \in \Sym{n}$, both sides of \eqref{eq:property_VTAV} are Hermitian, and so the conclusion follows.
\end{proof}

\begin{remark}
When $A \in \Sym{n}$, the decomposition given by \Cref{cor:tridiag-blk-decomp} will be called the \textit{tridiagonal block decomposition}. Note that (i) this decomposition exists regardless of whether $A$ is invertible or positive, (ii) even if $A$ is invertible, the diagonal blocks $T$, $C$ and $E$ need not be, (iii) if however $A \in \Pos{n}$, then $T$, $C$ and $E$ are in fact positive, but $D$ need not be full rank\footnote{For example, if $\Indsimple{A}{\S + A\S} = 0$ (i.e., $\S + A\S$ is invariant), then $D = 0$.}. We also note that this decomposition is similar to the block Lanczos decomposition \cite[page 567]{golub2013matrix} as used in block Krylov methods (amongst many, \cite{o1980block, sadkane1993block1, sadkane1993block2, dietl2007linear}).
\end{remark}

It is worth noting some special cases. Consider the case when $\S$ is $A-$invariant, and $A \in \Sym{n}$. Then $\Indsimple{A}{\S} = 0$, and so in the tridiagonal block decomposition \eqref{eq:property_VTAV}, $V'$ has $0$ columns (i.e., $q=0$), and we can simply write
\begin{equation} 
\label{eq:property_invariant} 
\begin{bmatrix} V^* \\ V''^* 
\end{bmatrix} A 
\begin{bmatrix} V & V'' 
\end{bmatrix} = 
\begin{bmatrix} T & 0 \\ 0 & E 
\end{bmatrix}. 
\end{equation}
This is the block-diagonal Schur decomposition of a Hermitian matrix for a given invariant subspace \cite[page 443]{golub2013matrix}. Consider similarly the case when $\S = \K_p(A,b)$, such that $\S$ is not $A-$invariant, and so $\Indsimple{A}{\S} = 1$. We then know that $B \in \Fpq{1}{p}$ is rank-1. In fact, if we build $V$ by the Arnoldi process (that is the first $k$ columns of $V$ span $\K_k(A,b)$ for $1 \leq k \leq p$), then $B = V'^* A V = \beta e_p^\ast$, where $\beta \neq 0$ with $(e_p)_i = 0$ for $i < p$ and $(e_p)_p = 1$.

The next three results build upon \Cref{cor:tridiag-blk-decomp}.

\begin{lemma}
\label{lemma:rank-S-perp}
Let $A \in \Fnn$, $\S \in \Gr{p}{n}$, and $\Indsimple{A}{\S} = q$, with $1 \leq p 
< n$. Let $\begin{bmatrix}V_1 & V_2 \end{bmatrix}$ be invertible, such that $\Img{V_1} = \S$, and $\Img{V_2} = \S^\perp$. Then $AV_1 = V_1S_1 + V_2S_2$ for unique $S_1 \in \Fpq{p}{p}, S_2 \in \Fpq{(n-p)}{p}$, and $\rank(S_2) = q$.
\end{lemma}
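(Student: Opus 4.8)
The strategy is to reduce this statement to the already-established Corollary~\ref{cor:tridiag-blk-decomp} by choosing bases compatible with both decompositions, and then tracking how the rank of the ``off-diagonal'' block is preserved when one enlarges $\S^\perp$ from the specific complement $\S^\perp \cap (\S + A\S)$ used there to a full complement of $\S$. First I would invoke Lemma~\ref{lemma:basic_properties}(ii) and Corollary~\ref{cor:tridiag-blk-decomp} to fix a \emph{unitary} $\begin{bmatrix} V & V' & V'' \end{bmatrix}$ with $\Img{V} = \S$, $\Img{\begin{bmatrix} V & V' \end{bmatrix}} = \S + A\S$, and $q = \Indsimple{A}{\S}$, and write $AV = VT + V'B$ with $\rank(B) = q$ (here $V$ is semi-unitary with range $\S$, and $\begin{bmatrix} V' & V'' \end{bmatrix}$ is a semi-unitary basis of $\S^\perp$, with $\Img{V'} = \S^\perp \cap (\S+A\S)$). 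In particular $AV = VT + \begin{bmatrix} V' & V'' \end{bmatrix}\begin{bmatrix} B \\ 0 \end{bmatrix}$, so this gives one valid choice of $(S_1, S_2)$ in the semi-unitary case, namely $S_1 = T$ and $S_2 = \begin{bmatrix} B \\ 0 \end{bmatrix}$, whose rank is $\rank(B) = q$.

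Next I would handle the passage from this distinguished choice to an arbitrary invertible $\begin{bmatrix} V_1 & V_2 \end{bmatrix}$ with $\Img{V_1} = \S$, $\Img{V_2} = \S^\perp$. Existence and uniqueness of $S_1, S_2$ with $AV_1 = V_1 S_1 + V_2 S_2$ follows because $A\S = \Img{AV_1} \subseteq \Fn = \Img{\begin{bmatrix} V_1 & V_2 \end{bmatrix}}$ and $\begin{bmatrix} V_1 & V_2 \end{bmatrix}$ is invertible, so each column of $AV_1$ has a unique expansion. For the rank claim: since $\Img{V_1} = \S = \Img{V}$ and $\Img{V_2} = \S^\perp = \Img{\begin{bmatrix} V' & V'' \end{bmatrix}}$, there are invertible change-of-basis matrices $G \in \Fpq{p}{p}$ and $K \in \Fpq{(n-p)}{(n-p)}$ with $V_1 = VG$ and $V_2 = \begin{bmatrix} V' & V'' \end{bmatrix} K$. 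Substituting $AV_1 = A V G = (VT + \begin{bmatrix} V' & V'' \end{bmatrix}\begin{bmatrix} B \\ 0 \end{bmatrix}) G = V_1 (G^{-1} T G) + V_2 (K^{-1} \begin{bmatrix} B \\ 0 \end{bmatrix} G)$, and comparing with the unique expansion, gives $S_1 = G^{-1} T G$ and $S_2 = K^{-1} \begin{bmatrix} B \\ 0 \end{bmatrix} G$. Since $K^{-1}$ and $G$ are invertible, $\rank(S_2) = \rank\!\begin{bmatrix} B \\ 0 \end{bmatrix} = \rank(B) = q$, which is the desired conclusion.

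The only mild subtlety — and the step I would be most careful with — is the bookkeeping around the degenerate block sizes: $q$ could equal $p$ (then there is no $V''$ appearing in a meaningful way only if also $n - p = q$), and $n - p - q$ could be $0$, in which case $V''$ has zero columns and $S_2 = K^{-1} B G$ already has rank $q$. The hypotheses $1 \le p < n$ and $\Indsimple{A}{\S} = q$ guarantee $V_2$ exists (since $n - p \ge 1$) and $q \le \min\{p, n-p\}$ by Lemma~\ref{lemma:basic_properties}(i), so all the matrices written above have admissible shapes; one should simply note that if $q = 0$ then $B$ is absent and $S_2 = 0$ has rank $0$, consistent with $\S$ being $A$-invariant. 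Apart from verifying these edge cases, the argument is essentially the change-of-basis computation above, so I would expect no real obstacle — the work is entirely in reusing Corollary~\ref{cor:tridiag-blk-decomp} with the right choice of complementary basis.
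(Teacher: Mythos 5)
Your proof is correct and follows essentially the same route as the paper's: both apply the tridiagonal block decomposition of \Cref{cor:tridiag-blk-decomp}, write $V_1 = VG$ and $V_2 = \begin{bmatrix} V' & V'' \end{bmatrix}K$ for invertible change-of-basis matrices (the paper's $M_1, M_2$), equate the two expansions of $AV_1$, and read off $S_2 = K^{-1}\begin{bmatrix}B\\0\end{bmatrix}G$, whence $\rank(S_2) = \rank(B) = q$. The only difference is that you spell out the degenerate block-size cases explicitly, which the paper leaves implicit but which adds no new idea.
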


\begin{remark}
Note that since there always exist $S_1$ and $S_2$ such that $AV_1 = V_1 S_1 + V_2 S_2$, \Cref{lemma:rank-S-perp} is necessary and sufficient: if $\rank(S_2) = q$, $\Indsimple{A}{\S} = q$.
\end{remark} 

\begin{proof}
Existence and uniqueness of $S_1$ and $S_2$, such that $AV_1 = V_1S_1 + V_2S_2$, follows from the invertibility of $\begin{bmatrix}V_1 & V_2 \end{bmatrix}$, as the columns form a basis of $\Fn$. Now $A$ has the decomposition \eqref{eq:property_VTAV} by  \Cref{cor:tridiag-blk-decomp}, where $\Img{V} = \S$, and since $\begin{bmatrix}V & V' & V'' \end{bmatrix}$ is unitary, we also have $\Img{\begin{bmatrix}V' & V'' \end{bmatrix}} = \S^\perp$. Thus there exist $M_1 \in \Gl{p}$ and $M_2 \in \Gl{n-p}$, such that $V_1 = V M_1$ and $V_2 = \begin{bmatrix}V' & V'' \end{bmatrix} M_2$, and so we have $A V  = V M_1 S_1 M_1^{-1} + \begin{bmatrix}V' & V'' \end{bmatrix} M_2 S_2 M_1^{-1}$. But $AV = VT + \begin{bmatrix}V' & V'' \end{bmatrix} \begin{bmatrix}B \\ 0 \end{bmatrix}$ also, from which it follows that 
\vspace*{-0.3cm}
\begin{equation}
    T = M_1 S_1 M_1^{-1}, \text{ and } M_2 S_2 M_1^{-1} = \begin{bmatrix}B \\ 0 \end{bmatrix}.
\end{equation}
The latter gives that $\rank(S_2) = \rank(B) = q$, as $M_1, M_2$ are invertible.
\end{proof}

\begin{corollary}
\label{cor:index-nonincreasing}
Let $A \in \Fnn$, and $\S$ be a subspace. Define the nested sequence of subspaces $\S_0 \subseteq \dots \subseteq \S_i \subseteq \S_{i+1} \subseteq \dots$, as $\S_0 = \S$, and $\S_{i+1} = \S_i + A\S_i$. Then $\Indsimple{A}{\S_i} \geq \Indsimple{A}{\S_{i+1}}$ for all $i \geq  0$, and there exists $j \geq 0$ such that $\Indsimple{A}{\S_j} = 0$.
\end{corollary}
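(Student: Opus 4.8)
The plan is to reduce everything to the behavior of the dimension sequence $d_i := \dim{\S_i}$. Because $\S_{i+1} = \S_i + A\S_i$ by construction, we have $\Indsimple{A}{\S_i} = \dim{\S_i + A\S_i} - \dim{\S_i} = d_{i+1} - d_i$, so the claimed inequality $\Indsimple{A}{\S_i} \geq \Indsimple{A}{\S_{i+1}}$ is precisely the assertion that the increments $d_{i+1} - d_i$ form a non-increasing sequence, i.e. that $d_i$ is a ``concave'' integer sequence. (A quick induction also shows $\S_i = \S + A\S + \dots + A^i\S$, which is occasionally convenient, though not strictly needed.)

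The key step is to exhibit, for each $i \geq 0$, a surjective linear map from $\S_{i+1}/\S_i$ onto $\S_{i+2}/\S_{i+1}$; this gives at once $d_{i+1} - d_i = \dim{\S_{i+1}/\S_i} \geq \dim{\S_{i+2}/\S_{i+1}} = d_{i+2} - d_{i+1}$. The natural candidate is the map induced by $A$: since $A\S_i \subseteq \S_i + A\S_i = \S_{i+1}$ and $A\S_{i+1} \subseteq \S_{i+1} + A\S_{i+1} = \S_{i+2}$, the assignment $v + \S_i \mapsto Av + \S_{i+1}$ is well defined (if $v - v' \in \S_i$ then $Av - Av' \in A\S_i \subseteq \S_{i+1}$). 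For surjectivity, take any $w \in \S_{i+2} = \S_{i+1} + A\S_{i+1}$ and write $w = s + At$ with $s, t \in \S_{i+1}$; then $t + \S_i$ is a legitimate element of $\S_{i+1}/\S_i$ mapping to $At + \S_{i+1} = w + \S_{i+1}$, so every coset is hit. If one prefers to avoid quotient spaces, the same bookkeeping works directly: extend a basis of $\S_i$ to a basis of $\S_{i+1}$ by vectors $v_1, \dots, v_r$ (so $r = d_{i+1} - d_i$), and verify $\S_{i+2} = \S_{i+1} + \spann\{Av_1, \dots, Av_r\}$, whence $d_{i+2} - d_{i+1} \leq r$.

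Finally, to see the index reaches $0$: the sequence $d_i$ is non-decreasing (as $\S_i \subseteq \S_{i+1}$) and bounded above by $n$, hence eventually constant, say $d_j = d_{j+1}$, and then $\Indsimple{A}{\S_j} = d_{j+1} - d_j = 0$. (Equivalently, $\Indsimple{A}{\S_i}$ is a non-increasing sequence of non-negative integers, hence stabilizes at some value $q_\infty$; were $q_\infty \geq 1$, the increments $d_{i+1} - d_i$ would equal $q_\infty$ for all large $i$, forcing $d_i \to \infty$, a contradiction.)

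I do not expect a genuine obstacle here; the only points requiring care are that well-definedness of the induced map rests on the recursion $A\S_i \subseteq \S_{i+1}$, and that surjectivity genuinely uses that $\S_{i+2}$ is built as $\S_{i+1} + A\S_{i+1}$ (not merely $\S_{i+1} + A\S_i$) — this is what makes the passage from the $i$-th to the $(i+1)$-st increment ``contract''.
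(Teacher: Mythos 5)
Your proof is correct, but it takes a genuinely different and more elementary route than the paper's. The paper deduces $\Indsimple{A}{\S_{i}} \geq \Indsimple{A}{\S_{i+1}}$ from the tridiagonal block decomposition of \Cref{cor:tridiag-blk-decomp}: writing $A$ in a unitary basis $\begin{bmatrix} V & V' & V'' \end{bmatrix}$ adapted to the flag $\S \subseteq \S+A\S \subseteq \Fn$, and invoking \Cref{lemma:rank-S-perp}, it identifies $\Indsimple{A}{\S+A\S}$ with $\rank\bigl(\begin{bmatrix} 0 & D \end{bmatrix}\bigr) = \rank(D)$, and $D$ has only $q = \Indsimple{A}{\S}$ columns, so its rank is at most $q$. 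You instead exhibit a well-defined surjective linear map $\S_{i+1}/\S_i \twoheadrightarrow \S_{i+2}/\S_{i+1}$ induced by $A$, which is basis-free and self-contained; it gives the same inequality at once without importing the block-decomposition machinery. Both capture the same underlying fact (the ``new'' directions added at step $i+2$ are images under $A$ of the ``new'' directions added at step $i+1$), but your quotient-map argument is arguably the cleaner proof, while the paper's version has the virtue of tying the statement to the decomposition \eqref{eq:property_VTAV} that is used repeatedly elsewhere. The termination step is the same in both: $d_i = \dim\S_i$ is non-decreasing and bounded by $n$, so it stabilizes, forcing the index to hit zero.
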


\begin{proof}
If $\Indsimple{A}{\S_0} = 0$, then $\S_i = \S_0$ for all $i \geq 0$, and the statement follows. Now assume $\Indsimple{A}{\S_0} \geq 1$. Let us just show that $\Indsimple{A}{\S_0} \geq \Indsimple{A}{\S_{1}}$; repeated application of the same argument proves that the sequence $\{\Indsimple{A}{\S_i}\}_{i=0}^\infty$ is non-increasing. If $\S_1 = \Fn$ we are again done as $\S_i = \S_1$ for all $i \geq 1$, so assume this is not the case. Consider the decomposition of $A$ in \eqref{eq:property_VTAV}, from which we have $\Img{\begin{bmatrix}V & V' \end{bmatrix}} = \S + A\S$, and $\Img{V''} = (\S+A\S)^\perp$; thus defining $V_1 := \begin{bmatrix}V & V' \end{bmatrix}$ and $V_2 := V''$ we obtain $AV_1 = V_1 S_1 + V_2 S_2$, with $S_2 = \begin{bmatrix}0 & D \end{bmatrix}$ (and $S_1$ similarly determined by \eqref{eq:property_VTAV}). Now $\rank(S_2) = \rank(D) \leq \Indsimple{A}{\S}$, and thus applying \Cref{lemma:rank-S-perp} gives $\Indsimple{A}{\S + A\S} \leq \Indsimple{A}{\S}$. To prove that there exists $j \geq 0$ such that $\Indsimple{A}{\S_j} = 0$, notice that if this was false then there would exist $k \geq 0$ such that $\dim{\S_k} > n$, which would give a contradiction.
\end{proof}

\begin{lemma}
\label{lemma:index-bounds}
Let $A \in \Fnn$, $\S \in \Gr{p}{n}$, and $\Indsimple{A}{\S} = q$. Let us also define $\S' := \S^\perp \cap (\S + A\S)$. Then we have the following.
\begin{enumerate}[(i)]
    \item If $A_\omega := A + \omega I$ for $\omega \in \F$, then $\Indsimple{A_\omega}{\S} = q$.
    
    \item $\Indsimple{A}{\S^\perp} \leq \min \{p, \; n-p\} \leq \lfloor n/2 \rfloor$, $\Indsimple{A}{\S + A\S} \leq \min \{q, \; n-p-q\} \leq \lfloor (n-p)/2 \rfloor$, and $\Indsimple{A}{\S'} \leq q$.
    
    \item If $A \in \Gl{n}$, then $\Indsimple{A^{-1}}{\S} = q$. 
    
    \item If $A \in \Sym{n}$, then $\Indsimple{A}{\S^\perp} = q$, and $\Indsimple{A}{\S'} = q$. Thus if $q = 0$, both $\S$ and $\S^\perp$ are $A-$invariant, and if $A \in \Sym{n} \cap \Gl{n}$, both $\S$ and $\S^\perp$ are also $A^{-1}-$invariant\footnote{The fact that $\S$ being $A-$invariant implies $\S^\perp$ is $A-$invariant for Hermitian $A$ is well known.}.
    
    \item If $A \in \Pos{n}$ and $\Indsimple{A}{\S} = 0$, then for any $s \in \R$, $\Indsimple{A^s}{\S} = 0$.
\end{enumerate}
\end{lemma}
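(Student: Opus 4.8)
The plan is to prove the five parts largely independently. For (i), I would note $A_\omega\S \subseteq A\S + \S$ and, conversely, $A\S = (A_\omega - \omega I)\S \subseteq A_\omega\S + \S$, so $\S + A_\omega\S = \S + A\S$ and hence $\Indsimple{A_\omega}{\S} = q$. For (iii), the identity to exploit is $\S + A^{-1}\S = A^{-1}\S + \S = A^{-1}(A\S + \S)$; since $A^{-1} \in \Gl{n}$ preserves dimension, $\dim{\S + A^{-1}\S} = \dim{\S + A\S}$, and subtracting $\dim{\S}$ gives $\Indsimple{A^{-1}}{\S} = q$.

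For (ii), the three inequalities against codimensions ($n-p$ for $\S^\perp$, $n-p-q$ for $\S+A\S$, and the trivial bound $q = \dim{\S'}$ for $\S'$) are immediate from \Cref{lemma:basic_properties}(i) applied to the subspaces $\S^\perp$, $\S+A\S$, and $\S'$, once we recall from \Cref{lemma:basic_properties}(ii) that $\dim{\S'} = q$. The only remaining claim is $\Indsimple{A}{\S+A\S} \leq q$, which is precisely the content of the first step in the proof of \Cref{cor:index-nonincreasing} (taking $\S_0 = \S$, $\S_1 = \S + A\S$), so I would simply cite that.

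For (iv), I would set up the tridiagonal block decomposition of \Cref{cor:tridiag-blk-decomp} in a unitary basis $\begin{bmatrix}V & V' & V''\end{bmatrix}$ with $\Img{V} = \S$ and $\Img{\begin{bmatrix}V & V'\end{bmatrix}} = \S+A\S$; since $A$ is Hermitian, part (ii) of that corollary gives $P = B^\ast$, $Q = 0$, $R = D^\ast$, and $\rank(B) = q$. Note $\Img{\begin{bmatrix}V' & V''\end{bmatrix}} = \S^\perp$ and $\Img{V'} = \S'$ (if $p \in \{0,n\}$ or $q = 0$ everything is trivial, so assume $1 \leq p < n$, and for the $\S'$ claim also $q \geq 1$). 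Applying \Cref{lemma:rank-S-perp} to $\S^\perp$ with the basis split $\begin{bmatrix}V' & V''\end{bmatrix}$ for $\S^\perp$ and $V$ for $(\S^\perp)^\perp = \S$, the coupling block playing the role of $S_2$ is $\begin{bmatrix}B^\ast & 0\end{bmatrix}$, of rank $\rank(B^\ast) = q$; by the necessary-and-sufficient form of \Cref{lemma:rank-S-perp} noted in the remark following it, $\Indsimple{A}{\S^\perp} = q$. Applying it to $\S'$ with basis $V'$ for $\S'$ and $\begin{bmatrix}V & V''\end{bmatrix}$ for $(\S')^\perp$, the coupling block is $\begin{bmatrix}B^\ast \\ D\end{bmatrix}$, which has $q$ columns and rank at least $\rank(B^\ast) = q$, hence exactly $q$, so $\Indsimple{A}{\S'} = q$. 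The stated consequences for $q = 0$ then follow: $\S$ is $A$-invariant by \Cref{def:index}, $\S^\perp$ is $A$-invariant since its index vanishes, and when $A \in \Gl{n}$, part (iii) applied to $\S$ and to $\S^\perp$ makes both $A^{-1}$-invariant. Finally, for (v), $\Indsimple{A}{\S} = 0$ means $\S$ is $A$-invariant, so as in the proof of \Cref{lemma:solutions-invariant-S} the spectral theorem applied to $A|_\S$ lets us pick $V$ with $AV = V\Lambda$ for a positive diagonal $\Lambda$; since $A \in \Pos{n}$, every eigenvector of $A$ is an eigenvector of $A^s$, giving $A^s V = V\Lambda^s$, hence $A^s\S \subseteq \S$ and $\Indsimple{A^s}{\S} = 0$.

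The routine parts are (i), (iii), (v) and most of (ii); the main obstacle — really the only place needing care — is (iv), where one must correctly identify the off-diagonal block of the tridiagonal decomposition that serves as $S_2$ in \Cref{lemma:rank-S-perp} for each of $\S^\perp$ and $\S'$, and verify it has full rank $q$. Both reductions come down to $\rank(B) = q$ from \Cref{cor:tridiag-blk-decomp}, so no genuinely new computation is required once the bookkeeping of bases and orderings is in place.
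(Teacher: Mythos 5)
Your proof is correct. Parts (i), (ii), (iv), and (v) follow the paper's argument essentially step for step; in (iv) you correctly pair $V'$ with $\begin{bmatrix}V & V''\end{bmatrix}$ as the basis split for $\S'$ and $(\S')^\perp$, extract the coupling block $\begin{bmatrix}B^\ast \\ D\end{bmatrix}$, and read off rank $q$ from the $B^\ast$ piece (the paper actually has a small typo here, writing $\begin{bmatrix}V & V'\end{bmatrix}$ where $\begin{bmatrix}V & V''\end{bmatrix}$ is meant, as the shapes confirm). The genuine divergence is part (iii). The paper conjugates $A$ into the unitary basis $\begin{bmatrix}V & V' & V''\end{bmatrix}$, invokes the nullity theorem of Strang and Nguyen \cite{strang2004interplay} to get $\rank{(\hat{A}^{-1})_{21}} = \rank{(\hat{A}_{21})} = q$, and closes via \Cref{lemma:rank-S-perp}. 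You instead note that $\S + A^{-1}\S = A^{-1}(A\S) + A^{-1}\S = A^{-1}(\S + A\S)$ by linearity and invertibility of $A$, and since $A^{-1}$ is an isomorphism it preserves dimension, giving $\dim{\S + A^{-1}\S} = \dim{\S + A\S}$ directly. Your route is shorter and more elementary: it avoids external machinery, avoids any block decomposition, needs no case split for $p = 0$, and makes it transparent that $\Indsimple{A^{-1}}{\S} = \Indsimple{A}{\S}$ is simply the statement that $A^{-1}$ carries $\S + A\S$ isomorphically onto $\S + A^{-1}\S$. The paper's heavier route has the collateral benefit of tying the index of $A^{-1}$ back to the block structure of $\hat A^{-1}$, which is thematically consistent with the surrounding development, but that connection is not required for the statement itself.
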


\begin{proof}
\begin{enumerate}[(i)]
    \item This follows because $\S + A_\omega \S = \S + A\S$.
    
    \item $\Indsimple{A}{\S^\perp} \leq \min \{p, \; n-p\} \leq \lfloor n/2\rfloor$ follows by applying \Cref{lemma:basic_properties}(i) to $\S^\perp$, and noticing that $\dim{\S^\perp} = n - p$. 
    $\Indsimple{A}{\S + A\S} \leq q$ was proved in \Cref{cor:index-nonincreasing}. Applying \Cref{lemma:basic_properties}(i) to $\S + A\S$ gives $\Indsimple{A}{\S + A\S} \leq \min \{p+q, \; n-p-q\}$, as $\dim{\S + A\S} = p+q$; so combining gives $\Indsimple{A}{\S + A\S} \leq \min \{q, \; p+q, \; n-p-q\} = \min \{q, \; n-p-q\}$. Finally $\min \{q, \; n-p-q\} \leq \lfloor (n-p)/2 \rfloor$.
    $\Indsimple{A}{\S'} \leq q$ follows from \Cref{lemma:basic_properties}(i): $\Indsimple{A}{\S'} \leq \dim{\S'} = \Indsimple{A}{\S} = q$.
    
    \item The $p=0$ case is clear, so assume $p \geq 1$. Denote by $\hat A$ the right-hand side of \eqref{eq:property_VTAV}. Since $A \in \Gl{n}$, $\hat{A} \in \Gl{n}$ and we have $A^{-1} \begin{bmatrix} V & V' & V''\end{bmatrix} = \begin{bmatrix} V & V' & V'' \end{bmatrix} \hat{A}^{-1}$. We use the subscript $1$ (resp. $2$) to denote the first $p$ (resp. last $n-p$) rows or columns.  From the nullity theorem (Theorem 2.1 in \cite{strang2004interplay}), $\nullity{(\hat{A}^{-1})_{21}} = \nullity{\hat{A}_{21}}$, and so $\rank{(\hat{A}^{-1})_{21}} = \rank{(\hat{A}_{21})} = q$. We then have $A^{-1} V = V (\hat{A}^{-1})_{11} + \begin{bmatrix} V' & V'' \end{bmatrix} (\hat{A}^{-1})_{21}$, and using \Cref{lemma:rank-S-perp} we conclude $\Indsimple{A^{-1}}{\S} = q$.
    
    \item Assuming $A \in \Sym{n}$, \eqref{eq:property_VTAV} gives $P = B^\ast$, $Q = 0$, $\Img{\begin{bmatrix}V' & V''\end{bmatrix}} = \S^\perp$, $\S' = \Img{V'}$, and $\S'^\perp = \Img{\begin{bmatrix}V & V'\end{bmatrix}}$. We also have $A\begin{bmatrix}V' & V''\end{bmatrix} = \begin{bmatrix}V' & V''\end{bmatrix} S_1 + VS_2$, and $AV' = V'\widetilde{S}_1 + \begin{bmatrix}V & V'\end{bmatrix} \widetilde{S}_2$, with $S_1, S_2, \widetilde{S}_1, \widetilde{S}_2$ determined by \eqref{eq:property_VTAV}. In particular $S_2 = \begin{bmatrix}P & 0\end{bmatrix}$ and $\widetilde{S}_2 = \begin{bmatrix} P \\ D \end{bmatrix}$, and note that $\rank(P) = q$, by \Cref{lemma:rank_B}. Now $\rank(S_2) = \rank(P)$ trivially, while $\rank(\widetilde{S}_2) = q$ as $\rank(\widetilde{S}_2) \geq \rank(P)$, and also $\rank(\widetilde{S}_2) \leq q$ since $\widetilde{S}_2 \in \Fpq{(n-q)}{q}$. So by \Cref{lemma:rank-S-perp}  $\Indsimple{A}{\S^\perp} = \Indsimple{A}{\S'} = q$. Finally by (iii), if $A \in \Sym{n} \cap \Gl{n}$ and $q = 0$, then $\Indsimple{A^{-1}}{\S} = \Indsimple{A^{-1}}{\S^\perp} = 0$.
    
    \item Note that from assumptions, $A\S = \S$, using both invertibility of $A$ and $\Indsimple{A}{\S} = 0$. By an argument similar to that already used in \Cref{lemma:solutions-invariant-S} we see that $A^s \S = \S$ also (since $\S$ is spanned by eigenvectors of $A$, which are also eigenvectors of $A^s$), and the conclusion follows.
\end{enumerate}
\end{proof}

The next lemma shows that the index of invariance is subadditive in both its arguments.
\begin{lemma}[Subadditivity]
\label{lemma:subadditivity}
Let $A, B \in \Fnn$, and $\S, \S' \in \G{n}$. Then
\begin{enumerate}[(i)]
    \item $\Ind{A}{\S + \S'} \leq \Ind{A}{\S} + \Ind{A}{\S'}$.
    
    \item $\Ind{A+B}{\S} \leq \Ind{A}{\S} + \Ind{B}{\S}$.
    
    \item $\Ind{AB}{\S} \leq \Ind{A}{\S} + \Ind{B}{\S}$.
\end{enumerate}
\end{lemma}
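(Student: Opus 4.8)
All three parts can be reduced to the elementary Grassmann identity $\dim{U + W} = \dim{U} + \dim{W} - \dim{U \cap W}$ for subspaces $U, W \in \G{n}$, together with the two monotonicity facts that $U_1 \subseteq U_2$ implies $\dim{U_1} \leq \dim{U_2}$, and that $\dim{A U} \leq \dim{U}$ for any $A \in \Fnn$. No invertibility or Hermiticity of $A, B$ is used.

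For (i), I would start from $A(\S + \S') = A\S + A\S'$, so that $(\S + \S') + A(\S + \S') = (\S + A\S) + (\S' + A\S')$. Abbreviating $U := \S + A\S$, $W := \S' + A\S'$ and expanding both $\dim{U + W}$ and $\dim{\S + \S'}$ via the Grassmann identity, the definition of $\Ind{A}{\S + \S'}$ rearranges to $\Ind{A}{\S} + \Ind{A}{\S'} + \bigl(\dim{\S \cap \S'} - \dim{U \cap W}\bigr)$; since $\S \subseteq U$ and $\S' \subseteq W$ give $\S \cap \S' \subseteq U \cap W$, the parenthesized term is $\leq 0$. Part (ii) is analogous: the key inclusion is $(A + B)\S \subseteq A\S + B\S$, hence $\S + (A + B)\S \subseteq (\S + A\S) + (\S + B\S)$; applying the Grassmann identity to the right-hand side and bounding the intersection term below by $\dim{\S}$ (using $\S \subseteq (\S + A\S) \cap (\S + B\S)$) yields $\dim{\S + (A+B)\S} \leq \dim{\S + A\S} + \dim{\S + B\S} - \dim{\S}$, and subtracting $\dim{\S}$ closes it.

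Part (iii) is the one that needs a genuinely new idea, and I expect it to be the main obstacle: $AB\S$ does not sit obviously inside any sum built only from $A\S$ and $B\S$, and one cannot simply combine (i) with a monotonicity-in-$\S$ property because the index is not monotone in its subspace argument. The plan is instead to isolate the directions that $B$ adds to $\S$: pick a subspace $W$ with $B\S = (\S \cap B\S) \oplus W$, and observe by a short count that $\dim{W} = \dim{B\S} - \dim{\S \cap B\S} = \Ind{B}{\S}$. Then $AB\S = A(\S \cap B\S) + AW \subseteq A\S + AW$, so $\S + AB\S \subseteq (\S + A\S) + AW$, whence $\dim{\S + AB\S} \leq \dim{\S + A\S} + \dim{AW} \leq \dim{\S + A\S} + \dim{W} = \dim{\S + A\S} + \Ind{B}{\S}$; subtracting $\dim{\S}$ gives $\Ind{AB}{\S} \leq \Ind{A}{\S} + \Ind{B}{\S}$. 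The degenerate cases ($\S = \{0\}$ or $B\S = \{0\}$) are harmless under the span and product conventions fixed in \Cref{sssec:hilbert-spaces}.
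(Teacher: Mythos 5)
Your proof is correct and is in substance the same argument as the paper's: both rest on the inclusions $A(\S+\S') = A\S + A\S'$, $(A+B)\S \subseteq A\S + B\S$, and $AB\S \subseteq A\S + A\mathcal{W}$ for a complement $\mathcal{W}$ of dimension $\Ind{B}{\S}$, together with the fact that $A$ cannot increase dimension. The only difference is bookkeeping: the paper realizes the index as $\dim{\S^\perp \cap (\S + A\S)}$ via \Cref{lemma:basic_properties}(ii) and adds orthogonal complements, whereas you use the Grassmann dimension identity and, in (iii), an algebraic complement of $\S \cap B\S$ inside $B\S$ rather than of $\S$ inside $\S + B\S$ — the resulting inequality chains are identical.
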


\begin{proof}
In this proof we will use the fact that if $\mathcal{A} \in \G{n}$ and $T \in \Fnn$, then $\Indsimple{T}{\mathcal{A}} = \dim{\mathcal{A}^\perp \cap (\mathcal{A} + T\mathcal{A})}$, by \Cref{lemma:basic_properties}(ii).

\begin{enumerate}[(i)]
    \item Let $\mathcal{Q} = \S + \S'$. Notice that $\S + A\S = \S \oplus (\S^\perp \cap (\S + A\S))$, and similarly $\S' + A\S' = \S' \oplus (\S'^\perp \cap (\S' + A\S'))$; so adding gives $\mathcal{Q} + A\mathcal{Q} = \S + \S' + (\S^\perp \cap (\S + A\S)) + (\S'^\perp \cap (\S' + A\S'))$. It follows that $\dim{\mathcal{Q} + A\mathcal{Q}} \leq \dim{\mathcal{Q}} + \Indsimple{A}{\S} + \Indsimple{A}{\S'}$.
    
    \item We again have $\S + (A+B)\S = \S + A\S + \S + B\S = \S + (\S^\perp \cap (\S + A\S)) + (\S^\perp \cap (\S + B\S))$. Thus $\dim{\S + (A+B)\S} \leq \dim{\S} + \Indsimple{A}{\S} + \Indsimple{B}{\S}$.
    
    \item We have $\S + AB \S \subseteq \S + A(\S + B\S) = \S + A(\S \oplus (\S^\perp \cap (\S + B\S))) = \S + A\S + A(\S^\perp \cap (\S + B\S)) = \S \oplus (\S^\perp \cap (\S + A\S)) + A(\S^\perp \cap (\S + B\S))$. Now $\dim{A(\S^\perp \cap (\S + B\S))} \leq \dim{\S^\perp \cap (\S + B\S)} = \Indsimple{B}{\S}$, and so we have $\dim{\S + AB\S} \leq \dim{\S} + \Indsimple{A}{\S} + \Indsimple{B}{\S}$.
\end{enumerate}
\end{proof}

We now return to the central question of the paper, which is to provide tighter bounds on the dimensions of the affine subspace $\X_b$ and the subspace $\X$, introduced in \eqref{eq:X-X_b}. 
\section{Proof of the main result}
\label{sec:main-result}

The goal of this section is to prove \Cref{thm:main_result}. We will work under the assumptions established in \Cref{ssec:problem-statement}, so we briefly remind the reader that we are working over an arbitrary field $\F = \C \text{ or } \R$,  $A \in \Sym{n} \cap \Gl{n}$, $b \in \Fn$, and $\S \in \Gr{p}{n}$ for $1 \leq p \leq n$ (more precisely it was shown in \Cref{ssec:problem-statement} that it suffices to only consider subspaces, so in fact we have assumed that $\T = \Gam{p}{\T} = \S$). We have also assumed that $V \in \Fpq{n}{p}$ is semi-unitary, such that $\Img{V} = \S$, and we have defined $\omin := -\lambda_{\text{min}}(A)$, and $A_\omega := A + \omega I$. Finally, we are interested in solutions $x_{b,\omega}$ to problem \eqref{eq:general-min-prob}, with $s=-1$. 

Let $\Indsimple{A}{\S} = q$. Notice that if $q = 0$, then the statement of \Cref{thm:main_result} already follows by \Cref{lemma:solutions-invariant-S}, because by the lemma $x_{b,\omega} - x_{b,\mu} = 0$, for all $\omega, \mu > \omin$, and $b \in \Fn$; so $\mathcal{Y} = \{0\}$. Thus for the proof of \Cref{thm:main_result} we assume $q \geq 1$. Now there are two cases: $n = p+q$, and $n > p+q$. In \Cref{app:appB}, we reduce the proof of \Cref{thm:main_result} in the $n = p+q$ case, to the case where $n > p+q$; thus we can further assume for the proof, without loss of generality, that $n > p+q$. Then using the tridiagonal block decomposition (\Cref{cor:tridiag-blk-decomp}), we will choose $V' \in \Fpq{n}{q}$, and $V'' \in \Fpq{n}{(n-p-q)}$ such that $\begin{bmatrix} V & V' & V'' \end{bmatrix}$ is unitary, $\S + A\S = \Img{\begin{bmatrix}V & V' \end{bmatrix}}$, and
\vspace*{-0.1cm}
\begin{equation}
\label{eq:tridiag-decomp-hermitian}
    \begin{bmatrix} V^* \\ V'^* \\ V''^* \end{bmatrix} A \begin{bmatrix} V & V' & V'' \end{bmatrix} = \begin{bmatrix} T & B^* & 0 \\ B & C & D^* \\ 0 & D & E \end{bmatrix},
\end{equation}
where $T \in \Fpq{p}{p}$, $C \in \Fpq{q}{q}$, $E \in \Fpq{(n-p-q)}{(n-p-q)}$ are all Hermitian, and the shapes of the other blocks are compatible, and we denote $H := \begin{bmatrix} T \\ B \end{bmatrix} \in \Fpq{(p+q)}{p} $ which is of full rank $p$. We let $b = Vc + V' c' + V'' c''$, for some $c \in \Fp{p}$, $c' \in \Fp{q}$, and $c'' \in \Fp{n-p-q}$, the representation being unique for the given choice of $V, V'$, and $V''$, and existing for any $b \in \Fn$, because $\Img{\begin{bmatrix} V & V' & V'' \end{bmatrix}} = \Fn$. 

To simplify the presentation of this section, we make a few observations. Using \eqref{eq:tridiag-decomp-hermitian} and the unitarity of $\begin{bmatrix} V & V' & V'' \end{bmatrix}$, we obtain
\vspace*{-0.1cm}
\begin{equation}
\label{eq:tridiag-decomp-hermitian1}
    \begin{bmatrix} V^* \\ V'^* \\ V''^* \end{bmatrix} A_\omega \begin{bmatrix} V & V' & V'' \end{bmatrix} = \begin{bmatrix} T + \omega I & B^* & 0 \\ B & C + \omega I & D^* \\ 0 & D & E + \omega I \end{bmatrix},
\end{equation}
and since $A_\omega \in \Pos{n}$ for $\omega > \omin$, the right-hand side of \eqref{eq:tridiag-decomp-hermitian1} is also positive. Thus in particular $E+\omega I$ is positive, which allows us to define $F_\omega \in \Fpq{q}{q}$ and $G_\omega \in \Fpq{(p+q)}{(p+q)}$ for any $\omega \in (\omin, \infty)$, as follows
\vspace*{-0.1cm}
\begin{equation}
\label{eq:F-G}
    F_\omega := D^\ast (E + \omega I)^{-1} D, \text{ and } G_\omega := \begin{bmatrix} T & B^\ast \\ B & C - F_{\omega} \end{bmatrix} + \omega I.
\end{equation}
The positivity of $E + \omega I$ directly ensures that $F_\omega \in \Pos{q}$, while $G_\omega \in \Pos{p+q}$ as it is the Schur complement of the $E + \omega I$ block of the right-hand side of \eqref{eq:tridiag-decomp-hermitian1}. Finally, we note a couple of key identities that follow from the 2-by-2 block matrix inversion formula \cite{lu2002inverses}, whenever $\omega > \omin$:
\vspace*{-0.3cm}
\begin{equation} 
\label{eq:block_inverse} 
\begin{split}
G_\omega^{-1} &= \begin{bmatrix} V^* \\ V'^* \end{bmatrix} A_\omega^{-1} 
\begin{bmatrix} V & V' \end{bmatrix}, \\
- G_\omega^{-1} \begin{bmatrix} 0 \\ D^\ast (E + \omega I)^{-1} \end{bmatrix} &= \begin{bmatrix} V^* \\ V'^* \end{bmatrix} A_\omega^{-1} V''.
\end{split}
\end{equation}
We are now ready to prove the following lemma, which is the first step in proving \Cref{thm:main_result}.

\begin{lemma}
\label{lemma:HTSH}
Define $d_{b,\omega,\mu} := V^*(x_{b,\omega} - x_{b,\mu}) \in \Fp{p}$, whenever $\omega, \mu \in (\omin, \infty)$, and let $N \in \Fpq{(p+q)}{q}$ be any full rank matrix\footnote{Existence of $N$ is guaranteed as $\rank(H^\ast) = \rank(H) = p$, hence the nullspace of $H^\ast$ has dimension $q$.} whose columns span the nullspace of $H^\ast$. Also define 
\begin{equation}
\label{eq:z-b-omega-mu}
    z_{b,\omega,\mu}(t) := D^\ast(E + \omega I)^{-1}c'' - D^\ast(E + \mu I)^{-1}c'' + \begin{bmatrix} B & C - F_{\mu} \end{bmatrix}Nt.
\end{equation}
Then there exists a unique $d \in \Fp{p}$ and $t \in \Fp{q}$ satisfying the system of equations
\begin{equation}
\label{eq:equivalent-system-eqs}
\begin{cases}
    & H^{\ast} G_\omega^{-1} \left ( Hd + \mu Nt + \begin{bmatrix} 0 \\ z_{b,\omega,\mu}(t) \end{bmatrix} \right ) = 0 \\
    & Nt = G_\mu^{-1} \left (H (H^\ast G_\mu^{-1} H)^{-1} H^\ast G_\mu^{-1} - I \right) \begin{bmatrix} c \\ c' - D^\ast (E + \mu I)^{-1} c'' \end{bmatrix},
\end{cases}
\end{equation}
where the solution $d$ satisfies $d =  d_{b,\omega,\mu}$.
\end{lemma}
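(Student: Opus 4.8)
The plan is to turn \eqref{eq:x_b_w_explicit_S} into a closed form for $V^\ast x_{b,\omega}$ expressed purely in the blocks of the tridiagonal decomposition \eqref{eq:tridiag-decomp-hermitian}, and then to recognize the second equation of \eqref{eq:equivalent-system-eqs} as the one that pins down $t$, and the first as the one that, given that $t$, pins down $d$, with $(d_{b,\omega,\mu}, t)$ a solution.

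First I would record the elementary consequences of \eqref{eq:tridiag-decomp-hermitian}: since $\begin{bmatrix}V & V' & V''\end{bmatrix}$ is unitary, $AV = \begin{bmatrix}V & V'\end{bmatrix}H$, and since $A = A^\ast$, $V^\ast A = H^\ast\begin{bmatrix}V^\ast \\ V'^\ast\end{bmatrix}$. Combining these with the two identities in \eqref{eq:block_inverse} and the expansion $b = Vc + V'c' + V''c''$, one gets
\[
V^\ast A A_\omega^{-1} A V = H^\ast G_\omega^{-1} H, \qquad V^\ast A A_\omega^{-1} b = H^\ast G_\omega^{-1} w_\omega, \qquad w_\omega := \begin{bmatrix} c \\ c' - D^\ast(E + \omega I)^{-1} c'' \end{bmatrix},
\]
and hence, writing $y_\omega := V^\ast x_{b,\omega}$ (so $x_{b,\omega} = V y_\omega$),
\[
y_\omega = (H^\ast G_\omega^{-1} H)^{-1} H^\ast G_\omega^{-1} w_\omega, \qquad d_{b,\omega,\mu} = y_\omega - y_\mu,
\]
where $H^\ast G_\omega^{-1} H \in \Pos{p}$ is invertible because $G_\omega \in \Pos{p+q}$ and $H$ has full column rank $p$ (consistent with \Cref{lemma:x-bw}).

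Next, the $t$-component. For each $\nu \in (\omin,\infty)$ one checks $H^\ast G_\nu^{-1}(H y_\nu - w_\nu) = 0$, so $G_\nu^{-1}(H y_\nu - w_\nu)$ lies in $\Ker{H^\ast} = \Img{N}$; since $N$ is injective, there is a unique $t_\nu \in \Fp{q}$ with $H y_\nu = w_\nu + G_\nu N t_\nu$. The right-hand side of the second equation of \eqref{eq:equivalent-system-eqs} is exactly $G_\mu^{-1}(H y_\mu - w_\mu) = N t_\mu$, so that equation is equivalent to $t = t_\mu$, which determines $t$ uniquely. Now fix $t = t_\mu$ and take $d = y_\omega - y_\mu$. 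Substituting $H y_\omega = w_\omega + G_\omega N t_\omega$ and $H y_\mu = w_\mu + G_\mu N t_\mu$ into $Hd$, one computes
\[
H d + \mu N t_\mu + \begin{bmatrix} 0 \\ z_{b,\omega,\mu}(t_\mu) \end{bmatrix} = G_\omega N t_\omega + \Big( (w_\omega - w_\mu) - (G_\mu - \mu I) N t_\mu + \begin{bmatrix} 0 \\ z_{b,\omega,\mu}(t_\mu) \end{bmatrix} \Big).
\]
The parenthesized correction vanishes: its top $p$-block is $0$ since $w_\omega - w_\mu$ has zero top block and $\begin{bmatrix} T & B^\ast \end{bmatrix} N = H^\ast N = 0$ (using $T = T^\ast$), while its bottom $q$-block is $0$ by the very definition \eqref{eq:z-b-omega-mu} of $z_{b,\omega,\mu}$, together with $G_\mu - \mu I = \begin{bmatrix} T & B^\ast \\ B & C - F_\mu \end{bmatrix}$ and $w_\omega - w_\mu = \begin{bmatrix} 0 \\ D^\ast(E+\mu I)^{-1}c'' - D^\ast(E+\omega I)^{-1}c'' \end{bmatrix}$. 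Hence the left-hand side equals $G_\omega N t_\omega$, and applying $H^\ast G_\omega^{-1}$ gives $H^\ast N t_\omega = 0$, so the first equation of \eqref{eq:equivalent-system-eqs} holds with $d = d_{b,\omega,\mu}$.

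Finally, uniqueness: $t$ is uniquely determined by the second equation; with $t$ fixed, the first equation reads $H^\ast G_\omega^{-1} H d = -H^\ast G_\omega^{-1}\big(\mu N t + \begin{bmatrix} 0 \\ z_{b,\omega,\mu}(t) \end{bmatrix}\big)$, and invertibility of $H^\ast G_\omega^{-1} H$ forces a unique $d$; since $d_{b,\omega,\mu}$ was shown to be a solution, $d = d_{b,\omega,\mu}$. The only genuinely delicate part is the block bookkeeping in the displayed computation — in particular, checking that $z_{b,\omega,\mu}$ is rigged precisely so that the bottom block of the correction cancels; everything else is routine manipulation with the positive-definite Schur complements already assembled in \eqref{eq:F-G}–\eqref{eq:block_inverse}.
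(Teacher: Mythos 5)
Your proposal is correct and follows essentially the same route as the paper: it uses the identities $V^\ast A A_\nu^{-1} A V = H^\ast G_\nu^{-1} H$ and $V^\ast A A_\nu^{-1} b = H^\ast G_\nu^{-1} w_\nu$ coming from \eqref{eq:block_inverse}, identifies the residual $G_\mu^{-1}(Hy_\mu - w_\mu)$ as an element $Nt_\mu$ of $\Ker{H^\ast}$, and gets uniqueness from the injectivity of $N$ together with $H^\ast G_\omega^{-1} H \in \Pos{p}$, exactly as in the paper's argument. The only difference is presentational (you parameterize the residual at both $\omega$ and $\mu$ by $t_\omega, t_\mu$ and verify the first equation by direct cancellation, whereas the paper substitutes $s = G_\mu N t$ into the rearranged optimality condition), so no further comment is needed.
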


\begin{proof}
To show uniqueness, suppose $(d,t), (d',t') \in \Fp{p} \times \Fp{q}$ are two solutions of \eqref{eq:equivalent-system-eqs}. Then from the second equation we get $N(t-t') = 0$; but since $N$ is of full rank $q$, we have $t = t'$. The first equation then gives $H^{\ast} G_\omega^{-1}H (d - d') = 0$. Now as $G_\omega \in \Pos{p+q}$, we have $H^{\ast} G_\omega^{-1}H \in \Pos{p}$, which gives $d = d'$ proving uniqueness.

To prove the existence of a solution to \eqref{eq:equivalent-system-eqs}, we start by expressing $d_{b,\omega,\mu}$ using \eqref{eq:x_b_w_explicit_S}, and obtain $d_{b,\omega,\mu} = (V^* A A_\omega^{-1} A V)^{-1} V^* A A_\omega^{-1} b - (V^* A A_\mu^{-1} A V)^{-1} V^* A A_\mu^{-1} b$, which after multiplying both sides by $V^* A A_\omega^{-1} AV$ and rearranging is equivalent to
\begin{equation}
\label{eq:equivalent-system-eqs-proof-1}
    V^* A A_\omega^{-1} \left \{ A V d_{b,\omega,\mu} - b + AV (V^* A A_\mu^{-1} A V)^{-1} V^* A A_\mu^{-1} b \right \} = 0.
\end{equation}
Next observe that as $A$ is Hermitian, we can express $V^* A A_\mu^{-1} A V$ and $V^* A A_\mu^{-1} b$, as $(AV)^\ast A_\mu^{-1} (AV)$ and $(AV)^\ast A_\mu^{-1} b$ respectively, and so firstly using the fact that $AV = \begin{bmatrix} V & V' \end{bmatrix} H$ from \eqref{eq:tridiag-decomp-hermitian1}, and secondly using the identities in \eqref{eq:block_inverse} one obtains
\vspace*{-0.3cm}
\begin{equation}
\label{eq:equivalent-system-eqs-proof-2}
    V^* A A_\mu^{-1} A V = H^\ast G_\mu^{-1} H, \;\; V^\ast A A_\mu^{-1} b = H^\ast G_\mu^{-1} \begin{bmatrix} c \\ c' - D^\ast (E + \mu I)^{-1} c'' \end{bmatrix},
\end{equation}
with similar expressions holding for $\mu$ replaced by $\omega$. Using \eqref{eq:equivalent-system-eqs-proof-2} one can then equivalently write \eqref{eq:equivalent-system-eqs-proof-1} as
\vspace*{-0.3cm}
\begin{equation}
\label{eq:equivalent-system-eqs-proof-3}
\begin{split}
    & H^\ast G_\omega^{-1} \left \{ H d_{b,\omega,\mu} + \begin{bmatrix} 0 \\ D^\ast(E + \omega I)^{-1}c'' - D^\ast(E + \mu I)^{-1}c'' \end{bmatrix} \right \} \\
    & + H^\ast G_\omega^{-1}  \left( H \left ( H^\ast G_\mu^{-1} H \right)^{-1} H^\ast G_\mu^{-1} - I \right) \begin{bmatrix} c \\ c' - D^\ast (E + \mu I)^{-1} c'' \end{bmatrix} = 0.
\end{split}
\end{equation}
Now let $s := \left( H \left ( H^\ast G_\mu^{-1} H \right)^{-1} H^\ast G_\mu^{-1} - I \right) \begin{bmatrix} c \\ c' - D^\ast (E + \mu I)^{-1} c'' \end{bmatrix}$. Then it follows that $H^\ast G_\mu^{-1} s = 0$, or equivalently $G_\mu^{-1} s = Nt$ for some $t \in \Fp{q}$, as the columns of $N$ form a basis for the nullspace of $H^\ast$. But this then implies that
\begin{equation}
\label{eq:equivalent-system-eqs-proof-4}
    s = G_\mu Nt = \mu Nt + \begin{bmatrix} 0 \\ \begin{bmatrix} B & C - F_\mu \end{bmatrix} Nt \end{bmatrix},
\end{equation}
using the fact that $\begin{bmatrix} T & B^\ast  \end{bmatrix} N = H^\ast N = 0$. Plugging $s$ back into \eqref{eq:equivalent-system-eqs-proof-3} then shows that $(d_{b,\omega,\mu}, t)$ is a solution of \eqref{eq:equivalent-system-eqs}, finishing the proof.
\end{proof}

It is worth noting an important special case of \Cref{lemma:HTSH}, when $A \in \Pos{n}$ or equivalently $\omin < 0$. In this case, for any $\omega \in (\omin, \infty)$, we set $\mu = 0$ in \Cref{lemma:HTSH} and obtain

\begin{corollary}
\label{cor:HTSH-positive}
When $A \in \Pos{n}$, with $d_{b,\omega, \mu}$ defined as in \Cref{lemma:HTSH}, there exists a unique solution $d = d_{b,\omega, 0}$ to the equation
\begin{equation}
    H^{\ast} G_\omega^{-1} \left ( Hd + \begin{bmatrix} 0 \\ D^\ast(E + \omega I)^{-1}c'' + BT^{-1}c - c' \end{bmatrix} \right ) = 0.
\end{equation}
\end{corollary}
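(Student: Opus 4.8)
The plan is to specialize \Cref{lemma:HTSH} to the value $\mu = 0$. Since $A \in \Pos{n}$, all eigenvalues of $A$ are positive, so $\omin = -\evalmin{A} < 0$, which means $0 \in (\omin,\infty)$ and \Cref{lemma:HTSH} may be invoked with $\mu = 0$ for each fixed $\omega \in (\omin,\infty)$. Moreover, as recorded in the remark following \Cref{cor:tridiag-blk-decomp}, positivity of $A$ forces the diagonal blocks $T, C, E$ of \eqref{eq:tridiag-decomp-hermitian} to be positive, hence invertible; in particular $T^{-1}$, $BT^{-1}$, and $F_0 = D^\ast E^{-1} D$ are all well defined (so $G_0$ makes sense), and the equation in the statement is meaningful. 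The strategy is then to take the two equations \eqref{eq:equivalent-system-eqs} at $\mu = 0$, simplify them using $A_0 = A$, and show the first one collapses exactly to the claimed equation; the existence, uniqueness, and the identification $d = d_{b,\omega,0}$ are then inherited verbatim from \Cref{lemma:HTSH}.

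First I would record the two simplifications that $\mu = 0$ produces. Because $A_0 = A$, the identities \eqref{eq:equivalent-system-eqs-proof-2} with $\mu$ replaced by $0$ give $H^\ast G_0^{-1} H = V^\ast A A^{-1} A V = V^\ast A V = T$ (the last equality being the $(1,1)$-block of \eqref{eq:tridiag-decomp-hermitian}), and $H^\ast G_0^{-1}\begin{bmatrix} c \\ c' - D^\ast E^{-1} c'' \end{bmatrix} = V^\ast A A^{-1} b = V^\ast b = c$, using $b = Vc + V'c' + V''c''$ with $\begin{bmatrix} V & V' & V'' \end{bmatrix}$ unitary. Substituting these into the second equation of \eqref{eq:equivalent-system-eqs} at $\mu = 0$ and multiplying through by $G_0$ yields
\[
G_0 Nt \;=\; H T^{-1} c - \begin{bmatrix} c \\ c' - D^\ast E^{-1} c'' \end{bmatrix} \;=\; \begin{bmatrix} 0 \\ BT^{-1} c - c' + D^\ast E^{-1} c'' \end{bmatrix},
\]
where I used $H T^{-1} c = \begin{bmatrix} T \\ B \end{bmatrix} T^{-1} c = \begin{bmatrix} c \\ BT^{-1} c \end{bmatrix}$. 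On the other hand, writing $G_0 = \begin{bmatrix} T & B^\ast \\ B & C - F_0 \end{bmatrix}$ from \eqref{eq:F-G} and using $H^\ast N = 0$ (hence $\begin{bmatrix} T & B^\ast \end{bmatrix} Nt = 0$), the bottom block of $G_0 Nt$ is exactly $\begin{bmatrix} B & C - F_0 \end{bmatrix} Nt$; comparing the two expressions above gives $\begin{bmatrix} B & C - F_0 \end{bmatrix} Nt = BT^{-1} c - c' + D^\ast E^{-1} c''$.

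Next I would plug this into the first equation of \eqref{eq:equivalent-system-eqs} at $\mu = 0$. There the term $\mu Nt$ vanishes, and from \eqref{eq:z-b-omega-mu} its bottom block becomes $z_{b,\omega,0}(t) = D^\ast(E + \omega I)^{-1} c'' - D^\ast E^{-1} c'' + \begin{bmatrix} B & C - F_0 \end{bmatrix} Nt = D^\ast(E + \omega I)^{-1} c'' + BT^{-1} c - c'$, after the two occurrences of $D^\ast E^{-1} c''$ cancel. This is precisely the equation in the statement, and by \Cref{lemma:HTSH} it admits a unique solution $d$, equal to $d_{b,\omega,0}$. The only part demanding care is the block bookkeeping — keeping track of which block of $G_0 Nt$ is which, and the cancellation of $D^\ast E^{-1} c''$ — and there is no genuine obstacle, since all the analytic content (existence, uniqueness, the block-inverse identities \eqref{eq:block_inverse}, and the reduction to \eqref{eq:equivalent-system-eqs}) is already supplied by \Cref{lemma:HTSH}. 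As an independent check one can instead argue directly: from \eqref{eq:x_b_w_explicit_S} and \eqref{eq:equivalent-system-eqs-proof-2} one has $V^\ast x_{b,\omega} = (H^\ast G_\omega^{-1} H)^{-1} H^\ast G_\omega^{-1}\begin{bmatrix} c \\ c' - D^\ast(E+\omega I)^{-1} c'' \end{bmatrix}$ and $V^\ast x_{b,0} = T^{-1} c$, so $d_{b,\omega,0}$ is known explicitly, and multiplying $d_{b,\omega,0}$ by $H^\ast G_\omega^{-1} H$ and inserting $HT^{-1}c = \begin{bmatrix} c \\ BT^{-1}c \end{bmatrix}$ reproduces the stated identity, with uniqueness following from $H^\ast G_\omega^{-1} H \in \Pos{p}$.
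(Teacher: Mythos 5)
Your proof is correct and follows essentially the same route as the paper's: specialize \Cref{lemma:HTSH} to $\mu=0$ (valid since $A\in\Pos{n}$ forces $\omin<0$), use the second equation of \eqref{eq:equivalent-system-eqs} to solve for $\begin{bmatrix} B & C-F_0\end{bmatrix}Nt$, and substitute into $z_{b,\omega,0}(t)$ to recover the stated equation. The only cosmetic difference is that the paper derives $H^\ast G_0^{-1}=\begin{bmatrix}I & 0\end{bmatrix}$ directly from $\begin{bmatrix}I&0\end{bmatrix}G_0 = H^\ast$, whereas you obtain the same conclusions $H^\ast G_0^{-1}H=T$ and $H^\ast G_0^{-1}\begin{bmatrix}c\\ c'-D^\ast E^{-1}c''\end{bmatrix}=c$ by specializing \eqref{eq:equivalent-system-eqs-proof-2} at $\mu=0$; both are immediate and yield the identical final computation.
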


\begin{proof}
We start by observing that using \eqref{eq:F-G} we have $\begin{bmatrix} I & 0 \end{bmatrix} G_0 = H^\ast$, or equivalently $H^\ast G_0^{-1} = \begin{bmatrix} I & 0 \end{bmatrix}$, from which we get $H^\ast G_0^{-1} H = T$, $T$ being positive. A simple computation then shows that
\vspace*{-0.3cm}
\begin{equation}
\label{eq:HTSH-positive-proof1}
    \left( H (H^\ast G_0^{-1} H)^{-1} H^\ast G_0^{-1} - I \right) \begin{bmatrix} c \\ c' - D^\ast E^{-1} c'' \end{bmatrix} = \begin{bmatrix} 0 \\ BT^{-1}c -c' + D^\ast E^{-1} c'' \end{bmatrix}.
\end{equation}
Using \eqref{eq:HTSH-positive-proof1} in the second equation of \eqref{eq:equivalent-system-eqs} gives
\vspace*{-0.3cm}
\begin{equation}
    G_0 Nt = \begin{bmatrix} 0 \\ BT^{-1}c -c' + D^\ast E^{-1} c'' \end{bmatrix},
\end{equation}
and so using the expression of $G_0$ from \eqref{eq:F-G}, we first conclude that $\begin{bmatrix} B & C-F_0 \end{bmatrix} Nt = BT^{-1}c -c' + D^\ast E^{-1} c''$, and then using the definition of $z_{b,\omega,\mu}(t)$ in \eqref{eq:z-b-omega-mu} we get $z_{b,\omega,0}(t) = D^\ast(E + \omega I)^{-1}c'' + BT^{-1}c - c'$. The corollary is now proved by applying \Cref{lemma:HTSH}, after setting $\mu = 0$ in the first equation of \eqref{eq:equivalent-system-eqs}.
\end{proof}

We now prove this paper's main result, stated in \Cref{sec:intro}.

\begin{proof}[Proof of \Cref{thm:main_result}]
From \Cref{lemma:HTSH} $(d_{b,\omega,\mu}, t)$ is the unique solution of \eqref{eq:equivalent-system-eqs}, for some $t \in \Fp{q}$.
Hence there exist $t' \in \F^q$ such that
\begin{equation} 
H d_{b,\omega,\mu} + \mu N t + \begin{bmatrix} 0 \\ z_{b,\omega,\mu}(t) \end{bmatrix} = G_\omega N t' = \omega N t' + \begin{bmatrix} 0 \\ \begin{bmatrix}  B & C - F_\omega \end{bmatrix} N t' \end{bmatrix}, \end{equation}
where we used the fact that $H^* N = \begin{bmatrix} T & B^* \end{bmatrix} N = 0$.
So $H d_{b,\omega,\mu} = N(\omega t' - \mu t) + \begin{bmatrix} 0 \\ z'_{b,\omega,\mu}(t,t')\end{bmatrix}$ where $z'_{b,\omega,\mu}(t,t') = \begin{bmatrix} B & C - F_\omega \end{bmatrix} N t' - z_{b,\omega,\mu}(t)$. Since $H$ is full column rank, $H^* H$ is invertible and $H^* H d_{b,\omega,\mu} = H^* \begin{bmatrix} 0 \\ z'_{b,\omega,\mu}(t,t') \end{bmatrix}$, and we conclude that 
\begin{equation}
\label{eq:d-b-omega-mu-solution}
d_{b,\omega,\mu} = (H^* H)^{-1} B^* z'_{b,\omega,\mu}(t,t').
\end{equation}
Noticing that $x_{b,\omega}, x_{b,\mu} \in \S$, we have $V d_{b,\omega,\mu} = x_{b,\omega} - x_{b,\mu}$, and \eqref{eq:d-b-omega-mu-solution} then gives $x_{b,\omega} - x_{b,\mu} \in \Img{V (H^* H)^{-1} B^*}$, for all $\omega, \mu > \omin$ and $b \in \Fn$. Since $B^\ast$ has full column rank $q$, and $V (H^* H)^{-1}$ has full column rank $p$, $V (H^* H)^{-1} B^*$ has full column rank $q$\footnote{Multiplication of a $\F$ valued matrix from the left by a full column rank matrix does not change its rank.}, and so defining $\Y := \Img{V (H^* H)^{-1} B^*}$ gives $\dim{\Y} = q$.

The theorem is proved if we can show that $\Y$ does not depend on the choice of $V, V'$. So suppose that $\overline{V} \in \Fpq{n}{p}$, $\overline{V'} \in \Fpq{n}{q}$ is a different choice of semi-unitary matrices such that $\begin{bmatrix} \overline{V} & \overline{V'} \end{bmatrix}$ is semi-unitary, $\Img{\overline{V}} = \S$, and $\Img{\begin{bmatrix} \overline{V} & \overline{V'} \end{bmatrix}} = \S + A \S$. Let $\overline{T} := \overline{V}^* A \overline{V}$, $\overline{B}^* := \overline{V}^* A \overline{V'}$, and $\overline{H}^\ast := \begin{bmatrix}
\overline{T} & \overline{B}^\ast \end{bmatrix}$ be analogously defined. Then there exists $U \in \Uni{p}$ and $U' \in \Uni{q}$, such that $V = \overline{V} U$ and $V' = \overline{V'} U'$. A simple computation then shows that $V (H^* H)^{-1} B^* = (\overline{V} (\overline{H}^* \overline{H})^{-1} \overline{B}^*) U'$, and as $U'$ is unitary this shows that $\Img{V (H^* H)^{-1} B^*} = \Img{\overline{V} (\overline{H}^* \overline{H})^{-1} \overline{B}^*}$.
\end{proof}

This proof immediately gives us bounds on the dimensions of the affine subspace $\X_b$ and the subspace $\X$, as stated in the next corollary.

\begin{corollary}
\label{cor:strong-bound}
The sets $\X_b$ and $\X$ introduced in \eqref{eq:X-X_b} satisfy
\begin{enumerate}[(i)]
    \item $\dim{\X_b} \leq \Indsimple{A}{\S}$ for all $b \in \Fn$.
    
    \item $\X \subseteq \Img{V (H^* H)^{-1} B^*}$, and so $\dim{\X} \leq \Indsimple{A}{\S}$.
\end{enumerate}
\end{corollary}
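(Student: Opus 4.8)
The plan is to obtain both statements as immediate consequences of \Cref{thm:main_result}. That theorem supplies a subspace $\Y \in \Gr{q}{n}$, independent of $b$, with $x_{b,\omega} - x_{b,\mu} \in \Y$ for all $\omega, \mu \in (\omin,\infty)$ and all $b \in \Fn$, where $q := \Indsimple{A}{\S}$; moreover $\Y = \Img{V(H^\ast H)^{-1}B^\ast}$ when $q \geq 1$, while $\Y = \{0\}$ when $q=0$ by \Cref{lemma:solutions-invariant-S}. In particular $\dim{\Y} = q$ in all cases.

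For part (i), fix $b \in \Fn$ and an arbitrary $\omega_0 \in (\omin,\infty)$. Since $x_{b,\omega} - x_{b,\omega_0} \in \Y$ for every $\omega > \omin$, we have $\{x_{b,\omega} \mid \omega > \omin\} \subseteq x_{b,\omega_0} + \Y$, an affine subspace of dimension $q$. Because the affine hull is the smallest affine subspace containing a set, $\X_b \subseteq x_{b,\omega_0} + \Y$, and hence $\dim{\X_b} \leq \dim{\Y} = \Indsimple{A}{\S}$.

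For part (ii), recall that for any affine subspace $\T$ the subspace $\Gam{\dim{\T}}{\T}$ coincides with the translation-invariant direction subspace $\{u - v \mid u, v \in \T\}$, which does not depend on the chosen base point. Applying this with $\T = \X_b$ and using that $\X_b$ lies inside the single coset $x_{b,\omega_0} + \Y$ of $\Y$, we get $\Gam{\dim{\X_b}}{\X_b} \subseteq \Y$ for every $b \in \Fn$. Since a (possibly infinite) sum of subspaces each contained in $\Y$ is again contained in $\Y$, it follows that $\X = \sum_{b \in \Fn} \Gam{\dim{\X_b}}{\X_b} \subseteq \Y$; when $q \geq 1$ this is exactly $\X \subseteq \Img{V(H^\ast H)^{-1}B^\ast}$, and when $q=0$ we trivially have $\X = \{0\}$. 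Either way $\dim{\X} \leq \dim{\Y} = \Indsimple{A}{\S}$.

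There is essentially no real obstacle here: the corollary is a bookkeeping exercise that unwinds the definitions of the affine hull, of the projection map $\Gam{}{\cdot}$, and of the sum of an infinite family of subspaces, once \Cref{thm:main_result} is in hand. The only mildly delicate points are the degenerate case $q = 0$ (handled by \Cref{lemma:solutions-invariant-S}, which forces $x_{b,\omega}$ to be constant in $\omega$) and the observation that the direction subspace of $\X_b$ is base-point independent, so that the containment $\X_b \subseteq x_{b,\omega_0} + \Y$ passes to direction subspaces.
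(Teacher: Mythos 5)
Your proposal is correct and follows essentially the same route as the paper: both deduce the corollary directly from \Cref{thm:main_result}, using that $x_{b,\omega} - x_{b,\mu} \in \Img{V(H^\ast H)^{-1}B^\ast}$ for all $\omega,\mu > \omin$ and all $b$, so that $\X_b$ lies in a coset of $\Y$ and $\X \subseteq \Y$. You simply spell out the bookkeeping (affine hull, direction subspaces, the $q=0$ case) that the paper leaves implicit.
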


\begin{proof}
Both (i) and (ii) follow by applying \Cref{thm:main_result}, because $x_{b,\omega} - x_{b,\mu} \in \Img{V (H^* H)^{-1} B^*}$, for all $\omega, \mu > \omin$, and for all $b \in \Fn$.
\end{proof}

One should compare the bound above with that provided for $\dim{\X_b}$ by \Cref{cor:weak-bound}. We see that \Cref{cor:strong-bound}(i) provides a stricter bound, because as mentioned in the paragraph below the proof of \Cref{cor:weak-bound}, $\Indsimple{A}{\S} \leq \dim{\S'}$ with $\S'$ defined in \Cref{sssec:minprob-prop}. For example, when $\S = \K_p(A,c)$ that is not $A-$invariant, for some $c \in \Fn$, we now get that $\dim{\X_b} \leq 1$. As mentioned in \Cref{sec:intro}, the particular case, when $\F = \R$ and $c = b$, follows from the results proved in \cite{hallman2018lsmb}. One can ask whether the bounds in \Cref{cor:strong-bound} are tight, or whether they can be improved. As we will show in the next section, $\dim{\X} = \Indsimple{A}{\S}$ when $A$ belongs to certain families of matrices, for example $\Pos{n}$; thus the bound in \Cref{cor:strong-bound}(ii) cannot be improved without further assumptions. On the other hand, we will also show that there are examples where $\dim{\X_b} < \Indsimple{A}{\S}$, for all $b \in \Fn$.
\section{Tightness of bounds}
\label{sec:converse}

In this section we explore the converse of the main theorem. We continue using the notations already introduced in \Cref{ssec:defn} and \Cref{sec:main-result}. In \Cref{ssec:zero-dim}, we explore how tight is the bound $\dim{\X_b} \leq \Indsimple{A}{\S}$ for fixed $b \in \Fn$. In \Cref{ssec:weak-converse} we formulate some sufficient conditions under which $\dim{\X} = \Indsimple{A}{\S}$.

\subsection{Bounds on \normalfont{\texorpdfstring{$\dim{\X_b}$}{bounds1}}}
\label{ssec:zero-dim}

The result that motivated this whole section is the following observation.

\begin{lemma}
\label{lemma:invariant-subspace-condition}
The following conditions are equivalent:

\begin{enumerate}[(i)]
    \item  $\Ind{A}{\S} = 0$.
    \item $x_{b,\mu} = x_{b,\omega}$, for all $b \in \Fn$, and for all $\mu, \omega > \omin$.
    \item There exists distinct $\mu, \omega > \omin$, such that $x_{b,\mu} = x_{b,\omega}$, for all $b \in \Fn$.
\end{enumerate}
\end{lemma}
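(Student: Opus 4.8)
The plan is to establish the cycle of implications $(i) \Rightarrow (ii) \Rightarrow (iii) \Rightarrow (i)$. The implication $(i) \Rightarrow (ii)$ is immediate from \Cref{lemma:solutions-invariant-S}: when $\Indsimple{A}{\S} = 0$ the subspace $\S$ is $A$-invariant, so the closed-form solution $x_{b,\omega,-1}$ in \eqref{eq:x_b_w_explicit_S} is independent of $\omega$; hence $x_{b,\mu} = x_{b,\omega}$ for all $b$ and all $\mu,\omega > \omin$. The implication $(ii) \Rightarrow (iii)$ is trivial since $\omin < \infty$ means we can pick two distinct values in $(\omin,\infty)$.

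The substance of the lemma is the contrapositive of $(iii) \Rightarrow (i)$: assuming $\Indsimple{A}{\S} = q \geq 1$, I must exhibit, for each pair of distinct $\mu,\omega > \omin$, some $b \in \Fn$ with $x_{b,\mu} \neq x_{b,\omega}$, i.e. $d_{b,\omega,\mu} = V^*(x_{b,\omega} - x_{b,\mu}) \neq 0$. Here I would lean directly on the machinery built for \Cref{thm:main_result}: fix the tridiagonal block decomposition \eqref{eq:tridiag-decomp-hermitian}, and use the explicit solution formula \eqref{eq:d-b-omega-mu-solution}, namely $d_{b,\omega,\mu} = (H^*H)^{-1} B^* z'_{b,\omega,\mu}(t,t')$, together with the system \eqref{eq:equivalent-system-eqs} determining $t$ (and then $t'$) from the components $c, c', c''$ of $b$. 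Since $(H^*H)^{-1}B^*$ has full column rank $q \geq 1$, it suffices to choose $b$ so that the vector $z'_{b,\omega,\mu}(t,t') \in \F^q$ is nonzero. The cleanest route is probably to work in the reduced setting $n = p + q$ (so $V''$ and $c''$ drop out), which is legitimate because \Cref{app:appB} reduces the general case to $n > p+q$ but the construction of a bad $b$ can always be carried out after restricting to $\S + A\S$; alternatively, set $c'' = 0$ outright. With $c'' = 0$, the formula for $z_{b,\omega,\mu}(t)$ in \eqref{eq:z-b-omega-mu} simplifies to $\begin{bmatrix} B & C - F_\mu \end{bmatrix} N t$, and $t$ is read off from the second equation of \eqref{eq:equivalent-system-eqs}.

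The main obstacle is showing that the map $b \mapsto d_{b,\omega,\mu}$ is not identically zero for the fixed pair $(\omega,\mu)$ — in other words, that the dependence on $b$ genuinely survives. One clean way to see this: the map $b \mapsto x_{b,\omega}$ is linear in $b$ (from \eqref{eq:x_b_w_explicit_S}, $x_{b,\omega} = \D{A}{\omega} b$), so $b \mapsto x_{b,\omega} - x_{b,\mu} = \dD{A}{\omega}{\mu} b$ is linear, and I need only show the linear operator $\dD{A}{\omega}{\mu}$ is nonzero. If it were the zero operator, then $\D{A}{\omega} = \D{A}{\mu}$ as operators $\Fn \to \S$. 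I would derive a contradiction by examining these operators on a well-chosen vector: taking $b \in \S + A\S$ with nonzero $V'$-component and zero $V$-component (i.e. $c = 0$, $c' \neq 0$, $c'' = 0$), and computing $V^* \D{A}{\omega} b$ and $V^* \D{A}{\mu} b$ using \eqref{eq:equivalent-system-eqs-proof-2}, reduces the claim to an identity in the resolvents $G_\omega^{-1}, G_\mu^{-1}$ that fails because $\omega \neq \mu$ and $G_\omega = G_\mu + (\omega - \mu)I - (F_\omega - F_\mu)$ with $G_\omega^{-1}H(H^*G_\omega^{-1}H)^{-1}$ genuinely varying in $\omega$ (this last point uses that $B \neq 0$, which holds since $q \geq 1$ forces $\rank(B) = q \geq 1$ by \Cref{cor:tridiag-blk-decomp}). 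Packaging this: if $\D{A}{\omega}$ were independent of $\omega$, then in particular $V^*\D{A}{\omega}b$ would be, forcing $(H^*G_\omega^{-1}H)^{-1}H^*G_\omega^{-1}$ applied to $\begin{bmatrix} c \\ c' \end{bmatrix}$ to be constant in $\omega$ for all $c,c'$; specializing to $c = 0$ and using $H^*G_0^{-1} = \begin{bmatrix} I & 0\end{bmatrix}$ (valid when $A \in \Pos{n}$, and reducible to this case via the shift $A \mapsto A_{\omega_0}$ using \Cref{lemma:index-bounds}(i), which does not change $\S, \S+A\S$, or the solutions) yields $(H^*G_\omega^{-1}H)^{-1}H^*G_\omega^{-1}\begin{bmatrix} 0 \\ c'\end{bmatrix} = \begin{bmatrix} 0 \\ 0 \end{bmatrix}$ must equal its value at a generic $\omega$, and a direct computation of the latter shows it is nonzero for suitable $c'$ because $B \neq 0$. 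This contradiction completes $(iii) \Rightarrow (i)$ and hence the lemma.
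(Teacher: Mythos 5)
Your implications $(i)\Rightarrow(ii)$ and $(ii)\Rightarrow(iii)$ agree with the paper. For $(iii)\Rightarrow(i)$ you attempt a genuinely different route, driving the contradiction through the block-decomposition machinery of \Cref{thm:main_result}, whereas the paper's argument is an elementary operator manipulation: it takes $b \in A_\omega A^{-1}\S^\perp$, observes $V^\ast A A_\omega^{-1}b = 0$ (hence $x_{b,\omega}=0$), deduces $V^\ast A A_\mu^{-1}b = 0$ from the hypothesis, then uses the factorization $A A_\mu^{-1} = A_\mu^{-1}A_\omega\,(A A_\omega^{-1})$ and the isomorphism $AA_\omega^{-1}\colon A_\omega A^{-1}\S^\perp \to \S^\perp$ to conclude $\Ind{A_\mu^{-1}A_\omega}{\S^\perp}=0$; since $A_\mu^{-1}A_\omega = I + (\omega-\mu)A_\mu^{-1}$ and $\omega\neq\mu$, this gives $\Ind{A_\mu^{-1}}{\S^\perp}=0$, and \Cref{lemma:index-bounds}(iii),(iv),(i) finish. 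That argument is self-contained, avoids the $n=p+q$ case split, and never touches $H,G_\omega,N$.

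Your route, as written, has a genuine gap. The reduction to $A \in \Pos{n}$ via the shift $A \mapsto A_{\omega_0}$ does not preserve the solutions $x_{b,\omega}$: the minimization $\argmin_{x\in\S}\|A_\omega^{-1/2}(b-Ax)\|_2$ involves $A$ in the residual $b-Ax$, not only $A_\omega$, so replacing $A$ by $A_{\omega_0}$ yields $\argmin_{x\in\S}\|A_\omega^{-1/2}(b - A_{\omega_0}x)\|_2$ after reparameterizing, a different problem with different minimizers whenever $\omega_0 \neq 0$. \Cref{lemma:index-bounds}(i) only says the index and the subspace $\S + A\S$ are shift-invariant, not the solutions. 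Consequently the identity $H^\ast G_0^{-1} = \begin{bmatrix}I & 0\end{bmatrix}$ from \Cref{cor:HTSH-positive} is not at your disposal. A secondary issue: hypothesis $(iii)$ gives $\D{A}{\omega}=\D{A}{\mu}$ at one specific pair of distinct points, not constancy in $\omega$, so the phrasing ``if $\D{A}{\omega}$ were independent of $\omega$'' smuggles in more than is given. Finally, the closing step — that $(H^\ast G_\omega^{-1}H)^{-1}H^\ast G_\omega^{-1}\begin{bmatrix}0 \\ c'\end{bmatrix}$ is nonzero for a suitable $c'$ ``because $B\neq 0$'' — is asserted rather than computed and is not an obvious consequence of $B\neq 0$ alone. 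The underlying goal of showing $\dD{A}{\omega}{\mu}\neq 0$ when $q\geq 1$ is, in principle, legitimate, but note that this is exactly \Cref{cor:converse-zero-dim}(iii), which the paper derives \emph{from} this lemma, so that statement cannot be invoked here without circularity; the present execution does not close the argument.
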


\begin{proof}
(i) $\rightarrow$ (ii) was proved in \Cref{lemma:solutions-invariant-S}, and (ii) $\rightarrow$ (iii) is straightforward.

We prove (iii) $\rightarrow$ (i). If $\mathcal{S} = \Fn$ we have $\Ind{A}{\Fn}=0$, so assume that $\mathcal{S} \neq \Fn$. Pick any $b \in A_{\omega} A^{-1}\mathcal{S}^{\perp}$. Then $A A_{\omega}^{-1}b \in \mathcal{S}^{\perp}$, and so $V^{\ast}A A_{\omega}^{-1}b = 0$. Since $x_{b,\mu} = x_{b,\omega}$, it follows using \eqref{eq:x_b_w_explicit_S} that $V^{\ast}A A_{\mu}^{-1}b = 0$, or $AA_{\mu}^{-1}b \in \mathcal{S}^{\perp}$. Now $A A_{\omega}^{-1}: A_{\omega} A^{-1} \mathcal{S}^{\perp} \rightarrow \mathcal{S}^{\perp}$ is an isomorphism as both $A, A_{\omega} \in \Gl{n}$, and notice that $A A_{\mu}^{-1} = A_{\mu}^{-1} A_{\omega} (A A_{\omega}^{-1})$ using the fact that $A, A_\mu, A_\omega \in \Sym{n}$; so we have in fact proved that $\Ind{A_{\mu}^{-1} A_{\omega}}{\mathcal{S}^{\perp}}=0$. Finally notice that $A_{\mu}^{-1} A_{\omega} = I + (\omega - \mu) A_{\mu}^{-1}$, which means that for any $x \in \mathcal{S}^{\perp}$, $x + (\omega - \mu)A_{\mu}^{-1}x \in \mathcal{S}^{\perp}$, and so $A_{\mu}^{-1}x \in \mathcal{S}^{\perp}$ (as $\mu \neq \omega$). Thus we conclude that $\Ind{A_{\mu}^{-1}}{\mathcal{S}^{\perp}}=0$, and by applying \Cref{lemma:index-bounds}(iii), (iv), and (i) successively, we get $\Ind{A}{\S}=0$.
\end{proof}

The equivalence of the conditions (i), (ii), and (iii) of \Cref{lemma:invariant-subspace-condition}, leads to the following corollary.

\begin{corollary}
\label{cor:converse-zero-dim}
The following statements are true.
\begin{enumerate}[(i)]
    \item $\Indsimple{A}{\S} = 0$ if and only if $\dim{\X} = 0$.
    \item $\Indsimple{A}{\S} = 0$ if and only if $\dim{\X_b} = 0$, for all $b \in \Fn$.
    \item The map $\mathbf{D}_A$ is a constant map if $\Indsimple{A}{\S} = 0$, and injective otherwise.
\end{enumerate}
\end{corollary}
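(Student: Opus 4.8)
The plan is to deduce all three parts from \Cref{lemma:invariant-subspace-condition}, whose equivalence between $\Indsimple{A}{\S} = 0$ and the constancy of $\omega \mapsto x_{b,\omega}$ (for every $b$) is the only substantive input. The bridge to the sets $\X_b$ and $\X$ is the elementary fact that for a nonempty $\mathcal{C} \subseteq \Fn$, the subspace underlying $\Aff{\mathcal{C}}$ equals $\Span{\{u - v \mid u, v \in \mathcal{C}\}}$; applied to $\mathcal{C} = \{x_{b,\omega} \mid \omega > \omin\}$ this gives $\Gam{\dim{\X_b}}{\X_b} = \Span{\{x_{b,\omega} - x_{b,\mu} \mid \omega, \mu > \omin\}}$ and hence $\X = \Span{\{x_{b,\omega} - x_{b,\mu} \mid b \in \Fn, \; \omega, \mu > \omin\}}$. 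In particular $\dim{\X_b} = 0$ is equivalent to $x_{b,\omega}$ being independent of $\omega$, and $\dim{\X} = 0$ (i.e. $\X = \{0\}$) is equivalent to $x_{b,\omega}$ being independent of $\omega$ for every $b$. I will also freely use $x_{b,\omega} = \D{A}{\omega}\, b$, which is immediate from \eqref{eq:x_b_w_explicit_S} and \Cref{def:Dww'}.

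With these in place, part (i) follows by chaining equivalences: $\Indsimple{A}{\S} = 0$ holds iff $x_{b,\omega} = x_{b,\mu}$ for all $b, \omega, \mu$ (by \Cref{lemma:invariant-subspace-condition}), iff $\X = \{0\}$ (by the correspondence above), iff $\dim{\X} = 0$; the reverse of the last step uses only that a zero-dimensional subspace is trivial, and the reverse of the middle step invokes \Cref{lemma:invariant-subspace-condition}, (iii) $\Rightarrow$ (i), for any fixed distinct pair $\omega \neq \mu$. Part (ii) is the same chain carried out at the level of a fixed $b$: $\dim{\X_b} = 0$ for all $b$ iff for every $b$ the set $\{x_{b,\omega} \mid \omega > \omin\}$ is a singleton, iff $x_{b,\mu} = x_{b,\omega}$ holds for all $b$ with some fixed distinct $\mu, \omega$ --- which is exactly condition (iii) of \Cref{lemma:invariant-subspace-condition}, so $\Indsimple{A}{\S} = 0$.

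For part (iii): if $\Indsimple{A}{\S} = 0$, then $\D{A}{\omega}\, b = x_{b,\omega} = x_{b,\mu} = \D{A}{\mu}\, b$ for all $b$ by \Cref{lemma:invariant-subspace-condition}, so $\D{A}{\omega}$ and $\D{A}{\mu}$ are linear maps on $\Fn$ agreeing everywhere and hence equal, making $\mathbf{D}_A$ constant. If instead $\Indsimple{A}{\S} \geq 1$ and $\D{A}{\omega} = \D{A}{\mu}$, then $x_{b,\omega} = x_{b,\mu}$ for all $b$; were $\omega \neq \mu$, \Cref{lemma:invariant-subspace-condition}, (iii) $\Rightarrow$ (i), would force $\Indsimple{A}{\S} = 0$, a contradiction, so $\omega = \mu$ and $\mathbf{D}_A$ is injective. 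The only point requiring any care is setting up the affine-hull/associated-subspace correspondence cleanly --- in particular that $\dim{\X} = 0$ genuinely forces $\X = \{0\}$ and thus pointwise constancy of $\omega \mapsto x_{b,\omega}$ for every $b$; once that is in place the corollary is just a repackaging of \Cref{lemma:invariant-subspace-condition} and presents no real obstacle.
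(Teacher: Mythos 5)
Your proof is correct and follows essentially the same route as the paper's: everything reduces to Lemma~\ref{lemma:invariant-subspace-condition}, mediated by the observation that $\X = \{0\}$ iff each $\X_b$ is a singleton. You spell out the affine-hull/direction-subspace correspondence more explicitly (which the paper treats as immediate), and for part (iii) you invoke Lemma~\ref{lemma:invariant-subspace-condition} where the paper cites Lemma~\ref{lemma:solutions-invariant-S} directly, but these are equivalent since the former's (i)$\Rightarrow$(ii) is proved via the latter; no substantive difference.
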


\begin{proof}
First notice that $\X = \{0\}$ if and only if $\X_b$ is a singleton, for all $b \in \Fn$. Both (i) and (ii) now follow from \Cref{lemma:invariant-subspace-condition}. For (iii), \Cref{lemma:solutions-invariant-S} implies that $\mathbf{D}_A$ is a constant map if $\Indsimple{A}{\S} = 0$, while if $\Indsimple{A}{\S} > 0$ and $\mathbf{D}_A$ is not injective, there exists distinct $\mu, \omega > \omin$, such that $\D{A}{\mu} = \D{A}{\omega}$ implying that $x_{b,\mu} = x_{b,\omega}$ for all $b \in \Fn$, thereby contradicting \Cref{lemma:invariant-subspace-condition}.
\end{proof}

An interesting consequence of \Cref{cor:converse-zero-dim} is that when $\Indsimple{A}{\S} = 1$, there must exist $b \in \Fn$ such that $\dim{\X_b} = 1$, since we know that $\dim{\X_b} \leq 1$ by \Cref{thm:main_result}. One can then ask whether this pattern holds in general, that is if $\Indsimple{A}{\S} \geq 1$, whether there always exists $b \in \Fn$ such that $\dim{\X_b} = \Indsimple{A}{\S}$. However this turns out to not be true as shown by the following example, which shows that one can have cases where $\dim{\X_b} \leq 1$ for all $b \in \Fn$, even though $\Indsimple{A}{\S}$ is arbitrarily large.

\begin{example} \label{example:converse}
For $\alpha \in \R \setminus \{1, -1\}$, let $p = q \geq 1$, $n = p+q$ and consider
\begin{equation}
    A = \begin{bmatrix} \alpha I & I \\ I & \alpha I \\ \end{bmatrix} \in \F^{n \times n}
\end{equation}
with $\S = \Span{\{e_1, \dots, e_p\}}$, where $e_k \in \Fn$ is given by $(e_k)_i = \delta_{ik}$. Notice that $\det(A) = (\alpha^2 - 1)^p$, so $A \in \Gl{n}$. Furthermore, for $\alpha > 1$, $A \in \Pos{n}$ since its eigenvalues are given by $\alpha \pm 1$. With $k(\alpha,\omega) := (\alpha + \omega)^2 - 1$ (note that $\omin = 1 - \alpha$, so $k(\alpha,\omega) > 0$ for $\omega > \omin$), we find
\begin{equation}
\begin{split}
    A_\omega^{-1} &= k(\alpha,\omega)^{-1} \begin{bmatrix}
        (\alpha + \omega) I & -I \\
        -I & (\alpha + \omega) I 
    \end{bmatrix}, \\
    A A_\omega^{-1} A &= k(\alpha,\omega)^{-1} \begin{bmatrix}
(\alpha^3+\alpha^2 \omega - \alpha + \omega) I & (\alpha^2+2\alpha \omega - 1) I \\
(\alpha^2+2\alpha \omega - 1) I & (\alpha^3+\alpha^2 \omega - \alpha + \omega) I \end{bmatrix}.
\end{split}
\end{equation}
Since $A A_\omega^{-1} A \in \Pos{p}$, first note that $\alpha^3+\alpha^2 \omega - \alpha + \omega > 0$ for $\omega > \omin$, and it follows by choosing $V^* = \begin{bmatrix} I & 0\end{bmatrix}$, and $V'^\ast = \begin{bmatrix} 0 & I\end{bmatrix}$ that
\begin{equation} \label{eq:example-alpha}
\begin{split}
    V^* x_{b,\omega} &= \frac{(\alpha^2 + \alpha \omega - 1)c + \omega c'}{\alpha^3+\alpha^2 \omega - \alpha + \omega}, \\
    d_{b,\omega,\mu} &= V^*(x_{b,\omega} - x_{b,\mu}) = \frac{(\mu - \omega)(\alpha^2 - 1)(c - \alpha c')}{(\alpha^3+\alpha^2 \omega - \alpha + \omega)(\alpha^3+\alpha^2 \mu - \alpha + \mu)}.
\end{split}
\end{equation}
It is clear that $d_{b,\omega,\mu} \in \Img{c - \alpha c'}$, so $\dim{\X_b} \leq 1$ (equality holds if and only if $c - \alpha c' \neq 0$), while $\Indsimple{A}{\S} = p$ using \Cref{lemma:rank-S-perp}.
\end{example}

The above example raises the question of whether it is possible to characterize the set of $b \in \Fn$, given $A \in \Sym{n} \cap \Gl{n}$, and $\S \in \Gr{p}{n}$, such that $\dim{\X_b} \leq s$, for some $0 \leq s \leq \Indsimple{A}{\S}$. Currently we only know a satisfactory answer when $s=0$, that we now present. As preparation, we need the following lemma.

\begin{lemma}
\label{lemma:function-1d-prop}
Consider the function $f : (\omin, \infty) \rightarrow \mathbb{R}$ defined by
\begin{equation}
    f(\omega) = \frac{\alpha_1}{\lambda_1 + \omega} + \dots + \frac{\alpha_t}{\lambda_t + \omega},
\end{equation}
with $\alpha_i, \lambda_i \in \R$ for all $1 \leq i \leq t$, and $\lambda_1 > \dots > \lambda_t \geq -\omin$. Then the following conditions are equivalent.
\begin{enumerate}[(i)]
    \item  $\alpha_1 = \dots = \alpha_t = 0$.
    \item $f(\omega) = 0$ for all $\omega \in (\beta_1, \beta_2)$,  where $\omin \leq \beta_1 < \beta_2$.
    \item There exists $\omega \in (\omin,\infty)$ such that $\left( \frac{\partial^m f}{\partial \omega^m} \right) (\omega) = 0$, for all $0 \leq m \leq t-1$.
\end{enumerate}
\end{lemma}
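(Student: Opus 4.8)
The plan is to prove the cycle of implications (i)~$\Rightarrow$~(ii)~$\Rightarrow$~(iii)~$\Rightarrow$~(i). The first implication is immediate: if all $\alpha_i$ vanish then $f$ is identically zero on $(\omin,\infty)$, hence on any subinterval, giving (ii).

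For (ii)~$\Rightarrow$~(iii) I would argue by analyticity. The function $f$ is a rational function whose only poles are the points $\omega=-\lambda_i$, and the hypothesis $\lambda_1>\dots>\lambda_t\geq-\omin$ forces $-\lambda_i\leq\omin$, so no pole lies in $(\omin,\infty)$ --- indeed $\lambda_i+\omega>0$ there --- and $f$ is real analytic on the connected open interval $(\omin,\infty)$. If $f$ vanishes on the nonempty subinterval $(\beta_1,\beta_2)\subseteq(\omin,\infty)$, then by the identity theorem for real analytic functions $f\equiv0$ on $(\omin,\infty)$; in particular, at any $\omega\in(\omin,\infty)$ all derivatives of $f$ vanish, so (iii) holds (one may also just invoke analyticity to see that all derivatives vanish at a point of $(\beta_1,\beta_2)$ itself).

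The heart of the argument is (iii)~$\Rightarrow$~(i). Fixing the $\omega$ supplied by (iii), I would set $\mu_i:=(\lambda_i+\omega)^{-1}$ for $1\leq i\leq t$; since $\omega>\omin$ and $\lambda_i\geq-\omin$ we have $\lambda_i+\omega>0$, so each $\mu_i$ is a well-defined positive real and the $\mu_i$ are pairwise distinct because the $\lambda_i$ are. Differentiating $f(\omega)=\sum_{i=1}^t\alpha_i(\lambda_i+\omega)^{-1}$ term by term gives
\begin{equation*}
\Bigl(\tfrac{\partial^m f}{\partial\omega^m}\Bigr)(\omega)=(-1)^m\,m!\sum_{i=1}^t\alpha_i\,\mu_i^{\,m+1},\qquad 0\leq m\leq t-1,
\end{equation*}
so the conditions in (iii) are equivalent to the linear system $\sum_{i=1}^t\mu_i^{\,k}\alpha_i=0$ for $k=1,\dots,t$. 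Its coefficient matrix $M=(\mu_i^{\,k})_{1\leq k,i\leq t}$ factors as $M=W\,\diag(\mu_1,\dots,\mu_t)$, where $W=(\mu_i^{\,k-1})_{1\leq k,i\leq t}$ is a Vandermonde matrix with determinant $\prod_{1\leq i<j\leq t}(\mu_j-\mu_i)\neq0$. Hence $M$ is invertible, forcing $\alpha_1=\dots=\alpha_t=0$, which is (i).

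The only real obstacle here is bookkeeping rather than genuine difficulty: one must recognize that $f(\omega)$ together with its first $t-1$ derivatives at a single point yields a square linear system in the $\alpha_i$ whose matrix is a nonsingular scaled Vandermonde matrix, and note that the standing assumption $\lambda_t\geq-\omin$ combined with $\omega>\omin$ is exactly what guarantees the nodes $\mu_i$ are finite, positive, and distinct.
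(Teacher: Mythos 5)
Your proposal is correct and follows essentially the same route as the paper: (i)$\Rightarrow$(ii) and (ii)$\Rightarrow$(iii) are handled by the triviality of the zero function (your identity-theorem detour is harmless overkill, and you note the simpler observation yourself), and (iii)$\Rightarrow$(i) reduces to the invertibility of the same scaled Vandermonde system in the nodes $\mu_i=(\lambda_i+\omega)^{-1}$, which the paper argues via the nonvanishing Vandermonde determinant. Your explicit factorization $M=W\,\mathrm{diag}(\mu_1,\dots,\mu_t)$ merely spells out why that determinant is nonzero.
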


\begin{proof}
(i) $\rightarrow$ (ii) is clear as $f$ is identically zero when $\alpha_1 = \dots = \alpha_t = 0$. (ii) $\rightarrow$ (iii) follows by choosing any $\omega \in (\beta_1, \beta_2)$, as $f$ is infinitely differentiable. We now prove (iii) $\rightarrow$ (i). For $m = 0,\dots,t-1$, we have the linear system
\begin{equation}
\label{eq: vandermonde-system}
    \begin{bmatrix}
        \frac{1}{\lambda_1 + \omega} & \frac{1}{\lambda_2 + \omega} & \dots & \frac{1}{\lambda_t + \omega} \\
        \frac{1}{(\lambda_1 + \omega)^2} & \frac{1}{(\lambda_2 + \omega)^2} & \dots & \frac{1}{(\lambda_t + \omega)^2} \\
        \vdots & \vdots & \ddots & \vdots \\
        \frac{1}{(\lambda_1 + \omega)^t} & \frac{1}{(\lambda_2 + \omega)^t} & \dots & \frac{1}{(\lambda_t + \omega)^t}
    \end{bmatrix}
    \begin{bmatrix}
        \alpha_1 \\
        \alpha_2 \\
        \vdots \\
        \alpha_t
    \end{bmatrix}
    = 0.
\end{equation}
The matrix on the left-hand side of \eqref{eq: vandermonde-system} is a Vandermonde matrix whose determinant is non-zero because $\lambda_i + \omega \neq \lambda_j + \omega$ whenever $i \neq j$, and $\lambda_i + \omega > 0$ for all $1 \leq i \leq t$, as $\omega > \omin$. Thus $\alpha_1 = \dots = \alpha_t = 0$.
\end{proof}

We can now present the first theorem of this section, which uses the notion of strong orthogonality of vectors introduced in \Cref{def:strong-orthogonality}.

\begin{theorem}
\label{thm:dim-Xb-zero-condition}
Let $A$ has $t$ distinct eigenvalues, and denote by $\S_1, \dots, \S_t$ the eigenspaces\footnote{If $\lambda$ is an eigenvalue of $A$, then the eigenspace corresponding to it is the span of all eigenvectors of $A$ with eigenvalue $\lambda$, and it is $A-$invariant.} corresponding to each distinct eigenvalue. Then the following are equivalent.
\begin{enumerate}[(i)]
    \item $\dim{\X_b} = 0$.
    \item $b - Ax_{b,\omega}$ and $v$ are strongly orthogonal with respect to $\S_1, \dots, \S_t$, for all $v \in \S$, and $\omega \in (\omin, \infty)$.
    \item There exists $\omega \in (\omin, \infty)$, such that $b - Ax_{b,\omega}$ and $v$ are strongly orthogonal with respect to $\S_1, \dots, \S_t$, for all $v \in \S$.
\end{enumerate}
\end{theorem}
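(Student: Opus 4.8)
The plan is to prove the three conditions equivalent through the cycle (i) $\Rightarrow$ (ii) $\Rightarrow$ (iii) $\Rightarrow$ (i); the engine is the spectral form of the optimality (normal) equations for $x_{b,\omega}$, combined with \Cref{lemma:function-1d-prop}. First I would fix notation. Write $A = \sum_{i=1}^{t} \lambda_i \pi_i$ for the spectral decomposition, where $\lambda_1 > \dots > \lambda_t$ are the distinct eigenvalues and $\pi_i : \Fn \rightarrow \S_i$ is the orthogonal projector onto the eigenspace $\S_i$; these are precisely the $\pi_i$ of \Cref{def:strong-orthogonality} associated to the orthogonal direct sum $\Fn = \S_1 \oplus \dots \oplus \S_t$. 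Since $\evalmin{A} = \lambda_t$, we have $\omin = -\lambda_t$, so the ordering $\lambda_1 > \dots > \lambda_t = -\omin$ fits the hypotheses of \Cref{lemma:function-1d-prop}, and $\lambda_i \neq 0$ for every $i$ because $A \in \Gl{n}$. From \eqref{eq:x_b_w_explicit_S} one checks $V^\ast A A_\omega^{-1}(b - Ax_{b,\omega}) = 0$, and $x_{b,\omega}$ is the unique point of $\S$ with this property since $V^\ast A A_\omega^{-1} A V$ is invertible; moreover, as $V$ is semi-unitary with $\Img{V} = \S$, this condition is the same as $v^\ast A A_\omega^{-1}(b - Ax_{b,\omega}) = 0$ for all $v \in \S$. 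Writing $r_\omega := b - Ax_{b,\omega}$ and using $A A_\omega^{-1} = \sum_{i} \frac{\lambda_i}{\lambda_i+\omega}\pi_i$ together with $v^\ast \pi_i r_\omega = (\pi_i v)^\ast(\pi_i r_\omega)$, the optimality condition becomes
\[ \sum_{i=1}^{t} \frac{\lambda_i}{\lambda_i + \omega}\,(\pi_i v)^\ast (\pi_i r_\omega) = 0 \qquad \text{for all } v \in \S \text{ and } \omega \in (\omin, \infty); \]
and by \Cref{def:strong-orthogonality}, $r_\omega$ and $v$ are strongly orthogonal with respect to $\S_1, \dots, \S_t$ exactly when $(\pi_i v)^\ast(\pi_i r_\omega) = 0$ for every $i$.

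For (i) $\Rightarrow$ (ii): if $\dim{\X_b} = 0$ then $x_{b,\omega}$, and hence $r_\omega$, does not depend on $\omega$; call the common value $r$. Fixing $v \in \S$, the displayed identity says the function $\omega \mapsto \sum_{i=1}^{t} \frac{\lambda_i (\pi_i v)^\ast(\pi_i r)}{\lambda_i+\omega}$ vanishes on all of $(\omin,\infty)$, so the implication (ii) $\Rightarrow$ (i) of \Cref{lemma:function-1d-prop}, applied with $\alpha_i := \lambda_i (\pi_i v)^\ast(\pi_i r)$, gives $\alpha_i = 0$ for each $i$; since $\lambda_i \neq 0$ this forces $(\pi_i v)^\ast(\pi_i r) = 0$ for each $i$, i.e. $r$ and $v$ are strongly orthogonal. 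As $v \in \S$ was arbitrary and $r$ is $\omega$-independent, (ii) follows.

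The implication (ii) $\Rightarrow$ (iii) is immediate. For (iii) $\Rightarrow$ (i): suppose $\omega_0 \in (\omin,\infty)$ is such that $r_{\omega_0} := b - Ax_{b,\omega_0}$ is strongly orthogonal to every $v \in \S$, i.e. $(\pi_i v)^\ast(\pi_i r_{\omega_0}) = 0$ for all $i$ and all $v \in \S$. Then for every $\omega \in (\omin,\infty)$ and $v \in \S$,
\[ v^\ast A A_\omega^{-1} r_{\omega_0} = \sum_{i=1}^{t} \frac{\lambda_i}{\lambda_i+\omega}\,(\pi_i v)^\ast(\pi_i r_{\omega_0}) = 0, \]
so $V^\ast A A_\omega^{-1}(b - Ax_{b,\omega_0}) = 0$ with $x_{b,\omega_0} \in \S$; by the uniqueness noted above (the solution of the $\omega$-problem being the only point of $\S$ satisfying its optimality condition), $x_{b,\omega} = x_{b,\omega_0}$ for all $\omega > \omin$. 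Hence $\{x_{b,\omega} \mid \omega > \omin\}$ is a single point and $\dim{\X_b} = 0$.

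The only step doing genuine work is (i) $\Rightarrow$ (ii), and there it is exactly \Cref{lemma:function-1d-prop} once the optimality condition is put into partial-fraction form; the rest is bookkeeping. The points to treat carefully are: the use of $A \in \Gl{n}$ to guarantee $\lambda_i \neq 0$ (so that $\lambda_i \alpha = 0$ forces $\alpha = 0$); matching the ordering $\lambda_1 > \dots > \lambda_t = -\omin$ to the hypotheses of \Cref{lemma:function-1d-prop}; the identification of $V^\ast(\cdot) = 0$ with orthogonality to $\S$; and invoking the uniqueness of the minimizer (from \Cref{lemma:x-bw}) in (iii) $\Rightarrow$ (i). Note that condition (iii) of \Cref{lemma:function-1d-prop} — the vanishing of all derivatives at a single point — is not needed here; only the equivalence of its conditions (i) and (ii) is used.
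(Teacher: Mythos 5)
Your proof is correct and takes essentially the same route as the paper's: both reduce the optimality condition via the spectral decomposition $AA_\omega^{-1} = \sum_i \lambda_i(\lambda_i+\omega)^{-1}\pi_i$ to a partial-fraction identity, invoke the (i) $\Leftrightarrow$ (ii) equivalence of \Cref{lemma:function-1d-prop}, and use $A\in\Gl{n}$ to cancel the $\lambda_i$ factors. The only cosmetic difference is that you keep $\omega$ as the moving parameter while the paper freezes one $\omega$ and varies a second parameter $\mu$; the underlying identity and the characterization of $x_{b,\omega}$ by $V^\ast AA_\omega^{-1}(b-Ax_{b,\omega})=0$ are identical.
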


\begin{proof}
We recall that for a Hermitian matrix, the eigenvectors are complete and the eigenspaces corresponding to distinct eigenvalues are orthogonal. Thus we have an orthogonal direct sum decomposition $\Fn = \S_1 \oplus \dots \oplus \S_t$, and the notion of strong orthogonality with respect to $\S_1,\dots,\S_t$ is well defined. Let $\lambda_1 > \dots > \lambda_t$ be the distinct eigenvalues of $A$, and without loss of generality assume that $\S_i$ is the eigenspace corresponding to $\lambda_i$, and let $Q_i$ be a semi-unitary matrix such that $\Img{Q_i} = \S_i$, for all $1 \leq i \leq t$. Notice that for any vectors $v_1, v_2 \in \Fn$, their orthogonal projection on $\S_i$ is given by $Q_i Q_i^\ast v_1$ and $Q_i Q_i^\ast v_2$ respectively; so we have 
\begin{equation}
\label{eq:dim-Xb-zero-condition-proof-0}
    (Q_i Q_i^\ast v_1)^\ast (Q_i Q_i^\ast v_2) = (Q_i^\ast v_1)^\ast (Q_i^\ast v_2).
\end{equation}
Also note that (ii) $\rightarrow$ (iii) is clear as (ii) is strictly stronger than (iii).

We first prove (i) $\rightarrow$ (ii). From \eqref{eq:x_b_w_explicit_S}, $\dim{\X_b} = 0$ implies that for all $\omega, \mu \in (\omin, \infty)$, $V ( V^* A A_\mu^{-1} A V)^{-1} V^* A A_\mu^{-1} b = x_{b,\omega}$. Fixing $\omega \in (\omin, \infty)$ we get that $V^* A A_\mu^{-1} b = ( V^* A A_\mu^{-1} A V) V^* x_{b,\omega}$, or $V^* A A_\mu^{-1} (b - Ax_{b,\omega}) = 0$, and then fixing any $v \in \S$, so $v = Vy$ for some $y \in \Fp{p}$, we get 
\begin{equation}
\label{eq:dim-Xb-zero-condition-proof-1}
    v^\ast A A_\mu^{-1} (b - Ax_{b,\omega}) = 0,
\end{equation}
for all $\mu \in (\omin, \infty)$. Now since $A \in \Sym{n}$, from the spectral theorem we have $A = \sum_{i=1}^{t} \lambda_i Q_i Q_i^\ast$, $A_\mu^{-1} = \sum_{i=1}^{t} (\lambda_i + \mu)^{-1} Q_i Q_i^\ast$, and $AA_\mu^{-1} = \sum_{i=1}^{t} \lambda_i (\lambda_i + \mu)^{-1}  Q_i Q_i^\ast$; thus plugging into \eqref{eq:dim-Xb-zero-condition-proof-1} gives $\sum_{i=1}^{t} \lambda_i (\lambda_i + \mu)^{-1} (Q_i^\ast v)^\ast (Q_i^\ast (b - Ax_{b,\omega})) = 0$, for all $\mu \in (\omin, \infty)$. It follows from the equivalence of conditions (i) and (ii) in \Cref{lemma:function-1d-prop}, and since $\lambda_i \neq 0$ as $A \in \Gl{n}$, that $(Q_i^\ast v)^\ast (Q_i^\ast (b - Ax_{b,\omega})) = 0$, for all $1 \leq i \leq t$. Using \eqref{eq:dim-Xb-zero-condition-proof-0} and since $v$ and $\omega$ are arbitrary, we now conclude that $b - Ax_{b,\omega}$ and $v$ are strongly orthogonal with respect to $\S_1, \dots, \S_t$, for all $v \in \S$, and $\omega \in (\omin, \infty)$.

It remains to prove (iii) $\rightarrow$ (i), so assume (iii) is true. Then first fixing $v \in \S$, and for all $\mu \in (\omin, \infty)$, we have that $\sum_{i=1}^{t} \lambda_i (\lambda_i + \mu)^{-1} (Q_i^\ast v)^\ast (Q_i^\ast (b - Ax_{b,\omega})) = 0$, i.e. \eqref{eq:dim-Xb-zero-condition-proof-1} holds, and now since $v$ is arbitrary this implies $V^* A A_\mu^{-1} (b - Ax_{b,\omega}) = 0$. Since $V V^\ast x_{b,\omega} = x_{b,\omega}$, it finally follows that $V^* A A_\mu^{-1} b = (V^* A A_\mu^{-1} A V) V^\ast x_{b,\omega}$, or $x_{b,\mu} = x_{b,\omega}$, for all $\mu \in (\omin, \infty)$. This finishes the proof of the theorem.
\end{proof}

As a consequence, we also show the following result, proving that, if $\Indsimple{A}{\S} \geq 1$, then the set $\{b \mid \dim{\X_b} = 0\}$ is of measure zero
(if $\Indsimple{A}{\S} = 0$, then $\dim{\X_b} = 0$ for all $b$ by \Cref{cor:converse-zero-dim}(ii)).

\begin{corollary}
\label{cor:zero-dim-characterization}
Assume $\Indsimple{A}{\S} \geq 1$.
Then the set $\{b \in \F^n \mid \dim{\X_b} = 0\}$ is a non-trivial subspace of $\Fn$ that has $n$-dimensional (resp. $2n$-dimensional) Lebesgue measure zero in $\Fn$, for $\F = \R$ (resp. $\F = \C$).
\end{corollary}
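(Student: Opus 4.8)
The plan is to analyze the set $Z := \{b \in \Fn \mid \dim{\X_b} = 0\}$ by establishing, in order, that it is a subspace, that it is non-trivial, and that it is a \emph{proper} subspace (from which the measure-zero assertion is immediate).

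First I would record that for each fixed $\omega$ the map $b \mapsto x_{b,\omega} = \D{A}{\omega}b$ is $\F$-linear, by the explicit formula \eqref{eq:x_b_w_explicit_S}. Since $\X_b = \Aff{\{x_{b,\omega} \mid \omega > \omin\}}$, we have $\dim{\X_b} = 0$ precisely when all the points $x_{b,\omega}$ coincide, i.e. when $\dD{A}{\omega}{\mu}b = 0$ for all $\omega,\mu \in (\omin,\infty)$. Hence $Z = \bigcap_{\omega,\mu > \omin} \Ker{\dD{A}{\omega}{\mu}}$ is an intersection of subspaces of $\Fn$, so it is a subspace.

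Next, for non-triviality I would exhibit the inclusion $A\S \subseteq Z$. Indeed, if $b = Ac$ with $c \in \S$, then $b - Ax = A(c-x)$, so the objective $\|A_\omega^{-1/2}(b-Ax)\|_2$ vanishes at the admissible point $x = c \in \S = \T$; by uniqueness of the minimizer (\Cref{lemma:x-bw}) this forces $x_{b,\omega} = c$ for every $\omega > \omin$, so $\dim{\X_{Ac}} = 0$, i.e. $Ac \in Z$. Since $A$ is invertible and $p = \dim{\S} \geq 1$ (which $\Indsimple{A}{\S} \geq 1$ forces), $\dim{A\S} = p \geq 1$, so $Z \neq \{0\}$.

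Finally I would argue $Z \neq \Fn$: if $Z = \Fn$ then $\dim{\X_b} = 0$ for all $b \in \Fn$, which by \Cref{cor:converse-zero-dim}(ii) forces $\Indsimple{A}{\S} = 0$, contradicting the hypothesis. A proper $\F$-subspace of $\Fn$ has real dimension at most $n-1$ when $\F = \R$ and at most $2(n-1)$ when $\F = \C$, hence has Lebesgue measure zero in $\R^n$ (resp. $\R^{2n}$), completing the proof. The only genuinely substantive observation is that zero-residual right-hand sides $b \in A\S$ produce $\omega$-independent solutions; once that is noticed, the remaining steps are routine bookkeeping together with \Cref{lemma:x-bw} and \Cref{cor:converse-zero-dim}, so I do not expect a real obstacle here.
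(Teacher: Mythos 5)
Your proof is correct, and the non-triviality and properness steps coincide exactly with the paper's (both produce a zero-residual right-hand side $b \in A\S$ and invoke \Cref{cor:converse-zero-dim}(ii)). The genuine difference is in showing that $Z := \{b \mid \dim{\X_b}=0\}$ is a subspace. You observe directly from \eqref{eq:x_b_w_explicit_S} that $b \mapsto x_{b,\omega}$ is linear, so $Z = \bigcap_{\omega,\mu>\omin}\Ker{\dD{A}{\omega}{\mu}}$ is an intersection of kernels — short and self-contained. The paper instead routes through \Cref{thm:dim-Xb-zero-condition}: it builds, from the eigenspace decomposition of $A$ and the strong-orthogonality characterization, an explicit linear operator $F_\omega : \Fn \to \F^{p\times t}$ (depending on a single fixed $\omega$) with $Z = \Ker{F_\omega}$. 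Your route is more elementary and avoids the strong-orthogonality machinery entirely; the paper's route is heavier but yields a \emph{concrete} description of $Z$ as the kernel of one computable operator rather than an a priori uncountable intersection, which is information the paper has available anyway since \Cref{thm:dim-Xb-zero-condition} was proved for its own sake. For the measure-zero conclusion both are equally sufficient.
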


\begin{proof}
Fix any $\omega \in (\omin, \infty)$. Assume $A$ has $t$ distinct eigenvalues, and 
let $\S_1, \dots, \S_t$ be the $t$ associated eigenspaces, and $Q_1, \dots, Q_t$ semi-unitary such that $\Img{Q_i} = \S_i$, for all $1 \leq i \leq t$. We first note the following fact. Let $b \in \F^n$, and assume that $b - Ax_{b,\omega}$ and $v$ are strongly orthogonal with respect to $\S_1,\dots,\S_t$, for all $v \in \S$. 
Then letting $\{v_j\}_{j=1}^p$ be a basis for $\S$, it implies $(Q_i^* v_j)^* (Q_i^* (b - A x_{b,\omega})) = 0$,
or equivalently $f_{\omega,ij} b = 0$ for all $1 \leq i \leq t$, and $1 \leq j \leq p$,
where $f_{\omega,ij} := v_j^* Q_i Q_i^* (I - A \D{A}{\omega})$
and $\D{A}{\omega}$ is defined in \eqref{eq:Dww'}.
If we now define the linear operator $F_{\omega}: \F^n \to \F^{p \times t}$ as $[F_\omega(b')]_{ij} = f_{\omega,ij} b'$, for any $b' \in \Fn$, it follows from \Cref{thm:dim-Xb-zero-condition} and because $\{v_j\}_{j=1}^p$ is a basis for $\S$, that $\dim{\X_b}=0$ if and only if $F_\omega(b)=0$, or $b \in \Ker{F_\omega}$.

The discussion above already shows that the set $\{b \in \F^n \mid \dim{\X_b} = 0\}$ is a subspace, namely $\Ker{F_\omega}$. Since $\Indsimple{A}{\S} \geq 1$, we know from \Cref{cor:converse-zero-dim}(ii) that there exist $\bar b \neq 0$ such that $\dim{\X_{\bar b}} \geq 1$; so it also follows that $\bar b \not \in \Ker{F_\omega}$, or $\Ker{F_{\omega}} \neq \F^n$. Thus $\Ker{F_{\omega}}$ is at least of codimension $1$, and so it has $n$-dimensional (resp. $2n$-dimensional) Lebesgue measure zero in $\Fn$, when $\F = \R$ (resp. $\F = \C$). It remains to prove that $\Ker{F_\omega} \neq \{0\}$. Since $\Indsimple{A}{\S} \geq 1$, we have $\dim{\S} \geq 1$, so there exist $\hat{b} \in A\S$ such that $\hat{b} \neq 0$. Then $y := A^{-1} \hat{b} \in \S$, and we have $x_{\hat{b},\omega} = y$ for all $\omega \in (\omin,\infty)$ as
$\hat{b} - Ay = 0$, or $\dim{\X_{\hat{b}}} = 0$. Thus $\hat{b} \in \Ker{F_\omega}$, and we conclude.
\end{proof}
\subsection{Conditions when \normalfont{\texorpdfstring{$\dim{\X} = \Indsimple{A}{\S}$}{bounds2}}}
\label{ssec:weak-converse}

The purpose of this subsection is to provide sufficient conditions under which $\dim{\X} = \Indsimple{A}{\S}$. From \Cref{cor:converse-zero-dim}(ii) we already know that $\dim{\X} = 0$ if and only if $\Indsimple{A}{\S} = 0$, so it follows that $\dim{\X} \geq 1$ implies $\Indsimple{A}{\S} \geq 1$. Thus, throughout this subsection we will assume that $q \geq \dim{\X} \geq 1$. Moreover, by the discussion in \Cref{app:appB} and \Cref{lemma:reduction1}, we can also assume without loss of generality, that $n > p+q$ (otherwise in the $n = p+q$ case we consider the modified problem).

We start with the observation that a sufficient condition to ensure $\dim{\X} = \Indsimple{A}{\S}= q$ is that for every $y \in \Y$ (with $\Y = \Img{V (H^\ast H)^{-1} B^\ast}$ as defined in \Cref{thm:main_result}), there exist $b \in \Fn$ and $\omega, \mu > \omin$, such that $y = x_{b,\omega} - x_{b,\mu}$, or equivalently $V^\ast y = V^\ast (x_{b,\omega} - x_{b,\mu}) = d_{b,\omega,\mu}$. This is because $\dim{\Y} = q$, and $\X \subseteq \Y$. But since $\Y = \Img{V (H^\ast H)^{-1} B^\ast}$, this is equivalent to showing that for every $u \in \Fp{q}$, there exist $b, \omega, \mu$ such that $d_{b,\omega,\mu} = (H^\ast H)^{-1} B^\ast u$. For convenience, let us define for all $\omega > \omin$, $J_\omega \in \Fpq{(p+q)}{(p+q)}$ and $E_\omega \in \Fpq{(n - p - q)}{(n - p - q)}$ as
\begin{equation}
\label{eq:Jmu}
    J_\omega := G_\omega^{-1} \left (H (H^\ast G_\omega^{-1} H)^{-1} H^\ast G_\omega^{-1} - I \right), \;\;\; E_\omega := E + \omega I.
\end{equation}
Let $N$ be defined as in \Cref{lemma:HTSH}, and suppose $N = \begin{bmatrix}N_1 \\ N_2\end{bmatrix}$ be a partitioning of $N$, where $N_1 \in \Fpq{p}{q}$ and $N_2 \in \Fpq{q}{q}$. Then by \Cref{lemma:HTSH} and the quantities defined therein, we deduce the following sufficient condition:

\begin{lemma} 
\label{lemma:sufficient-condition}
Let $u \in \Fp{q}$ be fixed. If there exist $b \in \Fn$, $\omega, \mu \in (\omin, \infty)$, and $t, t' \in \Fp{q}$ satisfying the system
\begin{equation}
\label{eq:sufficient-cond}
\begin{cases}
    & T (H^\ast H)^{-1} B^\ast u = N_1 (\omega t' - \mu t) \\
    & B (H^\ast H)^{-1} B^\ast u = N_2 (\omega t' - \mu t) + z'_{b,\omega,\mu}(t,t') \\
    & Nt = J_\mu \begin{bmatrix} c \\ c' - D^\ast E_\mu^{-1} c'' \end{bmatrix},
\end{cases}
\end{equation}
with $z'_{b,\omega,\mu}(t,t') = \begin{bmatrix} B & C - F_\omega \end{bmatrix} N t' - z_{b,\omega,\mu}(t)$ (as defined in \eqref{eq:z-b-omega-mu}), then $d_{b,\omega,\mu} = (H^\ast H)^{-1} B^\ast u$. 
Conversely, for fixed $(b,\omega,\mu)$, if $d_{b,\omega,\mu} = (H^\ast H)^{-1} B^\ast u$, then there exists $(t,t')$ satisfying \eqref{eq:sufficient-cond}.
Finally, if a $(b,\omega,\mu,t,t')$ exists for every $u \in \Fp{q}$ solving \eqref{eq:sufficient-cond}, then  $\dim{\X} = q$. 

\end{lemma}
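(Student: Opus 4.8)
The plan is to reduce all three assertions to the apparatus already built in \Cref{lemma:HTSH} and in the proof of \Cref{thm:main_result}; no new analytic ingredient is needed. Recall that, under the standing assumptions of this subsection, $\begin{bmatrix} V & V' & V'' \end{bmatrix}$ is unitary, so every $b \in \Fn$ is uniquely $b = Vc + V'c' + V''c''$, and for such $b$ and $\omega,\mu > \omin$, \Cref{lemma:HTSH} characterizes $d_{b,\omega,\mu}$ as the first coordinate of the unique pair $(d,t)$ solving \eqref{eq:equivalent-system-eqs}; moreover the proof of \Cref{thm:main_result} produces (from this same $t$) a vector $t'$ with
\[
    H d_{b,\omega,\mu} = N(\omega t' - \mu t) + \begin{bmatrix} 0 \\ z'_{b,\omega,\mu}(t,t') \end{bmatrix},
\]
the passage being: the first equation of \eqref{eq:equivalent-system-eqs} is equivalent to $H d_{b,\omega,\mu} + \mu N t + \begin{bmatrix} 0 \\ z_{b,\omega,\mu}(t) \end{bmatrix} = G_\omega N t'$, and then one uses $H^\ast N = \begin{bmatrix} T & B^\ast \end{bmatrix} N = 0$ together with $G_\omega N t' = \omega N t' + \begin{bmatrix} 0 \\ \begin{bmatrix} B & C - F_\omega \end{bmatrix} N t' \end{bmatrix}$.

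For the forward implication, suppose $(b,\omega,\mu,t,t')$ solves \eqref{eq:sufficient-cond} and set $d := (H^\ast H)^{-1} B^\ast u$. Using the $J_\mu$ notation of \eqref{eq:Jmu}, the third equation of \eqref{eq:sufficient-cond} is exactly the second equation of \eqref{eq:equivalent-system-eqs}. The first two equations of \eqref{eq:sufficient-cond}, being respectively the top $p$ and bottom $q$ block rows of a single identity (recall $H = \begin{bmatrix} T \\ B \end{bmatrix}$ and $N = \begin{bmatrix} N_1 \\ N_2 \end{bmatrix}$), assemble to $H d = N(\omega t' - \mu t) + \begin{bmatrix} 0 \\ z'_{b,\omega,\mu}(t,t') \end{bmatrix}$. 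Running the algebra of the first paragraph in reverse converts this to $H d + \mu N t + \begin{bmatrix} 0 \\ z_{b,\omega,\mu}(t) \end{bmatrix} = G_\omega N t'$, and applying $H^\ast G_\omega^{-1}$ (using $H^\ast N = 0$) recovers the first equation of \eqref{eq:equivalent-system-eqs} with $d$ in place of $d_{b,\omega,\mu}$. Thus $(d,t)$ solves \eqref{eq:equivalent-system-eqs}, and the uniqueness clause of \Cref{lemma:HTSH} forces $d = d_{b,\omega,\mu}$. The converse is the same chain read backwards: given $(b,\omega,\mu)$ with $d_{b,\omega,\mu} = (H^\ast H)^{-1}B^\ast u$, take $t$ from \Cref{lemma:HTSH} (which gives the third equation of \eqref{eq:sufficient-cond}) and $t'$ from the displayed identity of the first paragraph, then split that identity into its top $p$ and bottom $q$ rows to obtain the first two equations of \eqref{eq:sufficient-cond}.

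For the final assertion, the forward implication just proved says that whenever \eqref{eq:sufficient-cond} is solvable for a given $u \in \Fp{q}$, there exist $b,\omega,\mu$ with $d_{b,\omega,\mu} = (H^\ast H)^{-1}B^\ast u$, hence $x_{b,\omega} - x_{b,\mu} = V d_{b,\omega,\mu} = V(H^\ast H)^{-1}B^\ast u$ (using $x_{b,\omega}, x_{b,\mu} \in \S = \Img{V}$ and $V$ semi-unitary). If this holds for every $u$, then as $u$ ranges over $\Fp{q}$ the vectors $V(H^\ast H)^{-1}B^\ast u$ range over all of $\Y = \Img{V(H^\ast H)^{-1}B^\ast}$, so $\Y \subseteq \X$; combined with $\X \subseteq \Y$ from \Cref{cor:strong-bound}(ii) and $\dim{\Y} = q$ established in the proof of \Cref{thm:main_result}, this gives $\dim{\X} = q = \Indsimple{A}{\S}$.

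I expect the only real friction to be bookkeeping: making sure the free variables $t, t'$ in \eqref{eq:sufficient-cond} are matched with the $t$ constructed in \Cref{lemma:HTSH} and the $t'$ constructed in the proof of \Cref{thm:main_result}, and verifying that the block-row decompositions of $H$ and $N$ line up the first two equations of \eqref{eq:sufficient-cond} with the single vector identity for $H d$. Everything else is routine linear algebra over the already-established decomposition \eqref{eq:tridiag-decomp-hermitian1}.
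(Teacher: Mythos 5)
Your proposal is correct and follows essentially the same route as the paper's proof: stack the first two equations of \eqref{eq:sufficient-cond} into $Hd + \mu Nt + \begin{bmatrix} 0 \\ z_{b,\omega,\mu}(t)\end{bmatrix} = G_\omega N t'$, identify the third equation with the second equation of \eqref{eq:equivalent-system-eqs}, invoke the uniqueness clause of \Cref{lemma:HTSH} (in both directions), and settle the final claim via $\X \subseteq \Y$, $\dim{\Y} = q$ as argued just before the lemma. No gaps; the bookkeeping you flag (matching $t,t'$ and the block rows of $H$, $N$) works out exactly as you describe.
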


\begin{proof}
Combining the first two equations of \eqref{eq:sufficient-cond} gives $H (H^\ast H)^{-1} B^\ast u + \mu N t + \begin{bmatrix} 0 \\ z_{b,\omega,\mu}(t) \end{bmatrix} = G_\omega N t'$.
Since $N$ is a basis for the nullspace of $H^\ast$, this is equivalent to the first equation of \eqref{eq:equivalent-system-eqs} with $d = (H^\ast H)^{-1} B^\ast u$.
Now let $u \in \F^q$ be fixed. By applying \Cref{lemma:HTSH} we find $d_{b,\omega,\mu} = (H^\ast H)^{-1} B^\ast u$.
For the converse, suppose $d_{b,\omega,\mu} = (H^\ast H)^{-1} B^\ast u$. Then by
\Cref{lemma:HTSH}, $t$ exists such that \eqref{eq:equivalent-system-eqs} is satisfied. Since $N$ is a basis for the nullspace of $H^\ast$, there exist $t'$ such that $Hd+\mu N t + \begin{bmatrix} 0 \\ z_{b,\omega,\mu}(t)\end{bmatrix} = G_\omega N t'$ and the conclusion follows. Finally, we have already argued the last statement in the paragraph immediately before this lemma.
\end{proof}

Because of this result, our task now reduces to finding conditions that guarantee solutions to \eqref{eq:sufficient-cond}. It turns out that the first and third equations of \eqref{eq:sufficient-cond} pose no obstructions, which we show in the next lemma, and recall that we denote $c = V^\ast b$, $c' = V'^\ast b$ and $c'' = V''^\ast b$. So a choice of $(c,c',c'')$ uniquely defines $b$, and vice-versa.

\begin{lemma}
\label{lemma:first-third}
The following statements are true:
\begin{enumerate}[(i)]
    \item For every $u \in \Fp{q}$, there exists a unique $t'' \in \Fp{q}$, such that $T (H^\ast H)^{-1} B^\ast u = N_1 t''$, showing that the first equation of \eqref{eq:sufficient-cond} also admits a solution for some $\omega, \mu > \omin$, and $t,t' \in \Fp{q}$.
    \item For every $\mu > \omin$, $t \in \Fp{q}$, and $c'' \in \Fp{n-p-q}$, there exists $c \in \Fp{p}$, and $c' \in \Fp{q}$ solving the third equation of \eqref{eq:sufficient-cond}. Conversely for every $\mu > \omin$, and $b \in \Fn$, there exists a unique $t \in \Fp{q}$ solving the third equation of \eqref{eq:sufficient-cond}.
\end{enumerate}
\end{lemma}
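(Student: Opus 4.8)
The plan is to treat the two parts separately, in each case reducing solvability to a statement about the range of a single linear map.

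\textbf{Part (i).} First I would show that $N_1$ is injective. Writing $H^\ast = \begin{bmatrix} T & B^\ast \end{bmatrix}$ and $N = \begin{bmatrix} N_1 \\ N_2 \end{bmatrix}$, the defining relation $H^\ast N = 0$ becomes $T N_1 + B^\ast N_2 = 0$. If $N_1 v = 0$, then $B^\ast N_2 v = -T N_1 v = 0$, and since $B$ has full rank $q$ by \Cref{cor:tridiag-blk-decomp} (so $B^\ast$ has full column rank $q$, using $q \le p$ from \Cref{lemma:basic_properties}(i)), we get $N_2 v = 0$; hence $N v = 0$ and $v = 0$ by full column rank of $N$. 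Consequently any $t''$ with $T (H^\ast H)^{-1} B^\ast u = N_1 t''$ is unique. For existence, set $d := (H^\ast H)^{-1} B^\ast u$ and compute $H^\ast \begin{bmatrix} T d \\ B d - u \end{bmatrix} = (T^2 + B^\ast B) d - B^\ast u = H^\ast H d - B^\ast u = 0$, so $\begin{bmatrix} T d \\ B d - u \end{bmatrix} \in \Ker{H^\ast} = \Img{N}$; writing this vector as $N t''$ and reading off its top $p$ rows yields $T(H^\ast H)^{-1} B^\ast u = T d = N_1 t''$. Since the first equation of \eqref{eq:sufficient-cond} is exactly the top-$p$-row block of $H d = N(\omega t' - \mu t) + \begin{bmatrix} 0 \\ z'_{b,\omega,\mu}(t,t') \end{bmatrix}$ with $d = (H^\ast H)^{-1}B^\ast u$, it reads $N_1(\omega t' - \mu t) = N_1 t''$, which is satisfied by, e.g., $\mu$ arbitrary, $t = 0$, any $\omega > \max\{0,\omin\}$, and $t' = t''/\omega$.

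\textbf{Part (ii).} The crucial structural fact I would record first is that $\Img{J_\omega} = \Img{N}$ for every $\omega \in (\omin,\infty)$. Since $G_\omega \in \Pos{p+q}$ and $H$ has full column rank $p$, the matrix $H^\ast G_\omega^{-1} H \in \Pos{p}$ is invertible, so $P_\omega := H (H^\ast G_\omega^{-1} H)^{-1} H^\ast G_\omega^{-1}$ is idempotent; moreover $P_\omega x = 0 \iff H^\ast G_\omega^{-1} x = 0 \iff G_\omega^{-1} x \in \Ker{H^\ast} = \Img{N}$, i.e. $\Ker{P_\omega} = G_\omega \Img{N}$. Hence $I - P_\omega$ is the complementary projection, with image $G_\omega \Img{N}$, and $J_\omega = -G_\omega^{-1}(I - P_\omega)$ has range $G_\omega^{-1}(G_\omega \Img{N}) = \Img{N}$. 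The converse assertion of (ii) is then immediate: for any $\mu > \omin$ and $b \in \Fn$ the right-hand side of the third equation lies in $\Img{J_\mu} = \Img{N}$, hence equals $N t$ for some $t$, necessarily unique since $N$ is injective. For the forward assertion, given $\mu, t, c''$, put $w := -G_\mu N t = \begin{bmatrix} w_1 \\ w_2 \end{bmatrix}$ with $w_1 \in \Fp{p}$, $w_2 \in \Fp{q}$; then $w \in G_\mu \Img{N} = \Img{(I - P_\mu)}$, so $(I - P_\mu) w = w$ and $J_\mu w = -G_\mu^{-1} w = N t$. It therefore suffices to choose $c := w_1$ and $c' := D^\ast E_\mu^{-1} c'' + w_2$, which solves $\begin{bmatrix} c \\ c' - D^\ast E_\mu^{-1} c'' \end{bmatrix} = w$.

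I do not expect a genuine obstacle here; the only points requiring care are bookkeeping ones — correctly matching the block partitions of $H$, $N$, and $\begin{bmatrix} 0 \\ z'_{b,\omega,\mu} \end{bmatrix}$ so that the top-$p$-row block of the identity from the proof of \Cref{thm:main_result} really is the first equation of \eqref{eq:sufficient-cond}, and verifying that $H (H^\ast G_\omega^{-1} H)^{-1}$ is injective (being $H$ composed with an invertible matrix) so that $\Ker{P_\omega} = G_\omega \Img{N}$ holds as claimed. Once these projection identities are in place the lemma follows from the two range computations above.
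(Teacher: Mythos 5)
Your proof is correct, and in part (i) it takes a genuinely cleaner route than the paper's. The paper proves existence of $t''$ by invoking the geometric characterization of $\Img{N_1}$ as $\Img{T}^\perp \oplus T^{\dagger}(\Img{T}\cap\Img{B^\ast})$ developed in \Cref{app:appC} (specifically \Cref{cor:nullspace-H*}), tracing $Tu'$ into $T^{\dagger}(\Img{T}\cap\Img{B^\ast})$; you instead exhibit the explicit vector $\begin{bmatrix} Td \\ Bd-u\end{bmatrix}$, verify by one line that $H^\ast\begin{bmatrix} Td \\ Bd-u\end{bmatrix}=H^\ast Hd-B^\ast u=0$, and read off the top block. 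That sidesteps the pseudoinverse machinery entirely, and your direct proof that $N_1$ is injective (via $TN_1+B^\ast N_2=0$ plus full column rank of $B^\ast$ and $N$) also avoids appealing to \Cref{cor:nullspace-H*}(i). In part (ii) both proofs hinge on $\Img{J_\mu}=\Img{N}$, but the routes differ slightly: the paper factors $J_\mu$ through $G_\mu^{-1/2}$ and uses that the inner factor is an \emph{orthogonal} projector onto $\Img{H_\mu}^\perp$, then combines a rank count with the identity $H^\ast J_\mu=0$; you work directly with the oblique projector $P_\mu=H(H^\ast G_\mu^{-1}H)^{-1}H^\ast G_\mu^{-1}$ and compute $\Ker{P_\mu}=G_\mu\Img{N}$, avoiding the symmetric square root. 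Your approach has the side benefit of producing the explicit $(c,c')$ in one step via $w=-G_\mu Nt$, whereas the paper only asserts existence of the preimage $v$. Both are correct; yours is a bit more self-contained.
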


\begin{proof}
\begin{enumerate}[(i)]
    \item This proof relies on the results of \Cref{app:appC}. Fix $u \in \Fp{q}$. Firstly, it has already been argued in \Cref{cor:nullspace-H*}(i) that $\rank{(N_1)} = q$, so it follows that $t''$ is unique, if it exists. Let $u' = (H^\ast H)^{-1} B^\ast u$, and so $(H^\ast H) u' = (T^2 + B^\ast B) u' = B^\ast u$, or equivalently $T^2u' = B^\ast(u - Bu')$. But this then implies that $T^2 u' \in \Img{T} \cap \Img{B^\ast}$, and since $Tu' \in \Img{T}$ also, we further deduce that $Tu' \in T^{\dagger} (\Img{T} \cap \Img{B^\ast})$, where $T^{\dagger}$ denotes the pseudoinverse of $T$, defined in \Cref{cor:nullspace-H*}. Finally, \Cref{cor:nullspace-H*}(ii) shows that $T^{\dagger} (\Img{T} \cap \Img{B^\ast}) \subseteq \Img{N_1}$, and so $Tu' \in \Img{N_1}$, proving existence of $t''$. One can now choose $t = t' = t''$ and $\omega, \mu > \omin$ such that $\omega - \mu = 1$, which ensures $\omega t' - \mu t = t''$, and solves the first equation of \eqref{eq:sufficient-cond}.
    \item Fix any $\mu > \omin$. Let $H_\mu = G_\mu^{-1/2} H$, and notice that since $\rank{(H)} = p$, we have $\rank{(H_\mu)} = p$, and $\dim{\Img{H_\mu}^\perp} = q$. Also note that $J_\mu = - G_\mu^{-1/2} (I - H_\mu(H_\mu^\ast H_\mu)^{-1} H_\mu^\ast) G_\mu^{-1/2}$, from which we may observe that the inner factor is an orthogonal projector onto $\Img{H_\mu}^\perp$, and so $\rank{(J_\mu)} = \dim{\Img{J_\mu}} = q$ using invertibility of $G_\mu^{-1/2}$. Moreover, one can compute that $H^\ast J_\mu = 0$, which gives $\Img{J_\mu} \subseteq \Img{N}$, since the columns of $N$ span the null space of $H^\ast$. But $\rank{(N)} = q$, and so in fact 
    \begin{equation}
    \label{eq:Jmu-N-images}
        \Img{J_\mu} = \Img{N}.
    \end{equation}
    Now if $b \in \Fn$ is given, \eqref{eq:Jmu-N-images} implies the  existence of $t \in \Fp{q}$ satisfying the third equation of \eqref{eq:sufficient-cond}, and it is unique as $N$ is full column rank (by construction). Next suppose that $t$ and $c''$ are given. Then \eqref{eq:Jmu-N-images} again implies the existence of $v \in \F^{p+q}$ such that $Nt = J_\mu v$. We can then choose $c,c'$ such that $\begin{bmatrix}c \\ c' \end{bmatrix} =  v + \begin{bmatrix}0 \\ D^\ast E_\mu^{-1} c'' \end{bmatrix}$, and this is a solution to the third equation of \eqref{eq:sufficient-cond}. Since $\mu$ is arbitrary, this completes the proof.
\end{enumerate}
\end{proof}

We can now state the first condition that ensures that one can find for all $u \in \Fp{q}$, $(b, \omega, \mu, t, t')$ satisfying \eqref{eq:sufficient-cond}.

\begin{lemma}
\label{lemma:first-sufficient-condition}
If $\Img{T} \cap \Img{B^\ast} = \{0\}$, for each $u \in \Fp{q}$, there exists a $(b, \omega, \mu, t, t')$ satisfying \eqref{eq:sufficient-cond}.
\end{lemma}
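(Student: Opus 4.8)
The plan is to reduce, via \Cref{lemma:sufficient-condition}, to producing for each $u \in \Fp{q}$ a tuple $(b,\omega,\mu,t,t')$ solving the system \eqref{eq:sufficient-cond}; after that the work splits into (a) extracting rigid structural information from the hypothesis $\Img{T} \cap \Img{B^\ast} = \{0\}$, and (b) a short linear-algebra construction.

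For step (a) I would argue as follows. Recall that $B$ has full rank $q$ and $H$ full column rank $p$ (\Cref{cor:tridiag-blk-decomp}), so $B^\ast \in \Fpq{p}{q}$ is injective and $\Ker{H} = \{0\}$. Partitioning $N = \begin{bmatrix} N_1 \\ N_2 \end{bmatrix}$ as in \Cref{lemma:HTSH}, the identity $H^\ast N = 0$ reads $T N_1 = -B^\ast N_2$; the left-hand side lies in $\Img{T}$ and the right-hand side in $\Img{B^\ast}$, so both vanish, giving $T N_1 = 0$ and, by injectivity of $B^\ast$, $N_2 = 0$. Hence $N = \begin{bmatrix} N_1 \\ 0 \end{bmatrix}$ with $N_1$ of full column rank $q$ and $\Img{N_1} \subseteq \Ker{T}$; moreover $BN_1 \in \Fpq{q}{q}$ is invertible, since $BN_1 x = 0$ forces $N_1 x \in \Ker{B} \cap \Ker{T} \subseteq \Ker{H} = \{0\}$, so $x = 0$. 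Next I would locate $u' := (H^\ast H)^{-1}B^\ast u$: from $(T^2 + B^\ast B)u' = B^\ast u$ one gets $T^2 u' = B^\ast(u - Bu') \in \Img{T} \cap \Img{B^\ast} = \{0\}$, so $T^2 u' = 0$, and since $T$ is Hermitian, $Tu' = 0$. Therefore $\begin{bmatrix} u' \\ 0 \end{bmatrix} \in \Ker{H^\ast} = \Img{N}$, so $u' = N_1 s$ for some $s \in \Fp{q}$.

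For step (b), set $c'' = 0$ (equivalently $b \in \S + A\S$). Because $N_2 = 0$, every product of the form $\begin{bmatrix} B & C - F_\nu \end{bmatrix} N$ collapses to $BN_1$ and the $c''$-dependent terms of $z_{b,\omega,\mu}$ vanish, so $z'_{b,\omega,\mu}(t,t') = BN_1(t' - t)$. Fix any two distinct $\omega,\mu \in (\omin,\infty)$. With $c'' = 0$, equation~(1) of \eqref{eq:sufficient-cond} becomes $N_1(\omega t' - \mu t) = Tu' = 0$, which by injectivity of $N_1$ holds exactly when $\omega t' = \mu t$; equation~(2) becomes $BN_1(t' - t) = Bu' = BN_1 s$, which by invertibility of $BN_1$ holds exactly when $t' - t = s$. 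The linear system $\omega t' = \mu t$, $t' - t = s$ has the solution $t = \omega s/(\mu-\omega)$, $t' = \mu s/(\mu-\omega)$; with these $t, t'$ and $c'' = 0$, \Cref{lemma:first-third}(ii) supplies $c, c'$, hence $b$, solving equation~(3). This produces the required tuple, and since $u$ was arbitrary, \Cref{lemma:sufficient-condition} gives $\dim{\X} = q = \Indsimple{A}{\S}$.

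The only genuine obstacle is the term $z'_{b,\omega,\mu}(t,t')$, which in general entangles $C$, $F_\omega$, $F_\mu$, $D$, $E$ and $c''$ with no evident link to $\Img{B}$; the hypothesis removes this obstacle precisely because it forces $N_2 = 0$, after which the entire system collapses to an identity governed by the invertible $q\times q$ matrix $BN_1$.
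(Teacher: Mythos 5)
Your proof is correct and follows essentially the same route as the paper: the hypothesis forces $N_2 = 0$ and $BN_1 \in \Gl{q}$, the first two equations of \eqref{eq:sufficient-cond} then decouple into a nonsingular linear system for $(t,t')$, and \Cref{lemma:first-third}(ii) supplies $(c,c')$ for the third equation. The only differences are minor: you re-derive the structural facts directly ($TN_1 = 0$, $N_2 = 0$, $BN_1$ invertible via $\Ker{T}\cap\Ker{B}=\Ker{H}=\{0\}$, and the simplifications $Tu' = 0$, $u' \in \Img{N_1}$) and set $c'' = 0$ to obtain explicit $t,t'$, whereas the paper cites \Cref{lemma:nullspace-special-cases}(iii) and \Cref{lemma:first-third}(i) and keeps $c''$ arbitrary.
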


\begin{proof}
The main ingredient of this proof is the characterization of $N$ provided by \Cref{lemma:nullspace-special-cases}(iii), by which if $\Img{T} \cap \Img{B^\ast} = \{0\}$, then one must choose $N_1$ so that $\Img{N_1} = \Img{T}^\perp$, and $N_2 = 0$. We claim that $BN_1 \in \Fpq{q}{q}$ is invertible, which we prove later. Now fix any $u \in \Fp{q}$, $\omega, \mu > \omin \; (\omega \neq \mu)$, and $c'' \in \Fp{n-p-q}$. Then by \Cref{lemma:first-third}(i) there exists a unique $t'' \in \Fp{q}$ such that $T (H^\ast H)^{-1} B^\ast u = N_1 t''$. Let $\mathcal{A}_1 := \{(t,t') \mid t,t' \in \Fp{q}, \omega t' - \mu t = t''\}$, and notice that it is non-empty. Also, since $N_2 = 0$, the second equation of \eqref{eq:sufficient-cond} reduces to
\begin{equation}
\label{eq:first-sufficient-condition-proof-1}
    BN_1 (t'-t) = B(H^\ast H)^{-1} B^\ast u + D^\ast (E_\omega^{-1} - E_\mu^{-1}) c'',
\end{equation}
and then by the invertibility of $BN_1$, there exists a unique $t''' \in \Fp{q}$ such that the set $\mathcal{A}_2 := \{(t,t') \mid t,t' \in \Fp{q}, t'- t = t''', \text{\eqref{eq:first-sufficient-condition-proof-1} holds}\}$ is non-empty. So the set $\mathcal{A}_1 \cap \mathcal{A}_2$ contains exactly one element $(t,t')$ which is the solution to the equation 
\begin{equation}
\label{eq:first-sufficient-condition-proof-2}
    \begin{bmatrix}
        \omega I & -\mu I \\
        I & -I
    \end{bmatrix}
    \begin{bmatrix}
        t' \\
        t
    \end{bmatrix}=
    \begin{bmatrix}
        t'' \\
        t'''
    \end{bmatrix},
\end{equation}
because the matrix on the left hand side has determinant $(\mu - \omega)^q \neq 0$ (by assumption). Finally with the choices for $\mu, c''$ and the solution $t$ of \eqref{eq:first-sufficient-condition-proof-2}, we can by \Cref{lemma:first-third}(ii) find $c,c'$ satisfying the third equation of \eqref{eq:sufficient-cond}. Thus we have found $(b, \omega, \mu, t, t')$ satisfying \eqref{eq:sufficient-cond}, and this proves the lemma as $u$ is arbitrary.

Now we prove the claim $BN_1 \in \Gl{q}$. Suppose for the sake of contradiction this is not true, and there exists $0 \neq y \in \Fp{q}$ such that $BN_1 y = 0$. Then $N_1 y \in \Img{T}^\perp$ (as $\Img{N_1} = \Img{T}^\perp$), and $N_1 y \in \Ker{B} = \Img{B^\ast}^\perp$ simultaneously, so in fact $N_1 y \in (\Img{T} + \Img{B^\ast})^\perp$. Finally, we also know that $\rank{(H^\ast)} = p$, so $\Fp{p} = \Img{H^\ast} = \Img{T} + \Img{B^\ast}$ implying $N_1 y \in (\F^p)^\perp$, hence $N_1 y = 0$. Since $N_1$ is full column rank by \Cref{cor:nullspace-H*}(i), this implies $y = 0$ and gives a contradiction.
\end{proof}

A second condition that guarantees for all $u \in \Fp{q}$ existence of $(b, \omega, \mu, t, t')$ satisfying \eqref{eq:sufficient-cond}, is that $T$ is invertible. In fact we state a much stronger theorem below, from which our result will follow. We also recall the definition of $\dD{A}{\omega}{\mu}$ from \Cref{def:Dww'}, so $V d_{b,\omega,\mu} = x_{b,\omega} - x_{b,\mu} = \dD{A}{\omega}{\mu}b$, and we know that $\Img{\dD{A}{\omega}{\mu}} \subseteq \Y$ by \Cref{thm:main_result}. In the theorem below, we are concerned about when this can actually be an equality.

\begin{theorem}
\label{thm:second-condition}
Suppose $T \in \Gl{p}$, and consider the open set $\mathcal{U} := (\omin, \infty) \times (\omin, \infty) \subseteq \R^2$. Then there exists a closed subset $\mathcal{V} \subseteq \mathcal{U}$ of $2$-dimensional Lebesgue measure zero, such that for all $(\omega, \mu) \in \mathcal{U} \setminus \mathcal{V}$, we have $\Img{\dD{A}{\omega}{\mu}} = \Y$ (with $\Y$ defined as in \Cref{thm:main_result}). Moreover if $A \in \Pos{n}$, then $\Img{\dD{A}{\omega}{\mu}} = \Y$ for all $\omega, \mu \geq 0$ and $\omega \neq \mu$.
\end{theorem}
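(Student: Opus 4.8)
The plan is to reduce the assertion to a rank computation, then to make the fibre $\{d_{b,\omega,\mu}\mid b\in\Fn\}$ completely explicit and read off its dimension.

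First I would note that by \Cref{thm:main_result} we already have $\Img{\dD{A}{\omega}{\mu}}\subseteq\Y$ with $\dim{\Y}=q$, and that since $\dD{A}{\omega}{\mu}b=Vd_{b,\omega,\mu}$ with $b\mapsto d_{b,\omega,\mu}$ linear (\Cref{def:Dww'}), $\rank{\dD{A}{\omega}{\mu}}=\dim\{d_{b,\omega,\mu}\mid b\in\Fn\}$. Moreover the entries of $\dD{A}{\omega}{\mu}$ are rational, hence real analytic, in $(\omega,\mu)$ on the connected open set $\mathcal U$; so the set $\mathcal V:=\{(\omega,\mu)\in\mathcal U\mid\rank{\dD{A}{\omega}{\mu}}<q\}$ is closed (lower semicontinuity of rank) and is the common zero set of the $q\times q$ minors, hence — by the identity theorem for real analytic functions on a connected domain — is either all of $\mathcal U$ or of $2$-dimensional Lebesgue measure zero. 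Thus it suffices to exhibit one $(\omega_0,\mu_0)\in\mathcal U$, $\omega_0\neq\mu_0$, with $\rank{\dD{A}{\omega_0}{\mu_0}}=q$.

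To exhibit such a point I would compute the fibre. By \Cref{lemma:HTSH}, $d_{b,\omega,\mu}=-(H^\ast G_\omega^{-1}H)^{-1}H^\ast G_\omega^{-1}\bigl(\mu Nt+\begin{bmatrix}0\\ z_{b,\omega,\mu}(t)\end{bmatrix}\bigr)$ where $t$ solves the third equation of \eqref{eq:equivalent-system-eqs}; as $b=(c,c',c'')$ ranges over $\Fn$, $c''$ ranges freely and, by \eqref{eq:Jmu-N-images}, $Nt$ ranges over all of $\Img{N}$, the two being independent. Using $G_\omega N=\omega N+\begin{bmatrix}0\\ [B\ C-F_\omega]N\end{bmatrix}$ (which follows from $H^\ast N=0$) together with the Schur-complement identity $H^\ast G_\omega^{-1}\begin{bmatrix}0\\ I\end{bmatrix}=\omega(T+\omega I)^{-1}B^\ast S_\omega^{-1}$ (where $S_\omega$ is the Schur complement of $T+\omega I$ in $G_\omega$, positive definite since $\omega>\omin$), one rewrites $d_{b,\omega,\mu}=-(H^\ast G_\omega^{-1}H)^{-1}\,\omega(T+\omega I)^{-1}B^\ast S_\omega^{-1}\,\tilde w$, where $\tilde w\in\Fp{q}$ runs over a subspace $W$ as $b$ varies. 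A short manipulation using $E_\omega^{-1}-E_\mu^{-1}=(\mu-\omega)E_\omega^{-1}E_\mu^{-1}$ and $\Img{F_\omega},\Img{F_\mu}\subseteq\Img{D^\ast}$ then shows that for $\omega\neq\mu$ and $\omega\neq0$ one has $W=\Img{D^\ast}+\Img{[B\ C]N}$, independent of $(\omega,\mu)$. Since $(H^\ast G_\omega^{-1}H)^{-1}$, $(T+\omega I)^{-1}$ and $B^\ast S_\omega^{-1}$ are injective on the relevant domains, $\rank{\dD{A}{\omega}{\mu}}=\dim{W}$, so everything reduces to showing $W=\Fp{q}$ under the hypothesis $T\in\Gl{p}$.

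For this last step, $T\in\Gl{p}$ and $H^\ast N=TN_1+B^\ast N_2=0$ force $N_1=-T^{-1}B^\ast N_2$, and full rank of $N$ then forces $N_2\in\Gl{q}$; normalizing $N_2=I$ gives $[B\ C]N=C-BT^{-1}B^\ast=:M$, so $W^\perp=\Ker{D}\cap\Ker{M}$ ($M$ Hermitian). I would show this intersection is $\{0\}$: if $Dv=0$ and $Mv=0$, applying the tridiagonal decomposition \eqref{eq:tridiag-decomp-hermitian} to $\begin{bmatrix}-T^{-1}B^\ast v\\ v\end{bmatrix}$ gives $A[V\ V']\begin{bmatrix}-T^{-1}B^\ast v\\ v\end{bmatrix}=0$, whence (as $A$ and $[V\ V']$ are injective) $v=0$. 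Therefore $W=\Fp{q}$ and in fact $\rank{\dD{A}{\omega}{\mu}}=q$ for every $(\omega,\mu)$ with $\omega\neq\mu$, which proves the first assertion (one may even take $\mathcal V=\{\omega=\mu\}$). For the positive case, $A\in\Pos{n}$ gives $T=V^\ast AV\in\Pos{p}\subseteq\Gl{p}$ and makes $T+\omega I$, $S_\omega$, $E+\omega I$ and $G_\omega$ positive definite for every $\omega\geq0$ (recall $\omin<0$), so the computation above is valid at every $(\omega,\mu)$ with $\omega,\mu\geq0$ and $\omega\neq\mu$ — handling a possible $\omega=0$ via the skew-symmetry $\dD{A}{\omega}{\mu}=-\dD{A}{\mu}{\omega}$ — giving $\Img{\dD{A}{\omega}{\mu}}=\Y$ there. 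The main obstacle is the explicit identification of the fibre with the fixed subspace $W$ and, inside it, the triviality of $\Ker{D}\cap\Ker{M}$: this is the only place where invertibility of both $A$ and $T$ is used in an essential way.
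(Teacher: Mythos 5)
Your proposal is correct, but it takes a genuinely different — and in fact sharper — route than the paper's. The paper's proof hinges on \Cref{lemma:KLprop}, which uses real analyticity of $(\omega,\mu)\mapsto\det(\cdot)$ to show that $L(\omega,\mu)=C-BT^{-1}B^\ast-D^\ast K(\omega,\mu)D$ is invertible off a closed set of measure zero; then for each $u$ it solves the system \eqref{eq:sufficient-cond} by first fixing $c''$ arbitrarily and inverting $L(\omega,\mu)$ in \eqref{eq:second-condition-proof-2}. You instead compute the full fibre $\{d_{b,\omega,\mu}\mid b\in\Fn\}$ directly. The key observation — which the paper does not exploit — is that $c''$ should be treated as a free variable \emph{alongside} $t$: after factoring through the injective map $(H^\ast G_\omega^{-1}H)^{-1}(T+\omega I)^{-1}B^\ast S_\omega^{-1}$, the bracket inside becomes (up to the nonzero scalar $\mu-\omega$) $\,-L(\omega,\mu)t+\omega D^\ast E_\omega^{-1}E_\mu^{-1}c''$, whose range is $\Img{L(\omega,\mu)}+\Img{D^\ast}=\Img{C-BT^{-1}B^\ast}+\Img{D^\ast}$ when $\omega\neq0$. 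This space is $\Fp{q}$ unconditionally (given $T\in\Gl{p}$ and $A\in\Gl{n}$), by the block-decomposition argument you give for $\Ker D\cap\Ker(C-BT^{-1}B^\ast)=\{0\}$ — and crucially this does \emph{not} require $L(\omega,\mu)$ to be invertible. Consequently your argument proves the stronger statement that $\Img{\dD{A}{\omega}{\mu}}=\Y$ for \emph{every} pair $\omega\neq\mu$ in $(\omin,\infty)^2$ (taking $\mathcal V$ to be the diagonal), which subsumes both claims of the theorem and makes \Cref{lemma:KLprop} unnecessary; your first-paragraph reduction via $q\times q$ minors and real analyticity also becomes redundant once the explicit fibre computation goes through. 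Two small points: the reference to ``the third equation of \eqref{eq:equivalent-system-eqs}'' should read ``the second equation'' (that display has only two), and the skew-symmetry trick $\dD{A}{\omega}{\mu}=-\dD{A}{\mu}{\omega}$ that you invoke only in the positive case is in fact also needed in the general case when $\omin<0$, to handle the slice $\omega=0$ — worth making explicit there as well.
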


The proof of \Cref{thm:second-condition}, relies on the following lemma which we first state and prove below, and then supply the proof of \Cref{thm:second-condition}.

\begin{lemma}
\label{lemma:KLprop}
Let $T \in \Gl{p}$, and $\mathcal{U}$ be defined as in \Cref{thm:second-condition}. Define $K(\omega, \mu) \in \Fpq{(n-p-q)}{(n-p-q)}$, and $L(\omega, \mu) \in \Fpq{q}{q}$, for all distinct $\omega, \mu \in (\omin,\infty)$ as
\vspace*{-0.1cm}
\begin{equation}
\label{eq:KLdef}
    K(\omega, \mu) := \left( \frac{\mu E_{\omega}^{-1} - \omega E_{\mu}^{-1}}{\mu - \omega} \right), \; L(\omega, \mu) := C - B T^{-1} B^\ast - D^\ast K(\omega, \mu) D.
\end{equation}
Then we have the following:
\begin{enumerate}[(i)]
    \item There exists a closed subset $\mathcal{V} \subseteq \mathcal{U}$ of $2$-dimensional Lebesgue measure zero, such that for all $(\omega, \mu) \in \mathcal{U} \setminus \mathcal{V}$, $L(\omega, \mu)$ is invertible.
    
    \item Suppose additionally that $A \in \Pos{n}$. Then $L(\omega, \mu) \in \Pos{q}$ for all $\omega, \mu \geq 0$, and $\omega \neq \mu$.
\end{enumerate}
\end{lemma}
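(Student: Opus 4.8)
The plan is to prove the two parts by different mechanisms: part (ii) by a Loewner-order comparison, and part (i) by a determinant identity combined with the identity theorem for real-analytic functions. Throughout I will use the spectral description $K(\omega,\mu) = \psi_{\omega,\mu}(E)$, where $\psi_{\omega,\mu}(\lambda) = (\lambda+\omega+\mu)/\big((\lambda+\omega)(\lambda+\mu)\big)$; this follows by diagonalizing $E$ and computing the eigenvalue $\tfrac{1}{\mu-\omega}\big(\tfrac{\mu}{\lambda+\omega}-\tfrac{\omega}{\lambda+\mu}\big)$ of $K(\omega,\mu)$ on the $\lambda$-eigenvector of $E$. I will write $M \in \Fnn$ for the right-hand side of \eqref{eq:tridiag-decomp-hermitian} and, for a Hermitian $\widehat E$ of the appropriate size, $M(\widehat E)$ for the matrix obtained from $M$ by replacing its $(3,3)$-block $E$ by $\widehat E$ (so $M = M(E)$, and $\det M = \det A$ by unitary equivalence).

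For part (ii), I would first note that when $A \in \Pos{n}$ the matrix $M$ is positive (being unitarily equivalent to $A$), so in particular $T$ and $E$ are positive and $0 > \omin$. For $\lambda,\omega,\mu > 0$ one has $\tfrac1\lambda - \psi_{\omega,\mu}(\lambda) = \omega\mu/\big(\lambda(\lambda+\omega)(\lambda+\mu)\big) \ge 0$, so $0 \preceq K(\omega,\mu) \preceq E^{-1}$ in the Loewner order whenever $\omega,\mu \ge 0$, whence $D^\ast K(\omega,\mu) D \preceq D^\ast E^{-1} D = F_0$ and therefore $L(\omega,\mu) = (C - BT^{-1}B^\ast) - D^\ast K(\omega,\mu)D \succeq (C - F_0) - B T^{-1} B^\ast$. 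The right-hand side is the Schur complement of the positive block $T$ in $G_0$ (i.e. $G_\omega$ of \eqref{eq:F-G} at $\omega = 0$), and $G_0 \in \Pos{p+q}$ by the discussion following \eqref{eq:F-G} since $0 > \omin$; hence $(C-F_0) - BT^{-1}B^\ast \in \Pos{q}$, and so $L(\omega,\mu) \in \Pos{q}$. That settles (ii).

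For part (i), I would first observe that $\det L(\omega,\mu)$ is real-analytic on the connected open set $\mathcal U$: the eigenvalues of $E$ are $\ge \evalmin{A} = -\omin$, so $E+\omega I$ is invertible for every $\omega > \omin$, and the apparent singularity of $K$ along $\{\omega=\mu\}$ is removable (visible from the $\psi_{\omega,\mu}(E)$ form). By the identity theorem for real-analytic functions, the zero set of $\det L$ is either all of $\mathcal U$ or closed of Lebesgue measure zero, so it suffices to exhibit one point of $\mathcal U$ where $L$ is invertible; taking $\mathcal V := \{(\omega,\mu) \in \mathcal U : \det L(\omega,\mu) = 0\}$ then gives the claim. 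The tool is the determinant identity obtained from two successive Schur complements: whenever $T$ and $K(\omega,\mu)$ are invertible,
\[ \det L(\omega,\mu) \;=\; \frac{\det K(\omega,\mu)}{\det T}\,\det M\!\big(\widehat E_{\omega,\mu}\big), \qquad \widehat E_{\omega,\mu} := K(\omega,\mu)^{-1} = (E+(\omega+\mu)I)^{-1}(E+\omega I)(E+\mu I). \]
Now $\mu \mapsto \det M(E + \mu I)$ is a polynomial in $\mu$ whose value at $\mu = 0$ is $\det M(E) = \det A \ne 0$, hence it has only finitely many roots; fix $\mu_0 > \omin$ which is not a root. Letting $\omega \to \infty$ with $\mu = \mu_0$ fixed, the eigenvalue $\tfrac{(\lambda+\omega)(\lambda+\mu_0)}{\lambda+\omega+\mu_0}$ of $\widehat E_{\omega,\mu_0}$ tends to $\lambda+\mu_0$ and $\widehat E$ commutes with $E$, so $\widehat E_{\omega,\mu_0} \to E + \mu_0 I$ and $\det M(\widehat E_{\omega,\mu_0}) \to \det M(E+\mu_0 I) \ne 0$, while $\det K(\omega,\mu_0) \to \det(E+\mu_0 I)^{-1} \ne 0$; the identity (valid for large $\omega$, since then $K(\omega,\mu_0) \in \Pos{\,}$) then gives $\det L(\omega,\mu_0) \to \det M(E+\mu_0 I)/\big(\det T \cdot \det(E+\mu_0 I)\big) \ne 0$, so $\det L \not\equiv 0$ on $\mathcal U$.

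The step I expect to be the main obstacle is spotting the substitution $\widehat E = K(\omega,\mu)^{-1}$, which converts the Schur-complement chain for $L$ into a statement about $M$ with a perturbed corner block, together with the realization that as $\omega \to \infty$ this corner block converges to $E + \mu_0 I$; once that is in place, the non-vanishing of $\det L$ reduces to the elementary polynomial fact $\det M(E+\mu I)\big|_{\mu=0} = \det A \ne 0$. Part (ii) should be routine once the spectral form of $K$ and the $G_0$ Schur-complement identity are written down; the only care needed is that all the Loewner inequalities are used in the correct direction.
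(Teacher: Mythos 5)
Your proof is correct, and both parts take a genuinely different (and arguably cleaner) route than the paper's.

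For part (ii), you replace the paper's "big positive matrix / nested Schur complements" argument with a direct Loewner comparison: the pointwise inequality $\psi_{\omega,\mu}(\lambda) \le 1/\lambda$ for $\lambda,\omega,\mu>0$ yields $K(\omega,\mu) \preceq E^{-1}$, hence $L(\omega,\mu) \succeq (C-F_0)-BT^{-1}B^\ast$, which is the Schur complement of $T$ in $G_0 \in \Pos{p+q}$. This avoids introducing the auxiliary matrix $M + \begin{bmatrix}0&0\\0&Q\Delta Q^\ast\end{bmatrix}$ entirely and is more elementary. The direction of every inequality is right.

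For part (i), you avoid the paper's stratification of $\R^2\setminus\mathcal W$ into $k+1$ connected components: you observe directly that $\det L$ is real-analytic on the \emph{connected} set $\mathcal U$, because the apparent singularity of $K$ at $\omega=\mu$ is removable in the spectral form $\psi_{\omega,\mu}(E)$ (the denominators $\lambda+\omega$, $\lambda+\mu$ stay positive on $\mathcal U$). That lets you apply the identity theorem once, to $\det L$ itself, rather than once per component to the auxiliary function $f$. You then exhibit a non-vanishing point by sending $\omega\to\infty$ at fixed $\mu_0$, instead of the paper's device of evaluating $f$ on the line $\mu=0$ (which may lie outside $\mathcal U$ and is why the paper extends to $\R^2\setminus\mathcal W$). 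The Schur-complement determinant identity $\det L = \det K\cdot\det M(K^{-1})/\det T$ and the formula $K^{-1}=(E+(\omega+\mu)I)^{-1}(E+\omega I)(E+\mu I)$ are the same ones the paper derives (the paper writes $K^{-1}=E+Q\Delta Q^\ast$, which is an equivalent form); your limit computation and the choice of $\mu_0$ avoiding the finitely many roots of $\mu\mapsto\det M(E+\mu I)$ (nonzero at $\mu=0$ since it equals $\det A$) are all sound. One small point the paper handles that you might note explicitly: since $\det L$ is continuous, its zero set $\mathcal V$ is automatically closed in $\mathcal U$, as required by the statement.
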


\remark{Notice that in part (ii) of \Cref{lemma:KLprop}, the assumption $A \in \Pos{n}$ guarantees that $T \in \Pos{p}$, hence invertible. So in fact, for (ii) the assumption $T \in \Gl{p}$ is not needed.}

\begin{proof}
In this proof, whenever we say that a set has measure zero, it will mean that it has 2-dimensional Lebesgue measure zero. Also for both parts (i) and (ii), we note that as $A \in \Gl{n}$, the matrix on the right hand side of \eqref{eq:tridiag-decomp-hermitian} is invertible, and since $T \in \Gl{p}$, this implies that $M := \begin{bmatrix} C - B T^{-1} B^\ast & D^\ast \\ D & E \end{bmatrix} \in \Gl{n-p} \cap \Sym{n-p}$, being the Schur complement of $T$. \Cref{fig:measure_0_sets} illustrates some of the sets used in this proof.
\begin{enumerate}[(i)]
    \item Suppose $E$ has $k$ distinct eigenvalues $\{\xi_i\}_{i=1}^{k}$, for some $1 \leq k \leq n-p-q$, ordered as $\xi_1 > \xi_2 > \dots > \xi_k \geq -\omin$, which are real as $E \in \Sym{n-p-q}$, while the lower bound $-\omin$ is due to the Cauchy interlacing theorem (Theorem 8.1.7 in \cite{golub2013matrix}). Let $E = Q \Lambda Q^\ast $ be the eigenvalue decomposition of $E$, for some $Q \in U(n-p-q)$, and a diagonal matrix $\Lambda$ (thus $\Lambda_{jj} \in \{\xi_i \mid 1 \leq i \leq k\}$ for all $1 \leq j \leq n-p-q$). Now define the set $\mathcal{W} := \{(\omega, \mu) \in \R^2 \mid \omega + \mu \in \{-\xi_i \mid 1 \leq i \leq k\}\}$, and the function $f : \R^2 \setminus \mathcal{W} \rightarrow \R$ as
    \begin{equation}
    \label{eq:f-def}
        f(\omega, \mu) = \det \left( \begin{bmatrix} C - B T^{-1} B^\ast & D^\ast \\ D & E \end{bmatrix} + \begin{bmatrix} 0 & 0 \\ 0 & Q \Delta(\omega,\mu) Q^\ast \end{bmatrix} \right),
    \end{equation}
    where $\Delta(\omega,\mu)$ is a diagonal matrix having the same shape as $\Lambda$, and diagonal entries $(\Delta(\omega,\mu))_{jj} = \omega \mu / (\Lambda_{jj} + \omega + \mu)$, for all $1 \leq j \leq n-p-q$. Notice that $f$ is indeed $\R$-valued as the matrix on the right hand side of \eqref{eq:f-def} is Hermitian by construction, so the determinant is just the product of the eigenvalues.\\
    
    Next notice that $\mathcal{W}$ is a closed subset of measure zero, as it is the union of $k$ parallel lines $\omega + \mu + \xi_i = 0$, for $1 \leq i \leq k$. Hence $\R^2 \setminus \mathcal{W}$ is open and is a disjoint union of $k+1$ connected components $\mathcal{R}_1, \mathcal{R}_2, \dots, \mathcal{R}_{k+1}$ (each of which is also open) separated by these lines. Our immediate goal is to show that $f(\omega, \mu) \neq 0$ almost everywhere in $\R^2 \setminus \mathcal{W}$. To do this, we claim that $f$ is a real analytic map on each connected component, which we prove later. Now fix a connected component $\mathcal{R}_i$, and let $\mathcal{A}_i := \{(\omega, \mu) \in \mathcal{R}_i \mid \mu = 0\}$, noting that $\mathcal{A}_i$ is non-empty. Then for any $(\omega, \mu) \in \mathcal{A}_i$, we have 
    \begin{equation}
        f(\omega, \mu) = f(\omega, 0) = \det \left( \begin{bmatrix} C - B T^{-1} B^\ast & D^\ast \\ D & E \end{bmatrix} \right) \neq 0,
    \end{equation}
    thus $f|_{\mathcal{R}_i}$ is either non-zero everywhere in $\mathcal{R}_i$, or otherwise non-constant. So in both cases there exists a closed subset $\mathcal{V}_i \subseteq \mathcal{R}_i$ of measure zero, such that $f(\omega, \mu) \neq 0$ for all $(\omega, \mu) \in \mathcal{R}_i \setminus \mathcal{V}_i$ --- in the first case $\mathcal{V}_i$ is the empty set, while in the second case it follows by applying \Cref{lemma:real-analyticity-prop}(i), as $f|_{\mathcal{R}_i}$ is a non-constant real analytic map. Thus defining $\mathcal{V}' := \left( \bigcup_{i=1}^{k+1} \mathcal{V}_i \right) \bigcup \mathcal{W}$, we conclude that $\mathcal{V}'$ is a closed subset of measure zero such that $f(\omega,\mu) \neq 0$ for all $(\omega,\mu) \in \R^2 \setminus \mathcal{V}'$.\\
    
    Returning to the proof, note that whenever $\omega, \mu \in (\omin,\infty)$ with $\omega \neq \mu$, it follows using $E = Q \Lambda Q^\ast$ that the eigenvalues of $K(\omega, \mu)$ belong to the set $\left \{ (\xi_i + \omega + \mu) (\xi_i + \omega)^{-1}(\xi_i + \mu)^{-1} \mid 1 \leq i \leq k \right \}$. Thus for all $(\omega, \mu) \in \mathcal{U} \setminus \left( \mathcal{W} \cup \{(\omega,\omega) \mid \omega \in \R\} \right)$, $K(\omega, \mu)$ is invertible and we have
    \begin{equation}
    \label{eq:Kinv-def}
    \begin{split}
        K(\omega, \mu)^{-1} &= Q \left( \frac{\mu (\Lambda + \omega I)^{-1} - \omega (\Lambda + \mu I)^{-1}}{\mu - \omega} \right)^{-1} Q^\ast \\
        &= Q \Lambda Q^\ast + Q \Delta(\omega,\mu) Q^\ast = E + Q \Delta(\omega,\mu) Q^\ast.
    \end{split}
    \end{equation}
    Now define $\mathcal{V} := \mathcal{U} \cap \left( \mathcal{V}' \cup \; \{(\omega,\omega) \mid \omega \in \R\} \right)$, and note that $\mathcal{V}$ is a closed subset of $\mathcal{U}$ of measure zero. Combining \eqref{eq:f-def} and \eqref{eq:Kinv-def}, it follows from the previous paragraph that $f(\omega, \mu) = \det \left( \begin{bmatrix} C - B T^{-1} B^\ast & D^\ast \\ D & K(\omega, \mu)^{-1} \end{bmatrix} \right) \neq 0$ for all $(\omega,\mu) \in \mathcal{U} \setminus \mathcal{V}$. Since $K(\omega, \mu)^{-1}$ is also invertible in $\mathcal{U} \setminus \mathcal{V}$, we can finally conclude that $L(\omega, \mu)$ is invertible for all $(\omega,\mu) \in \mathcal{U} \setminus \mathcal{V}$, as it is the Schur complement of $K(\omega, \mu)^{-1}$ in $\begin{bmatrix} C - B T^{-1} B^\ast & D^\ast \\ D & K(\omega, \mu)^{-1} \end{bmatrix}$.\\
    
    It remains to prove that $f$ is a real analytic map on each connected component. Consider an open, connected component $\mathcal{R}_i$.
    Rewrite \eqref{eq:f-def} as
    \begin{equation*}
        f(\omega, \mu) = \det(P(\omega,\mu)) = \det \left( \begin{bmatrix} C - B T^{-1} B^\ast & D^\ast Q \\ Q^\ast D & Q^\ast E Q \end{bmatrix} + \begin{bmatrix} 0 & 0 \\ 0 & \Delta(\omega,\mu) \end{bmatrix} \right),
    \end{equation*}
    where $P(\omega,\mu) \in \Sym{n}.$
    The left matrix is constant and $\Delta(\omega,\mu)$ is diagonal where $\Delta(\omega,\mu)_{jj} \in \R$ is the quotient of real analytic functions ($\omega\mu$ and $\Lambda_{jj}+\omega+\mu$) and since $\Lambda_{jj}+\omega+\mu \neq 0$ for $(\omega,\mu) \in \mathcal{R}_i$, it is real analytic (see Proposition 2.2.2 in \cite{krantz2002primer}). Using the Leibniz formula (Theorem 2.4 in \cite{kwak2004linear}), let $S_n$ be all the permutations of $\{1,\dots,n\}$; then $\det(P(\omega,\mu)) = \sum_{\sigma \in \S_n} \sign(\sigma) \prod_i P(\omega,\mu)_{i,\sigma_i} \in \R$, where $\sign(\sigma) \in \{-1,1\}$ is the sign of the permutation. For a given permutation $\sigma$, $\prod_i P(\omega,\mu)_{i,\sigma_i}$ contains the product of real diagonal entries ($i$ such that $i = \sigma_i$), call it $g_\sigma(\omega,\mu) \in \R$, and other constant non-diagonal entries, call it $h_\sigma \in \F$ (with $\F$ either $\R$ or $\C$). Again, by Proposition 2.2.2 in \cite{krantz2002primer}, $g_\sigma$ is the product of real analytic functions and is real analytic, and since $\det(P(\omega,\mu)) \in \R$, we have $\det(P(\omega,\mu)) = \sum_{\sigma \in S_n} \sign(\sigma) g_\sigma(\omega,\mu) \Re(h_\sigma)$, which is a sum of real analytic functions and is real analytic.
    
    \begin{figure}
        \centering
        \begin{tikzpicture}
            \begin{axis}[
                axis equal,width=7cm,ticks=none,domain=-10:10,
                ymin=-6,ymax=6,xmin=-6,xmax=6,axis lines=middle,
            ]
                \fill [black!10] (axis cs:-8,6) -- (axis cs:8,-10) -- (axis cs: 8,-6) -- (axis cs:-8,10) -- (axis cs: -8,6);
            \end{axis}
            \begin{axis}[
                axis equal,width=7cm,ticks=none,domain=-10:10,
                ymin=-6,ymax=6,xmin=-6,xmax=6,axis lines=middle,xlabel={$\mu$},ylabel={$\omega$},
            ]
                \addplot[blue,thick] (x,-5-x);
                \addplot[blue,thick] (x,-2-x);
                \addplot[blue,thick] (x,2-x);
                \addplot[blue,thick] (x,5-x);
                \node[circle,fill=black,inner sep=0pt,minimum size=3pt] (a) at (axis cs:0,-2) {};
                \node[circle,fill=black,inner sep=0pt,minimum size=3pt] (b) at (axis cs:0,2) {};
                \node (wtext) at (axis cs:3,3) {\textcolor{blue}{$\mathcal{W}$}};
                \draw[red,ultra thick] (a) -- (b);
                \node at (axis cs: 1,-1) {\textcolor{red}{$\mathcal{A}_i$}};  
                \node[] at (axis cs:-1.5,-2) {$-\xi_{i-1}$};
                \node[] at (axis cs:-1.2,2) {$-\xi_{i}$};
                \node[] at (axis cs:-4,4) {$\mathcal{R}_i$};
            \end{axis}
        \end{tikzpicture}
        \caption{Illustration of some of the sets used in the proof of \Cref{lemma:KLprop}.}
        \label{fig:measure_0_sets}
    \end{figure}

    \item We will use some of the notations introduced in the proof of (i). Since $A \in \Pos{n}$, the matrix on the right hand side of \eqref{eq:tridiag-decomp-hermitian} is also positive, from which we firstly have $T \in \Pos{p}$, and so its Schur complement $M \in \Pos{n-p}$, and secondly $E \in \Pos{n-p-q}$, so $\xi_i > 0$ for all $1 \leq i \leq k$. Let $\mathcal{U}' := [0, \infty) \times [0, \infty)$, and $\mathcal{W}' := \{(\omega,\omega) \mid \omega \in \R\}$. Now for all $(\omega, \mu) \in \mathcal{U}' \setminus \mathcal{W}'$, we have $\xi_i + \omega + \mu > 0$, $\xi_i + \omega > 0$, and $\xi_i + \mu > 0$, for all $1 \leq i \leq k$; thus all the eigenvalues of $K(\omega,\mu)$ are positive, and all the eigenvalues of  $Q \Delta(\omega,\mu) Q^\ast$ are non-negative. It follows that both $K(\omega,\mu), K(\omega,\mu)^{-1} \in \Pos{n-p-q}$, and
    \begin{equation}
        \begin{bmatrix} C - B T^{-1} B^\ast & D^\ast \\ D & K(\omega, \mu)^{-1} \end{bmatrix} = M + \begin{bmatrix} 0 & 0 \\ 0 & Q \Delta(\omega,\mu) Q^\ast \end{bmatrix} \in \Pos{n-p},
    \end{equation}
    for all $(\omega, \mu) \in \mathcal{U}' \setminus \mathcal{W}'$, and again taking the Schur complement of $K(\omega, \mu)^{-1}$, as in the proof of (i), we conclude that $L(\omega, \mu) \in \Pos{q}$.
\end{enumerate}
\end{proof}

\begin{proof}[Proof of \Cref{thm:second-condition}]
Since $T \in \Gl{p}$, which also holds if $A \in \Pos{n}$, we can by \Cref{lemma:nullspace-special-cases}(i) choose $N_1 = -T^{-1} B^\ast$ and $N_2 = I$. Now choose $\mathcal{V}$ as defined in the proof of \Cref{lemma:KLprop}(i), and note that it is a closed subset of $\mathcal{U}$ of 2-dimensional Lebesgue measure zero. Then for all $(\omega, \mu) \in \mathcal{U} \setminus \mathcal{V}$, we have using \Cref{lemma:KLprop}(i) and $L(\omega,\mu)$ defined therein, that $L(\omega,\mu) \in \Gl{q}$; so let us choose $\omega, \mu \in \mathcal{U} \setminus \mathcal{V}$, and note that $\omega \neq \mu$ as such points are excluded by the construction of $\mathcal{V}$. Let us also fix some $u \in \Fp{q}$, and $c'' \in \Fp{n-p-q}$.  Now as in the proof of \Cref{lemma:first-sufficient-condition}, let $\mathcal{A}_1 := \{(t,t') \mid t,t' \in \Fp{q}, \omega t' - \mu t = t''\}$, where $t'' \in \Fp{q}$ is the unique solution to $T (H^\ast H)^{-1} B^\ast u = N_1 t''$, by \Cref{lemma:first-third}(i), and note that $\mathcal{A}_1$ is non-empty (for example, one can choose $t = t' = t'' / (\omega - \mu)$). The second equation of \eqref{eq:sufficient-cond}, with these choices for $u$, $\omega$, $\mu$, $c''$, and $t''$ then becomes
\begin{equation}
\label{eq:second-condition-proof-1}
    (C-BT^{-1}B^\ast) (t'-t) + (F_\mu t - F_\omega t') = B(H^\ast H)^{-1} B^\ast u + D^\ast (E_\omega^{-1} - E_\mu^{-1}) c'' - t'',
\end{equation}
and let us define $\mathcal{A}_2 := \{(t,t') \mid t,t' \in \Fp{q}, \text{\eqref{eq:second-condition-proof-1} holds}\}$. Our goal is to now show that $\mathcal{A}_1 \cap \mathcal{A}_2$ has a unique element. To do this, first assuming $\omega \neq 0$, we write $t' = (t'' + \mu t) / \omega$ and plug into \eqref{eq:second-condition-proof-1}, which after some rearrangement gives
\begin{equation}
\label{eq:second-condition-proof-2}
    \left(C-BT^{-1}B^\ast - \frac{\mu F_\omega - \omega F_\mu}{\mu -\omega}\right) t = g(u, \omega, \mu, c'', t''),
\end{equation}
for some $g \in \Fp{q}$. But the matrix on the left hand side of \eqref{eq:second-condition-proof-2} is exactly $L(\omega,\mu)$, which is invertible, thus there exists a unique $t \in \Fp{q}$ solving \eqref{eq:second-condition-proof-2}, and then $t' = (t'' + \mu t) / \omega$ is also uniquely determined. Now if $\omega = 0$, then $\mu \neq 0$, so we write $t = (\omega t' - t'')/\mu$, and then \eqref{eq:second-condition-proof-2} holds with $t$ replaced by $t'$, and we get the same uniqueness statement for $t,t'$. Finally by \Cref{lemma:first-third}(ii), the third equation of \eqref{eq:sufficient-cond} now has a solution $(c,c')$ with these choices for $\mu, c''$, and $t$. We have thus shown that for all $(\omega, \mu) \in \mathcal{U} \setminus \mathcal{V}$, and $u \in \Fp{q}$, we can obtain solutions $(b,t,t')$ satisfying \eqref{eq:sufficient-cond}; that is by \Cref{lemma:sufficient-condition}, $\dD{A}{\omega}{\mu} b = Vd_{b,\omega,\mu} = V(H^\ast H)^{-1} B^\ast u$. This proves that $\Img{\dD{A}{\omega}{\mu}} = \Y$.

When $A \in \Pos{n}$, we can repeat the same argument as above, with $\mathcal{U}$ replaced by $\mathcal{U}' := [0, \infty) \times [0, \infty)$, and $\mathcal{V}$ replaced by $\mathcal{W}' := \{(\omega,\omega) \mid \omega \in \R\}$, and in this case we need to use that $L(\omega,\mu) \in \Pos{q}$, for all $(\omega, \mu) \in \mathcal{U}' \setminus \mathcal{W}'$ by \Cref{lemma:KLprop}(ii). This completes the proof.
\end{proof}

Combining \Cref{lemma:sufficient-condition}, \Cref{lemma:first-sufficient-condition,thm:second-condition} we have thus finished the proof of the following corollary:
\begin{corollary}
\label{cor:sufficient-conditions}
$\dim{\X} = \Indsimple{A}{\S}$ if any of the following conditions hold:
\begin{enumerate}[(i)]
    \item $\Img{T} \cap \Img{B^\ast} = \{0\}$,
    \item $T \in \Gl{p}$.
\end{enumerate}
\end{corollary}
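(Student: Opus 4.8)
The plan is to obtain the corollary as a direct assembly of the three preceding results of this subsection, with no new computation. First I would dispose of the trivial case: if $\Indsimple{A}{\S} = q = 0$ then $\S$ is $A$-invariant, condition~(i) holds vacuously, and $\dim{\X} = 0 = q$ by \Cref{cor:converse-zero-dim}(i), so there is nothing to prove. Hence I may assume $q \geq 1$, and then the standing reductions of \Cref{ssec:weak-converse} (via \Cref{app:appB} and \Cref{lemma:reduction1}) let me assume $n > p+q$, so that the tridiagonal decomposition \eqref{eq:tridiag-decomp-hermitian}, the blocks $T, B, H$, the matrix $N$, and \Cref{lemma:HTSH,lemma:sufficient-condition} are all in force. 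I would also record once and for all that $\X \subseteq \Y := \Img{V(H^\ast H)^{-1} B^\ast}$ with $\dim{\Y} = q$ by \Cref{cor:strong-bound}(ii), so that in each case it suffices to prove $\dim{\X} = q$, equivalently $\Y \subseteq \X$.

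For case~(i), with $\Img{T} \cap \Img{B^\ast} = \{0\}$, the work has already been done: \Cref{lemma:first-sufficient-condition} guarantees that for every $u \in \Fp{q}$ there exists a tuple $(b, \omega, \mu, t, t')$ solving the system \eqref{eq:sufficient-cond}, and then the final clause of \Cref{lemma:sufficient-condition} immediately yields $\dim{\X} = q = \Indsimple{A}{\S}$.

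For case~(ii), with $T \in \Gl{p}$, I would appeal to \Cref{thm:second-condition}, which provides a pair $(\omega_0, \mu_0) \in \mathcal{U}$ with $\omega_0 \neq \mu_0$ such that $\Img{\dD{A}{\omega_0}{\mu_0}} = \Y$ (the complement of a measure-zero set of such pairs works, and every pair with $\omega_0 \neq \mu_0$ does when $A \in \Pos{n}$). Since by \Cref{def:Dww'} and \eqref{eq:x_b_w_explicit_S} one has $\dD{A}{\omega_0}{\mu_0} b = x_{b,\omega_0} - x_{b,\mu_0}$ for every $b \in \Fn$, this reads $\Y = \{x_{b,\omega_0} - x_{b,\mu_0} \mid b \in \Fn\} \subseteq \X$; combined with $\X \subseteq \Y$ it gives $\X = \Y$, hence $\dim{\X} = q = \Indsimple{A}{\S}$. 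Alternatively, case~(ii) could be routed through \Cref{lemma:sufficient-condition} as well, since $\Img{\dD{A}{\omega_0}{\mu_0}} = \Y$ says exactly that every $u \in \Fp{q}$ is attained, but the inclusion argument is shorter.

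I do not anticipate a genuine obstacle here, since all of the substantive content is already established: the linear-algebraic structure of the nullspace basis $N$ when $\Img{T} \cap \Img{B^\ast} = \{0\}$, and the real-analyticity and measure-zero argument of \Cref{lemma:KLprop} underpinning \Cref{thm:second-condition}. The only points requiring care are bookkeeping — invoking the standing reductions $q \geq 1$ and $n > p+q$ so that the cited lemmas apply verbatim — and noting the elementary identity $\Img{\dD{A}{\omega}{\mu}} = \{x_{b,\omega} - x_{b,\mu} \mid b \in \Fn\} \subseteq \X$ for each fixed pair $(\omega, \mu)$.
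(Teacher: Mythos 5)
Your proposal is correct and follows essentially the same route as the paper: case (i) is exactly \Cref{lemma:first-sufficient-condition} combined with the final clause of \Cref{lemma:sufficient-condition}, and case (ii) is \Cref{thm:second-condition} (whose proof itself runs through \Cref{lemma:sufficient-condition}), with your inclusion argument $\Y = \Img{\dD{A}{\omega_0}{\mu_0}} \subseteq \X \subseteq \Y$ being only a cosmetic repackaging of the paper's assembly of these results. The extra bookkeeping you add ($q=0$ case, the standing reductions $q \geq 1$ and $n > p+q$) matches the subsection's standing assumptions and introduces no new content.
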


We state a surprising consequence of \Cref{thm:main_result}, and \Cref{cor:converse-zero-dim,cor:sufficient-conditions}, below, which is valid even when $n = p+q$, whose part (i) shows that in the very special case of $\Indsimple{A}{\S} = 1$, \Cref{thm:second-condition} can be strengthened significantly.

\begin{corollary}
\label{cor:rank-dD-index1}
Suppose $A \in \Gl{n}$ such that $\Indsimple{A}{\S} = 1$. Then 
\begin{enumerate}[(i)]
    \item For all distinct $\omega , \mu \in (\omin, \infty)$, the matrix $\dD{A}{\omega}{\mu}$ has rank 1, and constant image $\Y$ defined in \Cref{thm:main_result}.
    \item $\dim{\X} = 1$.
\end{enumerate}
\end{corollary}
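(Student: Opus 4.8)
The plan is to establish part (i) by squeezing $\rank \dD{A}{\omega}{\mu}$ between $1$ and $1$: the upper bound comes directly from \Cref{thm:main_result}, and the lower bound (for distinct $\omega,\mu$) from the injectivity of $\mathbf{D}_A$ recorded in \Cref{cor:converse-zero-dim}(iii). Part (ii) then follows with almost no extra work, by sandwiching $\X$ between $\Y$ and the image of a single difference operator.

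In detail, for part (i): \Cref{thm:main_result} produces a subspace $\Y \in \Gr{q}{n}$, independent of $b$, with $x_{b,\omega}-x_{b,\mu}\in\Y$ for all $b\in\Fn$ and $\omega,\mu>\omin$, and $\dim{\Y}=q=\Indsimple{A}{\S}=1$. Since $\dD{A}{\omega}{\mu}$ is exactly the linear map $b\mapsto x_{b,\omega}-x_{b,\mu}$ (combine \Cref{def:Dww'} with \eqref{eq:x_b_w_explicit_S}), this gives $\Img{\dD{A}{\omega}{\mu}}\subseteq\Y$, hence $\rank \dD{A}{\omega}{\mu}\le 1$. On the other hand, because $\Indsimple{A}{\S}\ne 0$, \Cref{cor:converse-zero-dim}(iii) says $\mathbf{D}_A$ is injective, so for distinct $\omega\ne\mu$ we have $\D{A}{\omega}\ne\D{A}{\mu}$, i.e. $\dD{A}{\omega}{\mu}\ne 0$, so $\rank \dD{A}{\omega}{\mu}\ge 1$. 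Therefore $\rank \dD{A}{\omega}{\mu}=1$, and $\Img{\dD{A}{\omega}{\mu}}$ is a one-dimensional subspace of the one-dimensional space $\Y$, forcing $\Img{\dD{A}{\omega}{\mu}}=\Y$ for every distinct pair; so the image is the constant subspace $\Y$.

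For part (ii), I would first note that $\X=\Span{\{x_{b,\omega}-x_{b,\mu}\mid b\in\Fn,\ \omega,\mu>\omin\}}$: unwinding \eqref{eq:X-X_b}, the linear subspace $\Gam{\dim{\X_b}}{\X_b}$ attached to the affine hull $\X_b$ is spanned by the differences $x_{b,\omega}-x_{b,\mu}$, and summing over $b$ yields the span of all such differences. By \Cref{cor:strong-bound}(ii), $\X\subseteq\Y$, so $\dim{\X}\le 1$. Conversely, fix any distinct $\omega_0,\mu_0>\omin$; then $\{x_{b,\omega_0}-x_{b,\mu_0}\mid b\in\Fn\}=\Img{\dD{A}{\omega_0}{\mu_0}}\subseteq\X$, and by part (i) this set equals $\Y$, so $\dim{\X}\ge 1$. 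Hence $\dim{\X}=1$ (indeed $\X=\Y$).

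The one point to be explicit about — and the only place any care is needed — is that, unlike \Cref{thm:second-condition} and \Cref{cor:sufficient-conditions}, no hypothesis of the form $n>p+q$ is invoked here: \Cref{thm:main_result}, \Cref{cor:strong-bound}, and \Cref{cor:converse-zero-dim} are all valid for every $n\ge p+q$, since the reduction of \Cref{app:appB} handles the $n=p+q$ case of \Cref{thm:main_result}, and the arguments in \Cref{ssec:zero-dim} behind \Cref{cor:converse-zero-dim} impose no such restriction. Accordingly I do not expect a genuine obstacle; the only load-bearing facts are the equality $\dim{\Y}=q$ from \Cref{thm:main_result} and the constant-vs-injective dichotomy of \Cref{cor:converse-zero-dim}(iii), and the remaining work is the bookkeeping of chaining these together.
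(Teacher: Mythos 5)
Your proof of part (i) coincides with the paper's argument: the upper bound on the rank from \Cref{thm:main_result}, the lower bound from the constant-vs-injective dichotomy of $\mathbf{D}_A$ in \Cref{cor:converse-zero-dim}(iii). For part (ii), however, you take a genuinely different and shorter route. The paper's proof performs a case split on whether $T\in\Gl{p}$ and in each case invokes a sufficient condition from \Cref{cor:sufficient-conditions} (which rests on the machinery of \Cref{lemma:sufficient-condition}, \Cref{lemma:first-sufficient-condition}, \Cref{thm:second-condition}, and the reduction of \Cref{app:appB} when $n=p+q$). You instead observe that $\Img{\dD{A}{\omega_0}{\mu_0}}\subseteq\X$ for any single fixed pair $\omega_0\neq\mu_0$ --- since $x_{b,\omega_0}-x_{b,\mu_0}\in\Gam{\dim{\X_b}}{\X_b}\subseteq\X$ for every $b\in\Fn$ --- and combine this with the image equality $\Img{\dD{A}{\omega_0}{\mu_0}}=\Y$ already established in part (i) to get $\Y\subseteq\X$, hence $\dim{\X}\ge 1$; the upper bound $\dim{\X}\le 1$ from \Cref{cor:strong-bound}(ii) finishes it. This sandwich argument is self-contained, avoids the sufficient-conditions machinery entirely, and needs no remark about the $n=p+q$ case, because none of the ingredients it uses ever assumed $n>p+q$. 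The trade-off is that the shortcut is peculiar to $q=1$: for $q\ge 2$ the analogue of part (i) fails in general (the image of a single $\dD{A}{\omega}{\mu}$ need not fill $\Y$, which is precisely why \Cref{thm:second-condition} only gives equality off a measure-zero set or under positivity), so the paper's route via \Cref{cor:sufficient-conditions} is the one that generalizes, whereas yours exploits the extra rigidity special to rank one.
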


\begin{proof}
\begin{enumerate}[(i)]
    \item Fix any $\omega , \mu \in (\omin, \infty)$ such that $\omega \neq \mu$. Since $\Indsimple{A}{\S} = 1$, by \Cref{cor:converse-zero-dim}(iii), $\mathbf{D}_A$ is injective, so $\dD{A}{\omega}{\mu} \neq 0$. Thus $\dD{A}{\omega}{\mu}$ at least has rank 1. Moreover, by \Cref{thm:main_result}, $\Img{\dD{A}{\omega}{\mu}} \subseteq \Y$ with $\dim{\Y} = 1$, and so $\Img{\dD{A}{\omega}{\mu}} = \Y$.
    \item There are two cases: either $T \in \Gl{p}$ or $T \not \in \Gl{p}$. In the first case, we conclude by \Cref{cor:sufficient-conditions}(ii). In the second case, we claim that $\Img{T} \cap \Img{B^\ast} = \{0\}$, and then we can again conclude by \Cref{cor:sufficient-conditions}(i). For the claim, note that $\dim{\Img{T}} \leq p-1$, while $\rank{(B^\ast)} = 1$, so $\Img{B^\ast}$ is 1-dimensional. Thus, if $\Img{T} \cap \Img{B^\ast} \neq \{0\}$, it would imply $\Img{B^\ast} \subseteq \Img{T}$, and so $\Img{H^\ast} = \Img{T} + \Img{B^\ast} = \Img{T} \neq \Fp{p}$, giving a contradiction as $\rank{(H^\ast)} = p$.
\end{enumerate}
\end{proof}

\section{Applications}
\label{sec:related-results}

In this final section, we will point out some interesting consequences of the results derived in previous sections. In particular, we look at the limit $\omega \rightarrow \infty$, the question of injectivity of the map $\D{A}{\cdot}b : (\omin, \infty) \rightarrow \Fn$, and investigate some topological aspects of our results.

\subsection{The limit \texorpdfstring{$\omega \rightarrow \infty$}{}}
\label{ssec:minres}

This subsection is mostly for completeness, and we show that for each fixed $b \in \Fn$, the minimizers $x_{b,\omega}$ of \eqref{eq:x_b_w} have a well-defined limit, as $\omega \to \infty$. This is carried out in the next lemma.

\begin{lemma} 
\label{lemma:minres-connection}
Let $A \in \Sym{n} \cap \Gl{n}$, $b \in \F^n$, and $\T \in \Gra{p}{n}$. Let us define $x_{b,\infty} := \argmin_{x \in \T} \| b-Ax \|_2$, which exists uniquely. Then with $\X_b$ and $\X$ defined in \eqref{eq:X-X_b}, we have the following:
\begin{enumerate}[(i)]
    \item As $\omega \to \infty$, $x_{b,\omega} \to x_{b,\infty}$, and $x_{b,\infty} \in \X_b$.
    \item $x_{b,\infty} - x_{b,\omega} \in \X$ for all $\omega \in (\omin, \infty)$, and $b \in \Fn$.
\end{enumerate}
\end{lemma}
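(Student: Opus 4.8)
The plan is to reduce to the subspace case (as done throughout the paper via \Cref{cor:general-min-prob-subspace}), so assume $\T = \S$, and use the closed-form solution \eqref{eq:x_b_w_explicit_S}, namely $x_{b,\omega} = V(V^\ast A A_\omega^{-1} A V)^{-1} V^\ast A A_\omega^{-1} b$. First I would establish existence and uniqueness of $x_{b,\infty} := \argmin_{x \in \S}\|b - Ax\|_2$: this is the ordinary least squares problem over $\S$, and since $A$ is invertible, $AV$ has full column rank $p$, so the normal equations give the unique minimizer $x_{b,\infty} = V(V^\ast A^2 V)^{-1} V^\ast A b$ (here $V^\ast A^2 V = (AV)^\ast(AV) \in \Pos{p}$ is invertible). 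The key analytic step is then the limit $\omega \to \infty$: write $A_\omega^{-1} = (A + \omega I)^{-1} = \omega^{-1}(I + \omega^{-1}A)^{-1}$, so $\omega A_\omega^{-1} \to I$ as $\omega \to \infty$. Therefore $V^\ast A A_\omega^{-1} A V = \omega^{-1} V^\ast A (\omega A_\omega^{-1}) A V \to$ behaves like $\omega^{-1} V^\ast A^2 V$, and similarly $V^\ast A A_\omega^{-1} b$ behaves like $\omega^{-1} V^\ast A b$; the factors of $\omega^{-1}$ cancel in \eqref{eq:x_b_w_explicit_S}, and since $V^\ast A^2 V$ is invertible, matrix inversion is continuous near it, giving $x_{b,\omega} \to V(V^\ast A^2 V)^{-1} V^\ast A b = x_{b,\infty}$.

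For (i), once the limit is established, $x_{b,\infty} \in \X_b$ follows because $\X_b = \Aff{\{x_{b,\omega} \mid \omega > \omin\}}$ is a finite-dimensional affine subspace of $\Fn$, hence closed, and it contains the convergent sequence (net) $x_{b,\omega}$, so it contains the limit $x_{b,\infty}$.

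For (ii), I would argue as follows. Fix $\omega \in (\omin,\infty)$ and $b \in \Fn$. By \Cref{thm:main_result}, $x_{b,\omega} - x_{b,\mu} \in \Y$ for all $\mu \in (\omin,\infty)$, where $\Y$ is the fixed subspace independent of $b$. Letting $\mu \to \infty$, the left side converges to $x_{b,\omega} - x_{b,\infty}$ by part (i), and since $\Y$ is closed (finite-dimensional), $x_{b,\omega} - x_{b,\infty} \in \Y$, hence $x_{b,\infty} - x_{b,\omega} \in \Y$. Now I must relate this to $\X$ rather than to $\Y$. Recall $\X := \sum_{b' \in \Fn} \Gam{\dim{\X_{b'}}}{\X_{b'}}$. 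Since $x_{b,\infty}, x_{b,\omega} \in \X_b$ by part (i) (and $x_{b,\omega} \in \X_b$ trivially), their difference $x_{b,\infty} - x_{b,\omega}$ lies in the linear part $\Gam{\dim{\X_b}}{\X_b}$ of the affine subspace $\X_b$ (the difference of two points of an affine subspace lies in its associated subspace), and $\Gam{\dim{\X_b}}{\X_b} \subseteq \X$ by definition of $\X$ as a sum over all $b'$. Hence $x_{b,\infty} - x_{b,\omega} \in \X$, which is exactly (ii).

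The main obstacle I anticipate is making the limit argument fully rigorous: one must justify that $\omega A_\omega^{-1} \to I$ in operator norm (clear from the spectral decomposition $A = U D U^\ast$, giving $\omega(D + \omega I)^{-1} \to I$ entrywise on the diagonal), and then that the two $\omega^{-1}$ prefactors cancel cleanly so that no indeterminate form survives, together with continuity of the map $M \mapsto M^{-1}$ at the invertible matrix $V^\ast A^2 V$. None of this is deep, but it is the only genuinely analytic (as opposed to purely algebraic) point in the lemma, so care is needed there; everything else is bookkeeping with closed subspaces and the definition of $\X$.
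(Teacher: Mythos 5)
Your proposal is correct and essentially matches the paper's proof: the rescaling $\omega A_\omega^{-1} = (\omega^{-1}A + I)^{-1} \to I$ (the paper writes this as $\tilde A_\omega^{-1} \to I$ with $\tilde A_\omega := \omega^{-1}A + I$), continuity of matrix products and inversion at $V^\ast A^2 V \in \Pos{p}$, and closedness of the finite-dimensional affine subspace $\X_b$ together give (i), while (ii) follows exactly as you say from $x_{b,\omega}, x_{b,\infty} \in \X_b$ hence $x_{b,\infty} - x_{b,\omega} \in \Gam{\dim{\X_b}}{\X_b} \subseteq \X$. The initial detour through $\Y$ in your argument for (ii) is correct but unnecessary, as you yourself note; the paper goes directly to the $\X_b$ argument.
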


\begin{proof}
\begin{enumerate}[(i)]
    \item For $\omega$ sufficiently large, define $\tilde A_w := w^{-1} A + I$, and let $V \in \Fpq{n}{p}$ be semi-unitary such that $\Img{V} = \S$. Then from \Cref{sssec:solution-formula}, with $\T = x_0 + \S$, $x_0 \in \T$, and $\S \in \Gr{p}{n}$ we have 
    \begin{equation}
    \label{eq:minres-formula}
    \begin{split}
        x_{b,\omega} & = x_0 + V(V^\ast A A_\omega^{-1} A V)^{-1} V^\ast A A_\omega^{-1} (b-Ax_0) \\ 
        &= x_0 + V(V^\ast A \tilde A_{\omega}^{-1} A V)^{-1} V^\ast A \tilde A_\omega^{-1} (b - Ax_0).
    \end{split}
    \end{equation}
    Because the map $\T \ni x \mapsto V^\ast (x - x_0) \in \Fp{p}$ is a bijection, we can equivalently write $x_{b,\infty} = x_0 + \argmin_{y \in \Fp{p}} \| (b-Ax_0) - AVy \|_2$, and since $AV$ is full rank, using \Cref{lemma:quadratic_minimization} we have $x_{b,\infty} = x_0 + V (V^\ast A^2 V)^{-1} V^\ast A (b - Ax_0)$ uniquely. Now recall that matrix products are continuous in the matrix entries and the matrix inversion map $\Gl{n} \ni A \mapsto A^{-1} \in \Gl{n}$ is also continuous. Since it is clear that $\tilde A_{\omega} \to I$ as $\omega \to \infty$, by continuity we also have $\tilde A_\omega^{-1} \to I$ as $\omega \rightarrow \infty$. This then implies that $(V^\ast A \tilde A_\omega^{-1} A V)^{-1} \to (V^\ast A^2 V)^{-1}$, and $V^\ast A \tilde A_\omega^{-1} b \to V^\ast A b$. Combining these and using continuity again, we get $V(V^\ast A \tilde A_\omega^{-1} A V)^{-1} V^\ast A \tilde A_\omega^{-1} (b-Ax_0) \to V(V^\ast A^2 V)^{-1} V^\ast A (b-Ax_0)$, which proves $x_{b,\omega} \to x_{b,\infty}$. Finally since any finite dimensional affine subspace is closed, $\X_b$ is closed, and so $x_{b,\infty} \in \X_b$.
    \item By (i), for any $b \in \Fn$, $x_{b,\infty} \in \X_b$ implying that $x_{b,\omega} - x_{b,\infty} \in \Gam{\dim{\X_b}}{\X_b}$, for all $\omega \in (\omin, \infty)$. The result now follows immediately from the definition of $\X$.
\end{enumerate}
\end{proof}

As a direct consequence of part (i) of \Cref{lemma:minres-connection}, if $\T = \K_k(A,b)$ is a Krylov subspace, then $x_{b,\omega}$ converges to the solution of the MINRES subproblem \cite{paige1975solution} as $\omega \to \infty$.

We finish this section by a result extending \Cref{lemma:sufficient-condition} to the case $\mu \to \infty$. Define $J_\infty := H(H^\ast H)^{-1} H - I$ and $d_{b,\omega,\infty} := V^\ast (x_{b,\omega} - x_{b,\infty})$.

\begin{lemma}
\label{lemma:sufficient-condition-infty}
Let $u \in \F^q$ be fixed. If there exist $b \in \F^n$, $\omega \in (\omin, \infty)$, and $t, t' \in \F^q$ such that
\begin{equation}
\label{eq:sufficient-cond-infty}
\begin{cases}
    & T (H^\ast H)^{-1} B^\ast u = N_1 (\omega t' - t) \\
    & B (H^\ast H)^{-1} B^\ast u = N_2 (\omega t' - t) + \begin{bmatrix} B & C-F_\omega \end{bmatrix}Nt' - D^\ast E_\omega^{-1} c'' \\
    & Nt = J_\infty \begin{bmatrix} c \\ c' \end{bmatrix},
\end{cases}
\end{equation}
then $d_{b,\omega,\infty} = (H^\ast H)^{-1} B^\ast u$.
Conversely, if there exist $b \in \F^n$, and $\omega \in (\omin,\infty)$ such that $d_{b,\omega,\infty} = (H^\ast H)^{-1} B^\ast u$, then there exist $t, t' \in \F^q$ such that \eqref{eq:sufficient-cond-infty} holds.
\end{lemma}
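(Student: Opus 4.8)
The plan is to rerun the linear algebra behind \Cref{lemma:HTSH} and \Cref{lemma:sufficient-condition} in the regime $\mu \to \infty$. Note first that \eqref{eq:sufficient-cond-infty} is just \eqref{eq:sufficient-cond} after the reparametrization $\mu t \mapsto t$ (so the term $\mu N t$ there becomes $Nt$), with $F_\mu$ and $D^\ast E_\mu^{-1} c''$ sent to $0$ and $J_\mu$ replaced by $J_\infty = \lim_{\mu\to\infty}\mu J_\mu$. I would begin by recording a closed form for the $\omega=\infty$ solution: since we have reduced to $\T = \S$ we have $x_0 = 0$, so \Cref{lemma:minres-connection}(i) gives $x_{b,\infty} = V (V^\ast A^2 V)^{-1} V^\ast A b$, and using $AV = \begin{bmatrix} V & V' \end{bmatrix} H$ and the semi-unitarity of $\begin{bmatrix} V & V' \end{bmatrix}$ this collapses to $V^\ast A^2 V = H^\ast H$ and $V^\ast A b = H^\ast \begin{bmatrix} c \\ c' \end{bmatrix}$, whence $V^\ast x_{b,\infty} = (H^\ast H)^{-1} H^\ast \begin{bmatrix} c \\ c' \end{bmatrix}$ and $d_{b,\omega,\infty} = V^\ast x_{b,\omega} - (H^\ast H)^{-1} H^\ast \begin{bmatrix} c \\ c' \end{bmatrix}$.

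Next I would establish a ``universal'' identity, valid for every $b \in \Fn$ and every $\omega > \omin$. Starting from $d_{b,\omega,\infty} = (V^\ast A A_\omega^{-1} A V)^{-1} V^\ast A A_\omega^{-1} b - (V^\ast A^2 V)^{-1} V^\ast A b$, multiplying through by $V^\ast A A_\omega^{-1} A V = H^\ast G_\omega^{-1} H$, rearranging exactly as in the passage leading to \eqref{eq:equivalent-system-eqs-proof-1}, and invoking the block identities \eqref{eq:equivalent-system-eqs-proof-2}, \eqref{eq:block_inverse} together with $J_\infty = H(H^\ast H)^{-1} H^\ast - I$, one obtains
\[ H^\ast G_\omega^{-1} \left( H d_{b,\omega,\infty} + J_\infty \begin{bmatrix} c \\ c' \end{bmatrix} + \begin{bmatrix} 0 \\ D^\ast E_\omega^{-1} c'' \end{bmatrix} \right) = 0. \]
Because $H^\ast G_\omega^{-1} H$ is positive (hence invertible), $d_{b,\omega,\infty}$ is the \emph{unique} $d \in \Fp{p}$ solving this equation.

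The two directions then follow as in \Cref{lemma:sufficient-condition}. For the forward implication, given $(b, \omega, t, t')$ satisfying \eqref{eq:sufficient-cond-infty}, stack its first two equations (using $H = \begin{bmatrix} T \\ B \end{bmatrix}$, $N = \begin{bmatrix} N_1 \\ N_2 \end{bmatrix}$) to get $H (H^\ast H)^{-1} B^\ast u = N(\omega t' - t) + \begin{bmatrix} 0 \\ \begin{bmatrix} B & C - F_\omega \end{bmatrix} N t' - D^\ast E_\omega^{-1} c'' \end{bmatrix}$; rewrite the right side via $G_\omega N t' = \omega N t' + \begin{bmatrix} 0 \\ \begin{bmatrix} B & C - F_\omega \end{bmatrix} N t' \end{bmatrix}$ (valid since $H^\ast N = 0$) and the third equation $N t = J_\infty \begin{bmatrix} c \\ c' \end{bmatrix}$, arriving at $H(H^\ast H)^{-1} B^\ast u + J_\infty \begin{bmatrix} c \\ c' \end{bmatrix} + \begin{bmatrix} 0 \\ D^\ast E_\omega^{-1} c'' \end{bmatrix} = G_\omega N t'$; multiplying by $H^\ast G_\omega^{-1}$ and using $H^\ast N = 0$ kills the right side, so $(H^\ast H)^{-1} B^\ast u$ solves the universal identity, and uniqueness forces $d_{b,\omega,\infty} = (H^\ast H)^{-1} B^\ast u$. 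For the converse, given $b, \omega$ with $d_{b,\omega,\infty} = (H^\ast H)^{-1} B^\ast u$, let $t$ be the unique vector with $N t = J_\infty \begin{bmatrix} c \\ c' \end{bmatrix}$ (it exists since $H^\ast J_\infty = 0$ gives $\Img{J_\infty} \subseteq \Img{N}$ and $N$ has full column rank); the universal identity then yields $H(H^\ast H)^{-1} B^\ast u + N t + \begin{bmatrix} 0 \\ D^\ast E_\omega^{-1} c'' \end{bmatrix} \in \Ker{H^\ast G_\omega^{-1}} = \Img{G_\omega N}$ (the equality by a dimension count: $\rank(H^\ast G_\omega^{-1}) = p$ and $H^\ast G_\omega^{-1} (G_\omega N) = H^\ast N = 0$), so one picks $t'$ with $G_\omega N t'$ equal to that vector and reads off the first two equations of \eqref{eq:sufficient-cond-infty} from its top ($p$) and bottom ($q$) blocks.

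The main point to be careful about is bookkeeping rather than any genuine obstacle: one must keep the reparametrization $\mu t \mapsto t$ straight (so that $J_\mu$ is replaced by $J_\infty = \lim_{\mu\to\infty} \mu J_\mu$, not by its vanishing limit) and verify the single rank fact $\rank(H^\ast G_\omega^{-1}) = p$ — immediate from $\rank(H) = p$ and $G_\omega \in \Gl{p+q}$ — so that $\Ker{H^\ast G_\omega^{-1}}$ has dimension $q$ and coincides with $\Img{G_\omega N}$. Deriving the closed form $V^\ast x_{b,\infty} = (H^\ast H)^{-1} H^\ast \begin{bmatrix} c \\ c' \end{bmatrix}$ from \Cref{lemma:minres-connection} is the only ingredient genuinely specific to the $\omega = \infty$ case.
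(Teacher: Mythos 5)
Your proposal is correct and follows essentially the same route as the paper: your ``universal identity'' is exactly the paper's equation \eqref{eq:H_Gomegainv_H_infty}, derived from the closed form $V^\ast x_{b,\infty} = (H^\ast H)^{-1} H^\ast \begin{bmatrix} c \\ c' \end{bmatrix}$ and the block identities \eqref{eq:equivalent-system-eqs-proof-2}, and the translation to \eqref{eq:sufficient-cond-infty} via $Nt = J_\infty \begin{bmatrix} c \\ c' \end{bmatrix}$ and $G_\omega N t'$ mirrors the paper's chain of equivalences. Your packaging of the forward direction as uniqueness of the solution $d$ and of the converse via $\Ker{H^\ast G_\omega^{-1}} = \Img{G_\omega N}$ is only a cosmetic variation on the same argument.
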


\begin{proof}
$x_{b,\infty} \in \X_b$ by \Cref{lemma:minres-connection}(i), and $\X_b \subseteq \Img{V(H^\ast H)^{-1} B^\ast}$ by \Cref{thm:main_result}. So given $d_{b,\omega,\infty}$, there always exists a unique $u \in \F^q$ such that $d_{b,\omega,\infty} = (H^\ast H)^{-1} B^\ast u$. Then from \eqref{eq:x_b_w_explicit_S}, and \eqref{eq:minres-formula} in the proof of \Cref{lemma:minres-connection}, the condition $d_{b,\omega,\infty} = V^\ast (x_{b,\omega} - x_{b,\infty})$ is equivalent to
\begin{equation}
\label{eq:sufficient-condition-infty-proof-1}
V^\ast A A_\omega^{-1} \left\{ AVd_{b,\omega,\infty} - b + AV(V^\ast A^2 V)^{-1} V^\ast A b \right\} = 0.
\end{equation}
Note that $V^\ast A^2 V = H^\ast H$, and $V^\ast A b = H^\ast \begin{bmatrix} c \\ c' \end{bmatrix}$. Thus, with $G_\omega$ defined as in \eqref{eq:F-G}, and using \eqref{eq:equivalent-system-eqs-proof-2}, we find that condition \eqref{eq:sufficient-condition-infty-proof-1} is equivalent to
\begin{equation} 
\label{eq:H_Gomegainv_H_infty} 
H^\ast G_\omega^{-1} \left\{ H d_{b,\omega,\infty} + \begin{bmatrix} 0 \\ D^\ast E_\omega^{-1} c''\end{bmatrix} + (H(H^\ast H)^{-1} H^\ast - I) \begin{bmatrix} c \\ c'\end{bmatrix} \right\} = 0.
\end{equation}
Let $s = (H(H^\ast H)^{-1} H^\ast - I)\begin{bmatrix} c \\ c'\end{bmatrix}$, and note that $I - H(H^\ast H)^{-1} H^\ast$ is an orthogonal projector onto $\Img{H}^\perp$. So $H^\ast s = 0$, or $s = Nt$ for some $t \in \F^q$, and \eqref{eq:H_Gomegainv_H_infty} is equivalent to the following system
\begin{equation}
\label{eq:sufficient-condition-infty-proof-2}
    \begin{cases}
    & H^\ast G_\omega^{-1} \left\{ H d_{b,\omega,\infty} + \begin{bmatrix} 0 \\ D^\ast E_\omega^{-1} c'' \end{bmatrix} + Nt \right\} = 0, \\
    & Nt = (H (H^\ast H)^{-1} H^\ast - I) \begin{bmatrix} c \\ c' \end{bmatrix}.
    \end{cases}
\end{equation}
Finally, the first equation of \eqref{eq:sufficient-condition-infty-proof-2} holds if and only if $H d_{b,\omega,\infty} + \begin{bmatrix} 0 \\ D^\ast E_\omega^{-1} c'' \end{bmatrix} + Nt = G_\omega N t'$, for some $t' \in \Fp{q}$. Expanding $G_\omega$, writing $N = \begin{bmatrix} N_1 \\ N_2 \end{bmatrix}$, and letting $u$ be such that $d_{b,\omega,\infty} = (H^\ast H)^{-1} B^\ast u$ then leads to \eqref{eq:sufficient-cond-infty}. 
Since all steps are equivalences, the converse is true as well which concludes the proof.
\end{proof}

\subsection{Injectivity of the map \texorpdfstring{$\D{A}{\cdot}b : (\omega_{\text{\normalfont min}}, \infty) \rightarrow \Fn$}{}}
\label{ssec:injectivity}

We next give an elegant application of our results: in the setting $\F = \R$, we provide an explanation of the phenomenon first reported and proved in \cite{hallman2018lsmb} (see Section 4.4), that LSMB iterates are a convex combination of LSQR and LSMR iterates. However, we think that our proof is more illuminating and raises other interesting questions. In order to do this, we will first look at the map $\D{A}{\cdot}b : (\omega_{\text{\normalfont min}}, \infty) \rightarrow \Fn$, for some fixed $b \in \Fn$, and specifically ask ourselves when this map is injective.

We can easily formulate a necessary and sufficient condition of injectivity. Suppose $\D{A}{\cdot}b$ is not injective. Then there exists distinct $\omega, \mu \in (\omin,\infty)$ such that $\dD{A}{\omega}{\mu} b = 0$, or equivalently $b \in \Ker{\dD{A}{\omega}{\mu}}$. Conversely, if there exists distinct $\omega, \mu \in (\omin,\infty)$ such that $b \in \Ker{\dD{A}{\omega}{\mu}}$, then $\D{A}{\cdot}b$ is not injective as $\dD{A}{\omega}{\mu} b = 0$. Taking the contrapositive of this statement gives the result that for any $b \in \Fn$, $\D{A}{\cdot}b$ is injective if and only if 
\begin{equation}
\label{eq:global-injectivity-cond}
    b \not \in \underset{\mu \neq \omega}{\bigcup_{\mu, \omega > \omin}} \Ker{\dD{A}{\omega}{\mu}}.
\end{equation}
We note that it can be hard in practice to check condition \eqref{eq:global-injectivity-cond} explicitly; however, whether other simpler equivalent conditions exist or not is not known to us presently. Also recall from \Cref{cor:converse-zero-dim}(iii) that when $\Indsimple{A}{\S} = 0$, we have $\dD{A}{\omega}{\mu} = 0$, for all $\omega, \mu > \omin$; thus the injectivity question of $\D{A}{\cdot}b$ is only interesting when $\Indsimple{A}{\S} \geq 1$. In this case again, \Cref{thm:dim-Xb-zero-condition} provides a sufficient condition on $b$ for $\D{A}{\cdot}b$ to not be injective, but clearly this is not necessary. 

On the other hand, a much easier question that we can resolve almost completely is that of \textit{local injectivity}: we will say that $\D{A}{\cdot}b$ is locally injective at $\omega \in (\omin, \infty)$ if and only if there exists a non-empty open interval $(\omega_1,\omega_2) \subseteq (\omin, \infty)$ containing $\omega$, such that the restriction of $\D{A}{\cdot}b$ to $(\omega_1,\omega_2)$ is injective. Now note that the map $\D{A}{\cdot}b$ satisfies exactly one of the two following conditions:
\begin{enumerate}[(i)]
    \item $\D{A}{\cdot}b$ is a constant map,
    \item $\D{A}{\cdot}b$ is not a constant map.
\end{enumerate}
We already have a complete characterization in \Cref{thm:dim-Xb-zero-condition} of when condition (i) is true, in which case $\D{A}{\cdot}b$ is not locally injective anywhere in $(\omin,\infty)$. It turns out that if condition (ii) holds, then $\D{A}{\cdot}b$ is locally injective almost everywhere. We prove this in the next two lemmas.

\begin{lemma}
\label{lemma:real-analyticity-DA}
If $\F = \R$ (resp. $\F = \C$), $\D{A}{\cdot}b : (\omin, \infty) \rightarrow \Fn$ is a real analytic function (resp. both the real and imaginary parts of the image are real analytic functions).
\end{lemma}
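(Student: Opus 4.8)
The plan is to reduce everything to the single matrix identity $\D{A}{\omega} = V\,(V^\ast A A_\omega^{-1} A V)^{-1}\,V^\ast A A_\omega^{-1}$ (with $A_\omega = A + \omega I$), and to show that each entry of this matrix is a real analytic function of $\omega$ on $(\omin,\infty)$ — in the case $\F = \C$, that the real and imaginary parts of each entry are real analytic. Once that is done, fixing $b \in \Fn$ only takes a fixed $\F$-linear combination of these entries, so every component of $\D{A}{\omega}b$ inherits the property, and the lemma follows. Throughout I would use the standard closure properties of the class of real analytic functions of one real variable: it is closed under addition, multiplication, and quotients with nowhere-vanishing denominator (the last being Proposition~2.2.2 in \cite{krantz2002primer}); in the case $\F = \C$ these are applied separately to real and imaginary parts, using that for a complex-valued $q(\omega) \neq 0$ one has $1/q = \overline{q}/(\Re(q)^2 + \Im(q)^2)$ with real analytic, nonvanishing denominator.

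First I would treat $A_\omega^{-1}$. Since $A_\omega = A + \omega I$ is affine in $\omega$, every entry of $A_\omega$ is a (complex-coefficient) polynomial in $\omega$, hence its real and imaginary parts are real analytic; the same is true for every entry of the adjugate $\text{adj}(A_\omega)$ (integer polynomials in the entries of $A_\omega$) and for $\det(A_\omega)$. For $\omega > \omin$ we have $A_\omega \in \Pos{n} \subseteq \Gl{n}$, so $\det(A_\omega) \neq 0$ on the entire interval, and Cramer's rule $A_\omega^{-1} = \det(A_\omega)^{-1}\,\text{adj}(A_\omega)$ exhibits every entry of $A_\omega^{-1}$ as a quotient of real analytic functions with nonvanishing denominator; hence $A_\omega^{-1}$ has real analytic entries. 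Multiplying on the left and right by the constant matrices $V^\ast A$ and $A V$ preserves this, so $M(\omega) := V^\ast A A_\omega^{-1} A V \in \Fpq{p}{p}$ has real analytic entries; moreover by \Cref{lemma:x-bw} (whose proof is exactly the statement that this matrix is invertible, guaranteeing the minimizer defining $x_{b,\omega}$ exists uniquely) we have $M(\omega) \in \Gl{p}$ for all $\omega > \omin$, so $\det(M(\omega))$ is real analytic and nowhere zero on $(\omin,\infty)$, and Cramer's rule applied to $M(\omega)$ shows $M(\omega)^{-1}$ has real analytic entries as well.

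Finally I would assemble the pieces: $\D{A}{\omega} = V\,M(\omega)^{-1}\,V^\ast A\,A_\omega^{-1}$ is a product of matrix-valued functions each of whose entries is real analytic (the factors $V$ and $V^\ast A$ being constant), so each entry of $\D{A}{\omega}$, and hence of $\D{A}{\omega}b$, is real analytic; when $\F = \C$ the same argument applied to real and imaginary parts throughout gives the stated conclusion. I do not expect a genuine obstacle; the only points needing care are the two appeals to Cramer's rule — which is precisely why the nonvanishing of $\det(A_\omega)$ (from $A_\omega \in \Pos{n}$) and of $\det(M(\omega))$ (from \Cref{lemma:x-bw}) on the whole interval is essential — and the routine bookkeeping in the $\F = \C$ case, where one must check the algebra of complex-valued functions of $\omega$ with real analytic real and imaginary parts is closed under the three operations used.
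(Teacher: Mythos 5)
Your proposal is correct and follows essentially the same route as the paper: both arguments use the adjugate/Cramer's rule formula together with the nonvanishing of $\det(A_\omega)$ and $\det(V^\ast A A_\omega^{-1} A V)$ on $(\omin,\infty)$, plus the closure of real analytic functions under products and quotients (Proposition 2.2.2 in \cite{krantz2002primer}), to conclude entrywise analyticity of $\D{A}{\omega}$ and hence of $\omega \mapsto \D{A}{\omega}b$. The only cosmetic difference is that the paper exploits $\det(A_\omega), \det(V^\ast A A_\omega^{-1} A V) \in \R$ (by Hermitianity) instead of your explicit $1/q = \overline{q}/(\Re(q)^2+\Im(q)^2)$ device in the complex case.
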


\begin{proof}
We use Proposition 2.2.2 in \cite{krantz2002primer}, repeatedly in this proof. We will also simply say that a function is analytic if its real (if $\F=\R$ and $\F=\C$) and imaginary parts (if $\F = \C$) are both real analytic functions on $(\omin,\infty)$. Let us first consider $A_\omega^{-1}$. Using the adjugate formula (i.e. for $M \in \Gl{n}$, $M^{-1} = \det(M)^{-1} \text{adj}(M)$), we find $A_\omega^{-1} = \det(A_\omega)^{-1} \text{adj}(A_\omega)$. Since $A \in \Sym{n}$, $\det(A_\omega) \in \R$, and since $\omega \in (\omin,\infty)$, $\det(A_\omega) \neq 0$. Hence $\det(A_\omega)^{-1}$ is analytic as $\det(A_\omega)$ is a polynomial in the entries of $A_\omega$, which are in turn affine functions of $\omega$. Furthermore, $(\text{adj}(A_\omega))_{ij}$ is the determinant of a submatrix of $A_\omega$, and thus also analytic. Hence all the entries of $A_\omega^{-1}$ are analytic functions. Since $V$ and $A$ are constant, the entries of $V^\ast A A_\omega^{-1} A V$ are again analytic, and the same is true for its inverse, by the same argument as above (as $\det(V^\ast A A_\omega^{-1} A V) \neq 0$ for $\omega \in (\omin,\infty)$ and $V^\ast A A_\omega^{-1} A V \in \Sym{p}$). Similarly, the entries of $V^\ast A A_\omega^{-1} (b-Ax_0)$ are analytic, and we conclude that $(\omin,\infty) \ni \omega \mapsto x_{b,\omega}$ is analytic.
\end{proof}

\begin{lemma}
\label{lemma:local-injectivity-ae}
Suppose that $\D{A}{\cdot}b$ is not a constant map. Then the set of points in $(\omin,\infty)$ where $\D{A}{\cdot}b$ is not locally injective, has Lebesgue measure (1-dimensional) zero, and is a discrete, countable set.
\end{lemma}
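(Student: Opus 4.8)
The plan is to deduce the statement from two ingredients already available: the real analyticity of the map $g(\omega) := \D{A}{\omega}b = x_{b,\omega}$ (\Cref{lemma:real-analyticity-DA}), and the identity theorem for real analytic functions of one real variable.

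First I would fix coordinates, writing $g = (g_1,\dots,g_N) : (\omin,\infty) \to \R^N$, where $N = n$ if $\F = \R$ (the entries of $x_{b,\omega}$) and $N = 2n$ if $\F = \C$ (their real and imaginary parts). By \Cref{lemma:real-analyticity-DA} each $g_j$ is real analytic on the open interval $(\omin,\infty)$, and hence so is each derivative $g_j'$.

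Next I would establish the key local criterion: if $g'(\omega_0) \neq 0$ then $g$ is locally injective at $\omega_0$. Indeed, pick $j$ with $g_j'(\omega_0) \neq 0$; by continuity of $g_j'$ there is an open interval $(\omega_1,\omega_2) \subseteq (\omin,\infty)$ containing $\omega_0$ on which $g_j'$ keeps a constant nonzero sign, so $g_j$ is strictly monotone there, hence injective, hence $g|_{(\omega_1,\omega_2)}$ is injective. This criterion is sufficient but not necessary, as $\omega \mapsto \omega^3$ shows; sufficiency is all we need. Taking contrapositives, the set $Z_0$ of points at which $g$ fails to be locally injective satisfies $Z_0 \subseteq Z := \{\omega \in (\omin,\infty) : g'(\omega) = 0\} = \bigcap_{j=1}^N \{\omega : g_j'(\omega) = 0\}$.

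Finally, since $g$ is not constant, some coordinate $g_j$ is nonconstant on the connected interval $(\omin,\infty)$, so $g_j' \not\equiv 0$; by the identity theorem (see \cite{krantz2002primer}) its zero set has no accumulation point in $(\omin,\infty)$, so it is a discrete, hence at most countable, hence Lebesgue-null subset of $(\omin,\infty)$. Since $Z_0 \subseteq Z \subseteq \{\omega : g_j'(\omega) = 0\}$, the set $Z_0$ inherits all three properties, which is the claim. I do not anticipate a genuine obstacle; the only care needed is to state the local-injectivity criterion in the correct (sufficient) direction and, when $\F = \C$, to work consistently with the $2n$ real-valued real analytic components supplied by \Cref{lemma:real-analyticity-DA}.
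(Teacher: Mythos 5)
Your proof is correct and follows essentially the same route as the paper: pass to the real analytic derivative, observe it cannot vanish identically since the map is nonconstant, invoke the identity theorem to conclude the derivative's zero set is discrete (hence countable, hence null), and note that off this set the map is locally injective. The one place you do better than the paper is the local injectivity step: you argue elementarily that some coordinate $g_j$ has $g_j'(\omega_0) \neq 0$, hence is strictly monotone near $\omega_0$, hence $g$ is injective there, whereas the paper cites the inverse function theorem for a map $(\omin,\infty) \to \R^N$ with $N > 1$, where the derivative cannot literally be an invertible linear map; your version is the correct way to make that step precise.
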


\begin{proof}
Denote $f := \D{A}{\cdot}b$, and $\mathcal{U} := (\omin,\infty)$. By \Cref{lemma:real-analyticity-DA}, $f$ is a real analytic function on $\mathcal{U}$ (in the sense defined in \Cref{lemma:real-analyticity-DA} for the case $\F = \C$, meaning that both the real and imaginary parts are real analytic), and hence so is $f' := \partial_\omega f$, by Proposition 1.1.14 in \cite{krantz2002primer}. We claim that $f'$ is not identically zero. Let $\mathcal{Z} := \{\omega \in (\omin,\infty) \mid \partial_\omega f (\omega) = 0\}$. By the claim, $f'$ is either non-zero or non-constant, and in the first case $\mathcal{Z}$ is empty. In the second case, by \Cref{lemma:real-analyticity-prop,lemma:real-analyticity-complex-range}, $\mathcal{Z}$ has 1-dimensional Lebesgue measure zero, and is a discrete, countable set. It now follows from the inverse function theorem (see for e.g. Theorem C.34 in \cite{lee2013smooth}) that $f$ is locally injective at all points in $\mathcal{U} \setminus \mathcal{Z}$ in both cases, proving the lemma. To prove the claim, note that if $f' = 0$ identically in $\mathcal{U}$, then $\partial_\omega^n f = 0$ identically in $\mathcal{U}$ also, for all $n \geq 1$. By real analyticity of $f$, this would imply that $f$ is constant in $\mathcal{U}$, which is a contradiction.
\end{proof}

We return to the explanation of the convexity phenomenon concerning the LSMB iterates, that was alluded to at the beginning of this subsection. We first provide a lemma below that captures some conditions ensuring convexity, holding when $\dim{\X_b} = 1$ and $\F = \R$.

\begin{lemma}
\label{lemma:convexity-cond}
Let $\F = \R$ and $\dim{\X_b} = 1$. Then we have the following:
\begin{enumerate}[(i)]
    \item Let $\omin < \omega < \mu \leq \infty$ be such that $x_{b,\omega} \neq x_{b,\mu}$. Also assume that for all $\eta \in (\omega, \mu)$, $x_{b,\eta} \neq x_{b,\omega}$ and $x_{b,\eta} \neq x_{b,\mu}$. Then for all $\eta \in (\omega, \mu)$, $x_{b,\eta}$ is a convex combination of $x_{b,\omega}$ and $x_{b,\mu}$.
    \item Let $\omin < \omega < \mu \leq \infty$, and $\D{A}{\cdot}b$ is injective in $(\omega,\mu)$. Then for all $\eta \in (\omega, \mu)$, $x_{b,\omega} \neq x_{b,\eta} \neq x_{b,\mu}$, $x_{b,\omega} \neq x_{b,\mu}$, and $x_{b,\eta}$ is a convex combination of $x_{b,\omega}$ and $x_{b,\mu}$.
\end{enumerate}
\end{lemma}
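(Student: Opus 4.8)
The plan is to use the hypothesis $\dim{\X_b}=1$ to reduce both parts to elementary statements about a scalar function of $\omega$. Throughout I will invoke \Cref{lemma:minres-connection}: it gives $x_{b,\infty}\in\X_b$ together with $x_{b,\omega}\to x_{b,\infty}$ as $\omega\to\infty$, so that every point named in the lemma lies in the one‑dimensional affine subspace $\X_b$, and it also ensures (via \Cref{lemma:real-analyticity-DA}, or directly from the continuity of matrix inversion and multiplication as in the proof of \Cref{lemma:minres-connection}) that $\omega\mapsto x_{b,\omega}$ is continuous on $(\omin,\infty)$.

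For part (i): since $x_{b,\omega}\neq x_{b,\mu}$ and both lie on the line $\X_b$, the line $\X_b$ is \emph{exactly} the affine line through $x_{b,\omega}$ and $x_{b,\mu}$; this is the one place where $\dim{\X_b}=1$ together with $x_{b,\omega}\neq x_{b,\mu}$ is used. Hence for every $\eta$ in the interval $[\omega,\mu)$ (and in the limit at $\eta=\mu$) there is a unique scalar $g(\eta)\in\R$ with $x_{b,\eta}=(1-g(\eta))\,x_{b,\omega}+g(\eta)\,x_{b,\mu}$; the map $g$ is continuous on $[\omega,\mu)$, $g(\omega)=0$, and $g(\eta)\to 1$ as $\eta\to\mu$ (using \Cref{lemma:minres-connection}(i) when $\mu=\infty$, plain continuity when $\mu<\infty$). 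The hypothesis that $x_{b,\eta}\notin\{x_{b,\omega},x_{b,\mu}\}$ for $\eta\in(\omega,\mu)$ translates to $g(\eta)\notin\{0,1\}$ on $(\omega,\mu)$. If $g(\eta_0)<0$ for some $\eta_0\in(\omega,\mu)$, then since $g$ takes values close to $1$ as the argument approaches $\mu$, the intermediate value theorem forces $g$ to vanish somewhere in $(\eta_0,\mu)\subseteq(\omega,\mu)$, a contradiction; symmetrically $g(\eta_0)>1$ is impossible using $g(\omega)=0$. Thus $g(\eta)\in(0,1)$ on $(\omega,\mu)$, which is precisely the assertion that $x_{b,\eta}$ is a convex combination of $x_{b,\omega}$ and $x_{b,\mu}$.

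For part (ii): fix a unit direction $v$ of $\X_b$ and a basepoint $p\in\X_b$, and write $x_{b,\eta}=p+h(\eta)v$ with $h$ real‑valued. Then $h$ is continuous on $(\omin,\infty)$, and $\D{A}{\cdot}b$ being injective on $(\omega,\mu)$ is equivalent to $h$ being injective there; a continuous injective real function on an interval is strictly monotone, so after possibly replacing $v$ by $-v$ we may assume $h$ is strictly increasing on $(\omega,\mu)$. Extend $h$ continuously to the endpoints (using \Cref{lemma:minres-connection}(i) at $\mu=\infty$). Letting the argument tend to $\omega$, resp. to $\mu$, in the monotonicity inequality gives $h(\omega)\le h(\eta)\le h(\mu)$ for $\eta\in(\omega,\mu)$; moreover $h(\omega)=h(\eta_0)$ for some interior $\eta_0$ would yield, for any $\eta\in(\omega,\eta_0)$, the impossible chain $h(\omega)\le h(\eta)<h(\eta_0)=h(\omega)$, and symmetrically at $\mu$, so in fact $h(\omega)<h(\eta)<h(\mu)$ for all $\eta\in(\omega,\mu)$. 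This yields at once $x_{b,\omega}\neq x_{b,\eta}\neq x_{b,\mu}$, $x_{b,\omega}\neq x_{b,\mu}$, and that $h(\eta)$ lies strictly between $h(\omega)$ and $h(\mu)$, i.e.\ $x_{b,\eta}$ is a strict convex combination of $x_{b,\omega}$ and $x_{b,\mu}$; alternatively, once the three non‑equalities are established one may simply invoke part (i).

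The argument is essentially routine. The only mild obstacles are (a) justifying that $\X_b$ coincides with the affine line spanned by $x_{b,\omega}$ and $x_{b,\mu}$, and (b) the uniform treatment of the limiting case $\mu=\infty$, for which \Cref{lemma:minres-connection} supplies both the membership $x_{b,\infty}\in\X_b$ and the convergence $x_{b,\omega}\to x_{b,\infty}$.
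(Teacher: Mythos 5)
Your argument is correct and follows essentially the same route as the paper: parameterize the one-dimensional affine subspace $\X_b$ by a scalar (you normalize so that the two endpoints map to $0$ and $1$, the paper uses a generic basepoint-plus-direction parameterization), then use continuity of that scalar map together with the intermediate value theorem in part (i), and injectivity-plus-continuity implying strict monotonicity in part (ii). Your handling of the $\mu=\infty$ endpoint via \Cref{lemma:minres-connection} and the explicit reduction of part (ii) to part (i) after establishing the three non-equalities both match the paper's logic.
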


\begin{proof}
Since $\dim{\X_b} = 1$, there exist $x_0 \in \R^n$, $\R^n \ni w \neq 0$ such that $\X_b = \{ x_0 + w t \mid t \in \R\}$. Let $y_{b,\omega} \in \R$ be uniquely defined such that $x_{b,\omega} = x_0 + w y_{b,\omega}$. Notice that if $\D{A}{\cdot}b$ is injective, so is the map $(\omin,\infty) \ni \xi \mapsto y_{b,\xi}$.
\begin{enumerate}[(i)]
    \item First note that $y_{b,\omega} \neq y_{b,\mu}$ because $x_{b,\omega} \neq x_{b,\mu}$. Now assume the result is not true. Then there exists $\eta \in (\omega,\mu)$ such that either (1) $y_{b,\eta} > \text{max}(y_{b,\omega},y_{b,\mu})$, or (2) $y_{b,\eta} < \text{min}(y_{b,\omega},y_{b,\mu})$. For (1), by continuity of $\xi \mapsto y_{b,\xi}$, this implies there exists $\eta^{-} \in (\omega,\eta)$ if $y_{b,\omega} < \text{max}(y_{b,\omega},y_{b,\mu})$ such that $y_{b,\eta^{-}} = \text{max}(y_{b,\omega},y_{b,\mu})$, and $\eta^{+} \in (\eta,\mu)$ if $y_{b,\mu} < \text{max}(y_{b,\omega},y_{b,\mu})$ such that $y_{b,\eta^{+}} = \text{max}(y_{b,\omega},y_{b,\mu})$, a contradiction in both cases.
    For (2), there exists $\eta^{-} \in (\omega,\eta)$ if $y_{b,\omega} > \text{min}(y_{b,\omega},y_{b,\mu})$ such that $y_{b,\eta^{-}} = \text{min}(y_{b,\omega},y_{b,\mu})$, and $\eta^{+} \in (\eta,\mu)$ if $y_{b,\mu} > \text{min}(y_{b,\omega},y_{b,\mu})$ such that $y_{b,\eta^{+}} = \text{min}(y_{b,\omega},y_{b,\mu})$, and we again obtain a contradiction.
    \item Assume there exist $\eta \in (\omega,\mu)$ such that $y_{b,\eta} = y_{b,\mu}$ (resp. $y_{b,\eta} = y_{b,\omega}$). But $\xi \mapsto y_{b,\xi}$ is continuous and injective over $(\eta,\mu)$ (resp. $(\omega,\eta)$). So this cannot happen. Also by the same reasoning $x_{b,\omega} \neq x_{b,\mu}$; otherwise $\xi \mapsto y_{b,\xi}$ cannot be both injective and continuous in $(\omega,\mu)$. We conclude by applying (i) that for all $\eta \in (\omega, \mu)$, $x_{b,\eta}$ is a convex combination of $x_{b,\omega}$ and $x_{b,\mu}$.
\end{enumerate}
\end{proof}

We now state the theorem that explains the LSMB convexity result.

\begin{theorem}
\label{thm:convexity-spd-case}
Let $\F=\R$, and in addition to the assumptions of \Cref{thm:main_result}, also assume that $A \in \Pos{n}$, and $\T = \S = \K_k(A,b)$. Then for all $\omega \in (0,\infty)$, $x_{b,\omega}$ is a convex combination of $x_{b,0}$ and $x_{b,\infty}$. If $\Indsimple{A}{\S} = 1$, then $\dim{\X_b} = 1$.
\end{theorem}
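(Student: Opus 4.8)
The plan is to separate on $\Indsimple{A}{\S}$, which for a Krylov subspace $\S=\K_k(A,b)$ is either $0$ (iff $\S$ is $A$-invariant) or $1$. If $\Indsimple{A}{\S}=0$, then by \Cref{lemma:solutions-invariant-S} the map $\omega\mapsto x_{b,\omega}$ is constant, so $x_{b,\omega}=x_{b,0}=x_{b,\infty}$ and the convexity assertion is trivial. So assume $\Indsimple{A}{\S}=1$; then \Cref{cor:strong-bound}(i) already gives $\dim{\X_b}\le 1$, and I claim the whole theorem follows once I establish (a) $x_{b,0}\neq x_{b,\infty}$ (which forces $\dim{\X_b}=1$, giving the last sentence of the theorem), and (b) $\D{A}{\cdot}b$ is injective on $(0,\infty)$. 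Indeed, granting (a) and (b), \Cref{lemma:convexity-cond}(ii) applied with $\omega=0$ (admissible since $\omin=-\evalmin{A}<0$ because $A\in\Pos{n}$) and $\mu=\infty$ (using \Cref{lemma:minres-connection} for the limit) yields precisely that every $x_{b,\eta}$, $\eta\in(0,\infty)$, is a convex combination of $x_{b,0}$ and $x_{b,\infty}$.

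For (a): $x_{b,0}$ solves the CG subproblem (with $A_0=A$ one has $\norm{A^{-1/2}(b-Ax)}{2}=\norm{A^{-1}b-x}{A}$), so its residual $r:=b-Ax_{b,0}$ satisfies $r\perp\S$; and $x_{b,\infty}$ solves the MINRES subproblem, so its residual is orthogonal to $A\S$. If $x_{b,0}=x_{b,\infty}$, then a single $r$ is orthogonal to $\S+A\S$; but $b\in\S$ and $Ax_{b,0}\in A\S$ give $r\in\S+A\S$, forcing $r=0$, i.e. $A^{-1}b\in\S=\K_k(A,b)$. A routine Krylov argument (expand $A^{-1}b$ in the basis $\{A^ib\}_{i=0}^{k-1}$, multiply by $A$, and peel coefficients from the top using $A^kb\notin\S$, then $A^{k-1}b\notin\K_{k-1}(A,b)$, and so on) then forces $\S$ to be $A$-invariant, contradicting $\Indsimple{A}{\S}=1$.

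Step (b) is the main obstacle, and the idea is to reduce the injectivity to strict monotonicity of an explicit scalar function. I would take $V$ from the Lanczos process on $\K_k(A,b)$, so that in the tridiagonal block decomposition \eqref{eq:tridiag-decomp-hermitian} one has $B=\beta e_k^\ast$ with $\beta\neq 0$, $q=1$, $c=V^\ast b=\norm{b}{2}e_1$, and $c'=c''=0$ (since $b\in\S$). Let $N=\begin{bmatrix}N_1\\ N_2\end{bmatrix}$ span the one-dimensional $\Ker{H^\ast}$. Since $H^\ast N=0$, \eqref{eq:F-G} gives $G_\omega N=\omega N+\begin{bmatrix}0\\ g(\omega)\end{bmatrix}$ with $g(\omega):=BN_1+(C-F_\omega)N_2\in\R$; feeding $c,c',c''$ into \Cref{cor:HTSH-positive} (or directly into the optimality condition $H^\ast G_\omega^{-1}\rho_\omega=0$ with $\rho_\omega:=\begin{bmatrix}\norm{b}{2}e_1\\ 0\end{bmatrix}-H(V^\ast x_{b,\omega})$, using \eqref{eq:block_inverse}) then shows $x_{b,\omega}$ lies on a fixed line in direction $V(H^\ast H)^{-1}e_k$, at the parameter value proportional to $g(\omega)/\big(\omega\norm{N}{2}^2+N_2 g(\omega)\big)$. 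Cross-multiplying the equation $x_{b,\omega}=x_{b,\mu}$, the cross term $N_2 g(\omega)g(\mu)$ cancels and the condition collapses to $\mu g(\omega)=\omega g(\mu)$, i.e. $h(\omega)=h(\mu)$ where $h(\omega):=g(\omega)/\omega$. Writing $F_\omega=\sum_j\gamma_j/(\lambda_j^E+\omega)$ with $\gamma_j\ge 0$ and $\lambda_j^E>0$ (here $A\in\Pos{n}$ forces $E\in\Pos{n-p-q}$) and doing partial fractions gives $h(\omega)=\delta_0/\omega+\sum_j\delta_j/(\lambda_j^E+\omega)$ with $\delta_j=N_2\gamma_j/\lambda_j^E$ and $\delta_0=g(0)$. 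The decisive point is sign control: $N^\ast G_0 N=N_2 g(0)=N_2\delta_0$, and $A\in\Pos{n}$ makes $G_0\in\Pos{p+q}$, so $N_2\delta_0>0$; hence $\delta_0$ and every $\delta_j$ carry the sign of $N_2$ (with $\delta_0\neq 0$), so $h'(\omega)=-\delta_0/\omega^2-\sum_j\delta_j/(\lambda_j^E+\omega)^2$ keeps a constant nonzero sign on $(0,\infty)$. Thus $h$, and with it $\omega\mapsto x_{b,\omega}$, is strictly monotone, in particular injective, on $(0,\infty)$, establishing (b).

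The principal difficulty is exactly this reduction in (b): compressing the solution formula down to the one-parameter identity $h(\omega)=h(\mu)$ and recognizing that positivity of $A$ is what aligns the partial-fraction coefficients in sign. Minor care is also needed for the degenerate case $n=p+q$ (where $E,D,F_\omega$ are vacuous, $g$ is constant, and $h(\omega)=a/\omega$ is trivially monotone provided $a\neq 0$), handled either directly or via the reduction in \Cref{app:appB}, and for the non-degeneracies $(N_1)_1\neq 0$ and $a\neq 0$, which are equivalent to $\omega\mapsto x_{b,\omega}$ being nonconstant, i.e. to $\dim{\X_b}\ge 1$, and hence already follow from (a).
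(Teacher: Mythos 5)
Your proposal is correct, and it reaches the theorem by a genuinely different route than the paper. The paper never proves injectivity: working in the Lanczos basis with $N_1=-T^{-1}B^\ast$, $N_2=I$, it shows only the two endpoint non-equalities $x_{b,\eta}\neq x_{b,0}$ (for $\eta>0$) and $x_{b,\eta}\neq x_{b,\infty}$ (for $\eta\geq 0$), each by contradiction through \Cref{lemma:sufficient-condition} and \Cref{lemma:sufficient-condition-infty}; the crucial algebraic input is $BT^{-1}c\neq 0$, obtained from the fact that the inverse of an irreducible symmetric tridiagonal matrix has no zero entries, with positivity of $A$ entering via invertibility of the Schur complements $C-BT^{-1}B^\ast-F_\omega$; convexity then follows from \Cref{lemma:convexity-cond}(i), which needs only distinctness from the endpoints. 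You instead establish (a) $x_{b,0}\neq x_{b,\infty}$ from the Galerkin and minimal-residual orthogonality conditions plus linear independence of $\{A^ib\}_{i=0}^{k}$ --- an elementary replacement for the tridiagonal-inverse claim, and the source of $\dim{\X_b}=1$ --- and (b) the strictly stronger fact that $\omega\mapsto x_{b,\omega}$ is injective on $(0,\infty)$, then invoke \Cref{lemma:convexity-cond}(ii). I verified your reduction in (b): the optimality condition gives $\rho-Hy_\omega=s_\omega G_\omega N$ with $s_\omega=N^\ast\rho/(N^\ast G_\omega N)$, hence $y_\omega=(H^\ast H)^{-1}H^\ast\rho-\beta\, s_\omega g(\omega)(H^\ast H)^{-1}e_p$ with $s_\omega g(\omega)=\|b\|_2 (N_1)_1\, g(\omega)/\bigl(\omega\|N\|_2^2+N_2 g(\omega)\bigr)$; the cross-multiplication to $\mu g(\omega)=\omega g(\mu)$, the partial-fraction form of $h=g(\cdot)/\cdot$, and the sign alignment $N_2 g(0)=N^\ast G_0 N>0$, $\delta_j=N_2\gamma_j/\lambda_j^E$ (using $G_0\in\Pos{p+q}$ and $E\in\Pos{n-p-q}$) all check out, so $h$ is strictly monotone and the map is injective; and the nondegeneracy $(N_1)_1\neq 0$ does follow from (a), since $(N_1)_1=0$ would force the map to be constant and hence $x_{b,0}=x_{b,\infty}$. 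The trade-off: you prove more (strict monotonicity of the curve parameter, hence injectivity on all of $(0,\infty)$) at the price of a more computational argument, whereas the paper is shorter precisely because \Cref{lemma:convexity-cond}(i) lets it bypass injectivity entirely; your step (a) is also a cleaner substitute for the paper's $BT^{-1}c\neq 0$ argument. The only imprecision is cosmetic: in the degenerate case $n=p+q$ the constant $a=g(0)$ is automatically nonzero from $N_2 g(0)=N^\ast G_0 N>0$, so "$a\neq 0$" is not literally equivalent to nonconstancy as you phrase it, but the implication you actually use is correct.
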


\begin{proof}
If $\Indsimple{A}{\S} = 0$, then $\dim{\X_b} = 0$ by \Cref{thm:main_result}, and there is nothing to prove. So assume $q = \Indsimple{A}{\S} = 1$ (recall that for Krylov subspaces generated by any $A \in \Fnn$, the index of invariance is at most $1$), which also implies $b \neq 0$. We will show that the following conditions hold: (1) for all $\omega \in (0, \infty)$, $x_{b,\omega} \neq x_{b,0}$, and (2) for all $\omega \in [0, \infty)$, $x_{b,\omega} \neq x_{b,\infty}$. Then by (1) we get that $\dim{\X_b} = 1$ (as $\dim{\X_b} \leq 1$ by \Cref{thm:main_result}), and applying \Cref{lemma:convexity-cond}(i) proves the convex combination part.

For the proof of (1) and (2), assume $n > p+q$. Moreover, to facilitate the proof, we choose $\begin{bmatrix}V & V'\end{bmatrix}$ in a very specific way: the columns of $\begin{bmatrix}V & V'\end{bmatrix}$ are chosen to be the Lanczos vectors with the first column equal to $b / \|b\|_2$, which can be done using the Lanczos tridiagonalization process (see Algorithm 10.1.1 in \cite{golub2013matrix}). With this choice $T \in \Pos{p}$ is tridiagonal, with non-zero sub-diagonal entries (this uses the fact that $\S$ is Krylov and $\Indsimple{A}{\S} = 1$), i.e. $T_{i,i-1} \neq 0$ for $1 \leq i \leq p-1$, while the diagonal entries are positive. It also ensures that $B = \beta e_p^\ast$ for some $\beta \neq 0$ (see Section 10.1.2 in \cite{golub2013matrix} why $B$ and $T$ have these properties), and $c = \|b\|_2 e_1$, where we denote $e_k \in \R^p$ to be the vector satisfying $(e_k)_k = 1$ and all other entries zero. In addition, since $b \in \K_k(A,b)$, we have $c' = 0$, and $c'' = 0$. Now since $T$ is invertible, we choose $N$ according to \Cref{lemma:nullspace-special-cases}(i), i.e. $N_1 = -T^{-1}B^\ast$, and $N_2 = I$. Finally, we will need a claim: $BT^{-1}c \neq 0$. To prove the claim, notice that given the tridiagonal structure of $T$, it follows from Theorem 2.3 in \cite{meurant1992review} that all entries of $T^{-1}$ are non-zero, and so $BT^{-1}c = \beta \|b\|_2 (e_p^\ast T^{-1} e_1) \neq 0$.
\vspace{0.1cm}
\begin{enumerate}[(1)]
    \item Suppose the statement is false, so there exists $\omega \in (0,\infty)$ with $x_{b,\omega} = x_{b,0}$. Then with $d_{b,\omega,\mu}$ defined in \Cref{lemma:HTSH}, we have $d_{b,\omega,0} = 0$, so by \Cref{thm:main_result} $u=0$ is the unique solution to $d_{b,\omega,0} = (H^\ast H)^{-1} B^\ast u$. By \Cref{lemma:sufficient-condition}, there exists $t,t' \in \R$ satisfying the system \eqref{eq:sufficient-cond}, and since $N_1$ is full column rank, the first equation implies $t' = 0$ uniquely. Plugging this into the second equation of \eqref{eq:sufficient-cond} gives $z_{b,\omega,0}(t) = 0$, and then using \eqref{eq:z-b-omega-mu} and $c'' = 0$, we get $\begin{bmatrix} B & C - F_{0} \end{bmatrix}Nt = 0$. Now with $N_1$ and $N_2$ chosen as above, this is equivalent to $(C - B T^{-1} B^\ast - D^\ast E^{-1} D) t = 0$, and since \Cref{lemma:KLprop}(ii) implies that $(C - B T^{-1} B^\ast - D^\ast E^{-1} D)$ is invertible, we get $t=0$ uniquely. Finally the third equation of \eqref{eq:sufficient-cond} implies $J_0 \begin{bmatrix} c \\ 0 \end{bmatrix} = 0$, and now multiplying both sides of this equation by $G_0$, using the definition of $J_0$ in \eqref{eq:Jmu}, and using \eqref{eq:HTSH-positive-proof1}, we obtain $BT^{-1}c = 0$. By the claim above, this is a contradiction.
    \item Suppose again that the statement is false. Then there exists $\omega \in [0,\infty)$ such that $V^\ast(x_{b,\omega} - x_{b,\infty}) = d_{b,\omega,\infty} = 0$. Since $x_{b,\infty} \in \X_b$ by \Cref{lemma:minres-connection}(i), we get from \Cref{thm:main_result} that $u = 0$ is the unique solution to $d_{b,\omega,\infty} = (H^\ast H)^{-1} B^\ast u$. Now consider \Cref{lemma:sufficient-condition-infty}. The first equation of \eqref{eq:sufficient-cond-infty} implies $N_1(\omega t' - t) = 0$, or $\omega t' - t = 0$, as $N_1$ is full column rank. Using this in the second equation of \eqref{eq:sufficient-cond-infty}, and as $c'' = 0$, we get $\begin{bmatrix} B & C-F_\omega\end{bmatrix} N t' = 0$, which is equivalent to $(C - B T^{-1} B^\ast - F_\omega) t' = 0$ with $N_1$ and $N_2$ chosen as above.
    Since $(C - B T^{-1} B^\ast - F_\omega)$ is the Schur complement of the $C$ block of 
    \begin{equation} 
    \begin{bmatrix} T & B^\ast & \\ B & C & D^\ast \\ & D & E + \omega I \end{bmatrix},
    \end{equation}
    which is positive since $\omega \geq 0$ and $A \in \Pos{n}$, it is invertible, from which we conclude that $t' = 0$ uniquely. Plugging this into the third equation of \eqref{eq:sufficient-cond-infty} then gives $H (H^\ast H)^{-1} H^\ast \begin{bmatrix} c \\ 0 \end{bmatrix} = \begin{bmatrix} c \\ 0 \end{bmatrix}$. But $H (H^\ast H)^{-1} H^\ast$ is an orthogonal projector onto $\Img{H}$, so this implies $\begin{bmatrix} c \\ 0 \end{bmatrix} \in \Img{H}$, i.e. there exists $v \in \R^p$ such that $\begin{bmatrix} T \\ B \end{bmatrix}v = \begin{bmatrix} c \\ 0 \end{bmatrix}$, or equivalently $BT^{-1}c = 0$, contradicting the claim above.
\end{enumerate}
\vspace{0.1cm}

Finally, for the case $n = p+q$, we can use the discussion in \Cref{app:appB} to reduce to the $n > p+q$ case, and then repeat the above proof.
\end{proof}

The conclusion of the convexity part of \Cref{thm:convexity-spd-case} can be restated as follows: for all $\omega \in (0,\infty)$, the solutions $x_{b,\omega}$ are a convex combination of the CG and MINRES solutions (note that substituting $\omega = 0$ in the minimization problem \eqref{eq:x_b_w}, when $A \in \Pos{n}$, is the definition of the CG subproblem \cite{hestenes1952methods}). The fact that the LSMB solution is a convex combination of the LSQR and LSMR solutions now also follows by making appropriate substitutions in \eqref{eq:x_b_w}, as was mentioned in \Cref{sec:intro}.

Let us finally provide an example where we have both convexity and injectivity of $\D{A}{\cdot}b$ for almost all $b \in \Fn$, in the setting when $\F = \R$ and $\dim{\X_b} = 1$.

\begin{example}
Consider \Cref{example:converse} with the only added restriction of $\F = \R$. We built $A$ and $\S$ (with $p = q \geq 1$, $n = p + q$) such that, with $\alpha \in \R \backslash \{-1,1\}$
\[ d_{b,\omega,\mu} = V^*(x_{b,\omega} - x_{b,\mu}) = \frac{(\mu - \omega)(\alpha^2 - 1)(c - \alpha c')}{(\alpha^3+\alpha^2 \omega - \alpha + \omega)(\alpha^3+\alpha^2 \mu - \alpha + \mu)}, \]
where $c = V^\ast b$, $c' = V'^\ast b$, and $\omega,\mu \in (\omin, \infty)$. From the analysis in \Cref{example:converse}, we know the denominator is positive and hence non-zero, and $\alpha^2 - 1 \neq 0$. So we conclude that, if $c - \alpha c' \neq 0$, then for any $\omega, \mu \in (\omin,\infty)$ with $\omega \neq \mu$, we have $x_{b,\omega} \neq x_{b,\mu}$. This implies that if $c - \alpha c' \neq 0$, the map $\D{A}{\cdot}b : \omega \in (\omin,\infty) \to x_{b,\omega} \in \Fn$ is injective. By applying \Cref{lemma:convexity-cond}(ii) one obtains a convexity statement (for any choice of $\omega > \omin$ in the lemma). Now the set $\mathcal{U} := \{b \in \Fn \mid c - \alpha c' = 0 \}$ is clearly closed under addition and scalar multiplication, hence a subspace of $\Fn$ with $\dim{\mathcal{U}} = p < n$. Thus, $\mathcal{U}$ has $n$-dimensional Lebesgue measure zero in $\Fn$.
\end{example}

\subsection{Topological consequences}
\label{ssec:top}

We point out some topological consequences of  the tridiagonal block decomposition obtained in
\eqref{eq:property_VTAV}. In what follows, we will identify $\Rnn$ and $\Cnn$ with the smooth manifolds $\R^{n^2}$ and $\R^{2n^2}$ respectively. Similarly, $\Gl{n}$, $\Sym{n}$, and $\Pos{n}$ will denote the corresponding subsets of $\R^{n^2}$ (resp. $\R^{2n^2}$) under this identification, when $\F = \R$ (resp. $\F = \C$). We also state two useful facts: (i) $\Sym{n}$ is diffeomorphic to $\R^{n(n-1)/2 + n}$ (resp. $\R^{2n(n-1)/2 + n}$) if $\F = \R$ (resp. $\F = \C$), and (ii) the set of full rank matrices in $\Fpq{p}{q}$ is an open subset. Finally recall the function $\vec{\cdot}$ introduced in \Cref{sssec:hilbert-spaces}. We remind the reader of the following well known property: for compatible shapes, $AXB=C$ if and only if $(B^\top \otimes A)\vec{X} = \vec{C}$, where $\otimes$ is the Kronecker product.

We first define a few matrix sets that we will study below. Let $\S, \S' \in \G{n}$ be such that $\S \subseteq \S' \subseteq \Fn$, and $n \geq 1$. Define the sets
\begin{equation}
\label{eq:matrix-sets}
\begin{split}
    \M(\S, \S') &:= \{ A \in \Fnn \mid \S + A\S = \S' \} \\
    \Minv (\S, \S') &:= \M(\S, \S') \cap \Gl{n} \\
    \Msym (\S, \S') &:= \M(\S, \S') \cap \Sym{n} \\
    \Msyminv (\S, \S') &:= \Msym(\S,\S') \cap \Gl{n} \\
    \Mpos (\S, \S') &:= \M(\S, \S') \cap \Pos{n}.
\end{split}
\end{equation}
We give each of these sets the subset topology from $\Fnn$. The following lemma concerns the existence of these matrix sets.
\begin{lemma}
\label{lemma:existence}
Let $\S, \S' \in \G{n}$ be such that $\S \subseteq \S' \subseteq \Fn$ with $n \geq 1$, and let $p = \dim{\S}$, and $q = \dim{\S'} - \dim{\S}$. Then all the matrix sets in \eqref{eq:matrix-sets} are non-empty if $q \leq p$, and empty if $q > p$.
\end{lemma}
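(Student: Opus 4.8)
The plan is to prove the two halves separately: that $q > p$ forces emptiness of even the largest set $\M(\S,\S')$, and that $q \le p$ already produces an element of the smallest set $\Mpos(\S,\S')$ (which is contained in all the others, since $\Pos{n} \subseteq \Gl{n} \cap \Sym{n}$). For the first half, observe that for any $A \in \Fnn$ we have $\dim{A\S} \le \dim{\S} = p$, hence $\dim{\S + A\S} \le \dim{\S} + \dim{A\S} \le 2p$; since $\dim{\S'} = p + q$, the equation $\S + A\S = \S'$ forces $p + q \le 2p$, i.e. $q \le p$. Equivalently, this is just $\Indsimple{A}{\S} \le \dim{\S}$ from \Cref{lemma:basic_properties}(i). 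So if $q > p$, $\M(\S,\S')$ is empty, and therefore so are all the other four sets, being subsets of it.

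For the second half, assume $q \le p$ and construct an explicit positive matrix in $\M(\S,\S')$. Using \Cref{lemma:basic_properties}(ii) (applied with $\S'$ in the role of $\S+A\S$), choose a semi-unitary $\begin{bmatrix} V & V' & V'' \end{bmatrix} \in \Uni{n}$ with $\Img{V} = \S$, $\Img{\begin{bmatrix} V & V' \end{bmatrix}} = \S'$, and $V \in \Fpq{n}{p}$, $V' \in \Fpq{n}{q}$, $V'' \in \Fpq{n}{(n-p-q)}$ (with non-existent blocks dropped if $p$, $q$, or $n-p-q$ vanishes). In view of the tridiagonal block decomposition \eqref{eq:property_VTAV}, it suffices to exhibit a \emph{Hermitian positive} matrix whose block form in this basis is $\begin{bmatrix} T & B^\ast & 0 \\ B & C & 0 \\ 0 & 0 & E \end{bmatrix}$ with $B \in \Fpq{q}{p}$ of full rank $q$ (the rank condition on the off-diagonal block $B$ is exactly what makes $\Indsimple{A}{\S} = q$, by \Cref{lemma:rank_B} / \Cref{cor:tridiag-blk-decomp}); taking $D = 0$ is harmless. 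A concrete choice: since $q \le p$, let $B = \begin{bmatrix} I_q & 0 \end{bmatrix} \in \Fpq{q}{p}$, which has full rank $q$, and set $T = 2 I_p$, $C = 2 I_q$, $E = I_{n-p-q}$. Then the candidate $A := \begin{bmatrix} V & V' & V'' \end{bmatrix} \begin{bmatrix} 2I_p & B^\ast & 0 \\ B & 2I_q & 0 \\ 0 & 0 & I_{n-p-q} \end{bmatrix} \begin{bmatrix} V^\ast \\ V'^\ast \\ V''^\ast \end{bmatrix}$ is Hermitian, and it is positive because the inner block matrix is: its top-left $2\times 2$ block structure $\begin{bmatrix} 2I_p & B^\ast \\ B & 2I_q \end{bmatrix}$ has Schur complement $2I_q - \tfrac12 B B^\ast = 2I_q - \tfrac12 I_q = \tfrac32 I_q \succ 0$, and the remaining block $I_{n-p-q} \succ 0$. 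Conjugation by a unitary preserves positivity, so $A \in \Pos{n}$. Finally, applying \Cref{lemma:rank_B} (with $T$ and $B$ as above, $\rank(B) = q$) gives $\Indsimple{A}{\S} = q$, i.e. $\S + A\S$ has dimension $p + q$; and since $\S + A\S = \Img{\begin{bmatrix} V & V' \end{bmatrix}} = \S'$ by construction, $A \in \Mpos(\S,\S')$. Hence $\Mpos(\S,\S')$ is non-empty, and so are all of $\Msyminv(\S,\S')$, $\Msym(\S,\S')$, $\Minv(\S,\S')$, $\M(\S,\S')$ that contain it.

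The one point requiring care — the main (mild) obstacle — is the handling of the degenerate cases $p = 0$, $q = 0$, or $n - p - q = 0$, where some of $V$, $V'$, $V''$ are absent; one should check the construction still makes sense (e.g. $q = 0$ means $\S$ is $A$-invariant and $\S' = \S$, and the block $B$ disappears, so $A = \begin{bmatrix} V & V'' \end{bmatrix} \begin{bmatrix} 2I_p & 0 \\ 0 & I_{n-p} \end{bmatrix}\begin{bmatrix} V^\ast \\ V''^\ast \end{bmatrix} \succ 0$ works, and the hypothesis $q \le p$ is then vacuous unless also $p = 0$, in which case $\S = \S' = \{0\}$ and $A = I$ works). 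Everything else is routine linear algebra invoking results already established in \Cref{sec:prelim}.
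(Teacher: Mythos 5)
Your proof is correct, and it takes a genuinely different (and arguably more direct) route than the paper's in the non-emptiness half. The paper constructs a Hermitian \emph{swap} operator $A$ (sending $v_i \leftrightarrow v_{p+i}$ for $i \le q$, identity elsewhere), which has spectrum in $\{\pm 1\}$ and hence is not positive; it then observes that the shift $A + \omega I$ with $\omega > -\lambda_{\min}(A)$ is positive and satisfies $\S + (A+\omega I)\S = \S + A\S = \S'$. You instead write down a positive matrix directly, choosing the block form $\begin{bmatrix} 2I_p & B^\ast & 0 \\ B & 2I_q & 0 \\ 0 & 0 & I_{n-p-q} \end{bmatrix}$ with $B = \begin{bmatrix} I_q & 0 \end{bmatrix}$ and verifying positivity by a Schur-complement computation ($2I_q - \tfrac12 BB^\ast = \tfrac32 I_q \succ 0$). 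Both arguments are valid; yours avoids the shift trick at the cost of a small Schur-complement check, and the paper's avoids the Schur complement at the cost of a spectral shift. The emptiness half and the degenerate-case handling ($p=0$, $q=0$, $n=p+q$) match the paper's reasoning. One small remark: strictly speaking \Cref{lemma:rank_B} is stated with the hypothesis $\Img{\begin{bmatrix} V & V' \end{bmatrix}} = \S + A\S$ already in force, so invoking it to \emph{conclude} $\Indsimple{A}{\S} = q$ is slightly circular as phrased; it is cleaner to just compute directly that $AV = 2V + V'B$ with $B$ of full row rank $q$ gives $\S + A\S = \Img{V} + \Img{V'B} = \Img{\begin{bmatrix} V & V' \end{bmatrix}} = \S'$, which you in fact essentially do in the next clause.
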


\begin{proof}
For the case $q > p$, $\M(\S, \S')$ is empty because for any $A \in \M(\S, \S')$, $\Indsimple{A}{\S} \leq \dim{\S}$ by \Cref{lemma:basic_properties}(i). We now assume $q \leq p$. Since $\Mpos (\S, \S') \subseteq \Msym (\S, \S') \subseteq \M(\S, \S')$, and $\Mpos (\S, \S') \subseteq \Msyminv(\S,\S') \subseteq \Minv (\S, \S')$, it suffices to show that $\Mpos (\S, \S')$ is non-empty. If $q = 0$, we have $I \in \Mpos (\S, \S')$ because $\Ind{I}{\S}=0$ for all $\S \in \G{n}$; so we can assume $p \geq q \geq 1$. Now choose $V \in \F^{n \times p}$, $V' \in \F^{n \times q}$ semi-unitary, such that $\Img{V} = \S$, $\Img{\begin{bmatrix} V & V' \end{bmatrix}} = \S'$, and denote the columns of $V$ by $v_1,\dots,v_p$, and those of $V'$ by $v_{p+1},\dots,v_{p+q}$. If $\S' \neq \Fn$, choose $V'' \in \Fpq{n}{(n-p-q)}$ semi-unitary with $\Img{V''}=\S'^\perp$, and denote its columns $v_{p+q+1},\dots,v_n$. Now let $A$ be the linear map defined by its action on the orthonormal basis set $\{v_1,\dots,v_n\}$ as follows:
\begin{equation}
\begin{split}
    A(v_i) &= v_{p+i}, \;\;\; i \in \{1,\dots,q\}, \\
    A(v_{p+i}) &= v_i, \;\;\;\;\;\;\; i \in \{1,\dots,q\}, \\
    A(v_i) &= v_i, \;\;\;\;\;\;\; i \in \{q+1,\dots,p\} \cup \{p+q+1,\dots,n\}.
\end{split}
\end{equation}
Then by construction $\S + A\S = \S'$, so $\Ind{A}{\S} = q$, and moreover we have $v_j^\ast A v_i = v_i^\ast A v_j$, for all $1 \leq i,j \leq n$. By linearity we get $u^\ast A w = w^\ast A u$, for all $u,w \in \Fn$, and $A \in \Sym{n}$. For example, for $q \geq 1$, $p-q \geq 1$ and $n-p-q \geq 1$, this linear map has the following tridiagonal block decomposition (see \eqref{eq:property_VTAV}):
\begin{equation}
    \begin{bmatrix} V & V' & V'' \end{bmatrix}^\ast A \begin{bmatrix} V & V' & V'' \end{bmatrix} =
    \left[
    \begin{array}{c c|c|c} 
        0_q & & I_q  & \\
         & I_{p-q} & & \\ \hline
        I_q & & & \\ \hline
        & & & I_{n-p-q}
    \end{array}
    \right],
\end{equation}
where $0_q \in \Fpq{q}{q}$ is a zero-matrix, $I_r \in \Fpq{r}{r}$ is the identity matrix for $r \in \{q,p-q,n-p-q\}$, and empty blocks are zero. 
Finally for any $\omega > -\lambda_{\text{min}}(A)$, $A_\omega := A+ \omega I \in \Pos{n}$, and  $\S + A_\omega \S = \S + A \S = \S'$, i.e. $A_\omega \in \Mpos (\S, \S')$. 
\end{proof}

Given the above result, in the case $q \leq p$, it begs the question whether the sets $\M(\S, \S')$, $\Minv (\S, \S')$, $\Msym (\S, \S')$, $\Msyminv(\S,\S')$ and $\Mpos (\S, \S')$ have any interesting structure or not. The next few lemmas and corollaries will show that these are all in fact smooth (real) manifolds. For the proofs in the rest of this subsection, we define $\alpha = 1$, if $\F = \R$, and $\alpha = 2$, if $\F = \C$.

\begin{lemma}
\label{lemma:embedding-M}
Assume the notations of \Cref{lemma:existence}, $n \geq 1$ and $q \leq p$. Then $\M(\S, \S')$ is a smooth embedded submanifold of $\R^{\alpha n^2}$ of dimension $\alpha(n^2 - p(n-p-q))$.
\end{lemma}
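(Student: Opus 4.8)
The plan is to realize $\M(\S,\S')$ as the preimage of a regular value (or, more conveniently, as a locally trivial object whose charts we construct explicitly) of a smooth map, and then invoke the regular value theorem or a direct submersion argument. First I would fix semi-unitary matrices $V \in \Fpq{n}{p}$, $V' \in \Fpq{n}{q}$, $V'' \in \Fpq{n}{(n-p-q)}$ so that $\begin{bmatrix} V & V' & V'' \end{bmatrix}$ is unitary, $\Img{V} = \S$, and $\Img{\begin{bmatrix} V & V' \end{bmatrix}} = \S'$, exactly as in \Cref{cor:tridiag-blk-decomp}. Conjugation by this fixed unitary is a linear diffeomorphism of $\R^{\alpha n^2}$ onto itself, so without loss of generality it suffices to describe the image $\widetilde{\M} := \{ \W^\ast A \W \mid A \in \M(\S,\S')\}$ where $\W = \begin{bmatrix} V & V' & V'' \end{bmatrix}$. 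Writing a general matrix $\W^\ast A \W$ in the $p,q,(n-p-q)$ block form with blocks $T,P,Q$ / $B,C,R$ / $S,D,E$, the condition $\S + A\S = \S'$ becomes: the block $S \in \Fpq{(n-p-q)}{p}$ below the $(1,1)$ block must vanish (so that $A\S \subseteq \S'$), and the block $B \in \Fpq{q}{p}$ must have full rank $q$ (so that $\dim(\S+A\S) = p+q$). This is precisely the content of \Cref{lemma:rank_B} and \Cref{cor:tridiag-blk-decomp}, read in the other direction.

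So $\widetilde{\M}$ is the set of $n \times n$ matrices over $\F$ whose $(n-p-q)\times p$ block $S$ is zero and whose $q \times p$ block $B$ has rank $q$. The second condition is open (the set of full-rank matrices in $\Fpq{q}{p}$ is open, fact (ii) quoted just before the lemma, using $q \le p$), and the first condition is the vanishing of $\alpha p(n-p-q)$ real coordinates. Hence $\widetilde{\M}$ is an open subset of the linear subspace $\{S = 0\}$ of $\R^{\alpha n^2}$; being an open subset of a linear subspace, it is a smooth embedded submanifold of $\R^{\alpha n^2}$, and its dimension is $\alpha(n^2 - p(n-p-q))$ since exactly $\alpha p(n-p-q)$ real coordinates are constrained to be zero and the rest are free. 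Pulling back through the linear diffeomorphism $A \mapsto \W^\ast A \W$ shows $\M(\S,\S')$ itself is a smooth embedded submanifold of $\R^{\alpha n^2}$ of the same dimension, and the embedding is the inclusion. The hypothesis $q \le p$ is used to guarantee (via \Cref{lemma:existence}, or directly) that the full-rank locus is nonempty, so that the dimension count is meaningful; when $q \le p$ the rank condition cuts out a genuinely open dense piece rather than the empty set.

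I expect the only genuine subtlety — rather than an obstacle — is bookkeeping the dimension count and making sure the ``rank $=q$'' condition is correctly identified as an open condition defining no additional codimension, so that the manifold dimension is $\alpha(n^2 - p(n-p-q))$ and not something else; this just requires noting that $\Fpq{q}{p}$ with $q\le p$ has a nonempty (hence open and full-dimensional) full-rank locus. The structural heart of the argument, namely that $\S + A\S = \S'$ translates under the fixed unitary change of basis into ``$S=0$ and $\rank B = q$,'' is already supplied by \Cref{lemma:rank_B} and \Cref{cor:tridiag-blk-decomp}, so the proof is essentially a repackaging of those results plus the elementary fact that an open subset of an affine/linear subspace of Euclidean space is a smooth embedded submanifold of the ambient space.
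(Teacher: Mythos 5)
Your proposal is correct, and it reaches the conclusion by a cleaner route than the paper, although the structural heart is shared. Both arguments rest on the same change of basis: fix a unitary $W = \begin{bmatrix} V & V' & V''\end{bmatrix}$ adapted to $\S \subseteq \S'$ and use \Cref{lemma:rank_B}/\Cref{cor:tridiag-blk-decomp} to translate $\S + A\S = \S'$ into ``lower-left $(n-p-q)\times p$ block zero and $q \times p$ block of full rank $q$.'' Where you diverge is in how the smooth structure is transported. The paper builds the model set $\M$ as an abstract open subset of $\R^{f(n,p,q)}$, defines $F(A) = WAW^\ast$, and then verifies in three separate steps that $F$ is a smooth embedding — the immersion step requiring the $\widehat{W} = \overline{W}\otimes W$ Kronecker/vec computation — before invoking the image-of-an-embedding theorem. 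You instead observe that $A \mapsto W^\ast A W$ is a linear diffeomorphism of the ambient $\R^{\alpha n^2}$ onto itself, so it suffices that the conjugated set is an open subset of the linear subspace $\{S = 0\}$ (open because the full-rank locus in $\Fpq{q}{p}$ is open and, by $q \leq p$ and \Cref{lemma:existence}, nonempty), and that embedded submanifolds are preserved under ambient diffeomorphisms. This eliminates the immersion computation entirely, handles the degenerate cases $q = 0$ and $n - p - q = 0$ uniformly rather than by the separate case analysis the paper performs, and yields the same dimension $\alpha(n^2 - p(n-p-q))$; the paper's more laborious formulation pays off only in that its explicit model $\M$ and map $F$ are reused verbatim in the later proofs (\Cref{lemma:embedding-Msym} and its corollaries), where the Hermitian constraint makes the open-subset-of-a-subspace shortcut less immediate.
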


\begin{proof}
\label{proof:embedding-M}
If $p=0$ then $q = 0$, so $\M(\S, \S') = \R^{\alpha n^2}$, which proves the lemma for the case $p = 0$. So assume $p \geq 1$, and by \Cref{lemma:existence} $\M(\S, \S')$ is non-empty.  Let us also assume that $q \geq 1$ and $n - p - q \geq 1$, and first prove the lemma in this setting. So let $V \in \F^{n \times p}$, $V' \in \F^{n \times q}$ and $V'' \in \F^{n \times (n-p-q)}$ be such that $\Img{V} = \S$, $\Img{\begin{bmatrix} V & V' \end{bmatrix}} = \S + \S'$, and $W := \begin{bmatrix} V & V' & V''\end{bmatrix}$ is unitary. Now define the set of matrices $\M := \{ A \in \Fnn \mid \rank{(A_{p+1:p+q,1:p})} = q, A_{p+q+1:n,1:p} = 0\}$. Then for any $K \in \M(\S, \S')$, by \Cref{cor:tridiag-blk-decomp}, $W^\ast K W \in \M$, so $\M$ is non-empty.
For clarity, we split the proof into three steps.

\paragraph{Step 1: Smooth manifold structure}
Our first goal is to endow $\M$ with a smooth manifold structure. Let $f(n,p,q) := \alpha (n^2 - p(n-p-q))$. Clearly, we can identify $\M$ with the set $\mathcal{U} \times \R^{\alpha(n(n-p) + p^2)}$, where $\mathcal{U} \subseteq \R^{\alpha pq}$ is the set of all $q \times p$ matrices of rank $q$ (i.e. full rank). Now it is well known that $\mathcal{U}$ is a non-empty open subset of $\R^{\alpha pq}$; so $\M$ is an open subset of $\R^{f(n,p,q)}$, and we equip $\M$ with the subspace topology from $\R^{f(n,p,q)}$. Then with the standard smooth manifold structure inherited from $\R^{f(n,p,q)}$, $\M$ is a smooth manifold of dimension $f(n,p,q)$. Now define the map $F : \M \rightarrow \Fnn$ as $F(A) = W A W^\ast$, for all $A \in \M$. We claim that $F$ is a smooth embedding, which we prove below. Assuming the claim and applying Proposition 5.2 in \cite{lee2013smooth}, we conclude that with the subset topology inherited from $\Fnn$, $F(\M)$ is a smooth embedded submanifold of $\Fnn$, and moreover $F(\M)$ admits a unique smooth structure with the property that $F: \M \rightarrow F(\M)$ is a diffeomorphism. If we pick any $K \in \M(\S,\S')$, we note that $K = W (W^\ast K W) W^\ast$ as $W$ is unitary, and moreover $W^\ast K W \in \M$; so $K = F(W^\ast K W)$, and we conclude that $F(\M) = \M(\S,\S')$. This proves that $\M(\S,\S')$ is a smooth embedded submanifold of $\Fnn$ of dimension $f(n,p,q)$.

\paragraph{Step 2: $F$ is smooth and a topological embedding}
It remains to prove the smooth embedding claim. Recall that a smooth embedding is a map that is smooth, a topological embedding and an immersion (see Chapter 4 of \cite{lee2013smooth}). We first show that $F$ is smooth and a topological embedding. For the latter, it suffices to show that when $F(\M)$ is equipped with the subspace topology, then $F:\M \rightarrow F(\M)$ is continuous and there exists a continuous map $F^{-1}: F(\M) \rightarrow \M$ such that $FF^{-1} = \text{Id}$. Now define the map $\bar{F}: \R^{f(n,p,q)} \rightarrow \Fnn$ as $A \mapsto W A W^\ast$, and notice that $\bar{F}$ is a linear map between (real) vector spaces, and hence smooth as a map between smooth manifolds. Thus $F = \bar{F} |_{\M}$ is also a smooth map between manifolds, as $\M \subseteq \R^{f(n,p,q)}$ is open. In particular, if $F(\M)$ is equipped with the subspace topology, then $F:\M \rightarrow F(\M)$ is continuous. Next define the map $F^{-1}: F(\M) \rightarrow \M$ as $K \mapsto W^\ast K W$, and notice that both $FF^{-1} = \text{Id}$ and $F^{-1}F = \text{Id}$ (by unitarity of $W$). Also $F^{-1}$ is continuous: for any convergent sequence $K_i \rightarrow K$ in $F(\M)$, we have $||F^{-1}(K_i) - F^{-1}(K)||_{\text{F}} = ||K_i - K||_{\text{F}} \rightarrow 0$ in the Frobenius norm, so $F^{-1}(K_i) \rightarrow F^{-1}(K)$ in $\M$. Thus we have proved that $F$ is smooth and a topological embedding.

\paragraph{Step 3: $F$ is an immersion}
We now show that $F$ is an immersion, i.e. for all $A \in \M$, the differential $(dF)_{A}$ is injective. First notice that if $A \in \M$, then $\vec{F(A)} = \vec{WAW^*} = (\overline{W} \otimes W) \vec{A}$, where $\overline{W}$ denotes the complex conjugate of $W$. Also as $W$ is unitary, so is $\overline{W}$, and it follows that $\widehat{W} := \overline{W} \otimes W$ is unitary, because $\widehat{W}^\ast \widehat{W} = (\overline{W}^\ast \otimes W^\ast) (\overline{W} \otimes W) = (\overline{W}^\ast \overline{W}) \otimes (W^\ast W) = I$. Now pick any $A \in \M$, and pick a smooth chart $(\mathcal{V},\text{Id}_\mathcal{V})$ in $\M$, so that $A \in \mathcal{V}$, and similarly pick a smooth chart $(\mathcal{V}',\text{Id}_{\mathcal{V}'})$ in $\Fnn$ such that $F(A) \in \mathcal{V}'$. In these local coordinates, for all $A' \in \mathcal{V}$, the map $L := \text{Id}_{\mathcal{V}'} \circ F \circ \text{Id}_{\mathcal{V}}^{-1} : \text{Id}_{\mathcal{V}} (\mathcal{V}) \rightarrow \text{Id}_{\mathcal{V}'} (\mathcal{V}')$ has the form $L(\text{Id}_{\mathcal{V}} (A')) = \widehat{W} \vec{A'}$ if $\F = \R$, and $L(\text{Id}_{\mathcal{V}} (A')) = \begin{bmatrix} \Re(\widehat{W} \vec{A'}) \\ \Im(\widehat{W} \vec{A'}) \end{bmatrix}$ if $\F = \C$. The constraint $A'_{p+q+1:n,1:p} = 0$ means the corresponding entries in $\vec{A'}$ are fixed to zero. 
So let $\widetilde{W} \in \F^{n^2 \times (n^2 -p(n-p-q))}$ be the matrix formed by taking the subset of columns of $\widehat{W}$, corresponding to the entries of $\vec{A}$ not fixed to zero. Now for $\F = \R$, the differential of $L$ at $\text{Id}_{\mathcal{V}} (A)$ is exactly $\widetilde{W}$, which implies $\Ker{\widetilde{W}} = \{0\}$. In the case $\F = \C$, we have $\begin{bmatrix} \Re(\widehat{W} \vec{A}) \\ \Im(\widehat{W} \vec{A}) \end{bmatrix} = \begin{bmatrix} \Re(\widehat{W}) & - \Im(\widehat{W}) \\ \Im(\widehat{W}) & \Re(\widehat{W})\end{bmatrix} \begin{bmatrix} \Re(\vec{A}) \\ \Im(\vec{A})\end{bmatrix}$ and the differential of $L$ at $\text{Id}_{\mathcal{V}}(A)$ is
$\begin{bmatrix} \Re(\widetilde{W}) & -\Im(\widetilde{W}) \\
\Im(\widetilde{W}) & \Re(\widetilde{W}) \end{bmatrix}$,
which is again full rank since $\widetilde{W}$ is.
This finishes the proof of the lemma for the case $q \geq 1$ and $n - p - q \geq 1$.

For the case $q = 0$ and $n-p-q \geq 1$, the above proof is modified as follows. In this case, there is no $V'$; so $\S = \S'$, $\Img{V} = \S$, and $W := \begin{bmatrix} V & V''\end{bmatrix}$. We define $\M := \{ A \in \Fnn \mid A_{p+q+1:n,1:p} = 0\}$, and in the proof we identify $\M$ with $\R^{f(n,p,q)}$, which gives $\M$ the standard topology and smooth manifold structure of $\R^{f(n,p,q)}$. The rest of the proof remains the same. Finally, for the case $n-p-q=0$ ($q=0$ or $q \geq 1$), there is no $V''$ in both cases; so we simply redefine $W := \begin{bmatrix} V & V'\end{bmatrix}$ (resp. $W = V$) if $q \geq 1$ (resp. $q = 0$), and we repeat the entire argument above. In particular, $\M$ is defined as $\M := \{ A \in \Fnn \mid \rank{(A_{p+1:p+q,1:p})} = q\}$, and $\M = \{ A \in \Fnn\}$ in the respective cases.
\end{proof}

\begin{corollary}
\label{cor:embedding-Minv}
Assume the notations of \Cref{lemma:existence}, $n \geq 1$ and $q \leq p$. Then 
\begin{enumerate}[(i)]
    \item $\Minv(\S, \S')$ is a smooth embedded submanifold of $\Gl{n}$ of dimension $\alpha(n^2 - p(n-p-q))$.
    \item $\Minv(\S, \S')$ is an open, dense subset of $\M(\S, \S')$.
\end{enumerate}
\end{corollary}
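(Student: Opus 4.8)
The plan is to obtain both parts essentially for free from Lemma~\ref{lemma:embedding-M}, using two elementary facts: that $\Gl{n}$ is an open subset of $\Fnn \cong \R^{\alpha n^2}$, and that adding a multiple of the identity to $A$ does not change $\S + A\S$ (the same observation already used in Lemma~\ref{lemma:index-bounds}(i), since $\S + A_\omega\S = \S + A\S$).

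For part (i), I would first note that $\Minv(\S,\S')$ is non-empty: by \Cref{lemma:existence} (with $q \le p$) the set $\Mpos(\S,\S')$ is non-empty, and $\Mpos(\S,\S') \subseteq \Minv(\S,\S')$. Next, write $\Minv(\S,\S') = \M(\S,\S') \cap \Gl{n}$. Since the determinant is a polynomial in the $\alpha n^2$ real coordinates of $\Fnn$, $\Gl{n}$ is open in $\R^{\alpha n^2}$, so $\Minv(\S,\S')$ is an open subset of the embedded submanifold $\M(\S,\S')$ of $\R^{\alpha n^2}$ provided by \Cref{lemma:embedding-M}. An open subset of an embedded submanifold is again an embedded submanifold of the same dimension, so $\Minv(\S,\S')$ is a smooth embedded submanifold of $\R^{\alpha n^2}$ of dimension $\alpha(n^2 - p(n-p-q))$; and because $\Minv(\S,\S') \subseteq \Gl{n}$, which is itself an open submanifold of $\R^{\alpha n^2}$, it follows that $\Minv(\S,\S')$ is also a smooth embedded submanifold of $\Gl{n}$ of that dimension.

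For part (ii), openness is immediate from $\Minv(\S,\S') = \M(\S,\S') \cap \Gl{n}$ and openness of $\Gl{n}$: intersecting with an open set of $\Fnn$ gives an open set in the subspace topology of $\M(\S,\S')$. For density I would fix $A \in \M(\S,\S')$ and consider $A + \epsilon I$ for $\epsilon \in \R$. Since $\epsilon\S \subseteq \S$, we have $\S + (A + \epsilon I)\S = \S + A\S = \S'$, so $A + \epsilon I \in \M(\S,\S')$ for every $\epsilon \in \R$. On the other hand $\epsilon \mapsto \det(A + \epsilon I)$ is a monic polynomial of degree $n$, hence nonzero for all but at most $n$ real values of $\epsilon$; for each such $\epsilon$, $A + \epsilon I \in \Gl{n}$ and therefore $A + \epsilon I \in \Minv(\S,\S')$. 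Picking a sequence $\epsilon_k \to 0$ that avoids the finite exceptional set yields $A + \epsilon_k I \to A$ with every term in $\Minv(\S,\S')$, so $A$ lies in the closure of $\Minv(\S,\S')$, proving density.

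I do not expect a genuine obstacle here; the only things requiring care are the bookkeeping between the abstract model manifold $\M$ used in \Cref{lemma:embedding-M}, its embedded image $\M(\S,\S') = F(\M) \subseteq \R^{\alpha n^2}$, and $\Gl{n}$ viewed as an open submanifold of $\R^{\alpha n^2}$, together with citing the standard facts that open subsets of embedded submanifolds remain embedded submanifolds of the same dimension and that an embedded submanifold contained in an open submanifold is embedded in the latter. The mildly substantive ingredient is the shift identity $\S + (A + \epsilon I)\S = \S + A\S$, which is exactly the content of \Cref{lemma:index-bounds}(i).
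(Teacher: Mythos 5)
Your proposal is correct, and part (i) plus the openness claim in (ii) follow essentially the same route as the paper: everything reduces to \Cref{lemma:embedding-M} together with openness of $\Gl{n}$ in $\R^{\alpha n^2}$ (the paper spells this out by intersecting slice charts for $\M(\S,\S')$ with $\Gl{n}$ and invoking the slice criterion, whereas you cite the equivalent standard facts that an open subset of an embedded submanifold is embedded of the same dimension and that an embedded submanifold contained in an open submanifold is embedded therein). Where you genuinely diverge is the density argument. The paper transfers the question to the model manifold $\M$ of \Cref{lemma:embedding-M} via the topological embedding $F$, using $\det(A)=\det(F(A))$, and then perturbs a singular $A\in\M$ by $\epsilon_i I$, appealing to the Schur decomposition to choose $\epsilon_i\to 0$ making $A+\epsilon_i I$ invertible and observing that the block constraints defining $\M$ only involve off-diagonal blocks, hence are preserved. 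You instead stay in the ambient space: the shift identity $\S+(A+\epsilon I)\S=\S+A\S$ (the same observation as in \Cref{lemma:index-bounds}(i)) shows $A+\epsilon I\in\M(\S,\S')$ for every $\epsilon$, and since $\epsilon\mapsto\det(A+\epsilon I)$ is a monic degree-$n$ polynomial it vanishes for at most $n$ real values of $\epsilon$, so a sequence $\epsilon_k\to 0$ avoiding these roots gives $A+\epsilon_k I\in\Minv(\S,\S')$ converging to $A$. Both arguments hinge on the same perturbation $A\mapsto A+\epsilon I$; yours is a bit more economical because it avoids the detour through $\M$, $F$, and the Schur decomposition, while the paper's version has the minor advantage of reusing machinery it needs anyway for the analogous Hermitian statements later (\Cref{lemma:embedding-Msym} and its corollaries), where the same model-space perturbation is recycled.
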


\begin{proof}
\begin{enumerate}[(i)]
    \item \label{item:embedding-Minv-i} Let $f(n,p,q)$ be defined as in the proof of \Cref{lemma:embedding-M}. The proof is a simple consequence of the existence of slice charts for smooth embedded submanifolds (see Chapter 5 of \cite{lee2013smooth} for definitions). We know from \Cref{lemma:embedding-M} that $\M(\S,\S')$ is a smooth embedded submanifold of $\R^{\alpha n^2}$, of dimension $f(n,p,q)$. Now pick any $A \in \Minv(\S, \S')$. Since $A \in \M(\S, \S')$ also, by Theorem 5.8 in \cite{lee2013smooth}, there exists a smooth slice chart $(\mathcal{V},\phi)$ for $\M(\S,\S')$ in $\R^{\alpha n^2}$, such that $A \in \mathcal{V}$. Define $\mathcal{V}' = \mathcal{V} \cap \Gl{n}$, $\phi' = \phi |_{\mathcal{V} \cap \Gl{n}}$, and note that $A \in \mathcal{V}'$. Since $\Gl{n}$ is open in $\R^{\alpha n^2}$, this implies that $(\mathcal{V}',\phi')$ is a smooth slice chart for $\Minv(\S, \S')$ in $\Gl{n}$ satisfying the local $f(n,p,q)$-slice condition. Finally by Theorem 5.8 in \cite{lee2013smooth} (the converse part), we conclude that $\Minv(\S, \S')$ is a smooth embedded submanifold of $\Gl{n}$ of dimension $f(n,p,q)$.
    \item Since $\M(\S, \S')$ has the subset topology from $\R^{\alpha n^2}$, and $\Gl{n}$ is open in $\R^{\alpha n^2}$, the set $\M(\S, \S') \cap \Gl{n}$ is open in $\M(\S, \S')$. To show density, first if $p=q=0$, then $\M(\S, \S') = \R^{\alpha n^2}$, and $\Minv(\S, \S') = \R^{\alpha n^2} \cap \Gl{n}$; the result is now true as $\R^{\alpha n^2} \cap \Gl{n}$ is dense in $\R^{\alpha n^2}$. Now let $p \geq 1$, and consider $\M$ and $F$ as defined in the proof of \Cref{lemma:embedding-M}. Since $F : \M \rightarrow F(\M)$ is a topological embedding, and for all $A \in \M$ we have $\det{(A)} = \det{(F(A))}$, it suffices to prove that $\M \cap \Gl{n}$ is dense in $\M$. So let $A \in \M$, $A \not\in \Gl{n}$, and consider the Schur decomposition $A = U T U^\ast$, with unitary $U$ and upper-triangular $T$. Then $A$ has some zero eigenvalues, located on the diagonal of $T$. Hence there exists $\{\epsilon_i\}_{i=1}^\infty$ such that $\epsilon_i \to 0$, and for all $i \geq 1$, $T + \epsilon_i I \in \Gl{n}$ implying $A_i = U(T + \epsilon_i I)U^\ast = A + \epsilon_i I\in \Gl{n}$. Thus $A_i \rightarrow A$, and furthermore $A_i$ and $A$ agree at all off-diagonal entries, so $A_i \in \M \cap \Gl{n}$ for all $i \geq 1$, finishing the proof.
\end{enumerate}
\end{proof}

\begin{lemma}
\label{lemma:embedding-Msym}
Assume the notations of \Cref{lemma:existence}, $n \geq 1$, and $q \leq p$. Then $\Msym(\S, \S')$ is a smooth embedded submanifold of $\Sym{n}$ (which is diffeomorphic to $\R^{\alpha n(n-1)/2 + n}$) having dimension $\alpha (n(n-1)/2 - p(n-p-q) ) + n$.
\end{lemma}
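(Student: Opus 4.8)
The plan is to transcribe the proof of \Cref{lemma:embedding-M} almost verbatim, replacing $\Fnn$ by the real vector space $\Sym{n}$ throughout and replacing the general tridiagonal block decomposition of \Cref{cor:tridiag-blk-decomp} by its Hermitian refinement, part (ii) of that corollary. The degenerate cases are disposed of exactly as there: if $p = 0$ then $q = 0$ and $\Msym(\S,\S') = \Sym{n}$; and the cases $q = 0$ and/or $n - p - q = 0$ are handled by deleting the vanishing blocks (and the corresponding columns of $W$) as in \Cref{lemma:embedding-M}. So assume $p \geq 1$, and fix $W = \begin{bmatrix} V & V' & V'' \end{bmatrix} \in \Uni{n}$ with $\Img{V} = \S$ and $\Img{\begin{bmatrix} V & V'\end{bmatrix}} = \S'$.

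First I would introduce the model set $\M$: the set of all matrices of the Hermitian tridiagonal block shape appearing on the right-hand side of \eqref{eq:tridiag-decomp-hermitian} (i.e.\ $T \in \Sym{p}$, $C \in \Sym{q}$, $E \in \Sym{n-p-q}$, $D \in \Fpq{(n-p-q)}{q}$ arbitrary, the $(1,2)$ and $(1,3)$ blocks forced to be $B^\ast$ and $0$, and the $(2,3)$ block forced to be $D^\ast$) with the single extra requirement that $B \in \Fpq{q}{p}$ have full rank $q$. All constraints other than the rank condition are linear, and the rank condition is open, so $\M$ is a non-empty open subset of an affine real subspace of $\Sym{n}$. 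Parametrizing Hermitian matrices in the standard way (real diagonal entries, free $\F$-valued strictly-upper-triangular entries), one counts the free real parameters of $\M$: the diagonal blocks contribute $p + q + (n-p-q) = n$ real diagonal entries together with $\alpha[\, p(p-1)/2 + q(q-1)/2 + (n-p-q)(n-p-q-1)/2 \,]$ off-diagonal ones, and $B$ and $D$ contribute $\alpha pq$ and $\alpha q(n-p-q)$ respectively; summing the off-diagonal contributions gives exactly $\alpha(n(n-1)/2 - p(n-p-q))$ after using $n = p + q + (n-p-q)$. Hence $\M$, with the smooth structure inherited from Euclidean space, is a smooth manifold of dimension $\alpha(n(n-1)/2 - p(n-p-q)) + n$.

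Next I would define $F : \M \to \Sym{n}$, $F(A) = W A W^\ast$, which indeed lands in $\Sym{n}$ as $W$ is unitary, and argue that $F$ is a smooth embedding with $F(\M) = \Msym(\S,\S')$. Smoothness and the topological-embedding property are identical to \Cref{lemma:embedding-M}: $F$ is the restriction of the linear map $A \mapsto WAW^\ast$, with continuous inverse $K \mapsto W^\ast K W$ (an isometry in Frobenius norm). For the immersion property, the map $A \mapsto WAW^\ast$ is a real-linear isomorphism of $\Sym{n}$ onto itself (its $\vec{\cdot}$-form being the unitary $\widehat{W} = \overline{W} \otimes W$ restricted to the coordinates recording the free entries, with the real/imaginary block reformulation in the case $\F = \C$ exactly as before), so its differential, restricted to the open subset $\M$ of an affine subspace of $\Sym{n}$, is injective at every point. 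For $F(\M) = \Msym(\S,\S')$: if $K \in \Msym(\S,\S')$ then \Cref{cor:tridiag-blk-decomp}(ii) says $W^\ast K W$ has precisely the Hermitian tridiagonal block form defining $\M$ with $\rank(B) = q$, so $W^\ast K W \in \M$ and $K = F(W^\ast K W)$; conversely if $A \in \M$ then $(W A W^\ast) V = W\begin{bmatrix} T \\ B \\ 0 \end{bmatrix} = VT + V'B$ with $\rank(B) = q$, so by \Cref{lemma:rank_B}, $\Indsimple{WAW^\ast}{\S} = q$ and $\S + (WAW^\ast)\S = \Img{\begin{bmatrix} V & V' \end{bmatrix}} = \S'$, i.e.\ $F(A) \in \Msym(\S,\S')$. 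Applying Proposition 5.2 in \cite{lee2013smooth} to the smooth embedding $F$ then shows $\Msym(\S,\S') = F(\M)$ is a smooth embedded submanifold of $\Sym{n}$ of the claimed dimension.

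The computations above are routine; the one point needing care is the bookkeeping of the Hermitian structure --- cutting $\M$ out of $\Sym{n}$ by the correct mixture of open and linear conditions, and performing the parameter count consistently over the real diagonal entries and the $\F$-valued off-diagonal entries with the factor $\alpha$. Beyond that I anticipate no obstacle, since everything is a line-by-line adaptation of \Cref{lemma:embedding-M} with $\Sym{n}$ in place of $\Fnn$ and \Cref{cor:tridiag-blk-decomp}(ii) in place of the general block decomposition.
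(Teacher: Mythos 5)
Your proof is correct, and except for the immersion step it follows the same template as the paper's. The interesting divergence is the immersion argument: the paper establishes injectivity of $(dF)_A$ by an explicit coordinate computation — it writes $\vec{F(A)} = \widehat W\vec{A}$, separates real/imaginary and diagonal/strictly-lower/strictly-upper parts, and by a sequence of row and column operations on the $2n^2 \times 2n^2$ real matrix $\widehat W^{(1)}$ shows that the differential $\widetilde W$ is an invertible $n^2 \times n^2$ real matrix. You shortcut this entirely by observing that $A \mapsto WAW^\ast$ is a real-linear automorphism of $\Sym{n}$ (it preserves Hermitian-ness and has inverse $K \mapsto W^\ast K W$); since the differential of a linear map is the map itself, and $\M$ is an open subset of a linear subspace of $\Sym{n}$, injectivity of $(dF)_A$ is automatic. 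Your approach is genuinely cleaner and avoids the bookkeeping that makes the paper's Step 3 the most involved part of the proof. The rest — choice of model set $\M$, the dimension count (your $a(a-1)/2 + b(b-1)/2 + c(c-1)/2 + ab + bc = n(n-1)/2 - p(n-p-q)$ identity checks out), $F$ smooth and a topological embedding via the Frobenius-norm isometry, $F(\M) = \Msym(\S,\S')$ via \Cref{cor:tridiag-blk-decomp}(ii) in one direction and \Cref{lemma:rank_B} in the other, and the appeal to Proposition 5.2 of \cite{lee2013smooth} — matches the paper essentially verbatim. (Minor quibble: the subspace you restrict to is linear, not merely affine, but that only strengthens your claim.)
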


\begin{proof}
This proof follows the structure of the proof of \Cref{lemma:embedding-M}, so the quantities appearing in this proof, unless redefined, are the same. The two differences are different dimensions of the manifolds involved, and the proof of immersion, which has to take the Hermitian property into account.
As stated before, $\Sym{n}$ can be identified with $\R^{\alpha n(n-1)/2 + n}$, as it is diffeomorphic to it. If $p = 0$, then $q = 0$, and $\Msym(\S,\S') = \Sym{n}$ which proves the lemma. So now let $q \geq 1$, and $n-p-q \geq 1$.
Let $\M := \{A \in \Sym{n} \mid \rank{(A_{p+1:p+q,1:p})} = q, A_{p+q+1:n,1:p} = 0\}$. Then by \Cref{lemma:existence} $\Msym(\S, \S')$ is non-empty; so for any $K \in \Msym(\S, \S')$, $W^\ast K W \in \M$ by \Cref{cor:tridiag-blk-decomp}, and $\M$ is also non-empty.

\paragraph{Step 1: Smooth manifold structure} We need to account for the Hermitian property, so now define $f(n,p,q) := \alpha ( n(n-1)/2 - p(n-p-q) ) + n$. $\M$ can then be identified with $\mathcal{U} \times \R^{g(n,p,q)}$, where $g(n,p,q)=\alpha(p(p-1)/2)+p+\alpha((n-p)(n-p-1)/2)+(n-p)$; thus $\M$ is an open subset of $\R^{f(n,p,q)}$, from which it inherits the subset topology and smooth manifold structure.
Defining $F: \M \to \Sym{n}$ by $A \mapsto W A W^\ast$, and assuming it is a smooth embedding, we conclude similarly as in \Cref{lemma:embedding-M} that $F$ is a diffeomorphism, $F(\M) = \Msym(\S,\S')$, and $F(\M)$ is a smooth embedded submanifold of $\Sym{n}$ of dimension $f(n,p,q)$.

\paragraph{Step 2: $F$ is smooth and a topological embedding} We redefine $\bar{F}: \R^{f(n,p,q)} \rightarrow \Sym{n}$ by $A \mapsto W A W^\ast$. This step otherwise remains unchanged.

\paragraph{Step 3: $F$ is an immersion}
We finally prove that for all $A \in \M$, $(dF)_{A}$ is injective. As in \Cref{lemma:embedding-M}, $\vec{F(A)} = \widehat{W} \vec{A}$, where $\widehat{W}$ is unitary. Fixing $A \in \M$, pick smooth charts $(\mathcal{V}, \text{Id}_{\mathcal{V}})$ in $\M$ and $(\mathcal{V'}, \text{Id}_{\mathcal{V'}})$ in $\Sym{n}$, so that $A \in \mathcal{V}$ and $F(A) \in \mathcal{V'}$, and in these local coordinates define $L$ similarly as in \Cref{lemma:embedding-M}. Our goal is again to show that the differential of $L$ is injective.
We first prove the statement for $\F = \C$. 
Let $a = \vec{A}$, and $h = \vec{F(A)}$. We use the indices $d$, $l$, and $u$ to denote an enumeration of the diagonal, strictly lower triangular and strictly upper triangular entries respectively, of both $A$ and $F(A)$. The enumerations $l$ and $u$ satisfy the property that if $(i,j)$ is the $k$\textsuperscript{th} entry of $l$, then $(j,i)$ is the $k$\textsuperscript{th} entry of $u$. We also use exponents $R$ and $I$ (there will be no scope of confusion here with the identity matrix) to denote the real and imaginary parts of vectors and matrices.
After an appropriate reordering, we can rewrite $\vec{F(A)} = \widehat{W} \vec{A}$ as
\begin{equation} 
\label{eq:h-hatW-a}
\begin{bmatrix} h_d \\ h_l \\ h_u \end{bmatrix} = \begin{bmatrix}  \W_{dd} & \W_{dl} & \W_{du} \\ \W_{ld} & \W_{ll} & \W_{lu} \\ \W_{ud} & \W_{ul} & \W_{uu} \end{bmatrix} \begin{bmatrix} a_d \\ a_l \\ a_u \end{bmatrix}.
\end{equation}
Then using $a_l = \overline{a_u}$, and by separating the real and imaginary parts, we find
\begin{equation}
\begin{bmatrix} h_d^R \\ h_l^R \\ h_l^I \end{bmatrix} = 
\underbrace{\begin{bmatrix}
\W_{dd}^R & \W_{dl}^R + \W_{du}^R & -\W_{dl}^I + \W_{du}^I \\
\W_{ld}^R & \W_{ll}^R + \W_{lu}^R & -\W_{ll}^I + \W_{lu}^I \\
\W_{ld}^I & \W_{ll}^I + \W_{lu}^I &  \W_{ll}^R - \W_{lu}^R \end{bmatrix}}_{=\widetilde{W}} \begin{bmatrix}
a_d^R \\ a_l^R \\ a_l^I 
\end{bmatrix}.
\end{equation}
We now show that $\widetilde{W} \in \R^{n^2 \times n^2}$ is full column rank. Starting with $\widehat{W} \in \Gl{n^2}$, we show that $\widehat{W}$ can be transformed into $\widetilde{W}$ while remaining full column rank. First, since $\widehat{W}$ is full column rank, so is
\begin{equation} 
\widehat{W}^{(1)} := \begin{bmatrix} \widehat{W}^R & -\widehat{W}^I \\
\widehat{W}^I & \widehat{W}^R \end{bmatrix} = \begin{bmatrix}
        \W_{dd}^R & \W_{dl}^R & \W_{du}^R & -\W_{dd}^I & -\W_{dl}^I & -\W_{du}^I \\
        \W_{ld}^R & \W_{ll}^R & \W_{lu}^R & -\W_{ld}^I & -\W_{ll}^I & -\W_{lu}^I \\
        \W_{ud}^R & \W_{ul}^R & \W_{uu}^R & -\W_{ud}^I & -\W_{ul}^I & -\W_{uu}^I \\
        \W_{dd}^I & \W_{dl}^I & \W_{du}^I &  \W_{dd}^R &  \W_{dl}^R &  \W_{du}^R \\
        \W_{ld}^I & \W_{ll}^I & \W_{lu}^I &  \W_{ld}^R &  \W_{ll}^R &  \W_{lu}^R \\
        \W_{ud}^I & \W_{ul}^I & \W_{uu}^I &  \W_{ud}^R &  \W_{ul}^R &  \W_{uu}^R
    \end{bmatrix} \in \R^{2n^2 \times 2n^2}.
\end{equation}
We will now refer to the rows and columns of $\widehat{W}^{(1)}$ in order (i.e. top to bottom, and left to right respectively), by $d^R$, $l^R$, $u^R$, $d^I$, $l^I$, $u^I$.
We first drop the columns $d^I$.
We then replace columns $l^R$ and $u^R$ by their sum, and columns $l^I$ and $u^I$ by their differences. These operations keep the matrix full column rank and so
\begin{equation}
    \widehat{W}^{(2)} := 
    \begin{bmatrix}
        \W_{dd}^R & \W_{dl}^R + \W_{du}^R & -\W_{dl}^I + \W_{du}^I \\
        \W_{ld}^R & \W_{ll}^R + \W_{lu}^R & -\W_{ll}^I + \W_{lu}^I \\
        \W_{ud}^R & \W_{ul}^R + \W_{uu}^R & -\W_{ul}^I + \W_{uu}^I \\
        \W_{dd}^I & \W_{dl}^I + \W_{du}^I &  \W_{dl}^R - \W_{du}^R \\
        \W_{ld}^I & \W_{ll}^I + \W_{lu}^I &  \W_{ll}^R - \W_{lu}^R \\
        \W_{ud}^I & \W_{ul}^I + \W_{uu}^I &  \W_{ul}^R - \W_{uu}^R
    \end{bmatrix}
    \in \R^{2n^2 \times n^2}
\end{equation}
is also full column rank. 
We next argue that the rows $d^I$, $u^R$ and $u^I$ of $\widehat{W}^{(2)}$ can be removed without changing the rank of the result. Since $F(A) \in \Sym{n}$ for any $A \in \Sym{n}$, we have $h_l^R = h_u^R$, $h_l^I = -h_u^I$, and $h_d^I = 0$, for all $a_d^R$, $a_l^R$ and $a_l^I$.
Now from \eqref{eq:h-hatW-a}, one  obtains after regrouping terms
\begin{equation}
\label{eq:lu-real-parts}
\begin{split}
    h_l^R &= \begin{bmatrix} \W_{ld}^R & \W_{ll}^R + \W_{lu}^R & -\W_{ll}^I + \W_{lu}^I \end{bmatrix} \begin{bmatrix} a_d^R \\ a_l^R \\ a_l^I \end{bmatrix}, \\
    h_u^R &= \begin{bmatrix} \W_{ud}^R & \W_{ul}^R + \W_{uu}^R & -\W_{ul}^I + \W_{uu}^I \end{bmatrix}
    \begin{bmatrix} a_d^R \\ a_l^R \\ a_l^I \end{bmatrix},
\end{split}
\end{equation}
and so the rows $l^R$ and $u^R$ of $\widehat{W}^{(2)}$ are equal, using $h_l^R = h_u^R$. Similarly using $h_l^I = - h_u^I$ and reasoning similarly, we get that the rows $l^I$ and $u^I$ of $\widehat{W}^{(2)}$ are negative of each other. Finally $h_d^I = 0$ implies that row $d^I$ of $\widehat{W}^{(2)}$ is zero. So we can remove the three rows $u^R$, $d^I$ and $u^I$ from $\widehat{W}^{(2)}$, and obtain $\widetilde{W}$ with same rank as $\widehat{W}^{(2)}$. But $\rank{(\widehat{W}^{(2)})} = n^2$, and $\widetilde{W}$ is square, so it is invertible.
The constraint $A_{p+q+1:n,1:p} = 0$ is only setting some of the entries in $\vec{A}$ to zero, which corresponds to removing the corresponding columns of $\widetilde{W}$, after which we exactly get the differential of $L$, and we conclude that its kernel is trivial. The case $\F = \R$ is a particular case of $\F = \C$, and we repeat the above argument with appropriate changes, removing all matrices and vectors corresponding to the imaginary parts.

The remaining two cases $q=0$ or $n-p-q=0$ are argued similarly as in the proof of \Cref{lemma:embedding-M}. The proper definitions of $\M$ to use in the proof are now as follows: (i) if $q=0$, and $n-p-q \geq 1$, then let $\M := \{A \in \Sym{n} \mid A_{p+q+1:n,1:p} = 0\}$, (ii) if $q=0$, and $n-p-q = 0$, define $\M := \{A \in \Sym{n}\}$, and (iii) if $q \geq 1$, and $n-p-q = 0$, then define $\M := \{A \in \Sym{n} \mid \rank{(A_{p+1:p+q,1:p})} = q\}$.
\end{proof}

\begin{corollary}
\label{cor:embedding-Msyminv}
Assume the notations of \Cref{lemma:existence}, $n \geq 1$ and $q \leq p$. Then 
\begin{enumerate}[(i)]
    \item $\Msyminv(\S, \S')$ (resp. $\Mpos(\S,\S')$) is a smooth embedded submanifold of $\Gl{n} \cap \Sym{n}$ (resp. $\Pos{n}$) of dimension $\alpha ( n(n-1)/2 - p(n-p-q) ) + n$.
    \item $\Msyminv(\S, \S')$ and $\Mpos(\S,\S')$ are open subsets of $\Msym(\S, \S')$.
    \item $\Msyminv(\S,\S')$ is a dense subset of $\Msym(\S,\S')$.
\end{enumerate}
\end{corollary}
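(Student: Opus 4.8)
The plan is to derive all three statements from Lemma~\ref{lemma:embedding-Msym} together with two elementary topological facts: $\Gl{n}$ is an open subset of $\R^{\alpha n^2}$, and $\Pos{n}$ is an open subset of $\Sym{n}$. The overall strategy parallels the treatment of $\Minv(\S,\S')$ in Corollary~\ref{cor:embedding-Minv}.

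First I would prove (ii), since it is the cheapest and will be reused. Because $\Msym(\S,\S') \subseteq \Sym{n}$, one has $\Msyminv(\S,\S') = \Msym(\S,\S') \cap (\Gl{n}\cap\Sym{n})$, and because $\Pos{n} \subseteq \Sym{n}$, one has $\Mpos(\S,\S') = \M(\S,\S') \cap \Pos{n} = \Msym(\S,\S') \cap \Pos{n}$. Since $\Msym(\S,\S')$ carries the subspace topology from $\Sym{n}$, and both $\Gl{n}\cap\Sym{n}$ and $\Pos{n}$ are open in $\Sym{n}$, both $\Msyminv(\S,\S')$ and $\Mpos(\S,\S')$ are open in $\Msym(\S,\S')$, which is (ii).

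For (i) I would imitate the slice-chart argument of Corollary~\ref{cor:embedding-Minv}(i), now carried out inside the Euclidean space $\Sym{n} \cong \R^{\alpha n(n-1)/2 + n}$. Set $f := \alpha(n(n-1)/2 - p(n-p-q)) + n$. By Lemma~\ref{lemma:embedding-Msym}, $\Msym(\S,\S')$ is a smooth embedded $f$-submanifold of $\Sym{n}$. Given $A \in \Msyminv(\S,\S')$, I would pick a smooth $f$-slice chart $(\mathcal{V},\phi)$ for $\Msym(\S,\S')$ in $\Sym{n}$ with $A \in \mathcal{V}$ (Theorem~5.8 in \cite{lee2013smooth}), restrict it to the open subset $\mathcal{V} \cap \Gl{n}$ of the open submanifold $\Gl{n}\cap\Sym{n}$, and invoke the converse part of Theorem~5.8 in \cite{lee2013smooth} to conclude that $\Msyminv(\S,\S')$ is a smooth embedded submanifold of $\Gl{n}\cap\Sym{n}$ of dimension $f$; replacing $\Gl{n}\cap\Sym{n}$ by $\Pos{n}$ throughout gives the same conclusion for $\Mpos(\S,\S')$.

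It remains to establish (iii), density of $\Msyminv(\S,\S')$ in $\Msym(\S,\S')$. When $p = 0$ (hence $q = 0$) one has $\Msym(\S,\S') = \Sym{n}$ and $\Msyminv(\S,\S') = \Gl{n}\cap\Sym{n}$, and density follows by perturbing any $A = Q\Lambda Q^\ast \in \Sym{n}$ to $A + \epsilon_i I$ with $\epsilon_i \to 0$ chosen to avoid the finitely many values $-\Lambda_{jj}$. When $p \geq 1$ I would reuse the homeomorphism $F : \M \to \Msym(\S,\S')$, $A \mapsto WAW^\ast$, from the proof of Lemma~\ref{lemma:embedding-Msym}; since $\det F(A) = \det(WAW^\ast) = \det A$, one has $F(\M \cap \Gl{n}) = \Msyminv(\S,\S')$, so by the homeomorphism it suffices to show $\M \cap \Gl{n}$ is dense in $\M$. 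Given non-invertible $A \in \M$ with spectral decomposition $A = Q\Lambda Q^\ast$ ($Q$ unitary, $\Lambda$ real diagonal), the matrices $A_i := A + \epsilon_i I = Q(\Lambda + \epsilon_i I)Q^\ast$ are eventually invertible for a suitable $\epsilon_i \to 0$ and differ from $A$ only in their diagonal entries; since every constraint defining $\M$ in Lemma~\ref{lemma:embedding-Msym} involves only the strictly off-diagonal blocks $A_{p+1:p+q,1:p}$ and $A_{p+q+1:n,1:p}$ (whose row index set is disjoint from their column index set), these constraints survive the perturbation, so $A_i \in \M \cap \Gl{n}$ and $A_i \to A$.

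I expect the only genuinely delicate point to be this last observation — recording carefully that the blocks cut out by the definition of $\M$ are strictly off-diagonal, so that the scalar shift $A \mapsto A + \epsilon I$ preserves membership in $\M$ — together with the bookkeeping of the degenerate cases ($p = 0$, $q = 0$, $n = p + q$) in which the definition of $\M$ in Lemma~\ref{lemma:embedding-Msym} takes a slightly different form but the off-diagonal observation still applies. Everything else is an immediate consequence of results already established.
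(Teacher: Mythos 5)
Your proposal is correct and follows essentially the same path as the paper: openness of $\Gl{n}\cap\Sym{n}$ and $\Pos{n}$ in $\Sym{n}$ for (ii), the slice-chart argument from Corollary~\ref{cor:embedding-Minv}(i) adapted to $\Sym{n}$ for (i), and the scalar perturbation $A \mapsto A + \epsilon_i I$ via the topological embedding $F$ for (iii). Your explicit remark that the constraints cutting out $\M$ involve only strictly off-diagonal blocks is precisely the observation the paper leaves implicit when it notes "$A_i = A + \epsilon_i I \in \Sym{n}\cap\Gl{n}$ if $A \in \Sym{n}$."
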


\begin{proof}
Observe that $\Gl{n}\cap\Sym{n}$ and $\Pos{n}$ are both open in $\Sym{n}$.
\begin{enumerate}[(i)]
    \item 
    \begin{sloppypar}
    Define $f(n,p,q)$ as in the proof of \Cref{lemma:embedding-Msym}. Now to prove that $\Msyminv(\S, \S')$ (resp. $\Mpos(\S,\S')$) is a smooth embedded submanifold of $\Gl{n} \cap \Sym{n}$ (resp. $\Pos{n}$), we simply follow the proof of \Cref{cor:embedding-Minv}(i), with the following replacements: $\R^{\alpha n^2}$ by $\Sym{n}$, $\M(\S,\S')$ by $\Msym(\S,\S')$, $\Minv(\S,\S')$ by $\Msyminv(\S,\S')$ (resp. $\Mpos(\S,\S')$), and $\Gl{n}$ by $\Gl{n} \cap \Sym{n}$ (resp. $\Pos{n}$), and use the observation above. Also \Cref{lemma:embedding-Msym} should be used in place of \Cref{lemma:embedding-M}.
    \end{sloppypar}
    \item 
    \begin{sloppypar}
    Since $\Msym(\S,\S')$ has the subset topology from $\Sym{n}$, it then follows from the observation above that $\Msyminv(\S,\S')$ and $\Mpos(\S,\S')$ are both open in $\Msym(\S,\S')$.
    \end{sloppypar}
    \item Follow the same steps as in the proof of the density part of \Cref{cor:embedding-Minv}(ii), with the replacements as stated in the proof of part (i) of this lemma, $F$ and $\M$ now defined as in the proof of \Cref{lemma:embedding-Msym}, and noticing that $A_i = A + \epsilon_i I \in \Sym{n} \cap \Gl{n}$, if $A \in \Sym{n}$.
\end{enumerate}
\end{proof}

Finally, we consider a last related matrix manifold corresponding to Hermitian matrices with a special property.

\begin{lemma}
\label{lemma:density-MsymT}
\begin{sloppypar}
Assume the notations of \Cref{lemma:existence}, $p\geq 1$, $n \geq 1$ and $q \leq p$. Let $V \in \F^{n \times p}$ be semi-unitary such that $\Img{V} = \S$. Define $\MsymT(\S,\S') = \{A \in \Msym(\S,\S') \mid V^\ast A V \in \Gl{p}\}$. Then
\end{sloppypar}
\begin{enumerate}[(i)]
    \item $\MsymT(\S,\S')$ is non-empty, and independent of the choice of $V$.
    \item $\MsymT(\S,\S')$ is a dense open subset of $\Msym(\S,\S')$.
    \item $\MsymT(\S,\S')$ is an embedded submanifold of $\Msym(\S,\S')$ of the same dimension as $\Msym(\S,\S')$.
\end{enumerate}
\end{lemma}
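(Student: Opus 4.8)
The plan is to handle the three parts in order, each reducing to an elementary observation.

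For part (i), independence of the condition $V^\ast A V \in \Gl{p}$ on the choice of $V$ follows from the fact that any two semi-unitary $V, \overline{V} \in \Fpq{n}{p}$ with $\Img{V} = \Img{\overline{V}} = \S$ differ by a right unitary factor, $\overline{V} = VU$ with $U \in \Uni{p}$; hence $\overline{V}^\ast A \overline{V} = U^\ast (V^\ast A V) U$, which is invertible if and only if $V^\ast A V$ is. For non-emptiness, I would start from any $A_0 \in \Msym(\S,\S')$ (which exists by \Cref{lemma:existence}, since $q \leq p$), and observe that $A_0 + \omega I \in \Sym{n}$ with $\S + (A_0+\omega I)\S = \S + A_0\S = \S'$ for every $\omega \in \R$, so $A_0 + \omega I \in \Msym(\S,\S')$. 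Since $V^\ast(A_0+\omega I)V = V^\ast A_0 V + \omega I$ and $V^\ast A_0 V$ is Hermitian with finitely many eigenvalues, this compression is positive definite, hence invertible, for all sufficiently large $\omega$, placing $A_0 + \omega I$ in $\MsymT(\S,\S')$.

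For part (ii), openness is immediate: the map $\Msym(\S,\S') \ni A \mapsto \det(V^\ast A V)$ is a polynomial in the entries of $A$, hence continuous, and $\MsymT(\S,\S')$ is exactly the preimage of the complement of $\{0\}$. For density, fix $A \in \Msym(\S,\S')$ and let $\Pi := VV^\ast$ be the orthogonal projector onto $\S$ (independent of the choice of $V$). Set $A_t := A + t\Pi$ for $t \in \R$. Each $A_t$ is Hermitian, and since $\Pi$ acts as the identity on $\S$ one has $A_t x = Ax + tx$ for $x \in \S$, whence $A_t\S \subseteq \S + A\S$ and $A\S \subseteq \S + A_t\S$, so $\S + A_t\S = \S + A\S = \S'$ and $A_t \in \Msym(\S,\S')$ for all $t$. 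Moreover $V^\ast A_t V = V^\ast A V + tI$ is invertible except when $-t$ is an eigenvalue of $V^\ast A V$, i.e. for all but finitely many $t$. Since $A_t \to A$ as $t \to 0$, it follows that $\MsymT(\S,\S')$ is dense in $\Msym(\S,\S')$.

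For part (iii), I would invoke \Cref{lemma:embedding-Msym}, by which $\Msym(\S,\S')$ is a smooth embedded submanifold of $\Sym{n}$; since $\MsymT(\S,\S')$ is an open subset of it by part (ii), it is itself an open embedded submanifold of $\Msym(\S,\S')$ of the same dimension. The only mild subtlety — and the step I would be most careful about — is the density argument, where one must produce a one-parameter family that simultaneously preserves the subspace condition $\S + A\S = \S'$ and renders the compression $V^\ast A V$ invertible; the choice $A_t = A + t VV^\ast$ works precisely because $VV^\ast$ fixes $\S$ setwise and restricts to the identity on it.
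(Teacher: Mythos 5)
Your proposal is correct, and all three parts go through: the unitary-factor argument for independence, the shift $A_0+\omega I$ for non-emptiness, continuity of $A \mapsto \det(V^\ast A V)$ for openness, the family $A_t = A + tVV^\ast$ for density (your two-sided inclusion showing $\S + A_t\S = \S + A\S = \S'$ is exactly the point that needs checking, and it is right), and openness inside the embedded submanifold $\Msym(\S,\S')$ for (iii). The route differs from the paper's in execution rather than in the underlying idea. The paper proves (i) by noting $\Mpos(\S,\S') \subseteq \MsymT(\S,\S')$ and invoking \Cref{lemma:existence}, and proves (ii) entirely inside the coordinate model: it sets $\MsymT := \{A \in \Sym{n} \mid A_{1:p,1:p} \in \Gl{p}\} \cap \M$ with $\M$ and the map $F(A) = WAW^\ast$ from the proof of \Cref{lemma:embedding-Msym}, observes $\MsymT(\S,\S') = F(\MsymT)$, and uses that $F$ is a topological embedding (hence a homeomorphism onto $\Msym(\S,\S')$) to transport openness (from the openness of $\Sym{p}\cap\Gl{p}$ in $\Sym{p}$ and the product structure of $\M$) and density (by replacing the top-left block $T$ with nearby invertible Hermitian matrices $T_i$, leaving the other blocks untouched). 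Your argument is the same perturbation seen intrinsically: $A + tVV^\ast$ is precisely the block replacement $T \mapsto T + tI$ written in the ambient space, and your determinant argument replaces the transport-through-$F$ step. What your version buys is self-containedness for (i) and (ii) — you only need \Cref{lemma:embedding-Msym} in part (iii), and you avoid re-entering the model-space bookkeeping; what the paper's version buys is uniformity with the rest of \Cref{ssec:top}, where the same $F$, $\M$ machinery is reused across \Cref{lemma:embedding-M,lemma:embedding-Msym,cor:embedding-Minv,cor:embedding-Msyminv}, so the openness and density claims reduce to transparent coordinate conditions already set up there.
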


\begin{proof}
\begin{enumerate}[(i)]
    \item 
    \begin{sloppypar}
    Since $\Mpos(\S,\S') \subseteq \MsymT(\S,\S')$ and since $\Mpos(\S,\S')$ is non-empty from \Cref{lemma:existence}, $\MsymT(\S,\S')$ is non-empty.
    To show that $\MsymT(\S,\S')$ does not depend on the choice of $V$, let $\overline{V} \in \F^{n \times p}$ be another semi-unitary matrix such that $\Img{\overline{V}} = \S$. Then there exist $Q \in \Uni{p}$ so that $V = \overline{V}Q$ and $\rank(V^\ast A V) = \rank(Q^\ast \overline{V}^\ast A \overline{V} Q) = \rank(\overline{V}^\ast A \overline{V})$ which shows the result.
    \end{sloppypar}
    \item Let $F$ and $\M$ be defined as in the proof of \Cref{lemma:embedding-Msym}, and define $\MsymT = \{A \in \Sym{n} \mid A_{1:p,1:p} \in \Gl{p}\} \cap \M$. Then from definitions we have $\MsymT(\S,\S') = F(\MsymT)$. It was argued in \Cref{lemma:embedding-Msym} that $F: \M \rightarrow F(\M)$ is a topological embedding, hence we know that it is an open map. So if we show that $\MsymT$ is open in $\M$, then we would conclude that $\MsymT(\S,\S')$ is open in $F(\M) = \Msym(\S,\S')$. But since $\Sym{p} \cap \Gl{p}$ is open in $\Sym{p}$, it follows from the product structure of $\M$ that $\MsymT$ is open in $\M$.\\
    
    To show density, note again that as $F: \M \rightarrow F(\M)$ is a topological embedding, it suffices to show that $\MsymT$ is dense in $\M$. Consider any $A \in \M$. Let $T = A_{1:p,1:p}$. If $T \in \Gl{p}$, we are done. Assume $T \not\in \Gl{p}$. Since $\Gl{p}\cap\Sym{p}$ is dense within $\Sym{p}$, there exist $\{T_i\}_{i=1}^\infty$, $T_i \in \Sym{p}\cap\Gl{p}$ such that $T_i \to T$ as $i \to \infty$. Then define $A_i$ as $(A_i)_{kl} = (T_i)_{kl}$ if $1 \leq k,l \leq p$ and $(A_i)_{kl} = A_{kl}$ otherwise (i.e., $A_i$ is equal to $A$ except in the top-left $p \times p$ block where it equals $T_i$). Then $A_i \in \Sym{n}$, $\rank{((A_i)_{1:p,1:p})} = \rank{(T_i)} = p$, and $A_i \to A$ since $\|A_i - A\|_F^2 = \|T_i - T\|_F^2 \to 0$. So $\MsymT$ is dense within $\M$.
    \item This follows since $\MsymT(\S,\S')$ is open in $\Msym(\S,\S')$, by (ii).
\end{enumerate}
\end{proof}

As a result of \Cref{lemma:density-MsymT} we now have the following consequence after combining with \Cref{cor:sufficient-conditions}.

\begin{lemma}
\label{lemma:X-T-density-prop}
Consider subspaces $\S,\S' \in \G{n}$, such that $\S \subseteq \S' \subseteq \Fn$ with $n \geq 1$. Let $p = \dim{\S}$, $q = \dim{\S'} - \dim{\S}$, and $q \leq p$. Now define the following matrix sets
\begin{equation}
\label{eq:Q-grp1}
\begin{split}
    \mathcal{Q}_{\S} &:= \{A \in \Sym{n} \cap \Gl{n} \mid \dim{\X} = \Indsimple{A}{\S}\} \\
    \mathcal{Q}_{\S,\S'} &:= \{A \in \Sym{n} \cap \Gl{n} \mid \dim{\X} = q, \; \S + A\S = \S'\}.
\end{split}
\end{equation}
Then
\begin{enumerate}[(i)]
    \item $\mathcal{Q}_{\S}$ is dense in $\Sym{n} \cap \Gl{n}$.
    \item $\mathcal{Q}_{\S,\S'}$ is dense in $\Msym(\S,\S')$.
\end{enumerate}
\end{lemma}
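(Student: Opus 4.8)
The plan is to reduce both parts to the single sufficient condition ``$T\in\Gl p$'' furnished by \Cref{cor:sufficient-conditions}(ii), where $T$ is the top–left block $V^\ast AV$ of the tridiagonal block decomposition of $A$ (\Cref{cor:tridiag-blk-decomp}). The one observation I would isolate first is that $T=V^\ast AV$ for \emph{any} semi-unitary $V$ with $\Img{V}=\S$ (different choices differ by conjugation by a unitary, so the rank is unchanged); hence ``$V^\ast AV\in\Gl p$'' is a well-defined condition on $A$, independent of $V$, and it is exactly the defining condition of the set $\MsymT(\S,\S')$ from \Cref{lemma:density-MsymT}. By \Cref{cor:sufficient-conditions}(ii), every $A\in\Sym n\cap\Gl n$ with $V^\ast AV\in\Gl p$ satisfies $\dim{\X}=\Indsimple{A}{\S}$, hence lies in $\mathcal{Q}_\S$; and if in addition $\S+A\S=\S'$ (so $\Indsimple{A}{\S}=q$) it lies in $\mathcal{Q}_{\S,\S'}$.

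For part (ii) I would first dispose of the degenerate case $p=0$ (which forces $q=0$ and $\S=\S'=\{0\}$, so $\mathcal{Q}_{\S,\S'}=\Sym n\cap\Gl n$, which is dense in $\Msym(\{0\},\{0\})=\Sym n$). For $p\ge 1$, I would combine two density facts already available: $\MsymT(\S,\S')$ is a dense open subset of $\Msym(\S,\S')$ by \Cref{lemma:density-MsymT}(ii), and $\Msyminv(\S,\S')$ is dense in $\Msym(\S,\S')$ by \Cref{cor:embedding-Msyminv}(iii). Since the intersection of a dense set with a dense open set is dense, $\mathcal{N}:=\Msyminv(\S,\S')\cap\MsymT(\S,\S')$ is dense in $\Msym(\S,\S')$. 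Each $A\in\mathcal{N}$ is in $\Sym n\cap\Gl n$, satisfies $\S+A\S=\S'$, and has $V^\ast AV=T\in\Gl p$; so by the observation above $A\in\mathcal{Q}_{\S,\S'}$. Thus $\mathcal{N}\subseteq\mathcal{Q}_{\S,\S'}\subseteq\Msym(\S,\S')$, and density of $\mathcal{Q}_{\S,\S'}$ follows.

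For part (i), the case $p=0$ is again trivial ($\mathcal{Q}_\S=\Sym n\cap\Gl n$). For $p\ge 1$, set $D:=\{A\in\Sym n\mid \det(A)\,\det(V^\ast AV)\neq 0\}$; by the observation above $D\subseteq\mathcal{Q}_\S$. Now $D$ is the complement, inside $\Sym n$ (identified with $\R^{\alpha n(n-1)/2+n}$), of the zero set of the polynomial map $A\mapsto\det(A)\,\det(V^\ast AV)$, which is not identically zero because it equals $1$ at $A=I$ (here $V^\ast V=I$). The zero set of a nonzero real polynomial is closed and nowhere dense, so $D$ is open and dense in $\Sym n$; since $D\subseteq\Sym n\cap\Gl n\subseteq\Sym n$, this forces $D$ — and hence $\mathcal{Q}_\S$ — to be dense in $\Sym n\cap\Gl n$ with its subspace topology.

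I expect no genuine analytic obstacle: the real content was already packaged in \Cref{cor:sufficient-conditions,lemma:density-MsymT,cor:embedding-Msyminv}, and what remains is the bookkeeping of the degenerate cases $p=0$ and $q=0$ and the check that \Cref{cor:sufficient-conditions}(ii) is valid for every $n\ge p$ (via the $n=p+q$ reduction used throughout \Cref{sec:converse}) and not merely for $n>p+q$. A tempting alternative for (i) — writing $\Sym n\cap\Gl n=\bigsqcup_{\S'}\Msyminv(\S,\S')$ and patching (ii) over the strata — is awkward because the lower-dimensional strata are not open, so I would deliberately avoid it in favour of the direct polynomial-nonvanishing argument above.
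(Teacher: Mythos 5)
Your proposal is correct, and it reaches the conclusion by a partly different route than the paper. For part (ii) you do essentially what the paper does — perturb inside $\Msym(\S,\S')$ into $\MsymT(\S,\S')$ and invoke \Cref{cor:sufficient-conditions}(ii) — but you additionally intersect with $\Msyminv(\S,\S')$ before applying that corollary; this is a genuine improvement in care, since $\mathcal{Q}_{\S,\S'}\subseteq\Gl{n}$ and the standing hypotheses behind \Cref{cor:sufficient-conditions} require $A\in\Sym{n}\cap\Gl{n}$, a point the paper's own proof leaves implicit when it takes $A'\in\MsymT(\S,\S')$ without ensuring invertibility. For part (i) the paper argues pointwise: given $A$, it sets $\S_1=\S+A\S$ and perturbs $A$ \emph{within the stratum} $\Msym(\S,\S_1)$ using \Cref{lemma:density-MsymT}(ii); you instead exhibit the single set $D=\{A\in\Sym{n}\mid \det(A)\det(V^\ast AV)\neq 0\}$, show it is open and dense in $\Sym{n}$ by polynomial non-vanishing (the polynomial is $1$ at $A=I$), and note $D\subseteq\mathcal{Q}_\S$ by \Cref{cor:sufficient-conditions}(ii). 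Your argument is more elementary (it bypasses the manifold machinery of \Cref{lemma:density-MsymT} for part (i)), automatically handles invertibility of both $A$ and $T$ in one stroke, and even yields that $\mathcal{Q}_\S$ contains an open dense subset of $\Sym{n}\cap\Gl{n}$, which is slightly stronger than bare density; the paper's route, by contrast, keeps the perturbation inside the fixed stratum $\Msym(\S,\S+A\S)$, which is information your ambient perturbation discards but which part (ii) still records. Your closing caveats (the $p=0$ and $q=0$ degeneracies, and the validity of \Cref{cor:sufficient-conditions}(ii) for $n=p+q$ via the reduction of \Cref{app:appB}) are exactly the right boxes to tick, and none of them causes trouble.
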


\begin{proof}
\begin{enumerate}[(i)]
    \item If $p=0$, then $\mathcal{Q}_{\S} = \Sym{n} \cap \Gl{n}$; so we assume $p \geq 1$. Let $\epsilon > 0$ be arbitrary, and pick any $A \in \Sym{n} \cap \Gl{n}$. Let $\S_1 = \S + A\S$, and $q_1 = \Indsimple{A}{\S}$. Now consider the sets $\Msym(\S,\S_1)$ and $\MsymT(\S,\S_1)$, defined in \eqref{eq:matrix-sets} and \Cref{lemma:density-MsymT} respectively. Then by \Cref{lemma:density-MsymT}(ii), $\MsymT(\S,\S_1)$ is dense in $\Msym(\S,\S_1)$, and one can choose $A' \in \MsymT(\S,\S_1)$, such that $||A - A'||_{\text{F}} \leq \epsilon$. By \Cref{cor:sufficient-conditions}(ii) $A' \in \mathcal{Q}_{\S}$, finishing the proof.
    \item 
    \begin{sloppypar}
    If $p=0$, it implies $\S = \S' = \{0\}$, so $\mathcal{Q}_{\S,\S'} = \Msyminv (\S,\S')$, which is dense in $\Msym(\S,\S')$ by \Cref{lemma:density-MsymT}(ii). Now assume $p \geq 1$. Again $\MsymT(\S,\S')$ is dense in $\Msym(\S,\S')$. So for any $A \in \Msym(\S,\S')$ and $\epsilon > 0$, one can find $A' \in \MsymT(\S,\S')$ such that $||A - A'||_{\text{F}} \leq \epsilon$, and we conclude by applying \Cref{cor:sufficient-conditions}(ii).
    \end{sloppypar}
\end{enumerate}
\end{proof}

\section{Open problems}
\label{sec:future-work}

The results in this paper were proved in the finite dimensional setting, i.e. the matrices and vectors were all finite dimensional. However, we expect many of these results to also hold if the minimization problem \eqref{eq:x_b_w} was instead posed over an infinite dimensional closed subspace (or closed affine subspace) of an infinite dimensional separable Hilbert space. The precise generalizations and proofs are left as future work. However, even in the finite dimensional setting, there are a lot of open questions, which we now state. 

\subsection{Bounds}
\label{ssec:bound-tightness-open-probs}

As shown in \Cref{ssec:zero-dim}, the bound $\dim{\X_b} \leq \Indsimple{A}{\S}$ is not tight. Thus, finding ways to strengthen this bound will be interesting. We also characterized in \Cref{thm:dim-Xb-zero-condition} the precise condition under which $\dim{\X_b} = 0$. It is an open question whether one can find similar conditions that guarantee $\dim{\X_b} = t$, for $1 \leq t \leq \Indsimple{A}{\S}$. Similarly, one would like to know results analogous to \Cref{cor:zero-dim-characterization}, for the sets $\{b \in \Fn \mid \dim{\X_b} = t\}$, when $1 \leq t \leq \Indsimple{A}{\S}$. For example, one could ask how large are these sets, or what is their Hausdorff dimension? It is currently our conjecture that if $\F = \R$ (resp. $\F = \C$), the $n$-dimensional (resp. $2n$-dimensional) Lebesgue measure of these sets are non-decreasing in $t$, over the range $0 \leq t \leq \Indsimple{A}{\S}$. Proving this, or finding a counterexample to this will be interesting. A couple of other related questions, along similar lines, are the following:
\vspace{0.2cm}
\begin{enumerate}[(i)]
    \item Let $\S \subseteq \S' \subseteq \Fn$ be nested subspaces, and let $A \in \Sym{n} \cap \Gl{n}$ be such that $A \in \Msyminv (\S,\S')$. Suppose that $\dim{\X_b} < \Indsimple{A}{\S}$ for all $b \in \Fn$. Then does there exist $B \in \Sym{n} \cap \Gl{n}$, arbitrarily close to $A$ such that both (a) and (b) hold?
    \begin{enumerate}[(a)]
        \item $B \in \Msyminv (\S,\S')$,
        \item There exists $b \in \Fn$ such that $\dim{\X'_b} = \Indsimple{A}{\S}$. Here $\X'_b$ corresponds to the set $\X_b$, but for the matrix $B$.
    \end{enumerate}
    
    \item Let $\S \subseteq \S' \subseteq \Fn$ be nested subspaces, and let $A \in \Sym{n} \cap \Gl{n}$ be such that $A \in \Msyminv (\S,\S')$. Let $\dim{\X_b} < \Indsimple{A}{\S}$ for all $b \in \Fn$. Does there exist $B \in \Sym{n} \cap \Gl{n}$, arbitrarily close to $A$ such that  both (a) and (b) hold?
    \begin{enumerate}[(a)]
        \item $\Indsimple{A}{\S} = \Indsimple{B}{\S}$,
        \item There exists $b \in \Fn$ such that $\dim{\X'_b} = \Indsimple{A}{\S}$. Here $\X'_b$ corresponds to the set $\X_b$, but for the matrix $B$.
    \end{enumerate}
\end{enumerate}
\vspace{0.2cm}

For the next question, we first make the following definition:
\begin{definition}
A 4-tuple $(t,p,q,n)$ of non-negative integers is considered \textit{admissible} if there exists $\S \in \Gr{p}{n}$, and $A \in \Sym{n} \cap \Gl{n}$, such that $\Indsimple{A}{\S} = q$, and $\dim{\X_b} \leq t$ for all $b \in \Fn$, and there exists $c \in \Fn$ such that $\dim{\X_c} = t$.
\end{definition}
We can then ask the following:
\begin{enumerate}[(i)]
    \item Which 4-tuples $(t,p,q,n)$ are admissible? For example, if $q = 0$, then only the tuples $(0,p,0,n)$ are admissible with $n \geq p$. When $q=1$, only the tuples $(1,p,1,n)$ are admissible with $n \geq p+1$ and $p \geq 1$. Our current conjecture is that when $q \geq 1$, a tuple $(t,p,q,n)$ is admissible if and only if $1 \leq t \leq q$, $p \geq q$, and $n \geq p+q$. It will be interesting to know if any obstruction exists that prevents this from happening.
\end{enumerate}

There are also some unresolved questions regarding the tightness of the bound $\dim{\X} \leq \Indsimple{A}{\S}$. For example, we would like to know if there is an explicit example for which this is not an equality. It is also of interest to understand if there are other conditions, similar to the ones listed in \Cref{cor:sufficient-conditions}, that guarantee $\dim{\X} = \Indsimple{A}{\S}$. Finally, it is an interesting open question to ask under which conditions is $\Img{\dD{A}{\omega}{\mu}} = \Y$, for all distinct $\omega,\mu > \omin$. Even in the setting discussed in \Cref{thm:second-condition}, we were only able to prove this everywhere, except on a set of 2-dimensional Lebesgue measure zero, and what exactly is happening on this zero measure set is left to a future analysis.

\subsection{Topological questions}
\label{ssec:topology-open-probs}

We now mention some interesting open problems of a topological nature. It has been observed from numerical experiments that for randomly chosen $A \in \Sym{n} \cap \Gl{n}$, and $\S \in \G{n}$, such that $\Indsimple{A}{\S} \geq 1$, the map $\D{A}{\cdot}b : (\omin,\infty) \to \F^n$ is injective. Based on this, we make two assertions which may or may not be true (in which case a counterexample would be welcome):
\begin{enumerate}[(i)]
    \item For a given $A \in \Sym{n} \cap \Gl{n}$, and $\S = \G{n}$, if $\Indsimple{A}{\S} \geq 1$, there exists a $n-$dimensional (resp. $2n$-dimensional) Lebesgue measure zero set $\mathcal{B}$ if $\F=\R$ (resp. $\F=\C$), such that for all $b \in \F^n \setminus \mathcal{B}$, $\D{A}{\cdot}b$ is injective.
    \item For every $A \in \Sym{n} \cap \Gl{n}$, and $\S = \G{n}$, chosen randomly ensuring that $\Indsimple{A}{\S} \geq 1$, there exists a $n$-dimensional (resp. $2n$-dimensional) Lebesgue measure zero set $\mathcal{B}$ if $\F=\R$ (resp. $\F=\C$), such that for all $b \in \F^n \setminus \mathcal{B}$, $\D{A}{\cdot}b$ is injective.
\end{enumerate}
\vspace{0.2cm}

Another topological question relates to the structure of the sets $\mathcal{Q}_{\S}$ and $\mathcal{Q}_{\S,\S'}$, defined in \eqref{eq:Q-grp1}. How do these sets look? Are these open, closed, or connected? And if not, can these sets be decomposed into simpler sets? In fact, it is natural to additionally define the sets
\begin{equation}
\label{eq:Q-grp2}
\begin{split}
    \mathcal{Q}_{\S}(t) &:= \{A \in \Sym{n} \cap \Gl{n} \mid \dim{\X} = t\}, \hspace{2.5cm} 0 \leq t \leq p, \\
    \mathcal{Q}_{\S,\S'}(t) &:= \{A \in \Sym{n} \cap \Gl{n} \mid \dim{\X} = t, \; \S + A\S = \S'\}, \;\;\; 0 \leq t \leq q,
\end{split}
\end{equation}
and also ask about the structure of these sets, and it will be particularly interesting to know if there are any relationships (for e.g. density type) between these sets, and $\mathcal{Q}_{\S}$ and $\mathcal{Q}_{\S,\S'}$.

We finish by mentioning a topological question about the matrix manifolds, introduced in \eqref{eq:matrix-sets}, that is the topic of current investigation, and for which only partial results are available. The question concerns the number of connected components of each of these manifolds. Currently, we only know the complete answer to this question for the manifold $\M (\S,\S')$. Assuming the setting of \Cref{lemma:embedding-M}, we can state the following (proofs of these statements are simple, and skipped here):
\begin{enumerate}[(i)]
    \item If $\F = \C$, then $\M (\S,\S')$ is connected.
    \item Assume $\F=\R$. If $p=0$, then $\M (\S,\S')$ is connected. If $p \geq 1$, and $q < p$, then $\M (\S,\S')$ is connected. If $p=q\geq 1$, then $\M (\S,\S')$ has exactly 2 connected components.
\end{enumerate}
We are currently working to resolve the connectivity question for the other manifolds.

\section{Acknowledgements}
\label{sec:ack}
We would like to thank Michael Saunders and Eric Hallman for useful discussions at an early stage of this project. We thank Cindy Orozco Bohorquez for pointing out the nullity theorem \cite{strang2004interplay}, and András Vasy for discussions on analytic functions that led to the proof of \Cref{thm:second-condition}.
\appendix
\section{Quadratic forms}
\label{app:appA}

\begin{lemma}
\label{lemma:quadratic_minimization}
Let $m \geq n$, $A \in \F^{m \times n}$ a full column rank matrix, and $b \in \F^n$. The solution to $\min_{x \in \F^n} \|Ax-b\|_2$ is uniquely given by $x = (A^\ast A)^{-1} A^\ast b$.
\end{lemma}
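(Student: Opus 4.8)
The plan is to reduce the problem to the elementary fact that orthogonal projection minimizes distance, via the Pythagorean theorem already used elsewhere in the paper. First I would record that $A^\ast A \in \Gl{n}$: since $A$ has full column rank, $A^\ast A x = 0$ implies $\norm{Ax}{2}^2 = x^\ast A^\ast A x = 0$, hence $Ax = 0$, hence $x = 0$; so $A^\ast A$ is injective on $\F^n$ and therefore invertible, which makes the claimed formula $x = (A^\ast A)^{-1} A^\ast b$ well defined.

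Next I would set $P := A (A^\ast A)^{-1} A^\ast$ and check directly that $P = P^\ast$, $P^2 = P$, and $\Img{P} = \Img{A}$ (the inclusion $\subseteq$ is immediate, and $\supseteq$ follows from $PAy = A(A^\ast A)^{-1}A^\ast A y = Ay$). Thus $P$ is the orthogonal projector onto $\Img{A}$ and $I - P$ is the orthogonal projector onto $\Img{A}^\perp$. Then for any $x \in \F^n$, write $Ax - b = (Ax - Pb) - (b - Pb)$, where $Ax - Pb \in \Img{A}$ and $b - Pb = (I-P)b \in \Img{A}^\perp$; by the Pythagorean theorem $\norm{Ax - b}{2}^2 = \norm{Ax - Pb}{2}^2 + \norm{(I-P)b}{2}^2 \geq \norm{(I-P)b}{2}^2$, with equality if and only if $Ax = Pb$. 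Since $Pb \in \Img{A}$, some $x$ achieves $Ax = Pb$, and since $A$ has full column rank this $x$ is unique; finally $A\big((A^\ast A)^{-1} A^\ast b\big) = Pb$, so the unique minimizer is exactly $x = (A^\ast A)^{-1} A^\ast b$.

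The computation is routine and I do not expect a genuine obstacle; the only points requiring a little care are the verification that the cross term in the expansion of $\norm{Ax-b}{2}^2$ vanishes over $\F = \C$ as well — which it does precisely because $Ax - Pb$ and $(I-P)b$ are orthogonal in the Hermitian inner product — and the passage from $Ax = Pb$ back to uniqueness of $x$, which uses full column rank. An alternative and essentially equivalent route would be to expand and complete the square directly as $\norm{Ax-b}{2}^2 = \norm{A\big(x - (A^\ast A)^{-1}A^\ast b\big)}{2}^2 + \mathrm{const}$, using $A^\ast A \in \Pos{n}$ for strict convexity and hence uniqueness; I would note this as a remark but present the projection argument as the primary proof.
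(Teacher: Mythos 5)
Your proof is correct, but it takes a genuinely different route from the paper. The paper completes the square algebraically: it expands $\|Ax-b\|_2^2$, writes $P = A^\ast A \in \Pos{n}$, factors $P = Q^2$ with $Q \in \Pos{n}$, and regroups to get $f(x) = \|Qx - Q^{-1}A^\ast b\|_2^2 + \text{const}$, reading off the unique minimizer from $Qx = Q^{-1}A^\ast b$. You instead argue geometrically: you identify $P := A(A^\ast A)^{-1}A^\ast$ as the orthogonal projector onto $\Img{A}$, split $Ax - b$ into an $\Img{A}$ component and an $\Img{A}^\perp$ component, and invoke the Pythagorean theorem to see that the minimum over $\Img{A}$ is attained precisely at $Ax = Pb$, with uniqueness of $x$ coming from full column rank. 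Both arguments are standard and self-contained; the paper's version has the small advantage of making the strict convexity (and hence uniqueness) transparent in one algebraic step without needing to verify projector identities, while yours has the advantage of exposing the underlying geometry (least squares as orthogonal projection) and making the residual $\|(I-P)b\|_2$ explicit, which aligns nicely with the Pythagorean-theorem arguments the paper already uses in Section~2. Your closing remark about completing the square as $\|A(x - (A^\ast A)^{-1}A^\ast b)\|_2^2 + \text{const}$ is in fact a slight simplification of the paper's own route, since it avoids introducing $Q$ at all.
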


\begin{proof}
Rewrite $f(x) := \|Ax-b\|_2^2 = x^\ast A^\ast A x - (x^\ast A^\ast b) - (b^\ast A x) + b^\ast b$. Since $A$ is full column rank, $P = A^\ast A \in \Pos{n}$. Let $Q$ be such that $P = Q^2$, $Q \in \Pos{n}$. Such $Q$ always exists: let $U\Lambda U^\ast = P$ be the eigenvalue decomposition of $P$, and then one can choose $Q = U \Lambda^{1/2} U^\ast$. Then $f(x) = (Qx)^\ast (Qx) - ((Qx)^\ast (Q^{-1}A^\ast b)) - ((Q^{-1}A^\ast b)^\ast (Qx)) + b^\ast b = \| Q x - Q^{-1}A^\ast b\|_2^2 + b^\ast b - b^\ast A P^{-1} A^\ast b$. The minimum is obtained when $Qx - Q^{-1} A^\ast b = 0$, which happens uniquely (since $Q \in \Pos{n}$) when $Q^2 x = A^\ast b$ or $A^\ast A x = A^\ast b$.
\end{proof}

\begin{proof}[Proof of \Cref{lemma:x-bw}]
Notice that the map $\T \ni x \mapsto V^\ast (x - x_0) \in \Fp{p}$ is a bijection; so each $x \in \T$ can be uniquely written as $x = x_0 + Vy$ for some $y \in \Fp{p}$. Then rewrite the function to minimize in \eqref{eq:general-min-prob} as 
\begin{equation}
\label{eq:appA-proof1}
    \|A_\omega^{s/2}(b-Ax)\|_2 = \| A^{s/2}_\omega (b - A(x_0 + Vy)) \|_2 = \| A^{s/2}_\omega A V y - A_\omega^{s/2}(b-Ax_0) \|_2.
\end{equation}
In this expression, $A_\omega^{s/2} A V$ is full rank since $A_\omega \in \Pos{n}$ (because of the choice of $\omin$), $A \in \Gl{n}$, and  $V$ is full-rank.
Then using \Cref{lemma:quadratic_minimization}, the unique minimizer to \eqref{eq:appA-proof1} is given by $y = (V^\ast A A_\omega^s A V)^{-1} (V^\ast A A^s_\omega) (b - Ax_0)$ or $x = x_0 + Vy = x_0 + VP_\omega^{-1} V^\ast A A_\omega^s (b-Ax_0)$, with $P_\omega = V^\ast A A_\omega^s A V$. For the last part, notice that $x_{b,\omega,s}$ does not depend on the choice of $V$, because if $V' \in \Fpq{n}{p}$ is another full rank matrix whose columns span $\S$, then $V' = V L$ for some $L \in \Gl{p}$, from which it follows that $V P_\omega^{-1} V^\ast = V' P_\omega^{-1} V'^{\ast}$. Similarly $x_{b,\omega,s}$ also does not depend on the choice of $x_0$, because if $\T = x_0' + \S$ is a different representation of $\T$ for some other $x_0' \in \T$, then $x_0 - x_0' = Vy$ for some $y \in \F^p$, and so we have 
\begin{equation}
\begin{split}
    & \left( x_0 + V P_\omega^{-1} V^\ast A A_\omega^{s} (b - Ax_0) \right) - \left( x_0' + V P_\omega^{-1} V^\ast A A_\omega^{s} (b - Ax_0') \right) \\
    & = (x_0 - x_0') - V P_\omega^{-1} V^\ast A A_\omega^{s} A V y = (x_0 - x_0') - V P_\omega^{-1} P_\omega y = 0.
\end{split}
\end{equation}
This completes the proof.
\end{proof}
\section{\normalfont{Reducing the proof of \texorpdfstring{\Cref{thm:main_result}}{Lemma}} for the case \texorpdfstring{$n = p+q$ to the case $n > p+q$, where $p \geq q \geq 1$}{}}
\label{app:appB}

Suppose $n = p+q$, and the quantities $A$, $b$, $\T$, $\S$, etc. defined in the statement of \Cref{thm:main_result}. 
Define $\tilde{A} \in \Gl{n+1}$, $\tilde{b} \in \Fp{n+1}$, and for any $\tilde{c} \in \Fp{n+1}$, also define $\tilde{x}_{\tilde{c},\omega} \in \Fp{n+1}$ as
\vspace*{-0.1cm}
\begin{equation}
    \tilde{A} = 
    \begin{bmatrix}
    A & 0 \\
    0 & - \omin
    \end{bmatrix}, \;\;\;
    \tilde{b} = \begin{bmatrix} b \\ \alpha \end{bmatrix}, \;\;\;
    \tilde{x}_{\tilde{c},\omega} = \argmin_{\tilde{x} \in \tilde{\T}} \|\tilde{A}_\omega^{-\frac{1}{2}}(\tilde{c} - \tilde{A}\tilde{x}) \|_2,
\end{equation}
for any $\omega > \omin$, and $\alpha \in \F$, where $\tilde{A}_\omega = \tilde{A} + \omega I$, and $\tilde{\T}$ is an affine subspace of $\Fp{n+1}$ built as $\tilde{\T} = \{ (x,0) \in \F^{n+1} \mid x \in \T\}$. Also define the subspace $\tilde{\S} = \{(x,0) \mid x \in \S\}$. 
Notice that we have $\lambda_{\text{min}}(\tilde{A}) = \lambda_{\text{min}}(A) = -\omin$, $\dim{\tilde{\T}}=p$, and $\Gam{p}{\tilde{\T}} = \tilde{\S}$. Also note that given the structure of $\tilde A$, 
\begin{equation} 
\label{eq:tilde_non_tilde} \|\tilde{A}_\omega^{-\frac{1}{2}}(\tilde{b} - \tilde{A}\tilde x) \|_2^2 =
\|A_\omega^{-\frac{1}{2}}(b - Ax) \|_2^2 + \|(-\omin+\omega)^{-\frac{1}{2}}(\alpha + \omin y)\|_2^2,
\end{equation}
for $\tilde x \in \F^{n+1}, x \in \F^n, y \in \F$, and $\tilde x = \begin{bmatrix} x \\ y \end{bmatrix}$.
Then given the structure of $\tilde \T$ (i.e., $y = 0$), the last term is a constant and minimizing over $\tilde x \in \tilde \T$ is equivalent to minimizing over $x \in \T$ with
$\tilde x = \begin{bmatrix} x \\ 0 \end{bmatrix}$. Finally, let us also define $\tilde{\X}_{\tilde{b}} := \Aff{\{\tilde{x}_{\tilde{b},\omega} \mid \omega > \omin\}}$ and $\tilde{\X} := \sum_{\tilde{c} \in \Fp{n+1}} \Gam{\dim{\tilde{\X}_{\tilde{c}}}}{\tilde{\X}_{\tilde{c}}}$. Then we can prove the following two lemmas:

\begin{lemma}
\label{lemma:reduction}
Assuming that \Cref{thm:main_result} holds in the $n > p+q$ case, it also holds in the $n = p+q$ case.
\end{lemma}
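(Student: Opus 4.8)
The plan is to exploit the explicit ``lifting'' construction already set up in Appendix B, and show that all the relevant objects for the $(n+1)$-dimensional problem $(\tilde A, \tilde b, \tilde \T)$ restrict cleanly to the original $n$-dimensional objects. First I would verify that $\tilde\S + \tilde A \tilde\S$ has the same codimension over $\tilde\S$ as $\S + A\S$ over $\S$: since $\tilde A$ is block-diagonal with the extra coordinate acting on the last basis vector $e_{n+1}$, and $\tilde\S = \S \oplus \{0\}$ sits entirely inside the first $n$ coordinates, we get $\tilde A\tilde\S = (A\S) \oplus \{0\}$, hence $\tilde\S + \tilde A\tilde\S = (\S + A\S)\oplus\{0\}$ and $\tilde q := \Indsimple{\tilde A}{\tilde\S} = q$. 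Crucially, because $\dim{\F^{n+1}} = n+1 = p + q + 1 > p + q$, the lifted problem now falls in the regime $n' > p + q$ where \Cref{thm:main_result} is assumed to hold.

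Next I would apply \Cref{thm:main_result} in the $n > p+q$ case to $(\tilde A, \tilde b, \tilde\T)$: there exists a subspace $\tilde\Y \in \Gr{q}{n+1}$, independent of $\tilde b$, with $\tilde x_{\tilde b, \omega} - \tilde x_{\tilde b, \mu} \in \tilde\Y$ for all $\omega, \mu \in (\omin, \infty)$. The key observation, established via \eqref{eq:tilde_non_tilde}, is that for $\tilde c = (c, \alpha)$ the minimizer $\tilde x_{\tilde c, \omega}$ has the form $(x_{c,\omega}, 0)$ where $x_{c,\omega}$ is the original minimizer over $\T$ for right-hand side $c$ — the last coordinate decouples and is forced to be zero by the constraint $\tilde\T = \T \oplus \{0\}$, and the $\alpha$-dependent term in \eqref{eq:tilde_non_tilde} is a constant that does not affect the argmin. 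Therefore $\tilde x_{\tilde b, \omega} - \tilde x_{\tilde b, \mu} = (x_{b,\omega} - x_{b,\mu}, 0)$, so these vectors all lie in $\tilde\Y \cap (\F^n \oplus \{0\})$. Projecting onto the first $n$ coordinates, define $\Y$ to be the image of $\tilde\Y \cap (\F^n \oplus \{0\})$ under this projection; then $x_{b,\omega} - x_{b,\mu} \in \Y$ for all $\omega, \mu$ and all $b$, and $\Y$ is independent of $b$ since $\tilde\Y$ is independent of $\tilde b$.

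It then remains to pin down $\dim{\Y}$ and its explicit form. For dimension: I would argue $\dim{\Y} \le \dim{\tilde\Y} = q$ trivially, and $\dim{\Y} \ge q$ because, by \Cref{cor:converse-zero-dim} applied in the original setting, $\Indsimple{A}{\S} = q \ge 1$ forces the differences $x_{b,\omega} - x_{b,\mu}$ to span a space of dimension $\ge 1$; more carefully, one should track that the explicit formula $\tilde\Y = \Img{\tilde V (\tilde H^\ast \tilde H)^{-1} \tilde B^\ast}$ from \Cref{thm:main_result}, computed with the natural block-structured choices $\tilde V = \begin{bmatrix} V \\ 0 \end{bmatrix}$, $\tilde V' = \begin{bmatrix} V' \\ 0 \end{bmatrix}$ (which are semi-unitary with the right images since the extra coordinate is orthogonal to everything involved), yields $\tilde T = V^\ast A V = T$, $\tilde B = V'^\ast A V = B$, $\tilde H = H$, and hence $\tilde\Y = \Img{\begin{bmatrix} V(H^\ast H)^{-1} B^\ast \\ 0 \end{bmatrix}}$. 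This already lies inside $\F^n \oplus \{0\}$, so $\Y = \Img{V(H^\ast H)^{-1}B^\ast}$, which has dimension exactly $q$ and matches the formula in the statement verbatim, with no dependence on the choices of $V, V'$ by the last paragraph of the proof of \Cref{thm:main_result}. The main obstacle I anticipate is purely bookkeeping: one must check that the ``natural'' block choices $\tilde V, \tilde V', \tilde V''$ genuinely satisfy the semi-unitarity, image, and tridiagonal-block hypotheses required to invoke the explicit form of $\tilde\Y$ — in particular that $\tilde V'' = \begin{bmatrix} V'' & 0 \\ 0 & 1 \end{bmatrix}$ correctly captures $(\tilde\S + \tilde A\tilde\S)^\perp$ in $\F^{n+1}$, which includes the new coordinate direction. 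Once that is in place, everything collapses to the identity $\tilde x_{\tilde b,\omega} = (x_{b,\omega}, 0)$ and the conclusion is immediate.
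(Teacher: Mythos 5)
Your proposal is correct and follows essentially the same route as the paper's proof in Appendix B: lift to the $(n+1)$-dimensional problem via $(\tilde A, \tilde b, \tilde\T)$, verify via \eqref{eq:tilde_non_tilde} that $\tilde x_{\tilde b,\omega} = (x_{b,\omega},0)$, apply \Cref{thm:main_result} in the $n' = n+1 > p+q$ regime with the block choices $\tilde V = \begin{bmatrix}V\\0\end{bmatrix}$, $\tilde V' = \begin{bmatrix}V'\\0\end{bmatrix}$, and note $\tilde T = T$, $\tilde B = B$, $\tilde H = H$ so that $\tilde\Y$ collapses to $\Img{V(H^\ast H)^{-1}B^\ast}\oplus\{0\}$. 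The brief detour through \Cref{cor:converse-zero-dim} for the lower bound on $\dim{\Y}$ is unnecessary (and insufficient when $q>1$, as you note yourself), but you correct it immediately by appealing to the explicit formula, so there is no gap.
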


\begin{proof}
Given this choice of $\tilde{\S}$ and $\tilde A$, we have $\dim{\tilde \S} = \dim{\S}$, $\dim{\tilde \S + \tilde A \tilde \S} = \dim{\S + A\S}$ and $\Indsimple{\tilde A}{\tilde{\S}} = \Indsimple{A}{\S}$.
Furthermore, let $V$, $V'$ be chosen as in \Cref{thm:main_result}. Then $\tilde V = \begin{bmatrix} V \\ 0 \end{bmatrix}$, $\tilde V' = \begin{bmatrix} V' \\ 0 \end{bmatrix}$ are such that $\Img{\tilde V} = \tilde\S$, $\Img{\begin{bmatrix} \tilde V & \tilde V'\end{bmatrix}} = \tilde S + \tilde A \tilde \S$.
We can then apply \Cref{thm:main_result} to $\tilde A$ with $\tilde{\S}$ and $\tilde \T$ (with $\tilde V$ and $\tilde V'$ as basis) to conclude that $\tilde x_{\tilde b,\omega} - \tilde x_{\tilde b,\mu} \in \Img{\tilde V(\tilde H^* \tilde H)^{-1} \tilde B^*}$ with $\tilde H, \tilde B$ defined similarly as $H, B$ in \Cref{thm:main_result}. 
But given the structure of $\tilde A$ and the choices of $\tilde V, \tilde V'$, one easily sees that $\tilde B = B$, $\tilde H = H$ and hence $\tilde x_{\tilde b,\omega} - \tilde x_{\tilde b,\mu} \in \Img{\tilde V(H^* H)^{-1} B^*} = \Img{\begin{bmatrix} V(H^*H)^{-1} B^* \\ 0 \end{bmatrix}}$.
Finally this implies $x_{b,\omega} - x_{b,\mu} \in \Img{V(H^* H)^{-1} B^*}$.
\end{proof}

\begin{lemma}
\label{lemma:reduction1}
$\dim{\tilde{\X}_{\tilde{b}}} = \dim{\X_b}$, and $\dim{\tilde{\X}} = \dim{\X}$.
\end{lemma}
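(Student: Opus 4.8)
The plan is to show that the map $x \mapsto \tilde x := \begin{bmatrix} x \\ 0 \end{bmatrix}$ sets up a bijection between the relevant solution sets upstairs and downstairs, and then transport affine-hull and dimension statements across it. First I would record the key computation from \eqref{eq:tilde_non_tilde}: for $\tilde x = \begin{bmatrix} x \\ y \end{bmatrix}$ the objective splits as $\|\tilde A_\omega^{-1/2}(\tilde c - \tilde A \tilde x)\|_2^2 = \|A_\omega^{-1/2}(c - Ax)\|_2^2 + (-\omin+\omega)^{-1}|\gamma + \omin y|^2$ where $\tilde c = \begin{bmatrix} c \\ \gamma \end{bmatrix}$, and since on $\tilde\T$ we have $y = 0$, the second term is the constant $(-\omin+\omega)^{-1}|\gamma|^2$, independent of the minimizing variable. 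Hence $\tilde x_{\tilde c,\omega} = \begin{bmatrix} x_{c,\omega} \\ 0 \end{bmatrix}$ for every $\tilde c = \begin{bmatrix} c \\ \gamma \end{bmatrix}$ and every $\omega > \omin$; in particular this holds with $\tilde c = \tilde b$, giving $\tilde x_{\tilde b,\omega} = \begin{bmatrix} x_{b,\omega} \\ 0 \end{bmatrix}$.

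Next I would handle $\X_b$. Let $\iota : \Fn \to \Fp{n+1}$ be the linear isometric embedding $x \mapsto \begin{bmatrix} x \\ 0 \end{bmatrix}$. Since $\iota$ is linear and injective, it maps affine subspaces to affine subspaces of the same dimension, and it commutes with taking affine hulls: $\Aff{\iota(S)} = \iota(\Aff{S})$ for any $S \subseteq \Fn$ (this follows because $\iota(\Fn)$ is itself a subspace, so the affine subspaces of $\Fp{n+1}$ contained in $\iota(\Fn)$ are exactly the images under $\iota$ of affine subspaces of $\Fn$, and $\iota$ preserves inclusions). Applying this to $S = \{x_{b,\omega} \mid \omega > \omin\}$ and using the previous paragraph, $\tilde\X_{\tilde b} = \Aff{\{\tilde x_{\tilde b,\omega} \mid \omega > \omin\}} = \Aff{\iota(\{x_{b,\omega} \mid \omega > \omin\})} = \iota(\X_b)$, and therefore $\dim{\tilde\X_{\tilde b}} = \dim{\X_b}$.

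Finally, for $\X$ versus $\tilde\X$, the point is that $\X = \sum_{b \in \Fn} \Gam{\dim{\X_b}}{\X_b}$ and $\tilde\X = \sum_{\tilde c \in \Fp{n+1}} \Gam{\dim{\tilde\X_{\tilde c}}}{\tilde\X_{\tilde c}}$. For a general $\tilde c = \begin{bmatrix} c \\ \gamma \end{bmatrix}$ the computation above gives $\tilde x_{\tilde c,\omega} = \iota(x_{c,\omega})$, so by the same affine-hull argument $\tilde\X_{\tilde c} = \iota(\X_c)$ and hence $\Gam{\dim{\tilde\X_{\tilde c}}}{\tilde\X_{\tilde c}} = \iota\bigl(\Gam{\dim{\X_c}}{\X_c}\bigr)$, depending only on $c$ and not on $\gamma$. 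Since $\iota$ is linear, it commutes with the (possibly uncountable) subspace sum, so $\tilde\X = \sum_{\tilde c} \iota\bigl(\Gam{\dim{\X_c}}{\X_c}\bigr) = \iota\bigl( \sum_{c \in \Fn} \Gam{\dim{\X_c}}{\X_c}\bigr) = \iota(\X)$. As $\iota$ is an injective linear map, $\dim{\tilde\X} = \dim{\X}$, completing the proof.

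I do not expect a genuine obstacle here; the only mild care needed is the bookkeeping that $\iota$ commutes with affine hull and with arbitrary subspace sums, and that one must run the objective-splitting identity for \emph{all} right-hand sides $\tilde c$ (not just $\tilde b$) to get the $\X$ statement — the choice of the last diagonal entry $-\omin$ of $\tilde A$ is exactly what makes $\tilde A_\omega$ positive definite for all $\omega > \omin$, which is what is needed for the objective to be well-defined and for $\lambda_{\text{min}}(\tilde A) = \lambda_{\text{min}}(A)$.
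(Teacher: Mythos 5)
Your proof is correct and follows essentially the same route as the paper: the paper's own proof simply observes that $\tilde{x}_{\tilde{b},\omega} = \begin{bmatrix} x_{b,\omega} \\ 0 \end{bmatrix}$ for all $b \in \Fn$ and $\alpha \in \F$ (exactly your objective-splitting step via \eqref{eq:tilde_non_tilde}) and concludes. Your additional bookkeeping about the embedding commuting with affine hulls, the maps $\mathbf{\Gamma}$, and arbitrary subspace sums is correct and merely makes explicit what the paper leaves as "follows readily."
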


\begin{proof}
Since $\tilde x_{\tilde b,\omega} = \begin{bmatrix} x_{b,\omega} \\ 0\end{bmatrix}$ for all $b \in \Fn$, and $\alpha \in \F$, the conclusion follows readily.
\end{proof}
\section{The nullspace of \texorpdfstring{$H^\ast$}{H*}}
\label{app:appC}

We present here a geometrical relationship between the images of $T$ and $B^\ast$, and the nullspace of $H^{\ast} = \begin{bmatrix}T & B^\ast \end{bmatrix}$, as defined in the statement of \Cref{thm:main_result}. Recall that $T \in \Fpq{p}{p}$ is Hermitian, and $H^\ast$ and $B^\ast \in \Fpq{p}{q}$ are full rank, with $p \geq q$. To do this we first establish a more general result below. In this appendix, for any matrix $M \in \Fpq{p}{q}$, and $\mathcal{A} \in \G{p}$, we define $\text{pre}_{M}(\mathcal{A}) := \{x \in \Fp{q} \mid M x \in \mathcal{A}\}$.

\begin{lemma}
\label{lemma:nullspace-geometry}
Let $M = \begin{bmatrix} M_1 & M_2 \end{bmatrix}$ be a matrix such that $M_1 \in \Fpq{s}{s_1}$, $M_2 \in \Fpq{s}{s_2}$, with $s \geq s_2$, and suppose that $\rank{(M_2)} = s_2$. Let $N$ be any matrix such that the columns of $N$ span the nullspace of $M$, and let us partition $N$ as $N = \begin{bmatrix} N_1 \\ N_2 \end{bmatrix}$, where $N_1$ are the first $s_1$ rows of $N$. Then
\begin{enumerate}[(i)]
    \item $N_2 = -(M_2^\ast M_2)^{-1} M_2^\ast M_1 N_1$.
    \item $\rank{(N)} = \rank{(N_1)}$.
    \item $\Img{N_1} = \text{pre}_{M_1} (\Img{M_2})$.
\end{enumerate}
\end{lemma}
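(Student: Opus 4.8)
The statement to prove is \Cref{lemma:nullspace-geometry}, with three parts about the nullspace of $M = \begin{bmatrix} M_1 & M_2 \end{bmatrix}$ where $M_2$ has full column rank $s_2$.

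\textbf{Approach for part (i).}
Any vector in $\Ker{M}$ has the form $\begin{bmatrix} x_1 \\ x_2 \end{bmatrix}$ with $M_1 x_1 + M_2 x_2 = 0$, i.e. $M_2 x_2 = -M_1 x_1$. Since $\rank{(M_2)} = s_2$, the matrix $M_2^\ast M_2 \in \Fpq{s_2}{s_2}$ is invertible, and left-multiplying $M_2 x_2 = -M_1 x_1$ by $(M_2^\ast M_2)^{-1} M_2^\ast$ gives $x_2 = -(M_2^\ast M_2)^{-1} M_2^\ast M_1 x_1$ (this is exact, not least-squares, because $-M_1 x_1 \in \Img{M_2}$ so the orthogonal projection recovers the unique preimage). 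The columns of $N$ span $\Ker{M}$, so writing $N = \begin{bmatrix} N_1 \\ N_2 \end{bmatrix}$ and applying this column-wise yields $N_2 = -(M_2^\ast M_2)^{-1} M_2^\ast M_1 N_1$. First I would spell out exactly this computation.

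\textbf{Approach for part (ii).}
From part (i), $N = \begin{bmatrix} I \\ -(M_2^\ast M_2)^{-1} M_2^\ast M_1 \end{bmatrix} N_1$, i.e. $N$ is obtained from $N_1$ by left-multiplication by a matrix with full column rank $s_1$ (its top block is the identity $I_{s_1}$). Left-multiplication by a full column rank matrix preserves rank (this fact is already used in the paper, e.g. in the footnote after \eqref{eq:d-b-omega-mu-solution}), so $\rank{(N)} = \rank{(N_1)}$.

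\textbf{Approach for part (iii).}
This is the main content. I would prove the two inclusions. For $\Img{N_1} \subseteq \text{pre}_{M_1}(\Img{M_2})$: if $y \in \Img{N_1}$, then $y = N_1 z$ for some $z$, and $\begin{bmatrix} N_1 z \\ N_2 z \end{bmatrix} \in \Ker{M}$, so $M_1 (N_1 z) = -M_2 (N_2 z) \in \Img{M_2}$, hence $N_1 z \in \text{pre}_{M_1}(\Img{M_2})$. For the reverse inclusion $\text{pre}_{M_1}(\Img{M_2}) \subseteq \Img{N_1}$: take $y$ with $M_1 y = M_2 w$ for some $w$; then $\begin{bmatrix} y \\ -w \end{bmatrix} \in \Ker{M}$, so it is a linear combination of the columns of $N$, i.e. $\begin{bmatrix} y \\ -w \end{bmatrix} = N v = \begin{bmatrix} N_1 v \\ N_2 v \end{bmatrix}$ for some $v$, and reading off the top block gives $y = N_1 v \in \Img{N_1}$. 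None of this is genuinely hard; the only thing to be careful about is that the columns of $N$ span \emph{all} of $\Ker{M}$, which is given by hypothesis, so every nullspace vector is in $\Img{N}$. I expect part (iii)'s reverse inclusion to be the step most worth writing out carefully, but it is still short.

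\textbf{Overall.} The whole proof is elementary linear algebra; there is no real obstacle, only bookkeeping. The one point to state explicitly is the invertibility of $M_2^\ast M_2$ from $\rank{(M_2)} = s_2$, which underpins both (i) and the well-definedness of the projection used there.
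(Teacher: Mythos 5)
Your proposal is correct, and parts (i) and (ii) coincide with the paper's proof (the paper likewise notes $M_1N_1 + M_2N_2 = 0$, inverts $M_2^\ast M_2$, and writes $N = \begin{bmatrix} I \\ N' \end{bmatrix} N_1$ with a full-column-rank left factor). Where you genuinely diverge is part (iii): you prove the reverse inclusion $\text{pre}_{M_1}(\Img{M_2}) \subseteq \Img{N_1}$ directly, by observing that any $y$ with $M_1 y = M_2 w$ gives a nullspace vector $\begin{bmatrix} y \\ -w \end{bmatrix}$, which must be $Nv$ for some $v$ because the columns of $N$ span \emph{all} of $\Ker{M}$, so $y = N_1 v$. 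The paper instead proves only the easy inclusion $\Img{N_1} \subseteq \text{pre}_{M_1}(\Img{M_2})$ and then closes the gap by a dimension count: with $\mathcal{A} := \Img{M_1}\cap\Img{M_2}$ and $\mathcal{A}' := \Ker{M_1}^\perp \cap \text{pre}_{M_1}(\mathcal{A})$, it uses rank--nullity and the splittings $\Img{M_2} = \mathcal{A} \oplus (\mathcal{A}^\perp\cap\Img{M_2})$, $\text{pre}_{M_1}(\mathcal{A}) = \Ker{M_1}\oplus\mathcal{A}'$ to show $\dim{\text{pre}_{M_1}(\Img{M_2})} = \rank{(N_1)}$. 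Your route is shorter and more transparent, and it correctly isolates the one hypothesis that matters (that $N$ spans the whole nullspace, not merely sits inside it). What the paper's heavier argument buys is a set of byproducts that are reused verbatim in deriving \Cref{cor:nullspace-H*}: the decomposition $\Img{N_1} = \Ker{M_1} \oplus \mathcal{A}'$ and the equality $\dim{\mathcal{A}'} = \dim{\mathcal{A}}$ are cited there to obtain $\Img{N_1} = \Img{T}^\perp \oplus T^\dagger(\Img{T}\cap\Img{B^\ast})$. With your proof of the lemma one would have to re-derive those two facts separately before the corollary, though that is only a few extra lines.
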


\begin{proof}
\begin{enumerate}[(i)]
    \item As the columns of $N$ span the nullspace of $M$, we have $M_1 N_1 + M_2 N_2 = 0$, and since $\rank{(M_2)} = s_2$, the matrix $M_2^\ast M_2$ is invertible, from which the result follows.
    
    \item Let $N' := -(M_2^\ast M_2)^{-1} M_2^\ast M_1$. From (i), we have $N = \begin{bmatrix} I \\ N' \end{bmatrix} N_1$. Since $\begin{bmatrix} I \\ N' \end{bmatrix}$ is full column rank, the conclusion follows.
    
    \item From (i) $M_1 N_1 = - M_2 N_2$; so each column of $N_1$ is in $\text{pre}_{M_1} (\Img{M_2})$, and hence $\Img{N_1} \subseteq \text{pre}_{M_1} (\Img{M_2})$. Now define $\mathcal{A} := \Img{M_1} \cap \Img{M_2}$, and observe that $\text{pre}_{M_1} (\Img{M_2}) = \text{pre}_{M_1} (\mathcal{A})$. To prove the result it suffices to show that $\dim{\Img{N_1}} = \dim{\text{pre}_{M_1} (\mathcal{A})}$. Let $\rank{(M)} = r$, and suppose $\rank{(N)} = t$, so $\dim{\Img{N_1}} = t$ by part (ii). From the rank-nullity theorem we first have
    \begin{equation}
    \label{eq:nullspace-geometry-proof-1}
        t = s_1 + s_2 - r, \;\; \dim{\Img{M_1}} + \dim{\Ker{M_1}} = s_1,
    \end{equation}
    and since $\Img{M_2} = \mathcal{A} \oplus (\mathcal{A}^\perp \cap \Img{M_2})$, and $\Fp{s} = \Img{M_1} \oplus \Img{M_1}^\perp$ we also have
    \begin{equation}
    \label{eq:nullspace-geometry-proof-2}
    \begin{split}
        & \dim{\mathcal{A}} + \dim{\mathcal{A}^\perp \cap \Img{M_2}} = \dim{\Img{M_2}} = s_2, \\
        & \dim{\Img{M_1}} + \dim{\Img{M_1}^\perp} = s.
    \end{split}
    \end{equation}
    Next notice that $\Img{M_1} + \Img{M_2} = \Img{M_1} + \mathcal{A} \oplus (\mathcal{A}^\perp \cap \Img{M_2}) = \Img{M_1} + (\mathcal{A}^\perp \cap \Img{M_2})$. But $\Img{M_1} \cap (\mathcal{A}^\perp \cap \Img{M_2}) = (\Img{M_1} \cap \Img{M_2}) \cap \mathcal{A}^\perp = \mathcal{A} \cap \mathcal{A}^\perp = \{0\}$, so in fact
    \begin{equation}
    \label{eq:nullspace-geometry-proof-3}
        \dim{\Img{M_1}} + \dim{\mathcal{A}^\perp \cap \Img{M_2}} = \dim{\Img{M_1} + \Img{M_2}} = r.
    \end{equation}
    Thus combining \eqref{eq:nullspace-geometry-proof-2} and \eqref{eq:nullspace-geometry-proof-3} gives $\dim{\mathcal{A}} + \dim{\Img{M_1}^\perp} = s + s_2 - r$, and then using \eqref{eq:nullspace-geometry-proof-1} and \eqref{eq:nullspace-geometry-proof-2} gives 
    \begin{equation}
    \label{eq:nullspace-geometry-proof-4}
        \dim{\mathcal{A}} + \dim{\Ker{M_1}} = s_1 + s_2 - r = t.
    \end{equation}
    Now consider the set $\mathcal{A}' := \Ker{M_1}^\perp \cap \text{pre}_{M_1} (\mathcal{A})$, which is a subspace of $\text{pre}_{M_1} (\mathcal{A})$. Then the restriction to $\mathcal{A}'$ of the linear map given by $M_1$ is an isomorphism $M_1 |_{\mathcal{A}'} : \mathcal{A}' \rightarrow \mathcal{A}$, which gives $\dim{\mathcal{A}} = \dim{\mathcal{A}'}$. Notice that this proves the result as $\text{pre}_{M_1} (\mathcal{A}) = \mathcal{A}' \oplus \Ker{M_1}$, since $\Ker{M_1}$ is also a subspace of $\text{pre}_{M_1} (\mathcal{A})$, giving $\dim{\text{pre}_{M_1} (\mathcal{A})} = \dim{\mathcal{A}'} + \dim{\Ker{M_1}} = t$, using \eqref{eq:nullspace-geometry-proof-4}.
\end{enumerate}
\end{proof}

Let us apply \Cref{lemma:nullspace-geometry} to characterize the nullspace of $H^\ast$, which is of dimension $q$, and derive some consequences. If we choose $N \in \Fpq{(p+q)}{q}$ to be full column rank, as in \Cref{lemma:HTSH}, and write $N = \begin{bmatrix} N_1 \\ N_2 \end{bmatrix}$ as in \Cref{lemma:nullspace-geometry} with $N_1 \in \Fpq{p}{q}$, then we have $N_2 = -(BB^\ast)^{-1} B T N_1$, and $\Img{N_1} = \text{pre}_{T} (\Img{B^\ast})$. Denoting $\mathcal{A} := \Img{T} \cap \Img{B^\ast}$, and $\mathcal{A}' := \Ker{T}^\perp \cap \text{pre}_{T} (\mathcal{A})$, the proof of \Cref{lemma:nullspace-geometry}(iii) shows that $\Img{N_1} = \text{pre}_{T} (\mathcal{A}) = \Ker{T} \oplus \mathcal{A}'$.
Now as $T \in \Sym{p}$, we have $\Ker{T} = \Img{T}^\perp$, which means that $\mathcal{A}' \subseteq \Img{T}$, and $\Img{N_1} = \Img{T}^\perp \oplus \mathcal{A}'$ (an orthogonal direct sum). Thus we have that both $\mathcal{A}, \mathcal{A}'$ are subspaces of $\Img{T}$. Also $T \mathcal{A}' \subseteq \mathcal{A}$ from definition, and moreover $\dim{\mathcal{A}} = \dim{\mathcal{A}'}$ from the proof of \Cref{lemma:nullspace-geometry}(iii), which means $T \mathcal{A}' = \mathcal{A}$. Now by the spectral theorem, the restriction to the subspace $\Img{T}$ of the linear operator $T$, i.e. $T|_{\Img{T}}$, is invertible; thus in fact $T \mathcal{A}' = T|_{\Img{T}} \mathcal{A}' = \mathcal{A}$, or $\mathcal{A}' = T|_{\Img{T}}^{-1} \mathcal{A}$. Denoting the pseudoinverse \cite{moore1920reciprocal,penrose1955generalized} of $T$ by $T^{\dagger}$, it is also easily checked that $T^{\dagger} |_{\Img{T}} = T |_{\Img{T}}^{-1}$, since $T \in \Sym{n}$, and so we have proved the following corollary:

\begin{corollary}
\label{cor:nullspace-H*}
Let $N \in \Fpq{(p+q)}{q}$ be a full column rank matrix, whose columns span the nullspace of $H^\ast$, and let $T^\dagger$ be the pseudoinverse of $T$. If we partition $N$ as $N = \begin{bmatrix} N_1 \\ N_2 \end{bmatrix}$, where $N_1 \in \Fpq{p}{q}$ and $N_2 \in \Fpq{q}{q}$. Then
\begin{enumerate}[(i)]
    \item $N_2 = -(BB^\ast)^{-1} B T N_1$, and $\rank{(N)} = \rank{(N_1)} = q$.
    \item $\Img{N_1} = \Img{T}^\perp \oplus T^{\dagger} (\Img{T} \cap \Img{B^\ast})$.
\end{enumerate}
\end{corollary}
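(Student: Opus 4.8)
The plan is to obtain Corollary~\ref{cor:nullspace-H*} as a direct specialization of Lemma~\ref{lemma:nullspace-geometry}, taking $M = H^\ast = \begin{bmatrix} T & B^\ast \end{bmatrix}$, so that $M_1 = T \in \Fpq{p}{p}$, $M_2 = B^\ast \in \Fpq{p}{q}$, and $s = s_1 = p$, $s_2 = q$. Since $p \geq q$ (which holds throughout this appendix), the size hypothesis $s \geq s_2$ is satisfied; and since $B$ — hence $B^\ast$ — has full rank $q$ by Corollary~\ref{cor:tridiag-blk-decomp}, the hypothesis $\rank(M_2) = s_2$ holds as well. Moreover $\rank(H^\ast) = \rank(H) = p$, so $\Ker{H^\ast}$ has dimension $(p+q) - p = q$, whence any $N$ as in the statement lies in $\Fpq{(p+q)}{q}$ and has full column rank $q$. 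Part~(i) is then immediate: Lemma~\ref{lemma:nullspace-geometry}(i) gives $N_2 = -((B^\ast)^\ast B^\ast)^{-1}(B^\ast)^\ast T N_1 = -(BB^\ast)^{-1} B T N_1$, and Lemma~\ref{lemma:nullspace-geometry}(ii) gives $\rank(N) = \rank(N_1)$, which equals $q$ since $\rank(N) = q$.

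For part~(ii), Lemma~\ref{lemma:nullspace-geometry}(iii) yields $\Img{N_1} = \text{pre}_T(\Img{B^\ast})$, and since $Tx$ always lies in $\Img{T}$ we have $\text{pre}_T(\Img{B^\ast}) = \text{pre}_T(\mathcal{A})$ with $\mathcal{A} := \Img{T} \cap \Img{B^\ast}$. Following the last part of the proof of Lemma~\ref{lemma:nullspace-geometry}(iii), one decomposes $\text{pre}_T(\mathcal{A}) = \mathcal{A}' \oplus \Ker{T}$, where $\mathcal{A}' := \Ker{T}^\perp \cap \text{pre}_T(\mathcal{A})$; that proof also establishes $\dim{\mathcal{A}'} = \dim{\mathcal{A}}$ and $T\mathcal{A}' \subseteq \mathcal{A}$, hence $T\mathcal{A}' = \mathcal{A}$. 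Now invoke that $T \in \Sym{p}$: by the spectral theorem $\Ker{T} = \Img{T}^\perp$, the restriction $T|_{\Img{T}}$ is invertible, and $T^\dagger|_{\Img{T}} = (T|_{\Img{T}})^{-1}$. Since $\mathcal{A}' \subseteq \Ker{T}^\perp = \Img{T}$ and $\mathcal{A} \subseteq \Img{T}$, the identity $T\mathcal{A}' = \mathcal{A}$ rearranges to $\mathcal{A}' = T^\dagger\mathcal{A} = T^\dagger(\Img{T} \cap \Img{B^\ast})$; substituting $\Ker{T} = \Img{T}^\perp$ then gives $\Img{N_1} = \Img{T}^\perp \oplus T^\dagger(\Img{T} \cap \Img{B^\ast})$, with the sum orthogonal because $\mathcal{A}' \subseteq \Img{T}$.

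The heavy lifting — the rank comparison between $N$ and $N_1$ and the computation of $\dim{\text{pre}_T(\mathcal{A})}$ — is already packaged in Lemma~\ref{lemma:nullspace-geometry}, so the only genuinely new work is the passage from the abstract preimage $\text{pre}_T(\mathcal{A})$ to the explicit description via the pseudoinverse. The one point to handle carefully is the identity $T^\dagger|_{\Img{T}} = (T|_{\Img{T}})^{-1}$, which I would justify from the Moore--Penrose defining relations or, equivalently, from a spectral decomposition $T = U\Lambda U^\ast$, so that $T^\dagger = U\Lambda^\dagger U^\ast$ acts as $\Lambda^{-1}$ on the nonzero-eigenvalue eigenspaces and as $0$ on $\Ker{T}$. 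Everything else is bookkeeping with the orthogonal splitting $\Fp{p} = \Img{T} \oplus \Ker{T}$ afforded by $T$ being Hermitian.
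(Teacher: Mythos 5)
Your proposal is correct and follows essentially the same route as the paper: it specializes \Cref{lemma:nullspace-geometry} with $M_1 = T$, $M_2 = B^\ast$ to get part (i) and $\Img{N_1} = \text{pre}_T(\Img{B^\ast})$, then uses the decomposition $\text{pre}_T(\mathcal{A}) = \Ker{T} \oplus \mathcal{A}'$ from that lemma's proof together with $T \in \Sym{p}$ (so $\Ker{T} = \Img{T}^\perp$, $T|_{\Img{T}}$ invertible, and $T^\dagger|_{\Img{T}} = (T|_{\Img{T}})^{-1}$) to identify $\mathcal{A}' = T^\dagger(\Img{T} \cap \Img{B^\ast})$, exactly as in the paper. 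The only difference is that you spell out the hypothesis checks and the pseudoinverse identity that the paper leaves as "easily checked," which is harmless.
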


The relationship between some of these subspaces of $\Fp{p}$ appearing in \Cref{cor:nullspace-H*} is illustrated in \cref{fig:nullspace-H*}.
\begin{figure}[ht]
    \centering
    \includegraphics[width=0.75\textwidth]{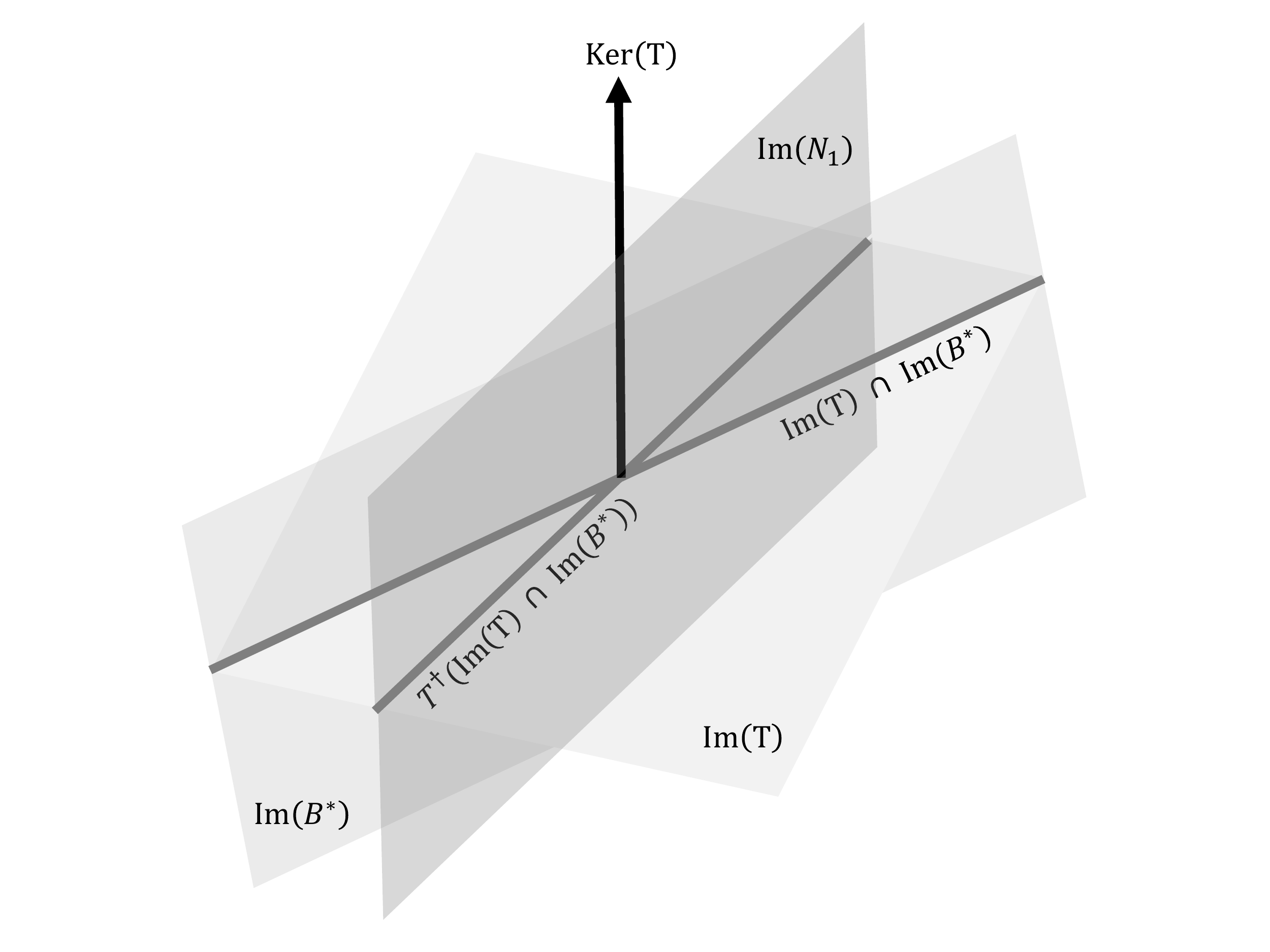}
    \caption{An illustration of the geometrical relationship between the various subspaces of $\Fp{p}$: $\Img{T}$, $\Img{B^\ast}$, $\Ker{T}$ and $\Img{N_1}$.}
    \label{fig:nullspace-H*}
\end{figure}

Finally we note a couple of special cases in the next lemma that follow from \Cref{cor:nullspace-H*}.
\begin{lemma}
\label{lemma:nullspace-special-cases}
Let $N, N_1, N_2$ be as in \Cref{cor:nullspace-H*}. Then
\begin{enumerate}[(i)]
    \item $\Img{B^\ast} \subseteq \Img{T}$ if and only if $T \in \Gl{p}$. In this case, one can choose $N_1 = -T^{-1} B^\ast$, and $N_2 = I$.
    
    \item $\Img{T} \subseteq \Img{B^\ast}$ if and only if $B^\ast \in \Gl{p}$. In this case one can choose $N_1 = Q$, for any $Q \in \Gl{p}$ (for e.g. $Q = I$).
    
    \item If $\Img{T} \cap \Img{B^\ast} = \{0\}$, then $N_1$ should be chosen such that $\Img{N_1} = \Img{T}^\perp$, and in this case $N_2 = 0$.
\end{enumerate}
\end{lemma}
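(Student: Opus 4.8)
The plan is to dispatch all three parts using three ingredients already in hand: the rank identity $\rank(H^\ast) = p$, which by $\Img{H^\ast} = \Img{T} + \Img{B^\ast}$ says $\Img{T} + \Img{B^\ast} = \Fp{p}$; the Hermiticity of $T$, which gives $\Img{T}^\perp = \Ker{T}$; and the explicit description of $N_1, N_2$ from \Cref{cor:nullspace-H*}. Throughout I will use that the nullspace of $H^\ast$ has dimension $q$ and that any two valid choices of $N$ differ by right multiplication by an invertible matrix, so that $\Img{N_1}$ is intrinsic to the situation.

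For part (i), the forward implication is immediate: if $T \in \Gl{p}$ then $\Img{T} = \Fp{p} \supseteq \Img{B^\ast}$. For the converse, $\Img{B^\ast} \subseteq \Img{T}$ forces $\Fp{p} = \Img{T} + \Img{B^\ast} = \Img{T}$, so the square matrix $T$ is surjective, hence invertible. To justify the stated choice I would check directly that $N := \begin{bmatrix} -T^{-1}B^\ast \\ I\end{bmatrix}$ satisfies $H^\ast N = -T T^{-1} B^\ast + B^\ast = 0$, has full column rank $q$ (its bottom block is $I_q$), and therefore spans the $q$-dimensional nullspace of $H^\ast$; this is also consistent with \Cref{cor:nullspace-H*}(ii), whose right-hand side collapses to $\Img{T^{-1}B^\ast}$ when $\Img{T} = \Fp{p}$.

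For part (ii), recall $q \leq p$ (e.g.\ \Cref{lemma:basic_properties}(i)). If $\Img{T} \subseteq \Img{B^\ast}$ then $\Fp{p} = \Img{T} + \Img{B^\ast} = \Img{B^\ast}$, so $\rank(B^\ast) = p$; combined with $q \leq p$ this gives $q = p$ and $B^\ast \in \Gl{p}$, and the converse is again trivial. For the choice of $N_1$, \Cref{cor:nullspace-H*}(ii) gives $\Img{N_1} = \Img{T}^\perp \oplus T^\dagger(\Img{T} \cap \Img{B^\ast}) = \Img{T}^\perp \oplus T^\dagger \Img{T} = \Fp{p}$, using that $T^\dagger$ maps $\Img{T}$ onto $\Img{T}$ for Hermitian $T$; hence any $Q \in \Gl{p}$ is an admissible $N_1$, and one completes $N$ by taking $N_2 = -(B^\ast)^{-1} T Q$, the unique solution of $B^\ast N_2 = -TQ$.

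For part (iii), apply \Cref{cor:nullspace-H*}(ii) with $\Img{T} \cap \Img{B^\ast} = \{0\}$: then $\Img{N_1} = \Img{T}^\perp \oplus T^\dagger\{0\} = \Img{T}^\perp$, so every valid $N_1$ has image $\Img{T}^\perp$. Since $\Img{T}^\perp = \Ker{T}$ by Hermiticity, $T N_1 = 0$, and \Cref{cor:nullspace-H*}(i) then gives $N_2 = -(BB^\ast)^{-1} B T N_1 = 0$. The only step requiring any care is the dimension count in part (ii) that pins $q = p$; the rest is direct substitution into \Cref{cor:nullspace-H*}.
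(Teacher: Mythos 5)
Your proof is correct and follows essentially the same route as the paper: all three parts are read off from Corollary~\ref{cor:nullspace-H*} together with the facts $\Img{T}+\Img{B^\ast}=\Fp{p}$ and $\Img{T}^\perp=\Ker{T}$. The only cosmetic differences are that in (i) you verify $H^\ast N=0$ directly rather than deriving $N_1$ from the corollary, and in (ii) you spell out the $q=p$ dimension count that the paper dispatches by ``interchange the roles of $T$ and $B^\ast$''; both are sound.
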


\begin{proof}
\begin{enumerate}[(i)]
    \item If $T \in \Gl{p}$, $\Img{T} = \Fp{p}$ and so $\Img{B^\ast} \subseteq \Img{T}$. If $\Img{B^\ast} \subseteq \Img{T}$, then $\Img{T} = \Fp{p}$ as $\rank{(H^\ast)} = p$, so $T \in \Gl{p}$. In this case, $\Img{T}^\perp = \{0\}$, $\Img{T} \cap \Img{B^\ast} = \Img{B^\ast}$, and $T^\dagger = T^{-1}$. Thus from \Cref{cor:nullspace-H*}(ii) we have $\Img{N_1} = T^{-1} \Img{B^\ast} = \Img{T^{-1} B^\ast}$, and so we can choose $N_1 = -T^{-1} B^\ast$. With this choice, we get $N_2 = I$ by \Cref{cor:nullspace-H*}(i).
    
    \item Interchanging the roles of $T$ and $B^\ast$ in the proof of part (i) proves that $\Img{T} \subseteq \Img{B^\ast}$ if and only if $B^\ast \in \Gl{p}$. In this case $\Img{T} \cap \Img{B^\ast} = \Img{T}$, and so $T^\dagger (\Img{T} \cap \Img{B^\ast}) = T^\dagger \Img{T} = T|_{\Img{T}}^{-1} \Img{T} = \Img{T}$. This gives using \Cref{cor:nullspace-H*}(ii) that $\Img{N_1} = \Img{T}^\perp \oplus \Img{T} = \Fp{p}$.  Thus $N_1$ must be chosen to be an invertible matrix in $\Gl{p}$.
    
    \item It follows directly in this case that $\Img{N_1} = \Img{T}^\perp$ from \Cref{cor:nullspace-H*}(ii). Thus $TN_1 = 0$ and this implies $N_2 = 0$.
\end{enumerate}
\end{proof}
\section{Properties of real analytic maps}
\label{app:appD}

We collect here some useful and well-known facts about real analytic functions that were used in this paper, in \Cref{sec:converse,sec:related-results}.

\begin{lemma}
\label{lemma:real-analyticity-prop}
Let $f$ denote the map $\R^m \supseteq \mathcal{U} \ni (x_1,\dots,x_m) \mapsto (y_1,\dots,y_k) \in \R^k$, which is real analytic \footnote{This means for each $1 \leq i \leq k$, $y_i$ is a real analytic function of $x_1,\dots,x_m$.} on a connected, open subset $\mathcal{U}$. Then
\begin{enumerate}[(i)]
    \item If $f$ is not constant, the zero set $\mathcal{Z} = \{x \in \R^m \mid f(x) = 0\}$ has $m$-dimensional Lebesgue measure zero in $\R^m$.
    
    \item If $m = 1$, then additionally the zero set $\mathcal{Z}$ is discrete\footnote{This means that for each $p \in \mathcal{Z}$, there is an open interval containing $p$, but no other zeros.}, and hence countable.
\end{enumerate}
\end{lemma}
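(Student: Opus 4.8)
The plan is to reduce both parts to standard facts about real analytic functions via the identity theorem. For part (i), I would first reduce to the scalar case: since $\mathcal{Z} = \bigcap_{i=1}^k \{x : y_i(x) = 0\}$, it suffices to show that the zero set of a single non-identically-zero real analytic function $g : \mathcal{U} \to \R$ has $m$-dimensional Lebesgue measure zero. (Here I use that $f$ not constant does not immediately give that some single $y_i$ is non-constant with zero set of measure zero — but if every $y_i$ that is non-constant had a zero set of measure zero we would be done, and if some $y_i$ is a non-zero constant then $\mathcal{Z}=\emptyset$; so I would phrase the reduction as: $\mathcal{Z}$ is contained in the zero set of any single non-constant component, or is empty.) For the scalar statement, the classical argument (see e.g. \cite{krantz2002primer}) proceeds by induction on $m$: write $g(x_1,\dots,x_m)$ and for each fixed $(x_2,\dots,x_m)$ consider the one-variable slice $x_1 \mapsto g(x_1,\dots,x_m)$; by the one-dimensional identity theorem this slice either vanishes identically or has a discrete (hence measure-zero) zero set. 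The set of $(x_2,\dots,x_m)$ for which the slice vanishes identically is itself the zero set of the real analytic functions $(x_2,\dots,x_m) \mapsto \partial_{x_1}^j g(\cdot)|_{x_1 = x_1^0}$ for all $j$ and some base point, and is a proper subset (since $g \not\equiv 0$ on a connected open set), so by the inductive hypothesis it has $(m-1)$-dimensional measure zero; then Fubini's theorem gives that $\mathcal{Z}$ has $m$-dimensional measure zero.

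For part (ii), with $m=1$, I would invoke the one-dimensional identity theorem directly: if $f : \mathcal{U} \to \R^k$ is real analytic on a connected open interval $\mathcal{U} \subseteq \R$ and is not identically zero on $\mathcal{U}$, then its zeros cannot accumulate in $\mathcal{U}$. Concretely, suppose $p \in \mathcal{Z}$ were an accumulation point; then in a neighborhood of $p$ we may expand each component $y_i$ in a convergent power series centered at $p$, and the vanishing of $y_i$ on a sequence converging to $p$ forces all Taylor coefficients of $y_i$ at $p$ to vanish, so $y_i \equiv 0$ near $p$. The set where all Taylor coefficients of all $y_i$ vanish is open and closed in the connected set $\mathcal{U}$ and nonempty, hence equals $\mathcal{U}$, contradicting $f \not\equiv 0$. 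Therefore $\mathcal{Z}$ has no accumulation point in $\mathcal{U}$, i.e. it is discrete; and a discrete subset of $\R$ is countable (each point has a rational-endpoint interval isolating it, giving an injection into $\Q^2$).

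I do not expect any genuine obstacle here — this is a packaging of textbook material, and I would simply cite \cite{krantz2002primer} for the underlying theorems (the identity theorem for real analytic functions and the measure-zero property of zero sets) rather than reproving them, keeping the write-up to the reduction steps above. The only mild subtlety worth stating carefully is the reduction in part (i) to the scalar case and the handling of the degenerate situations ($\mathcal{Z}$ empty, or some component a nonzero constant), which I would dispatch in a sentence.
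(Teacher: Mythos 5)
Your argument is correct and follows essentially the same route as the paper's proof: reduce part (i) to a single non-constant component whose zero set contains $\mathcal{Z}$, then invoke the standard fact that the zero set of a non-trivial real analytic function has measure zero (which the paper cites from Kuchment/Mityagin rather than sketching the slice-and-Fubini induction you outline), and for part (ii) the one-dimensional identity-theorem argument that zeros cannot accumulate (which the paper defers to the analogous complex-variable treatment in Conway). The only difference is the level of detail supplied, not the approach.
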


\begin{proof}
\begin{enumerate}[(i)]
    \item We will use $\pi_m (\mathcal{V})$ to denote the $m$-dimensional Lebesgue measure of a measurable subset $\mathcal{V} \subseteq \R^m$. The case $k=1$ is a simple consequence of Lemma 5.22 in \cite{kuchment2016overview} (see also \cite{mityagin2020zero}). For the case $k \geq 1$, there exists $1 \leq i \leq k$ such that $y_i$ is not constant, as $f$ is not constant. Then $\pi_m (\mathcal{Z}) \leq \pi_m (\{x \in \R^m \mid y_i = 0\}) = 0$.
    
    \item It suffices to prove this for the case $k = 1$. For $k \geq 1$, with $y_i$ chosen as in the proof of (i), we have $\mathcal{Z} \subseteq \{x \in \R^m \mid y_i = 0\}$. The proof of the $k=1$ case now follows in a manner similar to the complex setting (i.e. for non-constant analytic maps $\C \supseteq \mathcal{U} \rightarrow \C$), as is done for example in Theorem 3.7 - Corollary 3.10 of \cite{conway2012functions}. Finally, discrete subsets of $\R$ are countable.
\end{enumerate}
\end{proof}

\begin{lemma}
\label{lemma:real-analyticity-complex-range}
The conclusions of \Cref{lemma:real-analyticity-prop} continue to hold if $f$ is instead an analytic map $\R^m \supseteq \mathcal{U} \ni (x_1,\dots,x_m) \mapsto (y_1,\dots,y_k) \in \C^k$ on a connected, open subset $\mathcal{U}$.
\end{lemma}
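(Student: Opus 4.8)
The plan is to reduce the complex-valued case directly to the real-valued case already established in \Cref{lemma:real-analyticity-prop}. First I would write each component $y_i$ of $f$ as $y_i = \Re(y_i) + \sqrt{-1}\,\Im(y_i)$, where by the definition of ``analytic map into $\C^k$'' used in the paper, both $\Re(y_i)$ and $\Im(y_i)$ are real analytic functions $\mathcal{U} \to \R$ for each $1 \le i \le k$. Then I would define the auxiliary real analytic map
\[
g : \mathcal{U} \ni (x_1,\dots,x_m) \mapsto \bigl(\Re(y_1),\Im(y_1),\dots,\Re(y_k),\Im(y_k)\bigr) \in \R^{2k},
\]
which is real analytic on the connected open set $\mathcal{U}$ since each of its $2k$ components is.

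The key observation is that $f$ and $g$ have exactly the same zero set: $f(x) = 0$ in $\C^k$ if and only if $\Re(y_i)(x) = 0$ and $\Im(y_i)(x) = 0$ for all $i$, i.e. if and only if $g(x) = 0$ in $\R^{2k}$. Moreover $f$ is constant on $\mathcal{U}$ if and only if $g$ is constant on $\mathcal{U}$, since a complex number is determined by its real and imaginary parts. Hence, if $f$ is not constant, then $g$ is a non-constant real analytic map on a connected open subset of $\R^m$, and \Cref{lemma:real-analyticity-prop}(i) applied to $g$ gives that $\mathcal{Z} = \{x \in \R^m \mid g(x) = 0\} = \{x \in \R^m \mid f(x) = 0\}$ has $m$-dimensional Lebesgue measure zero, proving the analogue of part (i). When $m = 1$, \Cref{lemma:real-analyticity-prop}(ii) applied to $g$ additionally shows this zero set is discrete, hence countable, giving the analogue of part (ii).

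There is essentially no serious obstacle here: the only thing to be careful about is confirming that the paper's notion of a $\C^k$-valued analytic function is precisely the one for which $\Re(y_i)$ and $\Im(y_i)$ are real analytic (which is how ``analytic'' for complex-valued maps is used in \Cref{lemma:real-analyticity-DA} and the surrounding text), so that $g$ inherits real analyticity componentwise and \Cref{lemma:real-analyticity-prop} is directly applicable. Once that identification is made, the argument is a one-line reduction.
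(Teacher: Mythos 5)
Your reduction is correct and is essentially the paper's own argument: the paper likewise notes that the real and imaginary components are real analytic and reduces to \Cref{lemma:real-analyticity-prop}. Your write-up just spells out the componentwise map into $\R^{2k}$, the equality of zero sets, and the preservation of non-constancy, all of which are exactly the implicit steps in the paper's one-line proof.
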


\begin{proof}
This follows easily because both the real and imaginary components are real analytic maps, which reduces us to the setting of \Cref{lemma:real-analyticity-prop}.
\end{proof}

\bibliographystyle{plain}
\bibliography{bibliography}
\end{document}